\documentclass[11pt,a4paper]{article}

\usepackage[T1]{fontenc}
\usepackage[utf8]{inputenc}
\usepackage{lmodern}
\usepackage{microtype}
\usepackage{geometry}
\usepackage{amsmath,amssymb,amsthm,mathtools,mathrsfs}
\usepackage{bm}
\usepackage{enumitem}
\usepackage{xcolor}
\usepackage{tikz}
\usetikzlibrary{arrows.meta,positioning,calc,decorations.pathreplacing,cd,patterns,fit}
\usepackage{tikz-cd}
\usepackage{booktabs}
\usepackage{array}
\usepackage{longtable}
\usepackage{aliascnt}
\usepackage{hyperref}
\usepackage[capitalise,noabbrev]{cleveref}
\usepackage{csquotes}

\hypersetup{
  colorlinks=true,
  linkcolor=blue!50!black,
  citecolor=green!40!black,
  urlcolor=blue!60!black,
  pdftitle={AKSZ-BV-BFV on Joyce Generalized Corners},
  pdfauthor={Cristian Anghel},
  pdfkeywords={AKSZ, BV-BFV, generalized corners, b-tangent Lie algebroid,
               monoidal resolutions, regularized integration, BF theory}
}

\newtheorem{theorem}{Theorem}[section]

\newaliascnt{proposition}{theorem}
\newtheorem{proposition}[proposition]{Proposition}
\aliascntresetthe{proposition}

\newaliascnt{lemma}{theorem}
\newtheorem{lemma}[lemma]{Lemma}
\aliascntresetthe{lemma}

\newaliascnt{corollary}{theorem}
\newtheorem{corollary}[corollary]{Corollary}
\aliascntresetthe{corollary}

\newaliascnt{conjecture}{theorem}
\newtheorem{conjecture}[conjecture]{Conjecture}
\aliascntresetthe{conjecture}

\theoremstyle{definition}
\newaliascnt{definition}{theorem}
\newtheorem{definition}[definition]{Definition}
\aliascntresetthe{definition}

\newaliascnt{example}{theorem}
\newtheorem{example}[example]{Example}
\aliascntresetthe{example}

\newaliascnt{assumption}{theorem}
\newtheorem{assumption}[assumption]{Assumption}
\aliascntresetthe{assumption}

\theoremstyle{remark}
\newaliascnt{remark}{theorem}
\newtheorem{remark}[remark]{Remark}
\aliascntresetthe{remark}

\crefname{theorem}{Theorem}{Theorems}
\crefname{proposition}{Proposition}{Propositions}
\crefname{lemma}{Lemma}{Lemmas}
\crefname{corollary}{Corollary}{Corollaries}
\crefname{conjecture}{Conjecture}{Conjectures}
\Crefname{conjecture}{Conjecture}{Conjectures}
\crefname{definition}{Definition}{Definitions}
\crefname{example}{Example}{Examples}
\crefname{assumption}{Assumption}{Assumptions}
\crefname{remark}{Remark}{Remarks}

\newcommand{\F}{\mathcal F}
\newcommand{\Y}{\mathcal Y}
\newcommand{\ev}{\operatorname{ev}}
\newcommand{\Map}{\operatorname{Map}}
\newcommand{\Tot}{\operatorname{Tot}}
\newcommand{\codim}{\operatorname{codim}}
\newcommand{\supp}{\operatorname{supp}}
\newcommand{\Cone}{\operatorname{Cone}}
\newcommand{\Hom}{\operatorname{Hom}}
\newcommand{\id}{\mathrm{id}}
\newcommand{\dd}{\mathrm d}
\newcommand{\bT}{{}^{b}T}
\newcommand{\bOmega}{\Omega_b}

\newcommand{\Rge}{\mathbb R_{\geq 0}}
\newcommand{\Z}{\mathbb Z}
\newcommand{\N}{\mathbb N}
\newcommand{\cO}{\mathcal O}
\newcommand{\cP}{\mathcal P}
\newcommand{\cR}{\mathcal R}
\newcommand{\scale}{\zeta}
\newcommand{\cC}{\mathcal C}
\newcommand{\kfield}{\Bbbk}
\newcommand{\op}{\mathrm{op}}

\newcommand{\Tr}{\operatorname{Tr}}
\newcommand{\Reg}{\operatorname{Reg}}
\newcommand{\ad}{\operatorname{ad}}
\newcommand{\sd}{\operatorname{sd}}
\newcommand{\sgn}{\operatorname{sgn}}
\newcommand{\gp}{\mathrm{gp}}
\newcommand{\db}{\mathrm d_b}
\newcommand{\logOmega}{\Omega_{b,\log}}
\newcommand{\R}{\mathbb R}
\newcommand{\cA}{\mathcal A}
\newcommand{\cT}{\mathcal T}
\newcommand{\cV}{\mathcal V}
\newcommand{\trg}{\operatorname{tr}_{\mathfrak g}}

\title{\textbf{Relative Logarithmic AKSZ Descent on Joyce Generalized Corners}\\
\large Monoidal resolutions, conditional intrinsic $b$-transgression,
and conifold strictification}
\author{Cristian Anghel}
\date{August 17, 2026}

\begin{document}
\maketitle

\begin{abstract}
We study resolution-independent \emph{relative logarithmic} source complexes
for AKSZ--BV--BFV theory on face-oriented Joyce manifolds with generalized
corners.  Under the geometric hypotheses \textup{(G1)}--\textup{(G3)}, the
Dupont--Panzer--Pym total relative logarithmic complexes of admitted smooth
monoidal resolutions represent the common object
$j_!\Omega_{X^\circ}^\bullet$ and carry the same compactly supported derived
trace.  This comparison is deliberately unfiltered: it identifies total
relative cocycle classes through the common interior, but not individual
resolved faces, separate BFV descendants, nonlinear mapping spaces, or
arbitrary absolute regularized integrals.

For the intrinsic $b$-source $\bT[1]X$, we formulate the additional Stokes and
transfer data needed to recover strict facewise structures.  Assuming a
multiplicative regularized Stokes trace system on Joyce's face category gives
the presymplectic intrinsic BV--BFV identity and incidence descent.  The
finite datum \textup{(G4)} is a linear subdivision/aggregation \emph{transfer
criterion}; the separate datum \textup{(G5)} is a cyclic field-level
\emph{transfer criterion} for classical abelian BF theory.  Neither is a
general existence theorem, and a general strict multiplicative intrinsic
trace is not constructed here.

We prove the interval contractions needed for codimension-two resolution
collars and analyze the positive real conifold through the common star
refinement of its two diagonal resolutions.  On an explicit finite
product--Whitney logarithmic coefficient class, the exceptional square
satisfies the full \textup{(G4)} criterion after normal-face totalization and
the transferred differential is exactly the signed intrinsic incidence
differential.  Its finite algebraic BF dual gives a cyclic
logarithmic--cellular shadow, but not the full continuum intrinsic
$b$-de Rham datum \textup{(G5)}.  General nonlinear continuum pushforward,
existence of the intrinsic multiplicative trace, and loop-level logarithmic
graph integrals remain open.
\end{abstract}

\medskip
\noindent\textbf{2020 Mathematics Subject Classification.}
Primary 81T45, 57R56; Secondary 58A50, 58A12, 81T70.

\smallskip
\noindent\textbf{Key words and phrases.}
AKSZ construction; BV--BFV formalism; generalized corners; $b$-tangent Lie
algebroid; monoidal resolutions; regularized integration; BF theory; toric
monoids.

\tableofcontents

\section{Introduction}

The AKSZ construction originates in the graded-symplectic sigma-model
formalism of Alexandrov--Kontsevich--Schwarz--Zaboronsky~\cite{AKSZ}.  Its
BV--BFV extension to manifolds with boundary is developed by
Cattaneo--Mnev--Reshetikhin~\cite{CMRClassical,CMRQuantum}, and nonlinear
boundary/globalization questions have been treated in particular for split
AKSZ models~\cite{CMWNonlinearAKSZ}.  A facewise formulation for ordinary
corners is developed in the companion paper~\cite{AnghelPartI}.

Several frameworks converge on the geometry used here.  Kottke--Melrose
introduced generalized blow-up of ordinary corners and the monoidal complexes
that control it~\cite{KottkeMelrose}, and Kottke extended the blow-up to
$g$-corners~\cite{KottkeGCorner}.  Independently, Gillam--Molcho developed
positive log differentiable spaces~\cite{GillamMolcho}, in which manifolds
with $g$-corners appear as the log smooth objects; this is the setting in
which Dupont--Panzer--Pym construct their regularized
integrals~\cite{DPP}.  Joyce's intrinsic differential-geometric
account~\cite{JoyceGCorner}, which builds on his ordinary-corner
theory~\cite{JoyceCorners}, is the one followed below, and we cite the other
two frameworks where their results are used.  On the target side we use the
graded symplectic $QP$-manifolds of Roytenberg and
\v{S}evera~\cite{Roytenberg,Severa}.

Joyce generalized corners replace the local orthant by a monoidal model
\[
 X_P=\Hom_{\mathrm{Mon}}(P,\mathbb R_{\ge0}),
\]
where the weakly toric monoid $P$ need not be free.  Two new difficulties are
therefore simultaneous: ordinary ordered-face combinatorics no longer
describes the intrinsic higher corners, and the natural $b$-forms contain
logarithmic normal factors.  Joyce's $b$-tangent Lie algebroid supplies the
differential graded source, while the regularized integration theory of
Dupont--Panzer--Pym supplies the model for the Stokes datum after passing to an
ordinary-corner resolution~\cite{JoyceGCorner,DPP}.

Three source models must be distinguished.  First, ordinary AKSZ on
$T[1]X_\cR$ gives a resolution-level facewise descent system and a useful
linear baseline.  Second, the \emph{resolved $b$/logarithmic route} uses
$\bT[1]X_\cR$ together with the Dupont--Panzer--Pym relative logarithmic
complex.  Third, the \emph{intrinsic $b$-route} uses $\bT[1]X$ and Joyce's
intrinsic face category.  Its facewise AKSZ transgression requires an
additional multiplicative regularized Stokes trace system; the paper treats
this as structured input and does not prove its existence for arbitrary
Joyce $g$-corners.  The unconditional comparison between resolved logarithmic
models is therefore the unfiltered relative comparison through the common
interior.  A stricter resolved--intrinsic comparison is obtained only after
the separate finite transfer criteria \textup{(G4)} and \textup{(G5)} are
supplied.  The ordinary model maps to the
resolved $b$-model by the anchor but is not claimed to be equivalent to it.
Regularizations on distinct resolutions are chosen independently and are
compared only at the level of total relative classes through the common
interior; a character of the vertex monoid records intrinsic vertex values but
does not determine the full resolved DPP datum.
The positive real conifold
\[
 X_\square=\{x\in\mathbb R_{\ge0}^4:x_1x_2=x_3x_4\}
\]
is the basic non-ordinary test case throughout.

\subsection*{Hypotheses and scope}

\begin{definition}[Geometric hypotheses and transfer data]\label{def:admissible-package}
The following hypotheses and additional transfer data are used separately.
\begin{enumerate}[label=\textup{(G\arabic*)},leftmargin=2.8em]
\item $X$ is a compact face-oriented Joyce manifold with generalized corners and
embedded boundary faces, and its associated monoidal complex is finite; for
$G\prec F$ the chosen orientations determine incidence signs $[F:G]$.
\item A smooth global monoidal resolution $\beta_\cR:X_\cR\to X$ is fixed together with a
Dupont--Panzer--Pym regularization on its complete ordered boundary diagram.
\item The admitted comparison class has smooth common refinements whose comparison maps,
on compact subsets, factor into finite stellar towers.
\item For linear strictification, the resolved total face complex carries the finite,
centre-compatible, incidence-local subdivision/aggregation contraction of
\cref{def:face-compatible-stellar,sec:global-linear-strict}.
\item For the abelian BF/BV--BFV interpretation, the paired contravariant
$A$-cochain and covariant BF-dual $B$-chain systems carry the
deformation-retract and cyclicity data of
\cref{def:bf-cyclic-enhancement}.
\end{enumerate}
\end{definition}

The derived relative construction uses \textup{(G1)}--\textup{(G3)}.
Clauses \textup{(G4)} and \textup{(G5)} are of a different logical nature:
they are explicit chain-level \emph{transfer criteria}, not consequences of
\textup{(G1)}--\textup{(G3)} and not general existence theorems.  In
particular, \textup{(G5)} includes the cyclicity, adjointness and exact
matching of the transferred degree-one terms with prescribed signed face
arrows that are needed for the strict abelian BF interpretation.  The
conifold construction below verifies \textup{(G4)} on a concrete finite
logarithmic class and supplies a finite algebraic-dual BF shadow, but not the
full continuum datum \textup{(G5)}.  Continuum quantum Gaussian statements
additionally use \cref{ass:gaussian-determinant}.  General nonlinear and
loop-level continuum pushforwards are not claimed.

\begin{remark}[Choice dependence]\label{rem:choice-dependence}
Face orientations, smooth refinements, DPP normal sections, finite coefficient
subcomplexes, cellular bases, initial contractions and polarizations are part
of the input.  The common-interior object and its trace are independent of the
chosen resolution in the derived sense proved below.  By contrast, the
displayed chain-level retracts and finite BF actions depend on the stated
choices unless an explicit homotopy, canonical transformation, or
determinant-line comparison is supplied.  Throughout, \emph{canonical} for a
derived roof means canonical after fixing the indicated common-interior
identification; it does not erase these chain-level choices.
\end{remark}

All statements involving infinite-dimensional AKSZ mapping spaces are understood under
the same standing formal hypothesis as in the companion paper: evaluation, restriction,
contraction by the lifted source and target vector fields, and fiber integration of local
forms are defined and satisfy graded Cartan calculus and Stokes' formula.  Equivalently,
the identities may be read on a regular finite-dimensional approximation on which these
operations are defined.  The derived source-complex and cellular statements do not use
this additional formal mapping-space hypothesis.

Unless a topology is explicitly introduced, every tensor product in this
paper is algebraic.  In particular, tensoring a continuum complex with a
finite cellular complex means finite direct sums of copies of the continuum
complex.  We do not identify a full space of smooth or logarithmic forms on a
product with an algebraic tensor product of the two factor spaces.

Finally, a bibliographic convention.  The numbering of definitions,
propositions, corollaries and displayed equations quoted from
\cite{DPP} is that of the published version,
\emph{J. Éc. polytech. Math.} \textbf{13} (2026), which may differ from the
numbering of the preprint arXiv:2312.17720; likewise, quotations from
\cite{CMRClassical,CMRCellular} follow the published versions.

\subsection*{Main results}

The results have three logically distinct levels.  The first is
\emph{unconditional within the geometric package}: relative logarithmic
complexes and traces are compared through the common interior under
\textup{(G1)}--\textup{(G3)}.  The second is \emph{conditional}: intrinsic
multiplicative transgression and strict linear/cyclic face models require the
additional trace system and the transfer criteria \textup{(G4)}--\textup{(G5)}.
The third is \emph{constructive}: the positive real conifold verifies the
full \textup{(G4)} criterion on an explicit finite product--Whitney class,
while only a finite logarithmic--cellular BF shadow of \textup{(G5)} is
constructed.  The theorem statements below keep these levels separate.

\begin{theorem}[Resolution-level and relative logarithmic models]\label{thm:main-resolution-model}
Under \textup{(G1)}--\textup{(G3)}, admitted smooth resolutions have chain-homotopy
equivalent oriented subdivision complexes.  More generally, a facewise linear coefficient
system equipped with filtered refinement-transfer data and locally acyclic reduced
refinement fibres has quasi-isomorphic total descent complexes on any two admitted
resolutions.  For abelian BF coefficients, elementary radial blow-ups have
resolution-independent relative totalizations through the common-interior
roof; a filtered comparison still requires explicit refinement-transfer data.
For a finite-dimensional unimodular Lie algebra and chosen blow-up-adapted
finite ball-complex models, the Cattaneo--Mnev--Reshetikhin cellular nonabelian BF
actions are likewise invariant in residual BV
cohomology under the admitted elementary resolution moves.
\end{theorem}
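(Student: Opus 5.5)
The proof reduces every comparison to elementary stellar moves and then assembles them along a common refinement. Given two admitted smooth resolutions $\beta_{\cR}$ and $\beta_{\cR'}$, (G3) supplies a smooth common refinement $X_{\cR''}$ dominating both. On compact subsets its comparison maps factor into finite stellar towers, and by (G1) the monoidal complex is finite, so finitely many such towers cover $X$. It therefore suffices to treat a single elementary stellar subdivision (radial blow-up of a face of a smooth monoidal complex), prove a chain-level comparison for it, and compose along the zig-zag $X_\cR\leftarrow X_{\cR''}\to X_{\cR'}$. Compatibility across the compact pieces comes from incidence locality: each move changes only the open star of its centre.

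\emph{Step 1 (subdivision complexes).} For one stellar subdivision $\Sigma'\to\Sigma$ centred at a cone $\tau$, I would write down the standard subdivision chain map on oriented cellular chains, carrying a cone to the signed sum of its subcones, together with the aggregation map. The orientations and signs $[F:G]$ come from the face orientation in (G1). The composite aggregation $\circ$ subdivision is the identity. The other composite is homotopic to the identity by a cone-type homotopy supported in the star of $\tau$. Its existence is equivalent to acyclicity of the reduced fibre, which is the join of the boundary of $\tau$ with the link, and this join is a ball. This gives the chain-homotopy equivalence of oriented subdivision complexes.

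\emph{Step 2 (general coefficient systems).} For a facewise linear coefficient system with filtered refinement-transfer data, I would filter both total descent complexes by face dimension (the incidence filtration) and compare via the transfer map. On associated gradeds, the cone of the transfer map splits into a direct sum over old faces of the coefficient tensored with the reduced refinement fibre. This sum is acyclic by hypothesis, and since the filtration is finite by (G1) a spectral-sequence argument gives a quasi-isomorphism. For abelian BF coefficients on radial blow-ups, I would instead restrict both relative totalizations to the common interior $X^\circ$, where the blow-up is a diffeomorphism, and use the roof $j_!\Omega^\bullet_{X^\circ}$. The relative complexes are quasi-isomorphic to it by the collar and interval contractions for codimension-two collars established later in the paper. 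This comparison is unfiltered, consistent with the statement's caveat.

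\emph{Step 3 (nonabelian cellular BF).} For unimodular $\mathfrak g$ and chosen ball-complex models adapted to each blow-up, an elementary move is a cellular subdivision of a ball by a ball. I would invoke the Cattaneo--Mnev--Reshetikhin result~\cite{CMRCellular} that cellular BF actions on a subdivision and on the original complex are related by BV pushforward (fibre integral over a Lagrangian in the extra cells) up to canonical transformation. Unimodularity kills the $\trg\ad$ terms, and this gives invariance in residual BV cohomology; composing along the tower gives the result.

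I expect the main obstacle to be Step 2, specifically checking that the transfer data are genuinely compatible with the incidence filtration, so that the spectral-sequence comparison applies. A closely related difficulty is that the homotopies for different centres in a tower must glue. I would first try to handle this by performing the moves one at a time in an order of decreasing centre dimension, so that each homotopy is local to a star untouched by later steps.
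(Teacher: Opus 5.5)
Your four-part outline matches the paper's proof, which assembles \cref{cor:comb-invariance}, \cref{thm:derived-comparison} with \cref{cor:common-derived}, \cref{thm:bf-facewise} with \cref{cor:bf-compositions}, and \cref{thm:nonabelian-cellular}. But Step~3 rests on a false description.

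An elementary radial blow-up is not a cellular subdivision. The blow-down $F\times[0,\varepsilon)\times\Delta^{k-1}\to F\times Q_\varepsilon$ collapses the exceptional bundle $F\times\{0\}\times\Delta^{k-1}$ onto $F$. So no ball-complex model of the blown-up space subdivides one of the original along it, and CMR subdivision invariance does not apply as you invoke it. To use it you would first need a homeomorphism rel the outer collar, homotopic to the blow-down, and you do not supply one.

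The missing idea is that the blow-down is a \emph{simple-homotopy equivalence} relative to the outer collar:
\begin{itemize}
\item the homotopy equivalence of \cref{prop:radial-homotopy}, cellularly approximated rel that collar, has contractible normal source and target;
\item hence its Whitehead torsion lies in $\operatorname{Wh}(1)=0$;
\item the product and gluing formulas then globalize this.
\end{itemize}
One then factors into elementary expansions and collapses, and uses CMR's elementary-collapse lemma together with simple-homotopy invariance of the pushforward to residual fields. These same maps identify the residual spaces $H^\bullet(X_{\cR_i};\mathfrak g)[1]\oplus H_\bullet(X_{\cR_i};\mathfrak g^*)[-2]$ on the two arms, which your sketch never addresses.

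The BF step and Step~2 also cite the wrong mechanism.

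\emph{BF step.} Each arm of the roof is a quasi-isomorphism by the collar/excision (smooth relative de Rham) argument. For any compact ordinary-corner manifold $N$, extension by zero $\Omega_c^\bullet(N^\circ)\to\Tot(N;\cA_N)$ is a quasi-isomorphism, independently of the codimension of the blown-up face. The codimension-two collar contraction of \cref{thm:codim2-continuum-propagator} does something else: it retracts absolute forms on a local collar onto $\Omega^\bullet(\Gamma)\otimes C^\bullet(I)$. It says nothing about relative totalizations, and it would miss blow-ups of codimension $\geq 3$. Note also that the roof maps go \emph{into} the totalizations by extension by zero, not by restriction.

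\emph{Step~2.} You must filter by the codimension of the \emph{coarse support} $u_*(F')$, not by fine face dimension. Only that filtration keeps the intra-fibre incidence differential, so that the graded piece over $F$ is $\widetilde\Tot(\mathsf{Fib}_u(F);\cA_{\cR'})$. With the fine-dimension filtration, the graded map $\bigoplus_F\cA(F)\to\bigoplus_{F'}\cA(F')$ is not a quasi-isomorphism even for constant coefficients. The compatibility you call the main obstacle is clause~(b) of \cref{def:refinement-descent-datum}, i.e.\ part of the hypothesis. Moreover, \cref{thm:derived-comparison} applies directly to $u:\cR_{12}\to\cR_i$. No stellar tower or gluing of homotopies is needed, and the hypotheses say nothing about intermediate stellar stages anyway.

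\emph{Step~1 (minor).} For interior $\tau$ the join $\partial\tau*\operatorname{lk}\tau$ is a sphere, not a ball. The acyclic carrier is its cone, the closed star of the new vertex. The paper's acyclic-carrier proof of \cref{thm:subdivision-equivalence} avoids this, and needs no stellar factorization.
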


\begin{proof}
The oriented subdivision statement is
\cref{cor:comb-invariance}.  The filtered linear comparison and its passage
through a common refinement are
\cref{thm:derived-comparison,cor:common-derived}.  The relative abelian BF
comparison is \cref{thm:bf-facewise,cor:bf-compositions}, and the cellular
nonabelian statement is \cref{thm:nonabelian-cellular}.
\end{proof}

\begin{theorem}[Intrinsic trace formalism and conditional transfer]\label{thm:main}
Let $(\Y,\omega_\Y=\delta\alpha_\Y,\Theta)$ be an exact $QP$-target of degree
$\dim X-1$.
\begin{enumerate}[label=\textup{(\roman*)},leftmargin=2.5em]
\item Assume, in addition to the geometric hypotheses, that
$\mathsf{Face}(X)$ carries a multiplicative regularized Stokes trace system in
the sense of \cref{def:trace-system}.  Then the intrinsic $b$-source carries
the presymplectic AKSZ--BV--BFV identity on every face, with higher descent
encoded by the incidence differential.  If the transgressed two-forms are
weakly nondegenerate on the chosen field class, the usual Hamiltonian
BV--BFV structures follow.  No general existence theorem for this
multiplicative trace system is asserted.
\item Under \textup{(G1)}--\textup{(G3)}, the DPP total complexes of smooth
resolutions represent the common relative de Rham object
$j_!\Omega_{X^\circ}^\bullet$ and its compactly supported trace.  This is the
unconditional resolved comparison in the paper.  It globalizes to a derived
sheaf satisfying \v{C}ech descent but deliberately forgets the face
filtration and does not identify individual face components or descendants.
\item If the linear transfer criterion \textup{(G4)} is supplied, basic
logarithmic trace/descent coefficients in its specified class strictify to an
intrinsic incidence complex.  If, separately, the cyclic field-level transfer
criterion \textup{(G5)} is supplied---including the required adjointness,
pathwise face data and exact matching of transferred degree-one terms with the
prescribed signed face arrows---the paired $A$-cochain/$B$-chain systems carry
a strict classical intrinsic abelian BF model with incidence-level
BV--BFV/Stokes descendants and a bivariant maximally extended face system.
These are transfer theorems conditional on the stated data; no general
existence of \textup{(G4)} or \textup{(G5)} is claimed, and no identification
of the two retracts is implied without an extra comparison datum.
\end{enumerate}
\end{theorem}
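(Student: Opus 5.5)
The proof splits along the three clauses. Each clause is assembled from the corresponding body results, in the same way the proof of \cref{thm:main-resolution-model} reduces to cross-references. So the plan is to isolate, for each clause, the one genuinely new argument and point to where it must be supplied.

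For \textup{(i)} I would run the standard AKSZ transgression on the intrinsic source $\bT[1]X$, face by face. Take the transgressed one-form $\alpha=\mathrm{T}(\alpha_\Y)$, the two-form $\omega=\delta\alpha$ and the action $S=\iota_{\hat d_b}\alpha+\mathrm{T}(\Theta)$, where $\hat d_b$ is the lift of the $b$-de Rham differential and $\mathrm{T}$ is evaluation followed by regularized fibre integration. Joyce's $b$-tangent Lie algebroid makes $d_b$ a homological vector field on the source, and the target $QP$-structure gives $\{\Theta,\Theta\}=0$ and $\delta\Theta=\iota_Q\omega_\Y$. The only non-formal input is the regularized Stokes formula $\int_F d_b\eta=\sum_{G\prec F}[F:G]\int_G \operatorname{res}_G\eta$. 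This formula, together with multiplicativity (so that $\mathrm{T}$ intertwines $\delta$ and the Cartan calculus under the standing formal hypothesis), is exactly what \cref{def:trace-system} provides.

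Applying it to $\iota_{\hat d_b}\iota_{\hat d_b}$ and to $\delta S$ gives $\iota_Q\omega_F=\delta S_F+\pi^*\alpha_{\partial F}$ on each face $F$, with the boundary term organized by the incidence differential. The identity $\sum[F:G][G:H]=0$ from the orientations in \textup{(G1)} closes the descent, so $\partial^2=0$ matches $Q^2=0$. Weak nondegeneracy then permits the usual reduction to Hamiltonian BV--BFV data.

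For \textup{(ii)} the plan has three steps. First, identify each DPP relative logarithmic complex on $X_\cR$ with $j_!\Omega^\bullet_{X^\circ}$ through restriction to the interior. Second, use \textup{(G3)}: the comparison maps factor through finite stellar towers, and by the interval-contraction lemmas each elementary blow-up is a quasi-isomorphism on the total relative complex. Third, check that the compactly supported trace is the interior integral and is therefore preserved. \v{C}ech descent then follows from locality of $j_!$ on a finite cover. I expect the main obstacle to sit in this clause, in the codimension-two collars. There the regularizations on the two resolutions are chosen independently, so one cannot compare faces and can only compare totalizations. The argument must therefore be phrased entirely through the unfiltered cone, and I would first test it on the conifold star refinement.

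Clause \textup{(iii)} is homological perturbation. The \textup{(G4)} contraction, which is finite, centre-compatible and incidence-local, transfers the total face differential to the intrinsic incidence complex. Incidence-locality guarantees that the transferred differential has only face-arrow components. For BF, apply the same transfer to the paired $A$- and $B$-systems using the \textup{(G5)} data. Cyclicity and adjointness make the transferred pairing and action compatible, so the classical master equation and the Stokes descendants descend. The exact matching of degree-one terms with the signed arrows identifies the result as the strict intrinsic model.
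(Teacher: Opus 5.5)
Your treatment of \textup{(i)} and \textup{(iii)} follows the paper's route: \cref{thm:bvbfv,cor:higher}, and the perturbation lemma applied separately to the \textup{(G4)} retract and to the two \textup{(G5)} retracts, with the finite resolvents adjoint term by term. Two small corrections there. First, the identity carries the sign $(-1)^{\dim F}$. Second, multiplicativity is needed so that $\Reg_{G/F}\circ\Phi^*$ is again a field on $G$; that is what makes the boundary term the pullback $(\rho^{\cT}_{G/F})^*\alpha_G^b$.

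The genuine gap is in \textup{(ii)}. Your Step 2 claims that each elementary stellar blow-up is a quasi-isomorphism of total relative complexes, by the interval-contraction lemmas. Under \textup{(G1)}--\textup{(G3)} there is no chain map $\mathbb A_{\cR,s_{\cR}}^\bullet\to\mathbb A_{\cR',s_{\cR'}}^\bullet$ that could be a quasi-isomorphism. Pullback along the blow-down does not preserve face degree: the exceptional hypersurface lies over a coarse face of codimension $c\geq2$, so the face-degree-one restriction of $\gamma^*\omega_0$ to it has no counterpart on the coarse side. Moreover, the two DPP regularizations are chosen independently, so their virtual face maps are unrelated. A degree-mixing replacement would be exactly the filtered transfer $U_\gamma$ of \cref{def:refinement-descent-datum}, or the \textup{(G4)} datum, and neither is available here. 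The interval retracts produce such data only in special collar and product models. So your ``unfiltered cone'' has no map to be the cone of, and the codimension-two difficulty you flag as the main obstacle belongs to \textup{(G4)}, not to \textup{(ii)}.

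The missing idea is the interior bridge, which makes Step 2 unnecessary. In Step 1 the map must go the other way from what you wrote. Restriction to the interior lands in $j_*\Omega_{X^\circ}^\bullet$, which computes absolute rather than relative cohomology. Instead, use extension by zero into face degree zero, $\epsilon_{\cR}:j_!\Omega_{X^\circ}^\bullet\to\mathscr A_{\cR,s_{\cR}}^\bullet$. It is a chain map because forms vanishing near the boundary have zero regularized restrictions. It is a quasi-isomorphism, checked stalkwise:
\begin{itemize}
\item At interior points it is the identity of de Rham complexes.
\item At a boundary point the stalk of $j_!$ vanishes. The stalk cohomology of $\mathscr A$ also vanishes: the DPP relative logarithmic de Rham theorem identifies $H^\bullet(\mathscr A(U))$ with $H^\bullet(W_U,\partial W_U)$. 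The quotient homeomorphism $W_U/A_U\cong U/B_U$ of \cref{lem:boundary-collapse-blowdown} turns this into $H^\bullet(U,B_U)$. Finally, the weighted monoidal scaling $z(p)\mapsto t^{\lambda(p)}z(p)$ retracts both $U$ and $B_U$ onto the tangential slice at the vertex, so $H^\bullet(U,B_U)=0$.
\end{itemize}
Every resolution therefore receives a quasi-isomorphism from the same object, which gives the roof without any map between resolutions. The trace composed with $\epsilon$ is ordinary integration over $X^\circ$ (\cref{lem:interior-bridge}), so it is independent of both resolution and regularization. \v{C}ech descent follows from fineness of the resolved representative. None of \textup{(G3)}, stellar towers, or the interval lemmas enters.
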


\begin{proof}
Part \textup{(i)} is
\cref{thm:bvbfv,cor:higher}.  Part \textup{(ii)} is proved by the
common-interior roof \cref{thm:refinement-invariance}, its sheaf version
\cref{prop:sheaf-refinement,thm:sheaf-descent}, and the global trace
\cref{thm:global-trace}.  Part \textup{(iii)} is
\cref{thm:global-linear-strict,thm:cyclic-hpl-bf,cor:global-strict-bf}.
\end{proof}

\begin{theorem}[Resolved--intrinsic comparison: unconditional and conditional layers]\label{thm:C}
The comparison in this theorem is between the resolved $b$/logarithmic source
and the intrinsic $b$-source; it is not an equivalence with the ordinary AKSZ
source on the resolution.
For a smooth monoidal resolution $\beta:X_\cR\to X$, the $b$-derivative is a vector-bundle
isomorphism, so pullback identifies the intrinsic $b$-source with the basic bulk sector of
the resolved $b$-source.  On the complete face system the canonical comparison under
\textup{(G1)}--\textup{(G3)} is the unfiltered derived comparison through the common
interior; it is not a bijection of faces.  Under \textup{(G4)} the subdivision/aggregation
contraction gives a strict linear logarithmic incidence comparison after
totalization.  Under the separate
field-level datum \textup{(G5)}, the corresponding paired $A$-cochain and
BF-dual $B$-chain retracts are compatible with the classical abelian BF
pairing and give an incidence-level BV--BFV/Stokes family, contravariant in
$A$ and covariant in $B$ along all face inclusions.
\end{theorem}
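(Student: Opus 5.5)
The plan is to prove \cref{thm:C} as a synthesis theorem in four layers, following the pattern of the proofs of \cref{thm:main-resolution-model,thm:main}. Each layer reduces to a result proved later in the body.

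\emph{First layer: the anchor isomorphism.} For a smooth monoidal resolution $\beta:X_\cR\to X$, I would check that the $b$-derivative ${}^b\dd\beta:\bT X_\cR\to\beta^*\bT X$ is a vector-bundle isomorphism. This can be done in local monoidal charts $X_Q\to X_P$ coming from a refinement of cones. The $b$-tangent fibre of $X_P$ is $\Hom(P^{\gp},\R)$, and a subdivision does not change the group completion, so $Q^{\gp}=P^{\gp}$. The induced map on logarithmic frames $\dd\log x^p$ is therefore the identity lattice map, hence invertible, and the chart computations glue.

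Dualizing and taking exterior powers, pullback then gives an injective dg map $\beta^*:\bOmega^\bullet(X)\to\bOmega^\bullet(X_\cR)$, which commutes with $\db$ because it is the pullback along a Lie algebroid isomorphism. Its image is exactly the basic bulk sector, namely the forms whose coefficients descend along $\beta$. Hence $\bT[1]X_\cR\cong\beta^*\bT[1]X$ as dg-manifolds over $X_\cR$. I would also note explicitly that the ordinary $T[1]X_\cR$ enters only through the anchor, which is not invertible on the boundary. This explains the disclaimer that the theorem gives no equivalence with ordinary AKSZ.

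\emph{Second layer: the unfiltered comparison.} Here I would invoke \cref{thm:refinement-invariance} together with \cref{prop:sheaf-refinement,thm:sheaf-descent,thm:global-trace}. Under \textup{(G1)}--\textup{(G3)}, these show that the DPP total relative complexes of any two admitted resolutions both represent $j_!\Omega^\bullet_{X^\circ}$, and that the two compactly supported traces agree. I would then observe that this roof passes only through $X^\circ$, where $\beta$ is the identity. Resolved faces, such as the exceptional divisors of the conifold, have no counterpart among the intrinsic faces, so no bijection of faces is produced. This is why the statement is phrased as "not a bijection of faces".

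\emph{Third and fourth layers: the conditional comparisons.} Assuming \textup{(G4)}, I would apply \cref{thm:global-linear-strict}. The subdivision/aggregation contraction is incidence-local and centre-compatible. Homological perturbation transfers the resolved totalized differential to a differential on the intrinsic face complex, and that transferred differential equals the signed incidence differential built from $[F:G]$. This is the strict linear comparison.

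Assuming \textup{(G5)} separately, I would use \cref{thm:cyclic-hpl-bf} and \cref{cor:global-strict-bf}. The cyclic deformation retracts on the $A$-cochains and $B$-chains are mutually adjoint for the BF pairing, so the transferred pairing is preserved. Since the degree-one terms match the face arrows exactly, the classical master equation and the BV--BFV/Stokes identities descend face by face. Functoriality comes out as contravariant in $A$, through restriction, and covariant in $B$, through the pushforward of chains.

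\emph{Main obstacle.} I expect the hardest point to be the exact matching of transferred degree-one terms with signed arrows: homological perturbation can introduce higher corrections along composite paths of face inclusions. My first attempt would be to show, using incidence-locality from \textup{(G4)}/\textup{(G5)}, that every side-condition composite $h\delta h$ vanishes across non-adjacent faces. Then only length-one paths contribute, and the signs are fixed by the face orientations of \textup{(G1)}. Finally, I would stress that no comparison between the \textup{(G4)} and \textup{(G5)} retracts is asserted.
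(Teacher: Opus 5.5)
Your four-layer synthesis has the same architecture as the paper's proof. That proof simply cites \cref{prop:betale}, then \cref{thm:refinement-invariance,prop:tracecompat}, then \cref{thm:basic-comparison,thm:global-linear-strict,thm:cyclic-hpl-bf}, and your first two layers are fine. The gap is in how you treat the face-degree-one terms.

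In the \textup{(G4)} layer you assert that the transferred differential \emph{equals} the signed incidence differential built from $[F:G]$. Hypothesis \textup{(G4)} does not give this. The target $\mathscr B_{X,\cV}^\bullet$ carries no prescribed intrinsic face arrows before strictification. Clause \textup{(e)} only yields that $\delta_{\mathrm{str}}=\sum_r(-1)^r p_0\delta(h_0\delta)^r i_0$ has a component $F\to G$ only when $G\prec F$. Together with $D_{\mathrm{str}}^2=0$ this makes $\delta_{\mathrm{str}}$ an incidence operator, which is all the statement (a ``strict linear logarithmic incidence comparison'') requires. An identification with $[F:G]\,r_{G/F}$ is proved only for the conifold class in \cref{thm:conifold-g4}, where $\delta h_0=0$ makes the series collapse.

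Your ``main obstacle'' then tries to derive arrow matching from incidence locality, and that route fails. Every term $p_0\delta(h_0\delta)^r i_0$ already has face degree exactly $+1$, so there are no corrections along longer composite paths for locality to kill. The $r\geq1$ terms land on the same adjacent pairs $G\prec F$ after passing through the reduced sector $\operatorname{im}h_0\subseteq\ker p_0$. Nothing forces them to vanish, and nothing forces the $r=0$ term to equal $[F:G]\,r^A_{G/F}$.

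This is exactly why \textup{(G5)} imposes the matching identities \eqref{eq:g5-arrow-matching-A}--\eqref{eq:g5-arrow-matching-B} and the flag identities as hypotheses. The paper stresses that these are strictly stronger than incidence locality.

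The correct step is to take them as input to \cref{thm:cyclic-hpl-bf}:
\begin{itemize}
\item term-by-term adjointness of the finite resolvents gives compatibility with the BF pairing;
\item the matching clauses identify the degree-one components with the signed restrictions and corestrictions;
\item the flag clauses give contravariance in $A$ and covariance in $B$ along all face inclusions.
\end{itemize}
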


\begin{proof}
The $b$-source identification is \cref{prop:betale}.  The unfiltered
facewise comparison and its trace qualification are
\cref{thm:refinement-invariance,prop:tracecompat}.  The strict linear and
cyclic BF comparisons are collected in
\cref{thm:basic-comparison,thm:global-linear-strict,thm:cyclic-hpl-bf}.
\end{proof}

The principal unconditional statement is therefore the relative logarithmic
comparison through the common interior; by construction it forgets the face
filtration.  The genuinely stratified content begins with the separate
transfer criterion \textup{(G4)}.  When that datum is available, the actual
full-differential reduced sector is $\ker p_\infty$ and is contracted by
$h_\infty$; it is not identified a priori with the span of the geometrically
exceptional faces.
The conifold makes this visible: Joyce's intrinsic corner spaces and iterated boundaries
differ, while the two smooth diagonal resolutions become comparable after a common star
refinement.  Besides the local codimension-two calculation, the product--Whitney
class below provides on the common star resolution a finite global linear
realization and an algebraic cyclic shadow of the mechanism.  Nonlinear
strictification, a type-correct conifold
\textup{(G5)} realization, and loop-level logarithmic graph
integrals are left as explicit open problems in \cref{sec:open-problems}.

\part*{I.\ Geometry of generalized corners}\addcontentsline{toc}{part}{I.\ Geometry of generalized corners}

\section{Toric monoids, Joyce generalized corners, and the
\texorpdfstring{$b$}{b}-Lie algebroid}

\subsection{Weakly toric monoids}\label{subsec:weakly-toric}

We use additive notation for monoids.  A commutative monoid $P$ is called
\emph{weakly toric} in Joyce's terminology if it is finitely generated,
integral, saturated, and torsion free.  Kottke calls the same objects toric
monoids.  For the general theory of such monoids, and for the notions of
sharpening and face used below, we refer to Ogus~\cite[Ch.~I]{Ogus}.  Its group completion $P^{\gp}$ is a lattice, and we write
\[
 V_P=P^{\gp}\otimes_\Z\R.
\]

The subgroup of units is denoted $P^\times$, and the sharpening is
\[
  P^\sharp=P/P^\times.
\]
The monoid is \emph{sharp} if $P^\times=\{0\}$.  It is \emph{smooth} if
$P^\sharp$ is freely generated, equivalently
\[
  P\cong\N^k\times\Z^\ell
\]
for some $k,\ell$.  A sharp smooth monoid is therefore $\N^k$.

Every sharp weakly toric monoid may be represented as
\[
  P=\sigma^\vee\cap\Lambda^*
\]
for a full-dimensional strictly convex rational polyhedral cone $\sigma$ in
$\Lambda_{\R}=\Lambda\otimes\R$, where $\Lambda^*=\Hom(\Lambda,\Z)$.  Faces
of $P$ correspond contravariantly to faces of $\sigma$.  Full-dimensionality of
$\sigma$ is what makes $P$ sharp, and strict convexity is what gives
$P^{\gp}=\Lambda^*$.

\begin{definition}
A submonoid $S\subseteq P$ is a \emph{face} if its complement is a prime
ideal.  We write $S\leq P$.  The face poset is denoted $\mathsf{Face}(P)$.
\end{definition}

\begin{remark}
For $P=\N^k$, the face poset is Boolean.  For a non-simplicial cone, the
number of codimension-one faces meeting the vertex may exceed $k$.  This is
the first combinatorial obstruction to transporting the ordered-cube
formalism literally from ordinary corners.
\end{remark}

\subsection{The model space \texorpdfstring{$X_P$}{X P}}

Regard $\Rge$ as a monoid under multiplication.  Joyce defines
\[
  X_P=\Hom_{\mathrm{Mon}}(P,\Rge),
\]
a space of dimension $\operatorname{rank}P^{\gp}$.  For $p\in P$, evaluation gives
a smooth non-negative function
\[
  \lambda_p:X_P\longrightarrow\Rge,
  \qquad \lambda_p(x)=x(p),
\]
and these are multiplicative,
\[
  \lambda_{p+q}=\lambda_p\lambda_q.
\]
The smooth structure is generated by these monomial functions together with
ordinary smooth functions of finitely many generators.  A face $F\leq P$
determines an inclusion $X_F\hookrightarrow X_P$ by extending a homomorphism
on $F$ by zero on $P\setminus F$.  If
\[
  P=\N^k\times\Z^{n-k},
\]
then
\[
  X_P\cong [0,\infty)^k\times\mathbb R^{n-k}.
\]
Thus ordinary corners are precisely the smooth local models.

A manifold with generalized corners is a Hausdorff, second-countable space
with an atlas of open subsets of the $X_P$ and Joyce-smooth transition maps.
The ordinary-corner theory on which this generalization is modelled is
developed in \cite{JoyceCorners}.
We impose the additional hypothesis, used by Kottke, that boundary faces are
embedded.

\begin{definition}
A \emph{resolved-admissible generalized-corner manifold} is a compact
oriented Joyce manifold with generalized corners whose boundary faces are
embedded, whose associated monoidal complex is finite, and whose intrinsic
faces carry the face-orientation datum used in \textup{(G1)}.
\end{definition}

The finiteness assumption is convenient for a compact source and for finite
totalizations.  It may later be replaced by local finiteness and support
conditions.

\subsection{The \texorpdfstring{$b$}{b}-tangent bundle}

The ordinary tangent bundle of a $g$-corner may fail to be a rank-$n$ vector
bundle.  The $b$-tangent bundle is well behaved, and it is what makes the
intrinsic route possible at all.  On a manifold with ordinary corners it is
Melrose's $b$-tangent bundle, the bundle whose sections are vector fields
tangent to all boundary faces \cite{Melrose}; Joyce's construction extends it
to arbitrary $g$-corners.

\begin{proposition}[Joyce; Argüz--Joyce~\cite{JoyceGCorner,ArguzJoyce}]\label{prop:btangent}
On $X_P$ there are canonical identifications
\[
 \bT X_P\cong X_P\times V_P^*,
 \qquad
 \bT^*X_P\cong X_P\times V_P.
\]
If $\alpha\in V_P^*=\Hom(P^{\gp},\R)$ and $v_\alpha$ is the corresponding
constant $b$-vector field, then
\[
 v_\alpha(\lambda_p)=\alpha(p)\lambda_p.
\]
The space of $b$-vector fields has a Lie bracket and an anchor to
derivations of $C^\infty(X_P)$, making $\bT X_P$ a Lie algebroid.
\end{proposition}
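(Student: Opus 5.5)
The plan is to build the trivialization directly from the monomial functions $\lambda_p$ and then read the Lie algebroid structure off the resulting formula. Concretely, I will define a $b$-vector field as a derivation of $C^\infty(X_P)$ that preserves, for every $p\in P$, the ideal generated by $\lambda_p$. In Joyce's language these are the vector fields tangent to every face $X_F$ with $F\le P$. The first step is to construct, for each $\alpha\in V_P^*=\Hom(P^{\gp},\R)$, the constant field $v_\alpha$. It is the infinitesimal generator of the action of $\Hom(P^{\gp},\R_{>0})\cong V_P^*$ on $X_P$ by pointwise multiplication, $(t\cdot x)(p)=t(p)x(p)$. Since this action preserves each face and is smooth in Joyce's sense, differentiating the curve $s\mapsto e^{s\alpha}\cdot x$ gives
\[
v_\alpha(\lambda_p)=\tfrac{\dd}{\dd s}\big|_{s=0}e^{s\alpha(p)}\lambda_p=\alpha(p)\lambda_p .
\]
This is the displayed identity. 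The map $\alpha\mapsto v_\alpha$ is linear, and the fields commute because the group is abelian.

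The second step is to show that the map $X_P\times V_P^*\to\bT X_P$ is an isomorphism, i.e.\ that every $b$-vector field $v$ can be written uniquely as $\sum_i f_i v_{\alpha_i}$ with $f_i$ smooth.

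\begin{itemize}
\item \emph{Choice of coordinates.} Choose generators $p_1,\dots,p_N$ of $P$, so that $X_P$ embeds into $\Rge^N$ via the $\lambda_{p_j}$.
\item \emph{Extracting coefficients.} For a $b$-vector field $v$, the tangency condition gives $v(\lambda_{p_j})=g_j\lambda_{p_j}$ with $g_j$ smooth. This uses a division lemma: a smooth function vanishing on the face $\{\lambda_p=0\}$ is divisible by $\lambda_p$. On $X_P$ this holds by saturation of $P$, which reduces the question to the standard Hadamard argument in monomial coordinates.
\item \emph{Compatibility with relations.} The assignment $p\mapsto g_p(x)$, defined on the generators, respects the relations among the $p_j$ in $P^{\gp}$. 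This follows from $\lambda_{p+q}=\lambda_p\lambda_q$ together with the Leibniz rule, first on the dense interior and then everywhere by continuity. It therefore defines a smooth map $x\mapsto\alpha(x)\in V_P^*$.
\item \emph{Identification.} Set $v=v_{\alpha(x)}$. The two derivations agree on all $\lambda_p$, hence on the generating algebra, and so agree everywhere.
\end{itemize}

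Injectivity is checked on the interior. There the fields $v_\alpha$ are the coordinate fields $\partial_{\log\lambda}$ of the torus $\Hom(P^{\gp},\R_{>0})$, so they are pointwise linearly independent. Dualizing, the logarithmic forms $\dd\lambda_p/\lambda_p$ pair with $v_\alpha$ to give $\alpha(p)$, which identifies $\bT^*X_P\cong X_P\times V_P$. Here $p\otimes 1\in V_P$ corresponds to $\dd\log\lambda_p$.

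The final step is the Lie algebroid structure. The bracket of two $b$-vector fields preserves the ideals $(\lambda_p)$, and the anchor is the inclusion into derivations. Jacobi and Leibniz are inherited from $\operatorname{Der}C^\infty(X_P)$. In the frame one has $[f v_\alpha,g v_\beta]=f v_\alpha(g)v_\beta-g v_\beta(f)v_\alpha$.

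I expect the main obstacle to be the division/extension lemma in the second step. It requires that smooth functions on the non-simplicial model $X_P$ vanishing on a face factor through the corresponding monomial, and that the local data $g_p$ extend smoothly across the corners. For this I would rely on Joyce's characterization of smooth functions on $X_P$ as restrictions of smooth functions of the generators. I would also use saturation to choose a simplicial subdivision on which the ordinary-corner Hadamard lemma applies. Uniqueness is then automatic from density of the interior.
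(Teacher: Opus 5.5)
Your argument is correct, but it takes a different route from the paper. The paper does not derive the trivialization. It takes $\bT X_P\cong X_P\times V_P^*$ and $v_\alpha(\lambda_p)=\alpha(p)\lambda_p$ as part of Joyce's construction, and checks only the algebroid structure: bracket as commutator of derivations, $[v_\alpha,v_\beta]=0$ on constant sections, Leibniz on general sections, and invariance under $g$-chart changes.

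You instead start from an intrinsic class of derivations of $C^\infty(X_P)$, namely those preserving every ideal $(\lambda_p)$, and prove it is a free module on the $v_\alpha$. The torus $\Hom(P^{\gp},\R_{>0})$ produces the $v_\alpha$. Multiplicativity $\lambda_{p+q}=\lambda_p\lambda_q$ on the dense interior forces the coefficients $g_j$ to define an element of $\Hom(P^{\gp},\R)$. Independence is read off on the interior torus.

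Your route makes the trivialization a result rather than a definition. Canonicity and the Lie algebroid axioms become automatic, since commutators of ideal-preserving derivations preserve the ideals. Gluing is likewise automatic, granted that chart changes carry local monomials to units times monomials. The cost is that you must match your definition with Joyce's, which the paper never has to do.

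One step needs writing out. The claim ``agree on the $\lambda_{p_j}$, hence everywhere'' is not algebraic generation, because the $\lambda_{p_j}$ do not generate $C^\infty(X_P)$ as an algebra. It follows from locality of derivations and Hadamard's lemma applied to a local ambient extension $F$ of $f$, which gives $(Df)(x)=\sum_j\partial_jF(\lambda(x))\,D(\lambda_{p_j})(x)$.

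The ``division lemma'' you single out as the main obstacle should be dropped. Under your own definition it is never used. Since $\lambda_{p_j}$ lies in its own ideal, $v(\lambda_{p_j})=g_j\lambda_{p_j}$ with $g_j$ smooth is immediate, and $g_j$ is unique because it is fixed on the interior. The lemma would only enter if you derived your class from ``tangent to every face'', and as stated it is false even for saturated $P$:
\begin{itemize}
\item For $P=\N$ and $p=2$, $\lambda_1$ vanishes on $\{\lambda_2=0\}$, but $\lambda_1/\lambda_2$ is unbounded.
\item It also fails for a minimal generator. Take $P=\{(i,j)\in\N^2: i\equiv j \bmod 2\}$, generated by $u=(2,0)$, $w=(1,1)$, $z=(0,2)$, and parametrize $X_P$ by $\lambda_u=s^2$, $\lambda_w=st$, $\lambda_z=t^2$ with $s,t\ge0$. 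Then $\lambda_w$ vanishes on $\{\lambda_u=0\}=\{s=0\}$, while $\lambda_w/\lambda_u=t/s$ is unbounded near $s=0$, $t=1$.
\end{itemize}
So saturation does not reduce face-tangency to a generator-by-generator Hadamard argument. Any comparison with face-tangent derivations needs a separate argument, and the proposition does not require one.
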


\begin{proof}
The local trivialization and the action on monomial functions are part of
the construction of $\bT X_P$.  The bracket is characterized by the
commutator of derivations.  On constant sections one has
$[v_\alpha,v_\beta]=0$, and the Leibniz rule determines the bracket on
arbitrary local sections.  These local structures are invariant under
changes of $g$-chart and therefore glue.
\end{proof}

\subsection{The differential graded source}

For a Lie algebroid $A\to X$, the degree-shifted space $A[1]$ is encoded by
its Chevalley--Eilenberg algebra; the identification of Lie algebroid
structures on $A$ with degree-one homological vector fields on $A[1]$ is due
to Vaintrob~\cite{Vaintrob}.  This remains meaningful when the base is a
$g$-corner.

\begin{definition}\label{def:bsource}
The differential graded locally ringed space $\bT[1]X$ is defined by
\[
 C^\infty(\bT[1]X)=\Gamma(X,\wedge^\bullet\bT^*X)=\bOmega^\bullet(X),
\]
with homological vector field $\db$ equal to the Chevalley--Eilenberg
differential of the Lie algebroid $\bT X$.
\end{definition}

\begin{proposition}\label{prop:dbsquare}
The operator $\db$ has degree $+1$, is a graded derivation, and satisfies
$\db^2=0$.  On a local monomial function,
\[
 \db\lambda_p=\lambda_p\vartheta_p,
\]
where $\vartheta_p\in\Gamma(\bT^*X_P)$ is the constant $b$-one-form
corresponding to $p\in V_P$.
\end{proposition}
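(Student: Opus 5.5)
The plan is to reduce everything to the general theory of Chevalley--Eilenberg differentials of a Lie algebroid, and then compute in the local trivialization of \cref{prop:btangent}.

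\emph{Degree, derivation property and $\db^2=0$.} Recall that for a Lie algebroid $(A,[\cdot,\cdot],\rho)$ the Chevalley--Eilenberg differential on $\Gamma(\wedge^\bullet A^*)$ is given by the Koszul formula
\[
(\db\omega)(v_0,\dots,v_k)=\sum_i(-1)^i\rho(v_i)\,\omega(\dots\widehat{v_i}\dots)+\sum_{i<j}(-1)^{i+j}\omega([v_i,v_j],\dots\widehat{v_i}\dots\widehat{v_j}\dots).
\]
By construction it raises degree by one. The graded Leibniz rule follows from two facts: $\rho$ acts by derivations of $C^\infty(X)$, and the bracket obeys the Leibniz rule. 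The identity $\db^2=0$ is equivalent to the Jacobi identity for the bracket together with the fact that $\rho$ is a bracket homomorphism. All of these hold for $\bT X$ by \cref{prop:btangent}.

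The only point specific to $g$-corners is that $\db$ is well defined as a local operator on $\Gamma(\wedge^\bullet\bT^*X)$. This is settled by the fact, quoted in \cref{prop:btangent}, that the local structures glue across $g$-charts, so the Koszul formula is chart-independent. Equivalently, by Vaintrob's correspondence, $\db$ is the homological vector field on $\bT[1]X$.

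\emph{Local formula.} On $X_P$, choose a basis $\alpha_1,\dots,\alpha_n$ of $V_P^*$ and the dual basis $p_1,\dots,p_n$ of $V_P$. The constant $b$-one-forms $\vartheta_{p_i}$ then form a frame dual to the frame $v_{\alpha_i}$. For a function $f$ we have $\db f(v)=\rho(v)f$. Applying this to $\lambda_p$ and using $v_\alpha(\lambda_p)=\alpha(p)\lambda_p$ gives
\[
\db\lambda_p(v_\alpha)=\alpha(p)\lambda_p=\lambda_p\,\vartheta_p(v_\alpha),
\]
where $\vartheta_p$ is the constant form given by pairing with $p\in P\subset V_P$. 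Since the $v_\alpha$ span $\bT X_P$ pointwise, this yields $\db\lambda_p=\lambda_p\vartheta_p$.

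As a consistency check, the formula is compatible with multiplicativity. The Leibniz rule gives $\db(\lambda_p\lambda_q)=\lambda_p\lambda_q(\vartheta_p+\vartheta_q)$, and linearity of $p\mapsto\vartheta_p$ shows this equals $\lambda_{p+q}\vartheta_{p+q}$. Moreover, constant forms are closed because $\rho(v_\alpha)$ kills constants and $[v_\alpha,v_\beta]=0$. Hence $\db\vartheta_p=0$, consistent with $\db^2\lambda_p=\db\lambda_p\wedge\vartheta_p=\lambda_p\vartheta_p\wedge\vartheta_p=0$.

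\emph{Main obstacle.} The expected difficulty is not algebraic. It is making sure that the anchor and the Koszul formula act on Joyce-smooth functions and $b$-forms as genuine local operators near non-simplicial vertices, where the ordinary tangent bundle degenerates. I would handle this by invoking the gluing clause of \cref{prop:btangent} and by checking the formula only on the monomial generators $\lambda_p$ and their smooth composites. Since $\db$ is a derivation, it is determined by its values on these generators.
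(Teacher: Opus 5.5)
Your proposal is correct and matches the paper's proof: degree, derivation property and $\db^2=0$ are the standard Chevalley--Eilenberg facts for the Lie algebroid $\bT X$, and the monomial formula follows by pairing $\db\lambda_p$ with the constant frame $v_\alpha$ and using $v_\alpha(\lambda_p)=\alpha(p)\lambda_p$. Your additional multiplicativity and closedness checks are consistent with the statement but are not needed for it.
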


\begin{proof}
The first assertion is the standard Chevalley--Eilenberg construction for a
Lie algebroid.  Pairing the second formula with $v_\alpha$ gives
\[
 \langle v_\alpha,\db\lambda_p\rangle=v_\alpha(\lambda_p)=\alpha(p)\lambda_p
 =\langle v_\alpha,\lambda_p\vartheta_p\rangle,
\]
which proves the identity.
\end{proof}

Formally, on the interior one may write
\[
 \vartheta_p=\db\log\lambda_p.
\]
Unlike $\log\lambda_p$, the form $\vartheta_p$ extends as a smooth section
of $\bT^*X_P$ across the boundary.  This is the whole point: the resolved and
intrinsic $b$/logarithmic routes
of this paper are two ways of making sense of integration against such
forms, one by pulling them back to a resolution where they become ordinary
logarithmic forms, the other by regularizing directly on $X$.

\subsection{The basic non-ordinary model}

Consider
\begin{equation}\label{eq:conifold-model}
  X_\square=
  \{(x_1,x_2,x_3,x_4)\in\Rge^4:x_1x_2=x_3x_4\}.
\end{equation}
Joyce identifies this with $X_P$ for
\begin{equation}\label{eq:conifold-monoid-presentation}
  P=\{(a,b,c)\in\N^3:c\leq a+b\}.
\end{equation}
Its normal cone is combinatorially the cone over a square.  Four boundary
hypersurfaces and four one-dimensional edges meet at the vertex.  By
contrast, an ordinary three-dimensional corner has only three boundary
hypersurfaces at a vertex.  By \cref{prop:btangent}, however, $\bT X_\square$
is still a trivial bundle, of rank three; the mismatch between four
hypersurfaces and three logarithmic directions is exactly the single
monoidal relation, and it is what \cref{sec:conifold} unfolds.

A convenient lattice realization of the cone over the square uses the rays
\begin{equation}\label{eq:square-rays}
\begin{aligned}
 v_1&=(0,0,1),& v_2&=(1,0,1),\\
 v_3&=(0,1,1),& v_4&=(1,1,1).
\end{aligned}
\end{equation}
The two diagonals of the square give two smooth triangulations:
\begin{align}
 \cR_+&=\{\Cone(v_1,v_2,v_4),\ \Cone(v_1,v_3,v_4)\},\label{eq:triang-plus}\\
 \cR_-&=\{\Cone(v_1,v_2,v_3),\ \Cone(v_2,v_3,v_4)\}.
 \label{eq:triang-minus}
\end{align}
Each maximal cone is unimodular.  The two corresponding blow-ups are
manifolds with ordinary corners, but their face decompositions differ.

\section{Monoidal complexes, refinements, and blow-up}

\subsection{The monoidal complex of a generalized-corner manifold}

For a generalized-corner manifold $X$ with embedded faces, Kottke associates a monoidal complex
\[
  \cP_X.
\]
Informally, every boundary face $F$ carries a normal monoid, and inclusions $G\subseteq F$ induce face inclusions between the corresponding monoids.  The monoidal complex records these monoids and their incidence maps.  For ordinary corners this structure, and the generalized blow-up it supports, are due to Kottke--Melrose~\cite{KottkeMelrose}; the $g$-corner case used here is \cite{KottkeGCorner}.

We use only the following structural properties.
\begin{enumerate}[label=(M\arabic*),leftmargin=2.5em]
\item $\cP_X$ is functorial for interior $b$-maps.
\item Its smooth monoids correspond to ordinary orthant models.
\item Restricting to a boundary face gives a monoidal subcomplex.
\item A refinement replaces each cone by a compatible polyhedral subdivision with the same support.
\end{enumerate}

\begin{definition}
A \emph{refinement} of a finite monoidal complex $\cP$ is a morphism
\[
  \psi:\cR\longrightarrow\cP
\]
such that, cone by cone, the monoids of $\cR$ give a finite subdivision of the corresponding rational cone without changing its support.  It is \emph{smooth} if every monoid of $\cR$ is smooth.
\end{definition}

\subsection{Kottke blow-up}

The geometric theorem that makes the resolution strategy possible is the following.

\begin{theorem}[Kottke]\label{thm:kottke}
Let $X$ be a manifold with generalized corners with embedded boundary faces, and let
\[
  \psi:\cR\longrightarrow\cP_X
\]
be a refinement.  There is a blow-up
\[
  \beta_\cR:[X;\cR]\longrightarrow X
\]
unique up to diffeomorphism, such that:
\begin{enumerate}[label=(\roman*),leftmargin=2.2em]
\item $\beta_\cR$ is a diffeomorphism on interiors;
\item $\cP_{[X;\cR]}\cong\cR$;
\item $\beta_\cR$ satisfies the universal lifting property for interior $b$-maps whose monoidal morphisms factor through $\cR$;
\item blow-up commutes with pullback by an arbitrary interior $b$-map.
\end{enumerate}
\end{theorem}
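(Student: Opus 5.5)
The proof is local-to-global: construct $[X;\cR]$ in each $g$-chart as a toric blow-up, glue using the functoriality (M1), and read off (i)--(iv) from the local construction together with a universal property that forces uniqueness.

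\textbf{Local model.} Work first on $X_P=\Hom_{\mathrm{Mon}}(P,\Rge)$ for a weakly toric $P$, writing $P=\sigma^\vee\cap\Lambda^*$ after splitting off units. A refinement $\psi$ restricted to this chart is a fan $\cR_\sigma$ subdividing $\sigma$ with support $\sigma$. For each cone $\tau\in\cR_\sigma$ set
\[
Q_\tau=\tau^\vee\cap\Lambda^*\supseteq P .
\]
Each $X_{Q_\tau}$ is a chart, and the inclusion $P\hookrightarrow Q_\tau$ induces a map $X_{Q_\tau}\to X_P$ by restriction of homomorphisms. The charts are glued along $X_{Q_{\tau\cap\tau'}}$, using the localization inclusions $Q_\tau\hookrightarrow Q_{\tau\cap\tau'}$; each is an open embedding onto the locus where the extra generators are positive. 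This is exactly the real positive toric variety of the fan, which yields $[X_P;\cR_\sigma]\to X_P$.

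\textbf{Properties in the model.} Property (i) follows because on interiors every $X_Q$ is identified with $\Hom(\Lambda^*,\R_{>0})$. Property (ii) holds because the faces of $[X_P;\cR_\sigma]$ correspond to the cones of the fan, with normal monoids $Q_\tau^\sharp$. Property (iii) is a local computation. An interior $b$-map $f\colon Y\to X_P$ induces monoid maps from $P$ into the normal monoids of $Y$. If these factor through $\cR$, then each point of $Y$ lands in some cone $\tau$, so the pullbacks $f^*\lambda_q$ for $q\in Q_\tau$ remain smooth and nonnegative. Hence $f$ lifts to $X_{Q_\tau}$, and the lift is unique because it is determined on the dense interior by (i).

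\textbf{Globalization and uniqueness.} Cover $X$ by $g$-charts. By (M1) and (M3), the transition maps are interior $b$-maps whose monoidal morphisms are compatible with $\cR$, so by the local universal property (iii) they lift uniquely to the local blow-ups. Uniqueness makes the lifts satisfy the cocycle condition, so the local blow-ups glue. The embedded-faces hypothesis is what makes $\cP_X$ and $\cR$ globally well defined, so that the local fans agree on overlaps. Hausdorffness of the glued space follows from properness of each local blow-down, which is a consequence of the support condition (M4). Uniqueness up to diffeomorphism then follows from (iii) by the usual Yoneda argument: two blow-ups satisfying (iii) lift to each other, and the composite lifts are the identity by density of the interior. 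Property (iv) also follows from (iii): the fibre product of $Y\to X$ with $[X;\cR]\to X$, taken among interior $b$-maps, satisfies the universal property of $[Y;f^*\cR]$.

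\textbf{Main obstacle.} The hard step is the local lifting statement in (iii) for non-free monoids. One must show that smoothness of all $f^*\lambda_q$ for $q\in Q_\tau$ follows from the factorization of the induced monoid maps through $\tau$, and this has to hold uniformly near points where $Y$ itself has $g$-corners. The approach I would try first is to write $f^*\lambda_p=\lambda_{\mu(p)}\cdot(\text{positive smooth})$ via the $b$-map structure, with $\mu$ the monoid map, and then extend $\mu$ to $Q_\tau$ using saturation of the normal monoid of $Y$. This is where weak toricity is used.
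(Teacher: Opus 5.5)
The paper does not prove this statement itself. It records Theorems~3.7 and~3.10 of \cite{KottkeGCorner}, and later, in the proof of \cref{prop:proper-blowdown}, it uses exactly your local description: the nonnegative toric charts $X_{Q_\tau}$ glued over the cones of the refinement. Your outline is a faithful reconstruction of that cited construction. The saturation argument in your last paragraph is the right way to obtain the lift in (iii). If the image of the normal cone of $Y$ at a point lies in $\tau$, then for $q\in Q_\tau$ the element $\mu^{\gp}(q)$ pairs nonnegatively with the dual of the normal monoid $M$ of $Y$. By saturation it therefore lies in $M$, so $f^*\lambda_q=u\,\lambda_{\mu^{\gp}(q)}$ extends smoothly from the interior.

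Two points need repair.

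First, your Hausdorffness step is misjustified. Properness does not give Hausdorffness: the line with a doubled origin maps properly onto $\R$. Moreover, the support condition (M4) yields properness, not separation. Separation of the local model comes from the fan condition: cones of $\cR$ meet along common faces, so the charts are glued along localizations, as in a separated toric variety. Globally Hausdorffness is then automatic, because the glued space maps continuously to the Hausdorff space $X$ and $\beta^{-1}(U_i)$ is the single Hausdorff local blow-up $[U_i;\cR_i]$.

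Second, (iv) and uniqueness need more than (iii) as you state it.
\begin{itemize}
\item For (iv), you must define the pulled-back refinement $f^*\cR$, whose cones are $\sigma_F\cap f_\sharp^{-1}(\tau)$ for $\tau\in\cR$, and check that it is a refinement of $\cP_Y$.
\item You then need the observation that an interior $b$-map $g$ factors through $f^*\cR$ if and only if $f\circ g$ factors through $\cR$. With that in hand, the fibre-product property is formal.
\item The Yoneda argument for uniqueness needs (ii) in a specific form: the identification $\cP_{[X;\cR]}\cong\cR$ must be compatible with $\psi$, i.e.\ the monoidal morphism of a competing blow-down is $\psi$. Only then does each blow-down factor through the other's refinement.
\end{itemize}
With these additions your sketch covers what the cited result asserts.
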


This is a summary of Theorems 3.7 and 3.10 of \cite{KottkeGCorner}.  If $\cR$ is smooth, then $[X;\cR]$ has ordinary corners.  We call it a \emph{smooth monoidal resolution} of $X$.

\begin{proposition}[Properness and the common interior]\label{prop:proper-blowdown}
Let $\cR\to\cP_X$ be a finite refinement and let
$\beta_{\cR}:[X;\cR]\to X$ be Kottke's blow-down.  Then $\beta_{\cR}$ is
proper and surjective, and
\begin{equation}\label{eq:blowdown-interior-preimage}
 \beta_{\cR}^{-1}(X^\circ)=[X;\cR]^\circ,
 \qquad
 \beta_{\cR}^{-1}(X\setminus X^\circ)
 =[X;\cR]\setminus [X;\cR]^\circ.
\end{equation}
If $\cR$ is smooth, the set on the right in the second identity is the
ordinary topological boundary of the resolved manifold.
\end{proposition}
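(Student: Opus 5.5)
The plan is to reduce everything to the local monoidal models, using the structure of Kottke's blow-up from \cref{thm:kottke}. Properness and surjectivity will follow from local statements plus compactness considerations. The two preimage identities rest on a single fact: a point of $[X;\cR]$ is interior exactly when its image in $X$ is interior.

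\emph{Step 1 (local model).} Since blow-up commutes with pullback by interior $b$-maps (\cref{thm:kottke}(iv)), and in particular with restriction to open subsets, it suffices to work over a chart $X_P$ with cone $\sigma$ and a subdivision $\cR_\sigma$ of $\sigma$. Here $[X_P;\cR_\sigma]$ is glued from the orthant charts $X_Q$, one for each maximal cone $\tau\in\cR_\sigma$ with $Q=\tau^\vee\cap\Lambda^*\supseteq P$. On such a chart the blow-down is the restriction map
\[
\Hom(Q,\Rge)\to\Hom(P,\Rge),
\]
so that $\lambda_p\circ\beta=\lambda_p$ for all $p\in P$.

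\emph{Step 2 (interior characterization).} A point $x\in X_P$ is interior exactly when $\lambda_p(x)>0$ for every $p$. This holds iff the homomorphism $x$ lands in $\R_{>0}$, which in turn holds iff $x$ extends uniquely to a group homomorphism $P^{\gp}\to\R_{>0}$. Because $Q^{\gp}=P^{\gp}=\Lambda^*$, the positive homomorphisms on $P$ and on $Q$ are the same set. Hence $\beta$ maps the interior bijectively onto the interior, which recovers \cref{thm:kottke}(i).

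Conversely, suppose $y\in X_Q$ has $y(q)=0$ for some $q\in Q$. We need some $p\in P$ with $y(p)=0$. Since $\sigma$ is full-dimensional and strictly convex, $P$ contains an element $p_0$ in the interior of $\sigma^\vee$. For $N$ large, $Np_0-q\in Q$, because $Q=\tau^\vee\cap\Lambda^*$ and $p_0$ is strictly positive on $\tau\setminus\{0\}$. Multiplicativity then gives
\[
y(Np_0)=y(q)\,y(Np_0-q)=0,
\]
so $\lambda_{p_0}(\beta(y))=0$. This proves the second identity in \eqref{eq:blowdown-interior-preimage}, and the first follows by taking complements. If $\cR$ is smooth, each $Q$ is $\N^k$ up to units, so the non-interior locus of $X_Q\cong[0,\infty)^k\times\R^{n-k}$ is the ordinary topological boundary.

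\emph{Step 3 (properness and surjectivity).} Properness is local on the target. Over a chart, $[X_P;\cR_\sigma]$ is a finite union of closed pieces $\{y\in X_Q: \text{relevant } \lambda\le 1\}$ patterned on the fan. I would rather argue directly: the preimage of a compact $K\subset X_P$ in each $X_Q$ is closed, and it is bounded because every generator $q$ of $Q$ satisfies $q\le Np_0$ in the sense above. That gives $y(q)^{1}\cdot y(Np_0-q)=y(p_0)^N$. Bounding the factors then requires choosing, for each $q$, a representation in which $y(q)$ is controlled.

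This bounding step is the main obstacle. To control $y(q)$ I would express $q$ via Gordan's lemma as a positive rational combination $mq=\sum p_i - \sum(\dots)$. The cleanest route is to invoke compactness of the fibre of the toric blow-down: because $\cR_\sigma$ has the same support as $\sigma$, the valuative criterion shows the map is proper. Every curve $y_t$ with $\beta(y_t)$ converging has limit logarithmic direction in $|\sigma|=|\cR_\sigma|$, hence in some $\tau$, and therefore converges in $X_Q$.

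Surjectivity follows because the image is closed (by properness) and contains the dense interior.
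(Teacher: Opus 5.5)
Your Steps 1 and 2 are sound. The observation that $Np_0-q\in Q$ for $p_0$ in the interior of $\sigma^\vee$ gives a clean monoid-level proof of the two preimage identities. It is more explicit than the paper's appeal to the correspondence between non-interior faces of $[X;\cR]$ and nonzero cones of $\cR$.

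The genuine gap is properness in Step 3, and your surjectivity argument rests on it. The chartwise bound you aim for does not exist. For the blow-up of the origin in $[0,\infty)^2$, in the chart $(x,y)=(tr,r)$ the preimage of $[0,1]^2$ is $\{tr\le 1,\ r\le 1\}$, which is unbounded in $t$. Compactness of a preimage comes only from the finite union of charts, never from a single $X_Q$.

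Your fallback does not repair this. The valuative criterion concerns morphisms of schemes, and you give no passage from it to the topological map on non-negative real points. The paper supplies exactly that passage:
\begin{itemize}
\item complexifying the monoid charts gives the toric morphism defined by the identity of $\Lambda$ and the subdivision $\cR_\sigma$ of $\sigma$;
\item the toric properness criterion \cite[Thm.~3.4.11]{CLS} makes this morphism proper, since the source support equals the preimage of the target support when $|\cR_\sigma|=|\sigma|$;
\item the non-negative real locus is closed and preserved by the morphism, so the restricted map is proper.
\end{itemize}

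Your \enquote{limit direction} heuristic is also false as stated for sequences, which is what topological properness requires. Take $\sigma=\Rge^2$ subdivided by the ray through $e_1+e_2$, and $\tau_1=\Cone(e_1,e_1+e_2)$, whose monoid $Q_{\tau_1}$ is generated by $(0,1)$ and $(1,-1)$. Take interior points $y_k=e^{-\langle\cdot,u_k\rangle}$ with $u_k=(k,k+\sqrt k)$. Then $\beta(y_k)$ tends to the vertex and $u_k/|u_k|\to(1,1)/\sqrt2\in\tau_1$. Yet $y_k\bigl((1,-1)\bigr)=e^{\sqrt k}\to\infty$, so $y_k$ does not converge in $X_{Q_{\tau_1}}$; it converges only in the other chart.

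A correct direct argument uses distance to a cone rather than direction:
\begin{itemize}
\item if $\beta(y_k)$ converges, then $\langle p_i,u_k\rangle\ge -C$ for the generators $p_i$ of $P$;
\item hence $u_k\in\sigma+K$ with $K$ compact, by Minkowski--Weyl, $\sigma$ being pointed;
\item since $\cR_\sigma$ is finite, a subsequence satisfies $u_k\in\tau+K$ for a single maximal $\tau$;
\item then every generator of $Q_\tau$ is bounded along it, and a further subsequence converges in the closed orthant model of $X_{Q_\tau}$;
\item non-interior $y_k$ are handled by approximating them with interior points in the metrizable blow-up.
\end{itemize}
With either repair in place, your deduction of surjectivity from properness and density of $X^\circ$ goes through.
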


\begin{proof}
Properness is local on the target.  In a Kottke model $X_P$, the
construction is obtained by gluing the non-negative real toric charts
$X_{R^\vee}$ over the cones $R$ of the refinement; see
\cite[Eq.~(3.4) and Prop.~3.2]{KottkeGCorner}.  Complexifying the same monoid
charts gives the toric morphism defined by the identity on the ambient lattice
and the subdivision of the cone $P^\vee$.  The toric properness criterion
states that this complex toric morphism is proper precisely when the support
of the source fan is the inverse image of the support of the target fan
\cite[Thm.~3.4.11]{CLS}.  A refinement has, by definition, the same support,
so the complex toric morphism is proper.  Its non-negative real locus is
closed and is preserved by the morphism; hence the restriction to the
non-negative real toric charts is proper as well.  Therefore the local
blow-down is proper, and properness local on the target gives the global
claim.

For the second assertion, a non-interior face of $[X;\cR]$ corresponds to a
nonzero cone of $\cR$.  Under the monoidal morphism
$\cR\to\cP_X$ this cone maps injectively to a nonzero cone of $\cP_X$, hence
its face maps into $X\setminus X^\circ$.  Together with Kottke's
diffeomorphism on interiors this gives
\eqref{eq:blowdown-interior-preimage}.  Finally, the image of a proper map
between locally compact Hausdorff spaces is closed; it contains the dense
interior $X^\circ$, so it is all of $X$.  Thus $\beta_{\cR}$ is surjective.
\end{proof}

\begin{remark}
The word resolution is used here in the differential-geometric toric sense.  The interior is unchanged, while the singular combinatorics of the boundary is replaced by a smooth fan.  No singularity in the interior of the source is introduced or removed.
\end{remark}

\subsection{Common refinements}

\begin{proposition}[Common rational refinement]\label{prop:common-refinement}
Let $\cR_1\to\cP$ and $\cR_2\to\cP$ be finite refinements of a finite
monoidal complex.  There exists a finite rational polyhedral common refinement
$\cR_{12}^{\mathrm{rat}}$ with maps
\[
  \cR_{12}^{\mathrm{rat}}\longrightarrow\cR_i\longrightarrow\cP,
  \qquad i=1,2.
\]
The construction is compatible with restriction to monoidal subcomplexes.
If the two refinements belong to an admissible comparison class satisfying
\textup{(G3)}, the additional choice of a \emph{smooth} common refinement,
with comparison maps factoring into finite stellar towers, is part of the
comparison datum specified there.
\end{proposition}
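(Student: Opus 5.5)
The plan is to build the common refinement cone by cone from pairwise intersections, and then check that the pieces glue across the monoidal complex. Fix a cone $\sigma$ of $\cP$, viewed in its ambient real vector space $V_\sigma$ through the normal monoid of the corresponding face. By (M4), $\cR_1$ and $\cR_2$ restrict over $\sigma$ to finite rational polyhedral subdivisions $\Sigma_1(\sigma)$ and $\Sigma_2(\sigma)$ of the same support $|\sigma|$. I would set
\[
 \Sigma_{12}(\sigma)=\{\tau_1\cap\tau_2:\ \tau_i\in\Sigma_i(\sigma)\}.
\]
Each $\tau_1\cap\tau_2$ is cut out by finitely many rational linear inequalities, so it is again a rational polyhedral cone. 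Its faces are of the form $\tau_1'\cap\tau_2'$ with $\tau_i'\preceq\tau_i$. For two members $\tau_1\cap\tau_2$ and $\tau_1'\cap\tau_2'$, the intersection is $(\tau_1\cap\tau_1')\cap(\tau_2\cap\tau_2')$, which is a face of both because each $\Sigma_i(\sigma)$ is a fan. So $\Sigma_{12}(\sigma)$ is a finite fan. Its support is $|\sigma|\cap|\sigma|=|\sigma|$, since every point of $|\sigma|$ lies in some cone of each $\Sigma_i(\sigma)$. Each cone of $\Sigma_{12}(\sigma)$ lies inside a cone of $\Sigma_i(\sigma)$; this gives the monoidal maps $\cR_{12}^{\mathrm{rat}}\to\cR_i$. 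Their composites with $\cR_i\to\cP$ agree, because both are the inclusion into $\sigma$.

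Next I would glue these fans over $\cP$. Take a face inclusion $\rho\preceq\sigma$ in $\cP$. Refinements are compatible with face restriction, so $\Sigma_i(\sigma)|_\rho=\Sigma_i(\rho)$. Intersection commutes with restriction to $\rho$, since $\rho$ is a face and $(\tau_1\cap\tau_2)\cap\rho=(\tau_1\cap\rho)\cap(\tau_2\cap\rho)$. Hence $\Sigma_{12}(\sigma)|_\rho=\Sigma_{12}(\rho)$. The same computation shows that the incidence maps of $\cR_1$ and $\cR_2$ induce incidence maps on $\cR_{12}^{\mathrm{rat}}$ satisfying the monoidal complex axioms. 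The lattice data are the monoids $\tau\cap\Lambda_\sigma^*$, taken in the same lattice that $\cP$ uses. The same argument gives compatibility with restriction to a monoidal subcomplex $\cP'\subseteq\cP$ (cf.\ (M3)): the construction only uses the cones over $\cP'$, so restricting first and then intersecting gives the same result as intersecting first and then restricting.

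For the final sentence, nothing needs to be proved. The rational common refinement need not be smooth, and (G3) explicitly includes in the comparison datum a smooth common refinement whose comparison maps factor into finite stellar towers. I would still remark that a smooth refinement of $\cR_{12}^{\mathrm{rat}}$ always exists, by the standard toric resolution through iterated stellar subdivisions: repeatedly star-subdivide non-unimodular cones at interior lattice points, then use the restriction compatibility above to do this coherently across faces. This shows the datum is not vacuous, but the factorization of the comparison maps into stellar towers is assumed rather than derived.

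The main obstacle is the gluing step. Monoidal complexes are abstract rather than embedded fans, so one must check that the face identifications of $\cR_1$ and $\cR_2$ over a shared face of $\cP$ are identical, and not merely isomorphic. I would handle this by working in the canonical normal lattice of each face of $\cP$ and pulling everything back along the structure maps, so that all intersections take place in one fixed vector space per cone.
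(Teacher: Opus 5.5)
Your proposal is correct and follows essentially the same route as the paper. Like the paper, you intersect the cones of the two subdivisions over each cone of $\cP$, use that intersection commutes with restriction to a face so that the cone-by-cone constructions glue into a finite monoidal complex, and read the final sentence as nothing more than the \textup{(G3)} hypothesis. Your aside that a smooth refinement of $\cR_{12}^{\mathrm{rat}}$ exists by toric resolution is harmless but unused; the paper deliberately asserts no desingularization statement beyond \textup{(G3)}.
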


\begin{proof}
On a single rational cone $\sigma$, intersect each cone of the first
subdivision with each cone of the second.  The nonempty intersections and all
of their faces form a finite rational polyhedral subdivision of $\sigma$ that
refines both given subdivisions.  Performing this construction cone by cone
is compatible on common faces, because intersection commutes with restriction
to a face.  Since $\cP$ is finite, the resulting monoidal complex is finite.
The last sentence is exactly the smooth-comparison hypothesis \textup{(G3)};
no relative toric desingularization theorem stronger than that hypothesis is
being asserted here.
\end{proof}

By \cref{thm:kottke}, a common refinement gives a diagram
\begin{equation}\label{eq:common-resolution-diagram}
\begin{tikzcd}[column sep=large]
 & { [X;\cR_{12}] }
   \arrow[dl,"\gamma_1"']
   \arrow[dr,"\gamma_2"] & \\
 { [X;\cR_1] }
   \arrow[dr,"\beta_1"'] &&
 { [X;\cR_2] }
   \arrow[dl,"\beta_2"]\\
 & X &
\end{tikzcd}
\end{equation}
whose maps are diffeomorphisms on the common interior.

\section{The oriented resolution complex}

\subsection{Polyhedral slices}

Let $\sigma\subset\Lambda_{\R}$ be a strictly convex rational polyhedral cone.  Choose a linear functional
\[
  \ell:\Lambda_{\R}\longrightarrow\R
\]
strictly positive on $\sigma\setminus\{0\}$.  The slice
\[
  K_\sigma=\{x\in\sigma:\ell(x)=1\}
\]
is a compact convex polytope.  A fan subdivision of $\sigma$ induces a polyhedral subdivision of $K_\sigma$.

This elementary observation turns the conical refinement problem into a compact cellular
problem.  Orient $\Lambda_{\R}$ and orient $K_\sigma$ by the outward radial convention.
Choose an orientation on every cell of every subdivision, compatibly on cells that are
identified along common faces.  (An ambient orientation alone does not canonically orient
all lower-dimensional cells.)  Write $[\tau:\eta]\in\{0,\pm1\}$ for the resulting
incidence number.  Changing a cell orientation only conjugates the cellular differential
by the corresponding diagonal sign change.

\begin{definition}
For a refinement $\cR$ of $\sigma$, the \emph{oriented resolution chain complex} is the cellular chain complex
\[
  C_\bullet(\cR;\Z)=C_\bullet(K_\sigma(\cR);\Z)
\]
of the induced subdivision of $K_\sigma$.
\end{definition}

For the global gluing it is convenient to remove the apparent dependence on
the functionals $\ell$.  Put
\[
 \mathbb P_+(\sigma)=(\sigma\setminus\{0\})/\R_{>0}.
\]
Radial projection identifies every positive slice $K_\sigma$ with
$\mathbb P_+(\sigma)$ by a face-preserving cellular homeomorphism.  A face
map of cones induces a canonical closed cellular inclusion of the associated
positive projectivizations.  Thus, for a monoidal complex, we glue the
$\mathbb P_+(\sigma)$ along these canonical face maps and take the cellular
complex of the subdivision induced by $\cR$.  Equivalently one may use any
cone-wise positive slices and transport them through radial projection; no
unmentioned compatibility among the functionals is required.  With the
cell orientations fixed above, the resulting complex is denoted
$C_\bullet(\cR)$.

\subsection{Subdivision maps}

Let $\cR'\to\cR$ be a further refinement.  The identity map on the support is cellular only after passing to a common subdivision, but it induces a standard subdivision chain map
\[
  s_{\cR'\cR}:C_\bullet(\cR)\longrightarrow C_\bullet(\cR').
\]
It sends each oriented cell to the signed sum of the cells that subdivide it.

\begin{lemma}\label{lem:subdivision-chain-map}
The subdivision operator is a chain map:
\[
  \partial s_{\cR'\cR}=s_{\cR'\cR}\partial.
\]
It is natural under restriction to a face of the original cone.
\end{lemma}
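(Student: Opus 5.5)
The plan is to reduce the statement to the incidence-sum identity that underlies the standard subdivision operator, and to verify that identity cell by cell. Fix an oriented cell $\tau$ of the subdivision of $K_\sigma$ induced by $\cR$. Write $\operatorname{sub}(\tau)$ for the set of cells $\tau'$ of $\cR'$ of the same dimension contained in $\tau$. Then
\[
 s_{\cR'\cR}(\tau)=\sum_{\tau'\in\operatorname{sub}(\tau)}\epsilon(\tau',\tau)\,\tau',
\]
where $\epsilon(\tau',\tau)=\pm1$ compares the chosen orientation of $\tau'$ with the orientation it inherits from $\tau$. Since every cell of $\cR'$ lies in a unique smallest cell of $\cR$ (the one containing its relative interior), $s$ is well defined on generators. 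Both sides of $\partial s=s\partial$ are linear, so it suffices to compare coefficients of a fixed codimension-one cell $\eta'$ of $\cR'$ in $\partial s(\tau)$ and $s(\partial\tau)$.

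\emph{Step 1 (interior cells cancel).} Suppose $\eta'\subset\tau$ with $\dim\eta'=\dim\tau-1$ and $\eta'$ meeting the relative interior of $\tau$. Because $\cR'$ subdivides the polytope $\tau$ (a pseudomanifold with boundary), $\eta'$ is a face of exactly two top cells $\tau'_1,\tau'_2\in\operatorname{sub}(\tau)$. Their induced orientations give opposite incidence signs with $\eta'$: coherently oriented top cells induce opposite boundary orientations on a shared facet. Hence the coefficient of $\eta'$ in $\partial s(\tau)$ vanishes. This coefficient is also zero in $s(\partial\tau)$, since $\eta'$ is not contained in $\partial\tau$.

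\emph{Step 2 (boundary cells match).} Suppose instead $\eta'\subset\partial\tau$. Then $\eta'$ lies in a unique facet $\eta$ of $\tau$ and is a facet of exactly one $\tau'\in\operatorname{sub}(\tau)$. Its coefficient in $\partial s(\tau)$ is $\epsilon(\tau',\tau)[\tau':\eta']$, and its coefficient in $s(\partial\tau)$ is $[\tau:\eta]\epsilon(\eta',\eta)$. These agree by the defining property of induced boundary orientation (outward normal first): restricting the orientation of $\tau$ to $\tau'$ and then taking the boundary orientation on $\eta'$ is the same as taking the boundary orientation on $\eta$ and then restricting it to $\eta'$. Changing the chosen orientations of individual cells conjugates both sides by the same diagonal signs, so one may assume all cells carry induced orientations, where all $\epsilon=1$.

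\emph{Step 3 (naturality).} For a face $\sigma_0\le\sigma$, the slice $K_{\sigma_0}=K_\sigma\cap\sigma_0$ is a subcomplex for both subdivisions, because refinements restrict to faces by (M3)/(M4). The sum defining $s(\tau)$ for $\tau\subset K_{\sigma_0}$ involves only cells inside $\tau$. Therefore $s$ commutes with the inclusion of cellular complexes, and, through radial projection, with the gluing maps defining $C_\bullet(\cR)$ in the global case.

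The main obstacle is Step 1. It requires that the cells of $\cR'$ inside a cell of $\cR$ form a genuine polyhedral subdivision of that cell, so that every interior codimension-one cell is shared by exactly two top cells. I would derive this from the cone-by-cone definition of refinement, which says the cones of $\cR'$ inside a cone of $\cR$ cover it without changing support, and then pass to slices.
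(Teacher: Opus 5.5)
Your proposal is correct and is the same argument as the paper's: interior codimension-one cells of the subdivision of $\tau$ occur twice with opposite induced orientations and cancel, the remaining ones lie in $\partial\tau$ and reproduce $s_{\cR'\cR}(\partial\tau)$, and naturality holds because restricting to a face only restricts the set of subdividing cells. Your version makes explicit two things the paper's three-line proof leaves implicit: the outward-normal sign comparison in Step~2, and the fact that each interior codimension-one cell of the subdivision is shared by exactly two top cells.
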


\begin{proof}
For an oriented cell $e$, interior facets of the subdivision occur twice with opposite induced orientations and cancel.  The remaining facets lie in $\partial e$ and reproduce the subdivision of the oriented boundary.  Restriction to a face merely restricts the set of subdividing cells, so the same formula is natural.
\end{proof}

\begin{theorem}[Subdivision chain equivalence]\label{thm:subdivision-equivalence}
For every finite refinement $\cR'\to\cR$, the subdivision chain map
\[
  s_{\cR'\cR}:C_\bullet(\cR)\longrightarrow C_\bullet(\cR')
\]
is a chain-homotopy equivalence.  The homotopy inverse and homotopies may be chosen relative to every common subcomplex on which the refinement is trivial.
\end{theorem}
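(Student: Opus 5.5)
We will realize $s_{\cR'\cR}$ as the cellular chain map of a map of CW structures on one compact space and then use cellular homology plus freeness. Let $|K|$ be the union of the positive projectivizations $\mathbb P_+(\sigma)$ glued along face maps. By (M4), $\cR$ and $\cR'$ induce two regular cell structures $K(\cR)$ and $K(\cR')$ on the same space $|K|$, and $K(\cR')$ is a subdivision of $K(\cR)$. Every closed cell of $K(\cR)$ is then a subcomplex of $K(\cR')$.

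\textbf{Step 1: $s$ is a quasi-isomorphism.} Lemma~\ref{lem:subdivision-chain-map} already gives that $s_{\cR'\cR}$ is a chain map. We will identify it with the composite
\[
C_\bullet(K(\cR))\cong H_\bullet\bigl(|K|^{(\bullet)}_{\cR},|K|^{(\bullet-1)}_{\cR}\bigr)\longrightarrow C_\bullet(K(\cR')),
\]
where the last arrow sends an oriented $k$-cell $e$ to the fundamental class of $(\bar e,\partial e)$, written in the cells of $K(\cR')$ that subdivide $e$. Both complexes compute $H_\bullet(|K|;\Z)$ naturally. The identity of $|K|$ is cellular from $K(\cR')$ to $K(\cR)$, and the required compatibility is the statement that the fundamental class of a subdivided oriented polytope is the signed sum of its top cells. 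Hence $s$ induces the identity on singular homology. A second route, avoiding comparison with singular homology, is induction over the cells of $K(\cR)$ ordered by dimension. Each $\bar e$ is a convex polytope, hence contractible, and its subdivision is a regular cell complex. So $C_\bullet(K(\cR')|_{\bar e})$ and $C_\bullet(K(\cR)|_{\bar e})$ are both resolutions of $\Z$ relative to their boundaries, and a five-lemma argument on the skeletal filtration gives the quasi-isomorphism.

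\textbf{Step 2: upgrade to a homotopy equivalence.} Both complexes are bounded complexes of finitely generated free $\Z$-modules, since the complex is finite. A quasi-isomorphism between bounded-below complexes of projectives is therefore a chain homotopy equivalence, by the standard fact that its cone is acyclic, bounded and free, hence contractible.

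\textbf{Step 3: the relative version.} Let $L$ be a common subcomplex on which the refinement is trivial. On $L$, $s$ is the identity, so $s$ induces a quasi-isomorphism of quotients $C(\cR)/C(L)\to C(\cR')/C(L)$. These quotients are again free, because they are spanned by the cells not in $L$. We will construct the inverse and homotopies cell by cell, extending the identity on $L$. The cleaner formulation is the following. The cone of $s$ is the cone of the induced map of quotients, which is contractible. Choose a contraction of this quotient cone, and lift it using the split short exact sequence
\[
0\to \Cone(\id_{C(L)})\to \Cone(s)\to \Cone(\bar s)\to0.
\]
On the left term we take the canonical contraction of $\Cone(\id)$. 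The two contractions combine into a contraction of $\Cone(s)$ that restricts to the canonical one on $L$. Reading off the inverse $g$ and the homotopies from this contraction, $g$ is the identity on $C(L)$ and the homotopies vanish there.

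The main obstacle is this gluing: making the combined contraction restrict exactly to the canonical contraction on $L$. This needs a short inductive correction, done by the cell-by-cell extension over the cells outside $L$, using acyclicity of each closed-cell subdivision. Acyclicity holds because each closed cell is a convex polytope, so its subdivision is contractible.
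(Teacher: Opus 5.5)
Your proof is correct, but it takes a different route from the paper.

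\textbf{The two routes.} The paper applies the relative acyclic-carrier theorem directly. It builds the inverse $a_{\cR\cR'}$ cell by cell, sending each fine cell into the chains of the smallest coarse closed cell containing it. Both homotopies come from the uniqueness-up-to-carried-homotopy clause, and the relative statement comes from prescribing the identity and zero homotopies on $L$ during the inductive extension.

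You instead proceed in three steps:
\begin{enumerate}
\item prove that $s_{\cR'\cR}$ is a quasi-isomorphism;
\item use the algebraic fact that a quasi-isomorphism of bounded complexes of finitely generated free modules is a homotopy equivalence;
\item obtain the relative version by splitting $\Cone(s)$ along the subcomplex $\Cone(\id_{C(L)})$.
\end{enumerate}
Once the quasi-isomorphism is known, your route is essentially formal. Your second route to the quasi-isomorphism, by induction over coarse cells using acyclicity of each subdivided closed cell, uses exactly the input of the carrier theorem.

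\textbf{What each buys.} The carrier route gives locality: its inverse and homotopies preserve every subdivided coarse closed cell, not only the unresolved subcomplex $L$. An abstract contraction of the cone gives no support control away from $L$. For the theorem as stated, your relative version suffices. It also suffices for \cref{cor:comb-invariance}, taking $L$ to be the union of all unresolved subcomplexes; that union is again a subcomplex on which the refinement is trivial.

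\textbf{Two corrections.}

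First, the identity of $|K|$ is cellular from $K(\cR)$ to $K(\cR')$, not the reverse. Each coarse $k$-skeleton is a union of coarse closed cells of dimension at most $k$, and each of these is a subcomplex of $K(\cR')$. By contrast, new fine vertices generally lie in the interiors of coarse cells. With the correct direction, $s_{\cR'\cR}$ is precisely the induced cellular chain map, so your first route works as intended.

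Second, the gluing you flag as the main obstacle is immediate and needs no inductive correction. Write $\Cone(s)=A\oplus C$ as graded modules, with $A=\Cone(\id_{C(L)})$ and $d=\bigl(\begin{smallmatrix} d_A&\tau\\ 0&d_C\end{smallmatrix}\bigr)$. Then the operator
\[
 h=\begin{pmatrix} h_A & -h_A\tau h_C\\ 0 & h_C\end{pmatrix}
\]
is a contraction of $\Cone(s)$ that restricts to the canonical $h_A$ on $A$. The off-diagonal entry of $dh+hd$ reduces to $h_A(d_A\tau+\tau d_C)h_C$, which vanishes because $d^2=0$. Reading off the components of $h$ then gives $g|_{C(L)}=\id$ and homotopies vanishing on $C(L)$.
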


\begin{proof}
Use the relative acyclic-carrier theorem on the finite polyhedral complex
$K_\sigma$~\cite[\S13]{Munkres}.  For every coarse closed cell
$\overline e$, let the carrier be the cellular chains of the subdivision of
$\overline e$.  This carrier is acyclic, and the subdivision map is carried by
it.  Sending a fine cell to chains in the smallest coarse closed cell which
contains it gives, by the carrier theorem, an augmentation-preserving chain map
\[
 a_{\cR\cR'}:C_\bullet(\cR')\longrightarrow C_\bullet(\cR).
\]
The maps $a_{\cR\cR'}s_{\cR'\cR}$ and $\id$ are carried by the coarse closed
cells, while $s_{\cR'\cR}a_{\cR\cR'}$ and $\id$ are carried by their subdivided
closed cells.  The uniqueness-up-to-carried-homotopy part of the same theorem
therefore supplies the two chain homotopies.

If the refinement is trivial on a subcomplex $L$, prescribe
$a_{\cR\cR'}|_L=\id$ and prescribe both homotopies to vanish on $L$.  The
relative extension step in the acyclic-carrier induction extends these choices
cell by cell.  Applying this construction once to the global polyhedral complex
obtained by gluing the projectivized cone slices makes the choices simultaneous
on all unresolved boundary subcomplexes.
\end{proof}

\begin{corollary}[Combinatorial refinement invariance]\label{cor:comb-invariance}
Within an admissible comparison class satisfying \textup{(G3)}, any two finite smooth refinements of $\cP_X$ have oriented resolution chain complexes related by chain-homotopy equivalences through an admitted smooth common refinement.  The equivalences commute, up to relative chain homotopy, with restriction to any unresolved boundary face of $X$.
\end{corollary}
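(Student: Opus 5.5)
Given two finite smooth refinements $\cR_1,\cR_2\to\cP_X$ in an admissible comparison class, the plan is to pass through the admitted smooth common refinement $\cR_{12}$ supplied by \textup{(G3)} (see \cref{prop:common-refinement}). It comes with refinement maps $\cR_{12}\to\cR_i$ for $i=1,2$. We work throughout with the globally glued complexes $C_\bullet(\cR)$ built from the positive projectivizations $\mathbb P_+(\sigma)$. In that setting \cref{lem:subdivision-chain-map} and \cref{thm:subdivision-equivalence} apply verbatim: the latter's proof was already carried out on the glued global polyhedral complex. They give subdivision chain maps $s_i:=s_{\cR_{12}\cR_i}:C_\bullet(\cR_i)\to C_\bullet(\cR_{12})$, each a chain-homotopy equivalence with homotopy inverse $a_i$ and carried homotopies. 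The composite $a_2 s_1:C_\bullet(\cR_1)\to C_\bullet(\cR_2)$ is then a chain-homotopy equivalence with inverse $a_1 s_2$, since homotopy equivalences compose. This yields the zig-zag through the admitted common refinement asserted in the first sentence. The stellar-tower factorization in \textup{(G3)} is not needed here beyond guaranteeing that $\cR_{12}$ is smooth and admitted. Finiteness of $\cR_{12}$ comes from \cref{prop:common-refinement}.

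For the face statement, let $F$ be an unresolved boundary face of $X$. This means all three refinements $\cR_1,\cR_2,\cR_{12}$ restrict to the identity on the subcomplex $L_F$ of the glued projectivized complex corresponding to the cones of $F$. By (M3), restriction to $F$ gives a monoidal subcomplex. By \cref{prop:common-refinement}, the common refinement is compatible with that restriction, so $L_F$ is a common subcomplex of $C_\bullet(\cR_1)$, $C_\bullet(\cR_2)$ and $C_\bullet(\cR_{12})$ on which the refinements are trivial. The relative clause of \cref{thm:subdivision-equivalence} lets us choose $a_i|_{L_F}=\id$ and the homotopies vanishing on $L_F$. Naturality of $s_i$ under restriction (\cref{lem:subdivision-chain-map}) gives $s_i|_{L_F}=\id$. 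Hence the equivalences and homotopies restrict to the identity, respectively to zero, on $L_F$. Equivalently, they commute with the restriction and inclusion maps for $L_F$ up to relative chain homotopy; on the nose for the maps themselves.

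The main obstacle is simultaneity. We need one choice of $a_i$ and homotopies relative to \emph{all} unresolved faces at once, not face by face. Here I would use the union $L=\bigcup_F L_F$ of all unresolved subcomplexes. This is a subcomplex because faces of unresolved faces are unresolved: by (M4), refinement is cone-wise, and a trivially subdivided cone has trivially subdivided faces. I would then run the relative acyclic-carrier extension once relative to $L$, exactly as in the last paragraph of the proof of \cref{thm:subdivision-equivalence}. A secondary check is orientation bookkeeping: the cell orientations on $L$ must agree across the three complexes. This holds because the identical cells in $L$ carry the orientations fixed compatibly in the definition of $C_\bullet(\cR)$. Otherwise one conjugates by the diagonal sign change, which preserves all the above statements.
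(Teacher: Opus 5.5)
Your proposal is correct and follows the paper's own proof. You take the smooth common refinement supplied by \textup{(G3)}, run the relative acyclic-carrier construction of \cref{thm:subdivision-equivalence} once on the glued global complex of projectivized cone slices (prescribing identity maps and zero homotopies on the subcomplexes where the refinements are trivial), and compose along the two arms. One attribution needs correcting: \cref{prop:common-refinement} gives finiteness only of the rational common refinement, whereas finiteness of the admitted smooth $\cR_{12}$ comes from the finite stellar-tower clause of \textup{(G3)}, together with compactness of $X$ and finiteness of $\cP_X$.
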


\begin{proof}
Choose the smooth common refinement supplied by \textup{(G3)}.  The
projectivized cone slices of the finite monoidal complex, glued by their face
maps, form one finite polyhedral complex.  Each admitted refinement is a
subdivision of this global complex.  Apply the relative acyclic-carrier
construction of \cref{thm:subdivision-equivalence} once to that global
complex, prescribing the identity and zero homotopies on every subcomplex
where the refinement is trivial.  Since the carriers of cells in an
unresolved boundary subcomplex remain inside that subcomplex, the resulting
maps and homotopies restrict simultaneously to all such boundaries.  Applying
this construction to the two arms of the common
refinement gives the assertion.
\end{proof}

\subsection{Exceptional complexes}

Let $q:\cR\to\cP$ be a refinement.  For a cone $\sigma\in\cP$, write
\[
  \cR_\sigma=\{\tau\in\cR:\supp(\tau)\subseteq\supp(\sigma)\}
\]
for the subdivision lying over $\sigma$.  Its boundary subcomplex is the union of cells supported on proper faces of $\sigma$.

\begin{definition}
The \emph{local exceptional complex} over $\sigma$ is
\[
  E_\bullet(\cR/\sigma)
  =C_\bullet(K_\sigma(\cR),\partial K_\sigma(\cR)).
\]
Its reduced boundary version is
\[
  \widetilde E_\bullet(\cR/\sigma)
  =\widetilde C_\bullet(K_\sigma(\cR)).
\]
Here and below $\widetilde C_\bullet(K)$ denotes the augmented reduced
cellular complex: in degree zero it is $\ker(\epsilon:C_0(K)\to\mathbb Z)$,
and in positive degrees it agrees with the ordinary cellular complex.  A
contraction relative to a vertex $v$ refers to the splitting
$C_0(K)=\mathbb Z v\oplus\ker\epsilon$.
\end{definition}

Because $K_\sigma$ is a ball, the relative complex has the homology of a sphere shifted by one, while the reduced absolute complex is acyclic.  Which one appears in field theory depends on whether the unresolved top face is retained as a generator or replaced by the entire subdivision.

\begin{lemma}[Acyclic reduced exceptional sector]\label{lem:exceptional-acyclic}
For every nonzero sharp cone $\sigma$ and every finite subdivision,
\[
  H_\bullet\bigl(\widetilde E_\bullet(\cR/\sigma)\bigr)=0.
\]
The contraction can be chosen relative to a fixed vertex of the slice.
\end{lemma}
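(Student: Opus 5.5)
The plan is to identify the reduced exceptional complex with the augmented cellular chain complex of a contractible finite regular CW complex, and then build the contraction by hand from a cone construction, so that it is explicitly relative to a chosen vertex. Since $\sigma$ is nonzero, sharp and strictly convex, a functional $\ell$ positive on $\sigma\setminus\{0\}$ exists. The slice $K_\sigma$ is then a nonempty compact convex polytope, and it is homeomorphic to a closed ball of dimension $\dim\sigma-1$. The subdivision $\cR$ induces a finite polyhedral cell structure $K_\sigma(\cR)$ on it. By definition, $\widetilde E_\bullet(\cR/\sigma)=\widetilde C_\bullet(K_\sigma(\cR))$ is the augmented reduced cellular complex of this polyhedral complex. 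Cellular homology computes singular homology, and $K_\sigma$ is contractible, so $\widetilde H_\bullet(K_\sigma;\Z)=0$. This gives the vanishing statement. It does not depend on the cell orientations, because changing orientations only conjugates the differential by a diagonal sign matrix.

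For the contraction relative to a fixed vertex $v$, I would avoid abstract appeals to free acyclic complexes and use the acyclic-carrier theorem, as in the proof of \cref{thm:subdivision-equivalence}. Consider the chain map $\pi=v\cdot\epsilon$ on $C_\bullet(K_\sigma(\cR))$, which is zero in positive degrees and sends every vertex to $v$, and compare it with $\id$. For each closed cell $\overline e$, take as carrier the cellular chains of the smallest closed subcomplex containing $\overline e$ and $v$. The simplest choice is the whole complex $K_\sigma(\cR)$, which is acyclic in positive degrees and has $H_0=\Z$. Both $\id$ and $\pi$ are augmentation-preserving and carried by this acyclic carrier. The carrier theorem therefore supplies $h$ with
\[
\partial h+h\partial=\id-\pi .
\]
In the inductive construction we set $h(v)=0$, which is allowed because $(\id-\pi)(v)=0$. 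Restricted to $\ker\epsilon\oplus C_{>0}$, where $\pi$ vanishes, $h$ is a contraction of $\widetilde E_\bullet(\cR/\sigma)$ compatible with the splitting $C_0=\Z v\oplus\ker\epsilon$.

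I would also record a concrete alternative: a star-shaped, geodesic-cone construction. Since $K_\sigma$ is convex, straight-line homotopy to $v$ contracts it. The cellular version requires a common subdivision of the cone $v*K_\sigma$, so the acyclic-carrier route is cleaner.

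The main point to check is that each acyclic-carrier step really only needs $\widetilde H_\bullet$ of the carrier to vanish, and that the integrality of the construction is preserved. Both follow because the cellular chains are free abelian and the carrier is contractible, so the obstruction classes lie in zero groups. If the authors want the homotopy to be local, meaning carried by closed coarse cells, then I would instead use as carriers the closed stars through $v$. Each such star is convex and hence acyclic, and this keeps the construction compatible with restriction to faces containing $v$.
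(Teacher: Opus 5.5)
Your proof is correct, but it reaches the contraction by a different mechanism than the paper. The vanishing of homology is argued the same way in both: the convex slice is contractible, so the augmented cellular complex is exact.

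For the contraction, the paper does not use acyclic carriers. It sets $B_q=\operatorname{im}(d:C_{q+1}\to C_q)$. Each $B_{q-1}$ is free, as a subgroup of a finitely generated free abelian group, so the sequence $0\to B_q\to C_q\to B_{q-1}\to0$ splits. The paper then chooses complements $L_q$ with $d:L_q\cong B_{q-1}$. It defines $h=(d|_{L_q})^{-1}$ on $B_{q-1}$ and $h=0$ on $L_q$ and on $\mathbb Z v$. This single global splitting gives $dh+hd=\id-i_v\epsilon$ together with all the special side conditions $hi_v=0$, $\epsilon h=0$, $h^2=0$.

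Your acyclic-carrier argument, with the whole complex as carrier, is essentially the cell-by-cell version of the same ``free source, acyclic target'' induction. It yields $h(v)=0$ and $\epsilon h=0$, which is all the lemma requires. It does not give $h^2=0$ automatically; you would replace $h$ by $h\partial h$ if you wanted a special contraction. What your route buys is the flexibility of imposing carrier and relative constraints, as in \cref{thm:subdivision-equivalence}. The paper's route buys the full special deformation-retract identities in one step.

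Two side remarks in your proposal should be dropped or corrected, though neither affects your main argument. First, the ``smallest closed subcomplex containing $\overline e$ and $v$'' is $\overline e\sqcup\{v\}$ when $v\notin\overline e$. That has $\widetilde H_0\neq0$, so it is not an acyclic carrier. Second, the closed star of $v$ is only star-shaped about $v$, not convex in general. It also does not contain the cells away from $v$, so it cannot carry every cell.
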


\begin{proof}
The slice $K_\sigma$ is convex, hence contractible, so its augmented reduced
cellular complex is exact.  Write $C_0=\mathbb Z v\oplus\ker\epsilon$ and set
$i_v(1)=v$.  Put $B_q=\operatorname{im}(d:C_{q+1}\to C_q)$.  Exactness gives
$B_0=\ker\epsilon$, and for every $q\geq1$ there is a short exact sequence
\[
 0\longrightarrow B_q\longrightarrow C_q
 \xrightarrow{\ d\ }B_{q-1}\longrightarrow0.
\]
Each $B_{q-1}$ is a subgroup of the finitely generated free abelian group
$C_{q-1}$, hence is free and therefore projective; the sequence consequently
splits.  Choose a complement $L_q$ so that
\[
 C_q=B_q\oplus L_q,\qquad
 d:L_q\xrightarrow{\;\cong\;}B_{q-1}
 \qquad(q\geq1).
\]
Define $h$ to be the inverse of $d:L_q\to B_{q-1}$ on $B_{q-1}$, and zero
on every $L_q$ and on $\mathbb Z v$.  Then, directly on the displayed decomposition,
\[
 dh+hd=\id-i_v\epsilon,\qquad
 hi_v=0,\qquad \epsilon h=0,\qquad h^2=0.
\]
Restricting to $\ker\epsilon$ gives the required contraction of
$\widetilde E_\bullet(\cR/\sigma)$ relative to $v$.
\end{proof}

\part*{II.\ Resolution-level models}\addcontentsline{toc}{part}{II.\ Resolution-level models}

\section{AKSZ descent on a smooth monoidal resolution}

\subsection{Resolution-level AKSZ data}\label{sec:resolution-aksz-data}

Let $X$ be a resolved-admissible generalized-corner manifold of dimension $n$, and let
\[
  \beta_\cR:X_\cR=[X;\cR]\longrightarrow X
\]
be a smooth monoidal resolution.  Then $X_\cR$ is a manifold with ordinary corners.

Let
\[
  (\Y,\omega_\Y=\delta\alpha_\Y,Q_\Y,\Theta)
\]
be an exact Hamiltonian dg symplectic target of degree $n-1$.  For every oriented face $F$ of $X_\cR$, define
\[
  \F_F=\Map(T[1]F,\Y).
\]
Ordinary AKSZ transgression produces
\[
  (\F_F,\omega_F,\alpha_F,S_F,Q_F),
\]
where a codimension-$r$ face carries a two-form of degree $r-1$ and an action of degree $r$.  The modified Hamiltonian identity is
\begin{equation}\label{eq:facewise-mhi}
  \iota_{Q_F}\omega_F
  =(-1)^{\dim F}\delta S_F
   +\sum_{G\prec F}[F:G]\,\rho_{G/F}^*\alpha_G.
\end{equation}
Here $G\prec F$ means that $G$ is a codimension-one face of $F$, the incidence
sign $[F:G]\in\{\pm1\}$ is fixed by the chosen orientations, and
$\rho_{G/F}:\F_F\to\F_G$ is restriction of fields.  The twice-iterated defect
vanishes by the incidence relation $\partial^2=0$.

\begin{definition}
The family of facewise AKSZ data on $X_\cR$ is the \emph{resolution-level AKSZ descent system} and is denoted
\[
  \mathfrak{AKSZ}_{\mathrm{res}}(X,\Y;\cR).
\]
\end{definition}

This system is well defined for every smooth refinement.  The unresolved theory should not be defined by choosing one such system arbitrarily; it should be an invariant object extracted from all of them.

This ordinary-source system is a preliminary resolution-level object.  It maps
by precomposition with the anchor into the resolved $b$/logarithmic system
introduced below, but that map is not asserted to be an equivalence.  The main
resolution--intrinsic comparison concerns the resolved and intrinsic
$b$/logarithmic models.

\subsection{Further refinement and pullback}

Suppose $\cR'\to\cR$ is a smooth further refinement.  The associated map
\[
  \gamma:X_{\cR'}\longrightarrow X_\cR
\]
is a generalized blow-down between ordinary-corner manifolds and is a diffeomorphism on interiors.  Pullback of fields gives a formal map
\[
  \gamma^*:\Map(T[1]X_\cR,\Y)
  \longrightarrow
  \Map(T[1]X_{\cR'},\Y).
\]
On a face $F'$ of the finer resolution, the image lies in a unique smallest face $F$ of the coarser resolution, yielding facewise pullback maps.

There are two immediate limitations.
\begin{enumerate}[label=(\roman*),leftmargin=2.2em]
\item Pullback is not expected to be a symplectomorphism of the full BV spaces: the source domains and their boundary decompositions differ.
\item The finer resolution has exceptional faces with no one-to-one counterpart on the coarser resolution.
\end{enumerate}
Thus refinement invariance must involve a totalization and an elimination of exceptional sectors.

\subsection{Linear coefficient systems}

\begin{definition}
Let $\mathsf{Face}(\cR)$ be the face category of $X_\cR$, with one morphism $G\to F$ whenever $G\subseteq F$.  A \emph{facewise cochain coefficient system} is a contravariant functor
\[
  \cA_\cR:\mathsf{Face}(\cR)^{\op}
  \longrightarrow \mathsf{Ch}(\kfield).
\]
\end{definition}

Examples include:
\begin{enumerate}[label=(\alph*),leftmargin=2.2em]
\item linearized BV fields with differential $Q$;
\item polynomial local observables with the linearized BRST differential;
\item de Rham complexes of faces tensored with a fixed graded vector space;
\item tangent complexes to a regular facewise moduli problem.
\end{enumerate}

Choose orientations of all faces.  Define the bicomplex
\[
  \cC^{p,q}(\cR;\cA)
  =\bigoplus_{F\in\mathsf{Face}_q(\cR)}\cA_\cR^p(F),
\]
where $q$ is codimension.  The vertical differential is the internal differential $d_\cA$, and the horizontal differential is
\begin{equation}\label{eq:horizontal-coeff}
  (d_{\mathrm{face}}a)_G
  =\sum_{F:\,G\prec F}[F:G]\,r_{FG}(a_F).
\end{equation}
The total differential is
\[
  D=d_\cA+(-1)^p d_{\mathrm{face}}.
\]
Functoriality and incidence cancellation imply $D^2=0$.

\begin{definition}
The \emph{linear descent totalization} is
\[
  \Tot(\cR;\cA)
  =\Tot\bigl(\cC^{\bullet,\bullet}(\cR;\cA)\bigr).
\]
\end{definition}

\section{A derived criterion for refinement invariance}

\subsection{Base change}

Let $u:\cR'\to\cR$ be a further refinement.  It induces a support functor
\[
  u_*:\mathsf{Face}(\cR')\longrightarrow\mathsf{Face}(\cR)
\]
that assigns to a fine face the smallest coarse face containing its image.

\begin{definition}[Refinement descent datum]\label{def:refinement-descent-datum}
A family of coefficient systems $\{\cA_\cR\}$ has a \emph{refinement
descent datum} if every further refinement $u:\cR'\to\cR$ is equipped
with:
\begin{enumerate}[label=\textup{(\alph*)},leftmargin=2.5em]
\item a natural transformation
\[
  \eta_u:u_*^*\cA_\cR\longrightarrow\cA_{\cR'}
\]
compatible with compositions and a quasi-isomorphism on faces mapped
diffeomorphically to their support;
\item a degree-zero cochain map
\[
 U_u:\Tot(\cR;\cA_\cR)\longrightarrow
 \Tot(\cR';\cA_{\cR'})
\]
compatible with compositions and with the support filtration, such that the
associated-graded mapping-cone summand over every coarse face $F$ is canonically
identified, up to the codimension shift of $F$, with
\[
 \widetilde\Tot\bigl(\mathsf{Fib}_u(F);\cA_{\cR'}\bigr).
\]
\end{enumerate}
For ordinary cellular coefficient systems, $U_u$ is the usual subdivision
map.  For logarithmic relative de Rham systems, the existence of such a
\emph{filtered} operator is additional structure: ordinary pullback along a
blow-down does not in general preserve face degree because an exceptional
hypersurface may map to a coarse face of higher codimension.  The unfiltered
derived comparison used in \cref{subsec:interior-bridge} is therefore
constructed by an interior roof and does not assume such an operator.
\end{definition}

\subsection{Local exceptional acyclicity}

Fix a face $F$ of the coarse resolution.  Let
\[
  \mathsf{Fib}_u(F)=\{F'\in\mathsf{Face}(\cR'):u_*(F')=F\}
\]
be the exceptional fiber category.  The refinement descent datum induces
an augmentation
\[
  \cA_\cR(F)\longrightarrow
  \Tot\bigl(\mathsf{Fib}_u(F);\cA_{\cR'}\bigr),
\]
and the \emph{augmented reduced totalization}
$\widetilde\Tot(\mathsf{Fib}_u(F);\cA_{\cR'})$ is its mapping cone.

\begin{definition}
The refinement $u$ is \emph{$\cA$-acyclic over $F$} if the augmented reduced totalization
\[
  \widetilde\Tot\bigl(\mathsf{Fib}_u(F);\cA_{\cR'}\bigr)
\]
is acyclic.  It is \emph{locally $\cA$-acyclic} if this holds for every coarse face $F$.
\end{definition}

For a locally constant coefficient system, this condition reduces to the contractibility of the subdivision slice, which is \cref{lem:exceptional-acyclic}.  For a nonlinear or nonconstant field system it is a substantive locality requirement.

\subsection{Comparison theorem}

\begin{theorem}[Derived refinement comparison]\label{thm:derived-comparison}
Let $u:\cR'\to\cR$ be a finite smooth further refinement.  Suppose:
\begin{enumerate}[label=(\roman*),leftmargin=2.2em]
\item the coefficient systems carry a refinement descent datum in the sense of \cref{def:refinement-descent-datum};
\item $u$ is locally $\cA$-acyclic;
\item all direct sums in the totalization are finite, or the systems satisfy a boundedness condition making the filtration complete.
\end{enumerate}
Then the comparison map $U_u$ of the refinement descent datum,
\[
  U_u:\Tot(\cR;\cA_\cR)
  \longrightarrow
  \Tot(\cR';\cA_{\cR'})
\]
is a quasi-isomorphism.
\end{theorem}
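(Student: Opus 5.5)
The plan is to show that the mapping cone of $U_u$ is acyclic, by filtering both totalizations by the support filtration and comparing associated graded pieces. On $\Tot(\cR;\cA_\cR)$, filter by the codimension of the coarse face $F$. On $\Tot(\cR';\cA_{\cR'})$, filter by the codimension of $u_*(F')$. The support functor $u_*$ is monotone for face inclusion, and by \cref{def:refinement-descent-datum}(b) the map $U_u$ respects these filtrations. Since $\Cone(U_u)$ is acyclic exactly when $U_u$ is a quasi-isomorphism, it suffices to prove that the induced filtration on $\Cone(U_u)$ has acyclic associated graded pieces, and that this forces the whole cone to be acyclic.

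First I identify the associated graded. In filtration degree $q$, the graded piece of the source is $\bigoplus_{\codim F=q}\cA_\cR(F)$ with differential $d_\cA$; the face differential raises codimension and so drops out. The graded piece of the target is $\bigoplus_{\codim F=q}\Tot(\mathsf{Fib}_u(F);\cA_{\cR'})$. Here the fibre category contributes its own internal incidence differential between fine faces with the same support. Interior facets of a subdivision map to the same coarse face, so these terms survive in the graded piece; incidences that change the support move to a deeper filtration step and disappear. By \cref{def:refinement-descent-datum}(b), the graded piece of $\Cone(U_u)$ over $F$ is canonically, up to the codimension shift of $F$, the augmented reduced totalization $\widetilde\Tot(\mathsf{Fib}_u(F);\cA_{\cR'})$. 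The augmentation is built from $\eta_u$, so its restriction to the graded piece really is the mapping cone of $\cA_\cR(F)\to\Tot(\mathsf{Fib}_u(F);\cA_{\cR'})$. By hypothesis (ii), this piece is acyclic for every $F$.

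Next I pass from the graded pieces to the whole cone, using hypothesis (iii). If all sums are finite, the monoidal complex gives finitely many codimensions, so the filtration is finite. An induction on filtration length, using the short exact sequences $0\to F^{q+1}\to F^q\to \mathrm{gr}^q\to 0$ and their long exact sequences, shows that each $F^q\Cone(U_u)$ is acyclic, and in particular $F^0=\Cone(U_u)$ is. In the bounded or complete case I would instead use the spectral sequence of the filtered complex. Its $E_1$ page vanishes, and completeness together with exhaustiveness gives conditional convergence (Boardman), so the abutment vanishes. Equivalently, one can build a contraction of $\Cone(U_u)$ by the homological perturbation lemma: perturb the sum of the graded contractions by the strictly filtration-raising part of the differential, and the geometric series converges by completeness.

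The main obstacle is the identification step. One must check that the filtration on the target is genuinely by subcomplexes, i.e.\ that $d_{\mathrm{face}}$ never lowers the support codimension. One must also check that the graded piece of the cone is exactly the reduced fibre totalization, and not a twisted version carrying leftover cross terms from $U_u$ into other fibres. I would verify the first claim directly from \eqref{eq:horizontal-coeff}: if $G\prec F'$, then $u_*(G)\subseteq u_*(F')$, so the support can only shrink, which means codimension is nonincreasing in the right direction. For the second claim I would rely on the canonical identification postulated in (b), checking only that the signs $(-1)^p$ in $D$ and the codimension shift agree with the cone convention.
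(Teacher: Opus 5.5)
Your proposal is correct and follows the paper's own proof. You filter both totalizations, and hence the cone of $U_u$, by the codimension of the coarse support, identify the associated graded pieces with $\widetilde\Tot(\mathsf{Fib}_u(F);\cA_{\cR'})$ via clause (b) of \cref{def:refinement-descent-datum}, and conclude from local $\cA$-acyclicity by the finite-filtration (equivalently, spectral-sequence) argument. One wording slip: when $u_*(G')\subseteq u_*(F')$ the support codimension is nondecreasing rather than "nonincreasing", and that monotonicity is exactly what makes $\{\codim u_*(F')\geq q\}$ a decreasing filtration by subcomplexes.
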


\begin{proof}
Filter both total complexes by the codimension of the support in the coarse resolution.  By \cref{def:refinement-descent-datum}, $U_u$ preserves this filtration and the mapping cone of $U_u$ inherits it.  Its associated graded piece over a coarse face $F$ is precisely the reduced exceptional totalization
\[
  \widetilde\Tot\bigl(\mathsf{Fib}_u(F);\cA_{\cR'}\bigr),
\]
up to the codimension shift of $F$.  By local $\cA$-acyclicity, every associated graded piece is acyclic.  The spectral sequence of the finite filtered mapping cone therefore has zero $E_1$ page and converges to zero.  Hence the mapping cone of $U_u$ is acyclic, so $U_u$ is a quasi-isomorphism.
\end{proof}

\begin{corollary}[Comparison through an admitted common refinement]\label{cor:common-derived}
Let $\cR_1$ and $\cR_2$ be two finite smooth refinements in an admissible
comparison class satisfying \textup{(G3)}.  If an admitted smooth common
refinement $\cR_{12}$ is locally $\cA$-acyclic over both, then there is a zigzag of quasi-isomorphisms
\[
  \Tot(\cR_1;\cA_1)
  \xrightarrow{\simeq}
  \Tot(\cR_{12};\cA_{12})
  \xleftarrow{\simeq}
  \Tot(\cR_2;\cA_2).
\]
Assume, moreover, that the common refinements under consideration form a
directed subclass in which every map from a dominating common refinement is
equipped with compatible refinement-transfer data and is locally
$\cA$-acyclic.  Then the equivalence class of the displayed roof in the
derived category is independent of the chosen common refinement.
\end{corollary}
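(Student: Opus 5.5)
The first assertion follows by applying \cref{thm:derived-comparison} twice. Let $u_i:\cR_{12}\to\cR_i$ be the two arms of the admitted smooth common refinement supplied by \textup{(G3)}, as in \eqref{eq:common-resolution-diagram}. By hypothesis each $u_i$ carries a refinement descent datum and is locally $\cA$-acyclic. Finiteness of the monoidal complex \textup{(G1)} makes all totalizations finite direct sums, so condition (iii) of the theorem holds. The theorem then says each $U_{u_i}$ is a quasi-isomorphism, and these two maps are exactly the displayed roof. No further input is needed here.

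For independence, fix two admitted common refinements $\cR_{12}$ and $\cR_{12}'$. Because the subclass is directed, there is a third common refinement $\cR_{12}''$ dominating both, with maps $v:\cR_{12}''\to\cR_{12}$ and $v':\cR_{12}''\to\cR_{12}'$. These maps carry refinement-transfer data and are locally $\cA$-acyclic, so $U_v$ and $U_{v'}$ are quasi-isomorphisms, again by \cref{thm:derived-comparison}.

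Compatibility of the transfer data with composition, required in clause (b) of \cref{def:refinement-descent-datum}, gives strict equalities of cochain maps:
\[
U_v\circ U_{u_i}=U_{u_i\circ v},
\qquad
U_{v'}\circ U_{u_i'}=U_{u_i'\circ v'}.
\]
It follows that the roof through $\cR_{12}$, after post-composition with the quasi-isomorphism $U_v$, becomes the roof
\[
\Tot(\cR_1;\cA_1)\xrightarrow{U_{u_1v}}\Tot(\cR_{12}'';\cA_{12}'')\xleftarrow{U_{u_2v}}\Tot(\cR_2;\cA_2).
\]
In the derived category this roof represents the same morphism $U_{u_2}^{-1}U_{u_1}$, since $U_v$ is invertible there. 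The roof through $\cR_{12}'$ gives the analogous roof with $U_{u_i'v'}$.

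The main obstacle is identifying these two roofs through $\cR_{12}''$. The composites $u_i\circ v$ and $u_i'\circ v'$ are both the canonical refinement map $\cR_{12}''\to\cR_i$. A refinement morphism is determined by the underlying identity on supports, because all the blow-downs are the identity on the common interior. The two composites therefore agree as morphisms, and compatibility with compositions forces $U_{u_iv}=U_{u_i'v'}$.

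Where the transfer data are only coherent up to specified homotopy rather than strictly, I would instead argue that the two maps agree in the derived category. Both are filtered lifts of the same associated-graded augmentation, and their difference has a filtered mapping cone that is acyclic by local $\cA$-acyclicity. The filtration spectral sequence argument from the proof of \cref{thm:derived-comparison} then identifies the two classes.

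A general pair of roofs is handled by chaining such dominations, which is possible because the subclass is directed.
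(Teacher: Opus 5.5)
Your proposal is correct and follows the paper's proof: you apply \cref{thm:derived-comparison} to both arms, then pass to a dominating common refinement and use compatibility of the transfer maps with composition. Your observation that $u_i\circ v=u_i'\circ v'$ (because a further refinement $\cR_{12}''\to\cR_i$ is unique) makes explicit the step the paper leaves implicit. The closing fallback for merely homotopy-coherent transfer data is unnecessary, since \cref{def:refinement-descent-datum} requires strict compatibility. It would also fail as written: two maps that agree on the associated graded need not agree in the derived category, and the cone of their difference is not acyclic.
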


\begin{proof}
Apply \cref{thm:derived-comparison} to the two refinement maps.  Given two
common refinements in the directed subclass of the statement, choose a
dominating common refinement there.  The two maps from that domination are
locally $\cA$-acyclic and carry compatible refinement-transfer data, so
\cref{thm:derived-comparison} makes both of them quasi-isomorphisms.
Compatibility of the subdivision maps with composition then shows that the
resulting roofs agree in the localization at quasi-isomorphisms.
\end{proof}

\begin{definition}
When the hypotheses of \cref{cor:common-derived} hold for a directed cofinal
admissible class of smooth refinements, the resulting derived object is called the \emph{resolution-independent linear descent complex} and is denoted
\[
  \mathbb{AKSZ}_{\mathrm{lin}}(X,\Y).
\]
\end{definition}

The notation is intentionally conservative: the construction is a derived linear shadow of AKSZ data, not yet a nonlinear BV manifold.

\section{Elementary radial blow-up}

\subsection{The local model}

After shrinking a product collar of a codimension-$k$ ordinary face $F$, use
the equivalent $\ell^1$-collar
\[
 U=F\times Q_\varepsilon,
 \qquad
 Q_\varepsilon=\{x\in\mathbb R_{\geq0}^k:\textstyle\sum_i x_i<\varepsilon\}.
\]
The radial blow-up replaces the normal cone by polar variables
\[
  x_i=r u_i,
  \qquad
  r\in[0,\varepsilon),
  \qquad
  u=(u_1,\ldots,u_k)\in\Delta^{k-1},
\]
where
\[
  \Delta^{k-1}=\{u_i\geq0:\sum_i u_i=1\}.
\]
The local blow-down is
\[
  \beta:F\times[0,\varepsilon)\times\Delta^{k-1}
  \longrightarrow F\times Q_\varepsilon,
  \qquad
  \beta(y,r,u)=(y,ru).
\]
The exceptional hypersurface is $F\times\{0\}\times\Delta^{k-1}$.

\begin{proposition}[Homotopy invariance of radial blow-up]\label{prop:radial-homotopy}
The blow-down of an elementary radial blow-up is a homotopy equivalence.  It is a diffeomorphism away from the blown-up face, and both the original collar and the blown-up collar deformation retract onto $F$.
\end{proposition}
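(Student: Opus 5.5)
The plan is to build explicit straight-line deformation retractions on both collars, check that the blow-down intertwines them, and read off the homotopy equivalence from that commuting square. Write $U=F\times Q_\varepsilon$ and $\widetilde U=F\times[0,\varepsilon)\times\Delta^{k-1}$, with inclusions
\[
 i:F\to U,\quad i(y)=(y,0),
 \qquad
 \tilde\imath:F\to\widetilde U,\quad \tilde\imath(y)=(y,0,u_0)
\]
for a fixed barycentre $u_0\in\Delta^{k-1}$. The relevant projections are $\pi(y,x)=y$ and $\tilde\pi(y,r,u)=y$.

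First I treat the original collar. The homotopy
\[
 H_t(y,x)=(y,(1-t)x)
\]
stays in $U$ because $Q_\varepsilon$ is star-shaped about $0$. It satisfies $H_0=\id$, $H_1=i\circ\pi$, and it fixes $F\times\{0\}$. This makes $F$ a strong deformation retract of $U$.

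Next I treat the blown-up collar, where I combine two convex homotopies. One contracts the radial variable and the other slides the simplex coordinate to $u_0$:
\[
 \widetilde H_t(y,r,u)=\bigl(y,(1-t)r,(1-t)u+tu_0\bigr).
\]
This stays in $\widetilde U$ by convexity of $[0,\varepsilon)$ and of $\Delta^{k-1}$. It satisfies $\widetilde H_0=\id$, $\widetilde H_1=\tilde\imath\circ\tilde\pi$, and it fixes $\tilde\imath(F)$. So $\widetilde U$ also deformation retracts onto $F$.

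The blow-down is compatible with these retractions. We have $\pi\circ\beta=\tilde\pi$, and $\beta\circ\tilde\imath=i$ because $\beta(y,0,u_0)=(y,0)$. From the commuting square I take $\tilde\imath\circ\pi$ as a homotopy inverse of $\beta$, and verify the two composites:
\[
 \beta\circ\tilde\imath\circ\pi=i\circ\pi\simeq\id_U \quad\text{via } H_t,
\]
\[
 \tilde\imath\circ\pi\circ\beta=\tilde\imath\circ\tilde\pi\simeq\id_{\widetilde U}\quad\text{via } \widetilde H_t.
\]
Hence $\beta$ is a homotopy equivalence. Working with homotopy equivalences rather than pointed or relative ones avoids any need to make $\widetilde H_t$ cover $H_t$ exactly. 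One could check that $\beta\circ\widetilde H_t$ differs from $H_t\circ\beta$ only by the $u$-slide, which is harmless.

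For the diffeomorphism claim away from the blown-up face, on $r>0$ the inverse of $\beta$ is
\[
 (y,x)\mapsto\Bigl(y,\ \textstyle\sum_i x_i,\ x/\sum_i x_i\Bigr),
\]
which is smooth because $\sum_i x_i>0$ there. The $\ell^1$ normalization makes this inverse explicit and well defined on all of $Q_\varepsilon\setminus\{0\}$. Boundary strata with some $x_i=0$ but $r>0$ correspond to $u$ lying on faces of $\Delta^{k-1}$. The map is a diffeomorphism of manifolds with corners there because it is a product of the identity with the polar change of coordinates, which is smooth with smooth inverse in the corner sense.

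The one place where something must actually be checked is this smoothness of $\beta^{-1}$ up to the non-exceptional corner faces. The homotopy statements are formal.

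Finally, the global statement follows by gluing. A collar of a face in a compact manifold with corners is a product neighbourhood, and the construction above is local over $F$ and uses no choices beyond $u_0$. So the global blow-down agrees with the identity outside $U$, and a homotopy equivalence of collars that is a diffeomorphism near $\partial U\setminus\partial X$ extends by the identity. The gluing step needs the homotopies to be the identity near the outer edge of the collar. I would arrange this with a cutoff in $r$, which is the only fiddly point: replace $(1-t)$ by $(1-t\chi(r))$ with $\chi\equiv1$ near $0$, accepting that the global conclusion is then only a homotopy equivalence. That is all the proposition claims.
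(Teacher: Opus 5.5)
Your collar-level argument is sound. The straight-line retractions $H_t$ and $\widetilde H_t$, together with $\pi\circ\beta=\tilde\pi$ and $\beta\circ\tilde\imath=i$, show that $\beta:\widetilde U\to U$ is a homotopy equivalence with inverse $\tilde\imath\circ\pi$, and they give the deformation-retraction clause. Your explicit inverse on $r>0$ gives the diffeomorphism clause; the paper's proof leaves both of these implicit. The gap is in your last paragraph, the global statement. That is what the paper's proof actually establishes, and it is what \cref{cor:de-rham-blowup} needs for $[N;F]\to N$. Your inverse $\tilde\imath\circ\pi$ sends every point of the collar to $(y,0,u_0)$, including points with $r$ close to $\varepsilon$. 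So it does not agree with $\beta^{-1}$ near the outer edge of $U$, and it cannot be patched with the identity on the complement: the patched map would not even be continuous. Cutting off only the homotopies does not fix this, because the inverse map itself has to be modified. The obvious radial cutoff $(y,x)\mapsto\bigl(y,(1-\chi(r))r,x/r\bigr)$ is discontinuous at $x=0$, where the angular coordinate has no limit. So your claim that a collar homotopy equivalence which is a diffeomorphism near the outer edge ``extends by the identity'' is not justified by what you built. The point that really needs checking is continuity over the collapsed face, not smoothness of $\beta^{-1}$.

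The missing idea is a homotopy inverse that equals $\beta^{-1}$ for $r\geq\varepsilon/2$ and slides the angular coordinate to $u_0$ as $r\to0$. The paper takes $g(y,[r,u])=(y,r,q_r(u))$ with $q_r(u)=(1-a(r))u_0+a(r)u$, where $a=0$ near $0$ and $a=1$ for $r\geq\varepsilon/2$. This $g$ is continuous at the cone vertex because $q_r(u)\to u_0$ uniformly. Straight-line interpolation from $u$ to $q_r(u)$ then gives $\beta g\simeq\id$ and $g\beta\simeq\id$, both stationary for $r\geq\varepsilon/2$, so they glue literally; no radial contraction is needed. Your cutoff can be completed the same way. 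Take $\chi=1$ near $0$ and $\chi=0$ for $r\geq\varepsilon/2$, and cut off both the radial and the angular parts of $\widetilde H_t$. Put $g=\widetilde H^\chi_1\circ\beta^{-1}$ on $r>0$ and $g(y,0)=(y,0,u_0)$; this is continuous precisely because the angular slide reaches $u_0$ where $\chi=1$. Then $g\circ\beta=\widetilde H^\chi_1\simeq\id$ via $\widetilde H^\chi_t$, and $\beta\circ g\simeq\id$ by a straight line in the convex set $Q_\varepsilon$, both stationary where $\chi=0$. Alternatively, you can keep your non-relative collar equivalence and invoke the gluing theorem for homotopy equivalences of pushouts along closed cofibrations. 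For the de Rham corollary alone, Mayer--Vietoris with the five lemma also suffices. But one of these arguments has to be supplied.
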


\begin{proof}
View $Q_\varepsilon$ as the cone on $\Delta^{k-1}$, writing its points as
$[r,u]$, with all $[0,u]$ identified.  Fix $u_0\in\Delta^{k-1}$ and choose a
continuous function $a:[0,\varepsilon)\to[0,1]$ which is zero near $0$ and one
for $r\geq\varepsilon/2$.  Put
\[
 q_r(u)=(1-a(r))u_0+a(r)u.
\]
The map
\[
 g:F\times Q_\varepsilon\longrightarrow
 F\times[0,\varepsilon)\times\Delta^{k-1},\qquad
 g(y,[r,u])=(y,r,q_r(u)),
\]
is continuous at the cone vertex because $q_r(u)\to u_0$ uniformly as
$r\to0$.  It is the usual inverse of blow-down for
$r\geq\varepsilon/2$.

Linear interpolation in the convex simplex from $u$ to $q_r(u)$ gives
homotopies $\beta g\simeq\id$ on the cone and $g\beta\simeq\id$ on the
blown-up collar.  At $r=0$ the second homotopy contracts the exceptional
simplex to $u_0$; on the cone the first homotopy is well defined because all
angular values at $r=0$ represent the same vertex.  Both homotopies are fixed
for $r\geq\varepsilon/2$.  Hence they glue literally to the identity on the
unchanged complement of the collar, proving the global statement.
\end{proof}

\begin{corollary}\label{cor:de-rham-blowup}
Let $N$ be a manifold with ordinary corners and $F$ an ordinary corner face.
Pullback induces an isomorphism
\[
  \beta^*:H_{\mathrm{dR}}^\bullet(N)
  \xrightarrow{\cong}
  H_{\mathrm{dR}}^\bullet([N;F]).
\]
The same holds for a finite composition of elementary radial blow-ups.
\end{corollary}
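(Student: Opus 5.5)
The plan is to deduce the isomorphism from \cref{prop:radial-homotopy}: the blow-down $\beta:[N;F]\to N$ is a homotopy equivalence, and de Rham cohomology of manifolds with ordinary corners is homotopy invariant. The one subtlety is that the homotopy inverse $g$ built there is only continuous. We therefore avoid using $g^*$ on forms and instead pass through a smooth model of cohomology. On a manifold with ordinary corners, the de Rham theorem identifies $H^\bullet_{\mathrm{dR}}$ with singular cohomology with real coefficients, naturally with respect to smooth maps. The reason is that the sheaf complex $\Omega^\bullet$ is a fine resolution of the constant sheaf $\R$: the Poincaré lemma holds on orthant charts $[0,\infty)^k\times\R^{n-k}$ because these are star-shaped and the standard homotopy operator preserves smoothness up to the boundary, and partitions of unity exist.

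Since $\beta$ is smooth, naturality of the de Rham isomorphism gives a commutative square in which $\beta^*$ on $H_{\mathrm{dR}}$ corresponds to $\beta^*$ on $H^\bullet(-;\R)$. Singular cohomology is invariant under continuous homotopy equivalences, so \cref{prop:radial-homotopy} shows the latter $\beta^*$ is an isomorphism. More precisely, the homotopies $\beta g\simeq\id$ and $g\beta\simeq\id$ were constructed to be fixed outside the collar, so they glue from the local model $F\times Q_\varepsilon$ to global homotopies on $N$ and $[N;F]$. Hence the former $\beta^*$ is an isomorphism as well.

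For a finite composition $\beta_1\circ\cdots\circ\beta_m$ of elementary radial blow-ups, each intermediate space is again a manifold with ordinary corners. So the one-step result applies at each stage, and $(\beta_1\circ\cdots\circ\beta_m)^*=\beta_m^*\circ\cdots\circ\beta_1^*$ is a composite of isomorphisms.

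The main obstacle is the de Rham theorem with corners, specifically the Poincaré lemma on orthants with smoothness at the boundary. A cleaner alternative for the single-blow-up step avoids it: replace $g$ by a smooth homotopy inverse. Choose $u_0$ in the interior of $\Delta^{k-1}$ and take $a$ smooth and vanishing to infinite order at $0$. Then $g$ is smooth in the corner sense away from the vertex, but not at the vertex, because $u=x/|x|_1$ is not smooth there; hence the singular-cohomology route is the one I would commit to.
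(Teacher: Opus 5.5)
Your proof is correct and is essentially the paper's argument: homotopy invariance of de Rham cohomology applied to the homotopy equivalence of \cref{prop:radial-homotopy}, then composition. You make it more careful by passing the merely continuous inverse $g$ through the natural de Rham isomorphism with real singular cohomology. Your closing remark is mistaken but harmless: for smooth $a$ with $a(0)=0$ one has $a(r)u=(a(r)/r)\,x$ with $a(r)/r$ smooth, so $g$ and both interpolating homotopies are in fact smooth, and smooth homotopy invariance of $H_{\mathrm{dR}}^\bullet$ on manifolds with corners already suffices.
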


\begin{proof}
De Rham cohomology is homotopy invariant.  Apply \cref{prop:radial-homotopy} and compose.
\end{proof}

\subsection{The exceptional simplex as a face complex}

The new faces over $F$ are indexed by faces of $\Delta^{k-1}$.  A
$q$-dimensional simplex face corresponds to a resolved face of normal
codimension $k-q$.  Accordingly we use the face-graded reduced complex
\[
 \widetilde C_{\mathrm{face}}^r(\Delta^{k-1})
 :=\widetilde C_{k-r}(\Delta^{k-1}),
 \qquad 1\le r\le k,
\]
whose differential is the cellular boundary, now of face degree $+1$.

\begin{proposition}[Exceptional acyclicity for constant coefficients]\label{prop:simplex-acyclic}
Let $A^\bullet$ be any cochain complex.  The exceptional total complex
\[
  A^\bullet\otimes
  \widetilde C_{\mathrm{face}}^\bullet(\Delta^{k-1};\kfield)
\]
is acyclic.
\end{proposition}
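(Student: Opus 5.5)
The plan is to reduce the claim to the acyclicity of the reduced simplex complex and then tensor. First identify $\widetilde C_{\mathrm{face}}^\bullet(\Delta^{k-1};\kfield)$, up to the regrading $r=k-q$, with the augmented reduced cellular complex $\widetilde C_\bullet(\Delta^{k-1};\kfield)$ of \cref{lem:exceptional-acyclic}. Since $\Delta^{k-1}$ is the positive slice $K_\sigma$ of the orthant cone $\sigma=\R_{\ge0}^k$, that lemma applies directly. Indeed, the standard simplex with its trivial subdivision is a finite subdivision of a sharp nonzero cone slice. Tensoring the integral contraction with $\kfield$ then gives a $\kfield$-linear contraction $h$ of $\widetilde C_{\mathrm{face}}^\bullet$ satisfying $dh+hd=\id$ on the reduced complex, with $h$ of face degree $-1$. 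Here the augmentation has already been split off using the chosen vertex $u_0$.

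Next, work with the algebraic tensor product, which is a finite direct sum of copies of $A^\bullet$ by the paper's convention. Equip it with the total differential $D=d_A\otimes\id+(-1)^p\,\id\otimes d_{\mathrm{face}}$, following the sign rule of the bicomplex in \eqref{eq:horizontal-coeff}. Define the candidate homotopy $H=(-1)^p\,\id\otimes h$, with the same sign $(-1)^p$ attached to the $A$-degree. A short check then gives $DH+HD=\id$. The cross terms cancel because $d_A$ changes $p$ by one, so its sign flips against the $(-1)^p$ in $H$. The remaining terms give $\id\otimes(d_{\mathrm{face}}h+hd_{\mathrm{face}})=\id$. Therefore the complex is contractible, and in particular acyclic.

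An alternative that avoids sign bookkeeping is to filter by $A$-degree, or to use the Künneth formula over the field $\kfield$. Since $\widetilde C_{\mathrm{face}}$ is a bounded complex with $1\le r\le k$ and has zero homology, every term of the Künneth decomposition vanishes. Boundedness also makes the filtration finite, so no convergence issue arises even if $A^\bullet$ is unbounded.

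I expect the only real care to be needed in the degree bookkeeping. The face grading reverses the cellular degree, and the reduced degree-zero piece sits at $r=k$, where $\ker\epsilon$ is spanned by the differences of simplex vertices. One has to confirm that the augmentation-kernel convention here matches the one in \cref{lem:exceptional-acyclic}. If it does not, one should at least confirm that the cone of the augmentation, under the other convention, is still acyclic. In either case the contraction transfers unchanged. Beyond this matching of conventions, the argument is routine.
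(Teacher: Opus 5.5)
Your proposal is correct and follows essentially the same route as the paper. You obtain contractibility of the reduced simplex complex from \cref{lem:exceptional-acyclic}, where the paper uses the cone homotopy directly. You note that the regrading $r=k-q$ does not affect this. You then tensor the contraction with $A^\bullet$ using the Koszul sign, and you check that sign explicitly with $H=(-1)^p\,\id\otimes h$.
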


\begin{proof}
The reduced cellular chain complex of a simplex is contractible.  Regrading
by $r=k-q$ does not change acyclicity; the cone homotopy raises cellular
chain degree by one and therefore lowers face degree by one.  Tensoring with
$A^\bullet$ gives the stated contraction with the standard Koszul sign.
\end{proof}

This proposition is the local algebraic reason that an elementary blow-up does not change a facewise theory whose coefficients are constant in the normal simplex direction.

\part*{III.\ The intrinsic route}\addcontentsline{toc}{part}{III.\ The intrinsic route}

\section{Monoidal logarithmic forms}

\subsection{The logarithmic extension}

The AKSZ integrand on $\bT[1]X$ is a $b$-form.  To regularize it and to restrict
logarithmic primitives to boundary faces, we need a logarithmic extension which does
not overcount expressions that are already smooth.  This point is essential: the
logarithmic-function algebra of Dupont--Panzer--Pym is not the free algebra on formal
logarithms modulo additivity alone; it also identifies a logarithmic expression with
its ordinary smooth value whenever that value extends smoothly \cite[Def.~5.1]{DPP}.

On a monoidal chart $X_P$, write $\lambda_p$ for the nonnegative monomial associated
with $p\in P$.  Let
\[
 \mathcal M^{\mathrm{bas}}_P
 :=C^\infty_{X_P,>0}\cdot \lambda^P
\]
be the sheaf of positive smooth units times basic monomials, with structure map
$\alpha:\mathcal M^{\mathrm{bas}}_P\to C^\infty_{X_P,\ge0}$.  We use multiplicative
notation for this monoid and write $(\mathcal M^{\mathrm{bas}}_P)^{\gp}$ for its
groupification.  This is the positive log structure of Gillam--Molcho
\cite{GillamMolcho}, in whose framework Dupont--Panzer--Pym work
\cite[\S3]{DPP}.

\begin{definition}[DPP-compatible monoidal logarithmic functions]\label{def:logalgebra}
Let $\cO_{P}^{\log}$ be the sheaf of $C^\infty_{X_P}$-algebras generated by formal
symbols $\ell_f$, $f\in(\mathcal M^{\mathrm{bas}}_P)^{\gp}$, modulo the following
relations:
\begin{enumerate}[label=\textup{(\roman*)},leftmargin=2.5em]
\item $\ell_{f_1f_2}=\ell_{f_1}+\ell_{f_2}$ and $\ell_1=0$;
\item if $f\in\mathcal M^{\mathrm{bas}}_P$ and $g\in C^\infty_{X_P}$ are such that
$g\log(\alpha(f))$ extends to a smooth function $h$ on $X_P$, then
\[
 g\,\ell_f=h.
\]
\end{enumerate}
The monoidal logarithmic de Rham algebra $\logOmega^\bullet(X_P)$ is the commutative
dg algebra generated by $\bOmega^\bullet(X_P)$ and $\cO_P^{\log}$, modulo the
differential ideal generated by the relations above, with differential extending
$\db$ and satisfying, for a positive smooth unit $u$ and $p\in P$,
\begin{equation}\label{eq:dlog-monomial}
 \db\ell_{u\lambda_p}=\frac{\dd u}{u}+\vartheta_p.
\end{equation}
Equivalently, it is the universal logarithmic extension of the $b$-de Rham algebra
with these degree-zero relations.
The two displayed subalgebras are amalgamated over their common
\(C^\infty_{X_P}\)-subalgebra; equivalently, the construction is the
corresponding pushout of graded-commutative algebras, followed by the stated
differential quotient.
\end{definition}

\begin{remark}[Why relation \textup{(ii)} is necessary]\label{rem:dpp-second-relation}
Relation \textup{(ii)} is the smooth-identification relation of
Dupont--Panzer--Pym.  In particular, if $u>0$ is smooth then
$\ell_u=\log u$, and hence
\[
 \ell_{u\lambda_p}=\log u+\ell_{\lambda_p}.
\]
Thus changes of monoidal chart by positive smooth units are built into the algebra,
rather than being imposed only later at the level of scales.  More generally, if a
coefficient kills the logarithmic singularity so that $g\log(\alpha(f))$ is smooth,
the formal expression $g\ell_f$ is identified with that smooth function.  This is the
relation missing from the naive free-logarithm algebra.
\end{remark}

For $p\in P^{\gp}$, choose $p=a-b$ with $a,b\in P$ and put
\[
 \ell_p:=\ell_{\lambda_a\lambda_b^{-1}}.
\]
Relation \textup{(i)} makes this independent of the presentation $p=a-b$.  For
$p\in P$, the symbol $\ell_p$ represents $\log\lambda_p$ on the interior, but its
value along a vanishing face is defined only after regularization.

\begin{lemma}[Monoidal relations]\label{lem:monoidal-relations}
For all $p,q\in P^{\gp}$,
\[
 \vartheta_{p+q}=\vartheta_p+\vartheta_q.
\]
The corresponding formal logarithms are additive in $P^{\gp}$, while multiplication
of a monomial by a positive smooth unit changes its logarithm by the ordinary smooth
function $\log u$.
\end{lemma}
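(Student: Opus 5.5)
The identity $\vartheta_{p+q}=\vartheta_p+\vartheta_q$ follows by pairing with the constant $b$-vector fields of \cref{prop:btangent}, which trivialize $\bT X_P$. For every $\alpha\in V_P^*$, \cref{prop:dbsquare} and its proof give $\langle v_\alpha,\vartheta_p\rangle=\alpha(p)$. This pairing is linear in $p$, so
\[
 \langle v_\alpha,\vartheta_{p+q}\rangle=\alpha(p)+\alpha(q)=\langle v_\alpha,\vartheta_p+\vartheta_q\rangle .
\]
The $v_\alpha$ span $\bT X_P$ pointwise, so the two sides agree. Equivalently, $\vartheta_p$ is the image of $p$ under the identification $\bT^*X_P\cong X_P\times V_P$, and that map $P^{\gp}\to V_P$ is a group homomorphism. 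For $p\in P$ one can also check this from the Leibniz rule applied to $\lambda_{p+q}=\lambda_p\lambda_q$:
\[
 \lambda_{p+q}\vartheta_{p+q}=\db(\lambda_p\lambda_q)=\lambda_p\lambda_q(\vartheta_p+\vartheta_q).
\]
This holds on the dense interior, where $\lambda_{p+q}>0$, and then extends by continuity. The pairing argument avoids the division and is the one I would present.

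For the formal logarithms, additivity on $P^{\gp}$ is relation (i) of \cref{def:logalgebra}, combined with the well-definedness of $\ell_p$. Write $p=a-b$ and $q=c-e$ with $a,b,c,e\in P$. Then $p+q=(a+c)-(b+e)$, and $\lambda_{a+c}=\lambda_a\lambda_c$. Relation (i) gives
\[
 \ell_{\lambda_{a+c}\lambda_{b+e}^{-1}}=\ell_{\lambda_a\lambda_b^{-1}}+\ell_{\lambda_c\lambda_e^{-1}} .
\]
Independence of the presentation $p=a-b$ is the already-noted consequence of (i): if $a-b=a'-b'$, then $a+b'=a'+b$ in the cancellative monoid $P$. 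The unit statement is \cref{rem:dpp-second-relation}. Relation (i) gives $\ell_{u\lambda_p}=\ell_u+\ell_{\lambda_p}$. Relation (ii), applied with $g=1$ and $f=u$, where $\log\alpha(u)=\log u$ is smooth, gives $\ell_u=\log u$.

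I would close by checking that the two statements are consistent with the differential. Applying $\db$ to $\ell_{\lambda_{p+q}}=\ell_{\lambda_p}+\ell_{\lambda_q}$ and using \eqref{eq:dlog-monomial} with $u=1$ gives exactly the first identity. Applying $\db$ to the unit relation reproduces \eqref{eq:dlog-monomial}. So the differential ideal imposes no new relation, and in particular $\db$ is well defined on the quotient.

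The only step needing care, and the one I expect to be the main obstacle, is this compatibility check. One must confirm that relation (ii) is preserved by $\db$: if $g\ell_f=h$ with $h$ smooth, then $\db g\,\ell_f+g\,\db\ell_f$ should equal $\db h$. I would verify this on the interior, where everything is an honest function and honest form, and conclude by density of $X_P^\circ$ together with continuity of the $b$-forms involved.
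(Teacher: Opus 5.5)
Your argument is correct. The second half, covering the additivity of the $\ell_p$ on $P^{\gp}$ and the identity $\ell_{u\lambda_p}=\log u+\ell_{\lambda_p}$, is exactly the paper's proof: relation (i), plus relation (ii) with $g=1$ and $f=u$, which is \cref{rem:dpp-second-relation}.

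For $\vartheta_{p+q}=\vartheta_p+\vartheta_q$ your preferred route differs from the paper's.
\begin{itemize}
\item The paper takes the logarithmic derivative of $\lambda_{p+q}=\lambda_p\lambda_q$, which is your secondary Leibniz check. It then extends from $P$ to $P^{\gp}$ by additivity.
\item You instead observe that $p\mapsto\vartheta_p$ is the linear identification $\bT^*X_P\cong X_P\times V_P$, detected by pairing with the constant fields $v_\alpha$.
\end{itemize}
Your route is more direct: it covers $P^{\gp}$ at once, and needs neither division by $\lambda_{p+q}$ on the interior nor a density argument. The paper's route shows that the identity is the logarithmic derivative of the multiplicative monomial relation. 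That is what makes \eqref{eq:dlog-monomial} consistent with relation (i).

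Your final paragraph is not part of the lemma, and it is not an obstacle. $\logOmega^\bullet(X_P)$ is defined as a quotient by a \emph{differential} ideal, so $\db$ descends automatically. You may want the stronger claim that differentiating (ii) creates no new relation. For that, ``verify on the interior and use density'' does not work inside the formal quotient, where $\ell_f$ is a symbol rather than a function. Instead, note that smoothness of $h=g\log\alpha(f)$ forces the coefficients of $\db g\,\log\alpha(f)$ in a constant $b$-coframe to be smooth. Relation (ii) applied coefficientwise then identifies $\db g\,\ell_f$ with the smooth $b$-form $\db h-g(\dd u/u+\vartheta_p)$.
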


\begin{proof}
The first assertion is the logarithmic derivative of
$\lambda_{p+q}=\lambda_p\lambda_q$ and extends to $P^{\gp}$ by additivity.  The second
is relation \textup{(i)} together with \cref{rem:dpp-second-relation}.
\end{proof}

\begin{remark}[Intrinsic versus resolved logarithmic complexes]\label{rem:intrinsic-vs-dpp-log}
Definition~\ref{def:logalgebra} is an intrinsic monoidal candidate on a generalized
corner chart.  No logarithmic de Rham theorem for this unresolved nonsimplicial object
is assumed in this paper.  Whenever we pass to a smooth resolution, every resolved
face is an ordinary manifold with corners endowed with its full basic positive
log structure, which contains the pullbacks of the basic monomials from the
unresolved chart, and we use the logarithmic function and form sheaves
$C^{\infty,\log}$ and $A^{\bullet,\log}$ of Dupont--Panzer--Pym themselves.  Their
Definition~5.1 contains exactly relations \textup{(i)}--\textup{(ii)}, and their
Definition~6.4 constructs logarithmic forms from the logarithmic cotangent sheaf
\cite{DPP}.  Thus the DPP de Rham theorem is invoked only on the resolved
ordinary-corner spaces where its hypotheses apply.
\end{remark}

\begin{remark}
The extension $\logOmega^\bullet$ is larger than the Chevalley--Eilenberg algebra of
the source.  The fields live on $\bT[1]X$, while the regularization of their action is
allowed to use logarithmic primitives in $\logOmega^\bullet$.
\end{remark}

\begin{lemma}[Functoriality of the corrected logarithmic algebra]\label{lem:log-functoriality}
Let $\phi:X_Q\to X_P$ be a smooth monoidal map for which pullback carries
$\mathcal M_P^{\mathrm{bas}}$ to $\mathcal M_Q^{\mathrm{bas}}$.  Then pullback of
smooth functions and logarithmic monoid sections extends uniquely to a morphism of
commutative dg algebras
\[
 \phi^*:\logOmega^\bullet(X_P)\longrightarrow\logOmega^\bullet(X_Q).
\]
In particular, the local constructions glue across monoidal chart transitions.
\end{lemma}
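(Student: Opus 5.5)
The plan is to use the universal property of the presentation in \cref{def:logalgebra}. By construction, $\logOmega^\bullet(X_P)$ is the pushout of $\bOmega^\bullet(X_P)$ and $\cO_P^{\log}$ over $C^\infty_{X_P}$, followed by a quotient by a differential ideal. A morphism out of it is therefore determined by three pieces of data: its values on $\bOmega^\bullet(X_P)$, its values on the generators $\ell_f$, and the check that relations (i), (ii) and \eqref{eq:dlog-monomial} hold in the target. Uniqueness follows at once, because any dg morphism extending pullback of smooth functions and logarithmic monoid sections is already fixed on a generating set.

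For existence I would define $\phi^*$ in three steps. On $b$-forms, I use pullback along the interior $b$-map $\phi$. Since $\phi$ is monoidal, it induces a map of $b$-cotangent bundles, and by \cref{prop:btangent} this sends the constant form $\vartheta_p$ to $\vartheta_{\phi^\flat p}$ up to $\dd\log$ of a unit. It commutes with $\db$ because pullback of Lie algebroid Chevalley--Eilenberg complexes along a Lie algebroid morphism $\bT X_Q\to\bT X_P$ (the $b$-derivative) is a dg map. On logarithms, I set $\phi^*\ell_f:=\ell_{\phi^*f}$, which is well defined by the hypothesis $\phi^*\mathcal M^{\mathrm{bas}}_P\subseteq\mathcal M^{\mathrm{bas}}_Q$ extended to groupifications. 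Relation (i) is preserved because $\phi^*$ is multiplicative on monoid sections. For \eqref{eq:dlog-monomial}, I write $\phi^*(u\lambda_p)=(\phi^*u)\,v\,\lambda_q$ with $v$ a positive unit on $X_Q$. Then $\db\ell_{\phi^*(u\lambda_p)}=\dd\phi^*u/\phi^*u+\dd v/v+\vartheta_q$, by relation (ii) applied to $\ell_v=\log v$ and additivity. This should equal $\phi^*(\dd u/u+\vartheta_p)$, which is exactly the computation $\phi^*\vartheta_p=\db\log$ of $\phi^*\lambda_p$ checked on the interior. Since both sides are smooth $b$-forms and the interior is dense, equality on the interior suffices.

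The main obstacle is relation (ii). If $g\log\alpha(f)$ extends to a smooth $h$ on $X_P$, I need $(\phi^*g)\,\ell_{\phi^*f}=\phi^*h$ in $\cO_Q^{\log}$. On the interior of $X_Q$, where $\phi$ is interior, we have $\log\alpha(\phi^*f)=\phi^*\log\alpha(f)$. Hence $(\phi^*g)\log\alpha(\phi^*f)=\phi^*h$ there, and $\phi^*h$ is smooth on all of $X_Q$. So the hypothesis of relation (ii) holds on $X_Q$ for the pair $(\phi^*f,\phi^*g)$, and the relation in $\cO_Q^{\log}$ gives the desired identity. The subtle point is that interiority of $\phi$ is needed: the interior of $X_Q$ must map into the locus where $\log\alpha(f)$ is an honest function, so that the identity of smooth functions on a dense set extends by continuity. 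I would state this as part of ``smooth monoidal map''. Finally, the differential ideal generated by the relations maps into the corresponding ideal, because $\phi^*$ commutes with $\db$ on the generators.

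For gluing, chart transitions are monoidal diffeomorphisms preserving basic monomials up to units. Functoriality $(\psi\phi)^*=\phi^*\psi^*$ follows from uniqueness, since both sides agree on generators. Hence the local algebras form a sheaf on $X$ by the standard cocycle argument.
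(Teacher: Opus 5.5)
Your proposal is correct and follows the same route as the paper's proof. Both define the pullback on the generators and check that relation \textup{(i)}, relation \textup{(ii)} and the differential ideal are preserved; your treatment of uniqueness, of \eqref{eq:dlog-monomial}, and of functoriality for gluing fills in details the paper leaves implicit. The interiority you propose to add to ``smooth monoidal map'' is already forced by the hypothesis: sections of $\mathcal M_Q^{\mathrm{bas}}$ are positive on $X_Q^\circ$, so $\phi^*\lambda_p>0$ there and hence $\phi(X_Q^\circ)\subseteq X_P^\circ$.
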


\begin{proof}
Multiplicativity preserves relation \textup{(i)}.  If
$g\log(\alpha(f))=h$ is smooth, then
\[
 \phi^*g\,\log\!\bigl(\alpha(\phi^*f)\bigr)=\phi^*h
\]
is smooth, so relation \textup{(ii)} is preserved.  Pullback also preserves the differentials of these
relations, hence descends from the free dg algebra to the stated quotient.
\end{proof}

\begin{lemma}[Pullback to a smooth monoidal resolution]\label{lem:log-pullback-resolution}
Let $\beta_\cR:X_\cR\to X_P$ be a smooth monoidal resolution.  Pullback of smooth
functions and monomials induces a canonical morphism of commutative dg algebras
\begin{equation}\label{eq:log-pullback-resolution}
 \beta_\cR^*:\logOmega^\bullet(X_P)
 \longrightarrow A_{X_\cR}^{\bullet,\log},
\end{equation}
where the target is the Dupont--Panzer--Pym logarithmic de Rham algebra of the
ordinary-corner resolution.  After a compatible DPP regularization
$s_\cR$ has been chosen, composition with its virtual pullback gives the
corresponding morphism on any resolved face.  This facewise morphism is
relative to $s_\cR$ and is not canonical from $\beta_\cR$ alone.
\end{lemma}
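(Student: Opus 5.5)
The plan is to define $\beta_\cR^*$ on generators and then verify the relations, exactly as in \cref{lem:log-functoriality}. The source $\logOmega^\bullet(X_P)$ is a quotient of the amalgamated pushout of $\bOmega^\bullet(X_P)$ and $\cO_P^{\log}$ over $C^\infty_{X_P}$, so a morphism out of it is determined by three pieces of data. These are a dg map on $\bOmega^\bullet(X_P)$, an algebra map on $\cO_P^{\log}$, agreement of the two on $C^\infty_{X_P}$, and vanishing on the differential ideal of relations.

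\textbf{Step 1: the $b$-forms.} Since $\beta_\cR$ is an interior $b$-map, it pulls back $b$-forms to $b$-forms. On the ordinary-corner resolution $X_\cR$, Melrose $b$-forms are exactly the smooth log forms. This embeds them in the degree-$\le$ polynomial part of $A^{\bullet,\log}_{X_\cR}$ (DPP Def.~6.4), compatibly with differentials. In particular, on a cone $R$ of $\cR$ with coordinates $x_j$, a monomial pulls back as $\lambda_p\circ\beta_\cR=\prod_j x_j^{\langle p,e_j\rangle}$. Hence $\vartheta_p\mapsto\sum_j\langle p,e_j\rangle\,\dd x_j/x_j$, which is a log one-form.

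\textbf{Step 2: the logarithms.} The key observation is that $\beta_\cR^*$ carries $\mathcal M_P^{\mathrm{bas}}$ into the full basic positive log structure of $X_\cR$, as \cref{rem:intrinsic-vs-dpp-log} records. The image of $u\lambda_p$ is $(u\circ\beta_\cR)\prod x_j^{\langle p,e_j\rangle}$, with positive smooth unit and nonnegative exponents because $p\in P$ pairs nonnegatively with rays of $\cR$. So I set $\ell_f\mapsto\ell_{\beta_\cR^*f}$ in DPP's $C^{\infty,\log}$. I then check the relations:
\begin{itemize}
\item Relation (i) holds because pullback is multiplicative and DPP's logs are additive.
\item Relation (ii) holds because, if $g\log\alpha(f)=h$ is smooth on $X_P$, then $\beta_\cR^*g\cdot\log\alpha(\beta_\cR^*f)=\beta_\cR^*h$ is smooth on $X_\cR$. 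DPP Def.~5.1 imposes the same identification there.
\item Relation \eqref{eq:dlog-monomial} is compatible with the differential, since DPP's $d\ell_f=\dd\alpha(f)/\alpha(f)$ as a log form. Its pullback equals $\dd u/u+\beta_\cR^*\vartheta_p$ by Step 1.
\end{itemize}
Since both generators and relations are preserved, the map descends to the quotient. Uniqueness and canonicity follow because everything is forced on generators.

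\textbf{Step 3 and the main obstacle.} The facewise statement is where I expect the real subtlety. A DPP regularization $s_\cR$ supplies normal sections, and the virtual pullback $A^{\bullet,\log}_{X_\cR}\to A^{\bullet,\log}_{F}$ to a resolved face $F$ is a dg algebra map (DPP's regularized restriction) that sets $\ell_{x_j}$ to the regularized value determined by $s_\cR$. Composing gives the facewise morphism. To show it is not canonical from $\beta_\cR$ alone, I would exhibit a change of normal section $x_j\mapsto e^{c}x_j$. This shifts the restriction of $\ell_{\lambda_p}$ by $\langle p,e_j\rangle c$, which is nonzero for suitable $p$. The care needed is to confirm that DPP's virtual pullback is indeed multiplicative and commutes with $d$ on the full log algebra, as opposed to only on forms; I would cite their construction for that.
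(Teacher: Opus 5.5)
Your proposal is correct and is essentially the paper's proof: define $\beta_\cR^*$ on generators, check that relations \textup{(i)}, \textup{(ii)} and the relation for $\db\ell_{u\lambda_p}$ are preserved (with \textup{(ii)} handled by DPP's smooth-identification relation on $X_\cR$), and obtain the facewise map by composing with the $s_\cR$-dependent virtual pullback. The paper leaves implicit three details you make explicit: the treatment of the $b$-form factor, the compatibility over $C^\infty_{X_P}$ in the pushout, and the constant-rescaling example showing non-canonicity.
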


\begin{proof}
The monoidal map pulls a basic monomial $u\lambda_p$ back to a basic logarithmic
section on the resolution and preserves products, so relation \textup{(i)} is
preserved.  If $g\log(\alpha(f))=h$ is smooth on $X_P$, then
\[
 \beta_\cR^*g\,\log\!\bigl(\alpha(\beta_\cR^*f)\bigr)=\beta_\cR^*h
\]
is smooth on $X_\cR$; hence the DPP smooth-identification relation implies that
relation \textup{(ii)} is preserved as well.  Finally,
$\db\ell_{u\lambda_p}$ pulls back to the logarithmic differential of the pulled-back
section, so the differential ideal is preserved.  This gives
\eqref{eq:log-pullback-resolution}.  Given $s_\cR$, the same argument followed
by its virtual pullback gives the asserted face morphism; its dependence on
the regularization is exactly the qualification in the statement.
\end{proof}

\subsection{Local scales}

At a zero-dimensional stratum of $X_P$ for a sharp monoid, a scale should
assign finite regularized values to all $\ell_p$ without violating monoidal
relations.  If units are present, their logarithms are ordinary smooth
functions and are not independent normal-scale parameters.

\begin{definition}\label{def:vertex-scale}
Let $P$ be sharp.  A monoidal scale at the vertex $v$ of $X_P$ is an additive homomorphism
\[
 \scale:P^{\gp}\longrightarrow\R.
\]
Its regularized evaluation is required to agree with ordinary evaluation on smooth
functions and, on a logarithmic monomial with positive smooth unit $u$, is
\begin{equation}\label{eq:vertex-scale-unit}
 \Reg_\scale(\ell_{u\lambda_p})
 =\log u(v)+\scale(p),
 \qquad
 \Reg_\scale(\vartheta_p)=0.
\end{equation}
\end{definition}

For a nonsharp local model at a point $v$, the normal scale is instead a character of
$(P^\sharp)^{\gp}$.  To assign regularized values to all of $P^{\gp}$ one must also
choose a splitting, or equivalently an extension, whose restriction to
$(P^\times)^{\gp}$ is the additive character
$u\mapsto\log\lambda_u(v)$ coming from ordinary evaluation.  Such an extension is not
canonical; different choices encode tangential data.  In particular, the values of unit
monomials are fixed by ordinary evaluation and are not independent normal-scale
parameters.

The unit term in \eqref{eq:vertex-scale-unit} is forced by the smooth-identification
relation in \cref{def:logalgebra}.  The second formula is the logarithmic analogue of
setting the normal form $\dd r/r$ to zero under regularized restriction while retaining
a finite value for $\log r$.  This is the linear/constant-scale special case of the
virtual-morphism notion of scale used by Dupont--Panzer--Pym on ordinary resolved
faces \cite[Def.~4.12]{DPP}.  It is a pointwise normalization in a chosen
local trivialization, not a construction of compatible normal sections on an
entire resolved boundary diagram; see
\cref{prop:vertex-character-insufficient}.

For a positive-dimensional face $F\le P$, the normal group is
\[
 N_{F/P}^{\gp}=P^{\gp}/F^{\gp}.
\]
A normal scale is an element of $\Hom(N_{F/P}^{\gp},\R)$.  A choice of splitting of
\[
 0\longrightarrow F^{\gp}\longrightarrow P^{\gp}
 \longrightarrow N_{F/P}^{\gp}\longrightarrow0
\]
then gives a regularized restriction of logarithmic generators.  Different splittings change the restriction by tangential logarithms; globally this is best encoded by the virtual-morphism language used in logarithmic geometry.

\section{Intrinsic face categories and regularized Stokes trace systems}

\subsection{The intrinsic face category}

For a Joyce manifold with generalized corners whose boundary faces are
embedded, the intrinsic incidence object used below is the category of
embedded boundary faces, not the iterated boundary $\partial^kX$.
This distinction is essential for genuine $g$-corners.  Joyce shows that,
unlike the ordinary-corner case, $\partial^kX$ need not carry a natural
$S_k$-action for $k\geq3$.  By contrast, Kottke's monoidal complex records
canonically the embedded boundary faces and all of their inclusions.

\begin{definition}[Intrinsic face category]\label{def:intrinsic-face-category}
Let $X$ be a manifold with generalized corners with embedded boundary faces.
The \emph{intrinsic face category} $\mathsf{Face}(X)$ is the poset category
whose objects are $X$ and the connected embedded boundary faces of $X$, and
with a unique morphism
\[
 G\longrightarrow F
\]
whenever $G\subseteq F$.  Put $|F|=\codim_XF$.  We write $G\prec F$ when
$G\subseteq F$ and $|G|=|F|+1$.
\end{definition}

Under Kottke's correspondence, the objects of $\mathsf{Face}(X)$ are the
strata encoded by the monoidal complex $\cP_X$, and inclusions of faces are
encoded by face inclusions of the associated normal monoids.  A
\emph{flag} is a chain
\[
 F_0\succ F_1\succ\cdots\succ F_r
\]
of codimension-one incidences.  Flags are simply simplices in the nerve of
$\mathsf{Face}(X)$; in particular, they are not identified with points of
$\partial^rX$.

Use the face-orientation datum of \textup{(G1)}.  Thus every
codimension-one inclusion $G\prec F$ has an incidence sign
$[F:G]\in\{\pm1\}$, fixed by the comparison between the chosen
orientation of $F$, the boundary coorientation given by the positive normal
ray, and the chosen orientation of $G$.  These signs satisfy the following
relation.

\begin{lemma}[Intrinsic incidence cancellation]\label{lem:intrinsic-incidence}
If $H\subset F$ and $|H|=|F|+2$, then
\begin{equation}\label{eq:intrinsic-incidence}
 \sum_{G:\,H\prec G\prec F}[F:G][G:H]=0.
\end{equation}
Consequently the signed cellular incidence operator on
$\mathsf{Face}(X)$ squares to zero.
\end{lemma}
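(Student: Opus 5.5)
The plan is to reduce the identity \eqref{eq:intrinsic-incidence} to a local statement in a single monoidal chart and then verify it there by a diamond argument in the face lattice of a polyhedral cone.

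First, localise. Take $H\subset F$ with $|H|=|F|+2$, and pick a point $x$ in the relative interior of $H$. A face $G$ with $H\prec G\prec F$ contains $H$, hence contains $x$. Since boundary faces are embedded and the face poset is recorded by Kottke's monoidal complex, we may work in a chart $X_P$ centred at $x$, with $P$ the normal monoid of $H$.

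In this chart the embedded faces through $x$ correspond to faces of the normal cone $\sigma_H$ (dual to $P$). We may also reduce to the case where $F$ corresponds to the apex, by replacing $X$ with $F$ and $P$ with the face corresponding to $F$, using property (M3). The faces $G$ strictly between $H$ and $F$ then correspond to the rays, i.e.\ codimension-one data, of a two-dimensional strictly convex rational cone. Such a cone has exactly two rays. This is the diamond property of the face lattice of a polyhedral cone, because an interval of length two in that lattice is a segment. Hence exactly two faces $G_1,G_2$ occur in the sum.

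Note that non-simplicial phenomena, such as the four facets at the conifold vertex, only occur in intervals of length at least three. They do not affect this length-two statement.

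It remains to show the two terms have opposite signs, $[F:G_1][G_1:H]=-[F:G_2][G_2:H]$. The signs come from comparing the orientation of $F$, the positive normal ray coorientations, and the orientation of $H$. Composing them gives the orientation of $H$ induced from $F$ via the ordered pair of inward normals $(n_1,n_2)$ along the path through $G_1$, and $(n_2,n_1)$ along the path through $G_2$. Here $n_1$ is the normal of $G_1$ in $F$ and, near $H$, also the normal of $H$ in $G_2$, and symmetrically for $n_2$. Locally near $x$, this is the statement that the two-dimensional normal cone of $H$ in $F$ is an ordinary quadrant $[0,\infty)^2\times H$ up to a linear change. This holds because a two-dimensional strictly convex rational cone is simplicial, with two rays. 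Swapping two normal vectors reverses the orientation, so the products differ by $-1$. This is precisely the ordinary-corner computation behind $\partial^2=0$, and it gives \eqref{eq:intrinsic-incidence}.

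The step needing care is the sign comparison when the dual monoid of the two-dimensional cone is not free: $P$ may be a non-unimodular cone $\{(a,b)\colon\ldots\}$. Here I would argue only with orientations of real vector spaces, via $V_P$, rather than lattice bases. Orientation comparisons are insensitive to the positive rescaling needed to make the cone a quadrant, so non-unimodularity only introduces positive determinants.

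Finally, the consequence follows directly. The signed incidence operator $(\partial a)_G=\sum_{G\prec F}[F:G]a_F$ has square with $(H,F)$-coefficient equal to the left side of \eqref{eq:intrinsic-incidence} when $|H|=|F|+2$, and equal to $0$ otherwise. Hence $\partial^2=0$.
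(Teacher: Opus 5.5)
Your proposal is correct and follows essentially the same route as the paper. You localize along the interior of $H$, use embeddedness to see that the rank-two normal cone of $H$ in $F$ contributes exactly two intermediate faces, and show that their composite orientations are opposite. The paper phrases this last step as the two endpoints of an oriented transverse interval slice, i.e.\ the cellular identity $\partial^2=0$ on the normal slice, which is equivalent to your interchange of the ordered normal pair $(n_1,n_2)$.
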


\begin{proof}
The statement is local along the interior of $H$.  The normal monoid of $H$
inside $F$ has rank two.  Its real cone is therefore a strictly convex
rational polyhedral cone of dimension two, and a transverse compact slice is
an oriented interval.  The codimension-one intermediate faces
$H\prec G\prec F$ correspond to the two endpoints of this interval.  Their
induced boundary orientations are opposite, which is precisely
\eqref{eq:intrinsic-incidence}.  Equivalently, this is the cellular identity
$\partial^2=0$ on the normal slice.  Gluing these local normal slices over
$H$ gives the global incidence identity.
\end{proof}

\begin{remark}[Orientation scope]\label{rem:orientation-scope}
The scalar incidence signs used in this paper are therefore not asserted to
exist canonically for an arbitrary oriented $g$-corner.  The main theorems are
stated in the face-oriented setting of \textup{(G1)}.  A formulation with
orientation-line coefficients should remove this auxiliary choice, but that
twisted version is not developed here.
\end{remark}

\begin{remark}[Why iterated boundaries are not used intrinsically]\label{rem:no-sk-gcorners}
For ordinary corners one may identify $\partial^kX$ with ordered
$k$-tuples of distinct local boundary components and divide by the free
$S_k$-action.  Joyce proves that this description fails for general
$g$-corners: already in the cone-over-a-square example, $\partial^3X$ has
eight points over the vertex whereas $C_3(X)$ has one, and the natural
transpositions on iterated boundaries do not in general satisfy the
relations of $S_k$.  The category $\mathsf{Face}(X)$ avoids this obstruction
and retains exactly the incidence information needed for Stokes descent.
\end{remark}

\subsection{The trace axiom}

\begin{definition}[Multiplicative regularized Stokes trace system]\label{def:trace-system}
Let $X$ be a compact $g$-corner with embedded boundary faces and the coherent
face-orientation datum of \textup{(G1)}.  A \emph{multiplicative regularized
Stokes trace system} $\cT$ consists of:
\begin{enumerate}[label=\textup{(\roman*)},leftmargin=2.5em]
\item a commutative differential graded algebra $\logOmega^\bullet(F)$
containing $\bOmega^\bullet(F)$ for every $F\in\mathsf{Face}(X)$;
\item for every inclusion $G\subseteq F$, a unital morphism of commutative
differential graded algebras
\[
 \Reg_{G/F}:\logOmega^\bullet(F)\longrightarrow\logOmega^\bullet(G),
\]
called regularized restriction, such that
$\Reg_{G/F}(\bOmega^\bullet(F))\subseteq\bOmega^\bullet(G)$;
\item strict functoriality along the face category,
\begin{equation}\label{eq:reg-functorial}
 \Reg_{H/G}\circ\Reg_{G/F}=\Reg_{H/F}
 \qquad(H\subseteq G\subseteq F),
\end{equation}
and $\Reg_{F/F}=\id$;
\item a degree-zero linear map
\[
 \Tr_F^{\cT}:{\logOmega}_c^\bullet(F)\longrightarrow\R[-\dim F]
\]
for every face $F$;
\item the exact Stokes identity
\begin{equation}\label{eq:stokes-trace}
 \Tr_F^{\cT}(\db\eta)
 =\sum_{G\prec F}[F:G]\,
 \Tr_G^{\cT}(\Reg_{G/F}\eta)
\end{equation}
for every compactly supported
$\eta\in\logOmega^{\dim F-1}(F)$.
\end{enumerate}
\end{definition}

\begin{remark}[Trace versus nondegeneracy]\label{rem:trace-not-nondegenerate}
The axioms in \cref{def:trace-system} are Stokes and multiplicativity axioms.
They do not imply that a two-form obtained by transgression is weakly
nondegenerate, and they do not by themselves construct a BV bracket on an
infinite-dimensional field space.  Whenever Hamiltonian vector fields or a
classical master equation are asserted below, the required nondegeneracy and
formal mapping-space hypotheses are stated separately.
\end{remark}

\begin{remark}[Linear variant]\label{rem:linear-trace-system}
For purely linear cochain theories one may weaken \textup{(ii)} to a
functorial degree-zero cochain map.  The multiplicativity and the preservation
of $\bOmega$ are required here because a nonlinear AKSZ field is encoded by an
algebra morphism; without them, regularized source restriction need not define
a field on the boundary face.  All intrinsic nonlinear AKSZ statements below
use the multiplicative version of \cref{def:trace-system}.
\end{remark}

\begin{proposition}[Face differential]\label{prop:intrinsic-face-differential}
Let $\cA$ be a contravariant cochain system on $\mathsf{Face}(X)$, with
restriction maps $r_{G/F}$ satisfying strict functoriality.  Set
\[
 \cC^{q,k}(X;\cA)
 =\bigoplus_{\substack{F\in\mathsf{Face}(X)\\ |F|=k}}\cA^q(F)
\]
and define
\begin{equation}\label{eq:intrinsic-face-diff}
 (d_{\mathrm{face}}a)_G
 =\sum_{F:\,G\prec F}[F:G]\,r_{G/F}(a_F).
\end{equation}
Then $d_{\mathrm{face}}^2=0$.  If the internal differential commutes with all
$r_{G/F}$, the total operator
\[
 D=d_{\cA}+(-1)^q d_{\mathrm{face}}
\]
on a component of internal degree $q$ satisfies $D^2=0$.
\end{proposition}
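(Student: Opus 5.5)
The plan is a direct computation, reducing $d_{\mathrm{face}}^2=0$ to \cref{lem:intrinsic-incidence} and then assembling $D^2=0$ from the usual bicomplex sign bookkeeping.

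\emph{Step 1: expand $d_{\mathrm{face}}^2$.} Fix a face $H$ with $|H|=k+2$ and an element $a=(a_F)_{|F|=k}$ of $\cC^{q,k}(X;\cA)$. Applying \eqref{eq:intrinsic-face-diff} twice gives
\[
 (d_{\mathrm{face}}^2a)_H=\sum_{G:\,H\prec G}\ \sum_{F:\,G\prec F}[G:H][F:G]\,r_{H/G}r_{G/F}(a_F).
\]
Strict functoriality gives $r_{H/G}r_{G/F}=r_{H/F}$. Regrouping by the outer face $F$, the expression becomes
\[
 \sum_{F\supset H,\ |F|=k}\Bigl(\sum_{G:\,H\prec G\prec F}[F:G][G:H]\Bigr)r_{H/F}(a_F).
\]
Each inner sum vanishes by \cref{lem:intrinsic-incidence}. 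Two points need checking here. First, every chain $H\prec G\prec F$ is counted exactly once, which holds because $\mathsf{Face}(X)$ is a poset. Second, only $F$ with $|F|=|H|-2$ contribute, which is built into the grading.

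\emph{Step 2: assemble $D$.} Write $d_{\mathcal A}$ componentwise, so that $(d_{\mathcal A}a)_F=d_{\mathcal A}(a_F)$. The hypothesis that $d_{\mathcal A}$ commutes with every $r_{G/F}$ gives $d_{\mathcal A}d_{\mathrm{face}}=d_{\mathrm{face}}d_{\mathcal A}$ on the bicomplex, with both operators preserving internal degree or raising it by one in the evident way. On a component of internal degree $q$, expand
\[
 D^2=d_{\mathcal A}^2+(-1)^q d_{\mathcal A}d_{\mathrm{face}}+(-1)^{q+1}d_{\mathrm{face}}d_{\mathcal A}+d_{\mathrm{face}}^2.
\]
The first and last terms vanish, the first because each $\cA(F)$ is a cochain complex and the last by Step 1. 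The sign in the third term is $(-1)^{q+1}$ because $d_{\mathcal A}a$ has internal degree $q+1$ and $d_{\mathrm{face}}$ does not change $q$. The two middle terms therefore cancel by the commutation.

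\emph{Main obstacle.} The algebra is routine. The one step that needs care is the regrouping in Step 1. When $G\prec F$ and $H\prec G$, the restriction composite must be the single map $r_{H/F}$ regardless of which intermediate $G$ is used, and this is exactly strict functoriality. Without it, as for a merely homotopy-coherent system, the inner sum could not be factored out and one would need higher homotopies instead. I will also note that finiteness of the face category from (G1) keeps all sums finite, so no convergence issue arises.
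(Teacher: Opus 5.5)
Your proposal is correct and is essentially the paper's proof: strict functoriality collapses each composite $r_{H/G}r_{G/F}$ to $r_{H/F}$, the resulting inner incidence sum vanishes by \cref{lem:intrinsic-incidence}, and $D^2=0$ is the standard Koszul sign check. The paper only cites that sign check, while you carry it out explicitly and correctly.
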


\begin{proof}
Fix $H$ of codimension $k+2$.  The contribution of a face $F$ of codimension
$k$ to $(d_{\mathrm{face}}^2a)_H$ is, by functoriality,
\[
 \left(\sum_{G:\,H\prec G\prec F}[F:G][G:H]\right)
 r_{H/F}(a_F),
\]
which vanishes by \cref{lem:intrinsic-incidence}.  The total-complex statement
is the standard Koszul sign computation.
\end{proof}

\begin{remark}[No separate residue anomaly]\label{rem:no-anomaly}
If one tries to pull a logarithmic primitive back by ordinary restriction, the
result may be undefined and a residue term appears to be missing.  In
\cref{eq:stokes-trace}, the boundary map is the regularized restriction
$\Reg_{G/F}$.  The residue information is therefore contained in the
boundary field itself.
\end{remark}

\subsection{Ordinary corners and the ordered-boundary model}

In the DPP formulas below, $(\partial^kN)^{\mathrm{bas}}$ denotes the
\emph{basic} part of the $k$-fold boundary as a manifold with log corners.
Its underlying ordinary-corner manifold is the usual ordered boundary
$\partial^kN$.  The face maps in the relative logarithmic complex are the
scale-induced virtual morphisms
$(\partial^{k+1}N)^{\mathrm{bas}}\to(\partial^kN)^{\mathrm{bas}}$.
Phantom logarithmic directions enter those virtual pullbacks and their
regularized values; they do not index additional geometric faces in the
ordered sum.  This is the basic/phantom distinction in
\cite[Secs.~3.3 and~6.4]{DPP}.

\begin{definition}[Unordered DPP descent datum]\label{def:dpp-unordered-descent}
Let $F$ be an embedded face of an ordinary-corner manifold and let
$G\subseteq F$ have relative codimension $r$.  Denote by
$\mathcal O_{G/F}\subseteq(\partial^rF)^{\mathrm{bas}}$ the free $S_r$-orbit
of ordered lifts of $G$.  For an ordered lift $\widetilde G$, write
$A_{\log}^\bullet(\widetilde G)$ for its DPP logarithmic de Rham algebra,
including the phantom logarithmic directions carried by the iterated
boundary.

An \emph{unordered DPP descent datum} consists of one cdga
$A_G^\bullet$ for every embedded face and equivariant isomorphisms
\begin{equation}\label{eq:dpp-unordered-identification}
 \phi_{G/F}:
 \left(\prod_{\widetilde G\in\mathcal O_{G/F}}
 A_{\log}^\bullet(\widetilde G)\right)^{S_r}
 \xrightarrow{\;\cong\;}A_G^\bullet
\end{equation}
which intertwine the chosen scales and phantom directions.  For every flag
$H\subseteq G\subseteq F$, the isomorphism obtained by first applying
$\phi_{G/F}$ and then $\phi_{H/G}$ is required to agree with
$\phi_{H/F}$ after the canonical concatenation of the two ordered lift
orbits.  The corresponding identifications of sign representations with
orientation lines are also required to agree with the fixed face
orientations.
\end{definition}

This datum is additional to the symmetric semi-simplicial regularization of
Dupont--Panzer--Pym: their construction supplies the ordered lift algebras,
permutation actions and virtual pullbacks, but does not by itself identify all
phantom-enhanced ordered lifts with one preassigned cdga on the embedded
unordered face.

\begin{proposition}[Conditional ordered-to-unordered descent]
\label{lem:dpp-unordered-descent}
An $S_r$-equivariant DPP regularization together with an unordered descent
datum defines unital cdga maps
\begin{equation}\label{eq:dpp-unordered-reg}
 \Reg_{G/F}
 =\phi_{G/F}\circ
 \left(\prod_{\widetilde G\in\mathcal O_{G/F}}
 v_{\widetilde G/F}^*\right)^{S_r}
 \circ\phi_{F/F}^{-1}
 :A_F^\bullet\longrightarrow A_G^\bullet,
\end{equation}
which satisfy
$\Reg_{H/G}\Reg_{G/F}=\Reg_{H/F}$ for every flag
$H\subseteq G\subseteq F$.
\end{proposition}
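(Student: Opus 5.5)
The plan is to verify well-definedness and the cdga property of each $\Reg_{G/F}$ first, and then reduce the functoriality identity to the composition law for DPP virtual pullbacks together with the flag-compatibility clause of \cref{def:dpp-unordered-descent}.

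\textbf{Step 1: well-definedness.} Read $\phi_{F/F}$ as the identification of $A_F^\bullet$ with the DPP algebra of $F$ itself; the orbit $\mathcal O_{F/F}$ is a single point and $S_0$ is trivial. For every ordered lift $\widetilde G\in\mathcal O_{G/F}$, the scale-induced virtual morphism $\widetilde G\to F$ gives a virtual pullback
\[
v_{\widetilde G/F}^*:A_{\log}^\bullet(F)\to A_{\log}^\bullet(\widetilde G).
\]
By DPP this is a unital cdga map, so the product map into $\prod_{\widetilde G}A_{\log}^\bullet(\widetilde G)$ is also a unital cdga map. $S_r$-equivariance of the regularization means that for $\pi\in S_r$ the permutation isomorphism $\pi:\widetilde G\to\pi\widetilde G$ satisfies
\[
\pi^*\circ v_{\pi\widetilde G/F}^*=v_{\widetilde G/F}^*.
\]
Hence the tuple $(v_{\widetilde G/F}^*\eta)_{\widetilde G}$ is $S_r$-invariant, so the map lands in the invariant subalgebra. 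That subalgebra is a sub-cdga because $S_r$ acts by cdga automorphisms. Composing with the cdga isomorphism $\phi_{G/F}$ therefore yields a unital cdga map $A_F^\bullet\to A_G^\bullet$. Within this step, the sign representation enters only through the orientation-line identification, which affects traces and not the restriction maps themselves.

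\textbf{Step 2: functoriality.} For a flag $H\subseteq G\subseteq F$ with relative codimensions $s$ and $r$, I will expand $\Reg_{H/G}\Reg_{G/F}$. The middle factor $\phi_{G/F}^{-1}$ is not literally cancelled by $\phi_{G/G}^{-1}$. This is exactly what the flag-compatibility clause of \cref{def:dpp-unordered-descent} handles: it identifies applying $\phi_{G/F}$ and then $\phi_{H/G}$ with $\phi_{H/F}$ after concatenating the lift orbits
\[
\mathcal O_{H/G}\times\mathcal O_{G/F}\to\mathcal O_{H/F},
\]
which is equivariant for $S_s\times S_r\subseteq S_{r+s}$.

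It then remains to show, lift by lift, that
\[
v_{\widetilde H/\widetilde G}^*\circ v_{\widetilde G/F}^*=v_{\widetilde H/F}^*
\]
for the concatenated lift $\widetilde H$. I will invoke this as the semi-simplicial (face-map) composition law of the DPP regularization: the virtual morphisms $(\partial^{k+1}N)^{\mathrm{bas}}\to(\partial^kN)^{\mathrm{bas}}$ compose strictly because they form a symmetric semi-simplicial object. With this in hand, $S_r\times S_s$-invariants followed by $S_s$-invariants recover the $S_{r+s}$-invariant component indexed by concatenated lifts. Since $\mathcal O_{H/F}$ is a free $S_{r+s}$-orbit, every ordered lift of $H$ in $F$ arises as such a concatenation up to the $S_{r+s}$-action, so the invariant tuples match.

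\textbf{Expected main obstacle.} The hardest part will be this last bookkeeping step. One must check that $S_{r+s}$-invariance on $\mathcal O_{H/F}$ is equivalent to the combined invariance data, including the shuffle permutations that are not in $S_r\times S_s$. One must also check that the phantom-direction identifications in \eqref{eq:dpp-unordered-identification} intertwine the virtual pullbacks, rather than only the underlying algebras. I would handle this by working with a single chosen ordered lift representative: on a free orbit, the invariants are isomorphic to the algebra of one lift. This reduces everything to the strict composition of DPP virtual pullbacks along one ordered flag. That composition law is exactly what the descent datum was designed to transport.
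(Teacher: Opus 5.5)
Your proposal is correct and follows essentially the paper's own argument. You compose the DPP virtual pullbacks lift by lift, use $S_r$-equivariance to land in the invariant product and then compose with $\phi_{G/F}$; for a flag you concatenate orderings, identify the composite pullback by the symmetric semi-simplicial identities, and match the invariant products using the cocycle clause of the descent datum. Your extra bookkeeping, namely the shuffles outside $S_r\times S_s$ and the reduction to a single ordered lift of a free orbit, only makes explicit what the paper's one-line appeal to those identities leaves implicit. One small correction: the factor in the middle of $\Reg_{H/G}\Reg_{G/F}$ that does not cancel is $\phi_{G/G}^{-1}\circ\phi_{G/F}$, not $\phi_{G/F}^{-1}$.
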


\begin{proof}
For every ordering of the $r$ intervening boundary hypersurfaces, compose the
corresponding DPP virtual pullbacks.  Equivariance assembles these component
maps into a map to the invariant product in
\eqref{eq:dpp-unordered-identification}; composing with $\phi_{G/F}$ gives
\eqref{eq:dpp-unordered-reg}.  For a flag, concatenate the two orderings.
The symmetric semi-simplicial identities identify the resulting composite
with the virtual pullback for $H\subseteq F$, while the cocycle clause in the
descent datum identifies the two invariant products.  This is exactly the
stated strict composition law.
\end{proof}

\begin{theorem}[DPP ordered trace and conditional unordered descent]
\label{thm:dpp}
Let $N$ be an oriented manifold with ordinary corners, equipped with its basic
positive log structure and a compatible DPP regularization on all iterated
boundaries.  The logarithmic de Rham algebras, virtual face maps and
regularized integrals define the symmetric semi-simplicial ordered relative
complex and its Stokes cochain trace of
\cite[Defs.~6.13, 7.1 and Cor.~7.11]{DPP}.
Here the differential and trace are understood in the shifted-totalization
convention of DPP.  The explicit diagonal conversion to the column
totalization used later in this paper is given in
\cref{eq:dpp-total-differential,eq:dpp-column-conjugation}.
If an unordered DPP descent datum is additionally supplied, they define a
regularized Stokes trace system on $\mathsf{Face}(N)$ in the sense of
\cref{def:trace-system}.
\end{theorem}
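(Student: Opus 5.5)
The first assertion is essentially a citation, so I will prove it by assembling the DPP results rather than constructing anything new. From \cite[Def.~6.13]{DPP} I take the ordered relative complex: the logarithmic de Rham algebras $A^{\bullet,\log}((\partial^kN)^{\mathrm{bas}})$ of the iterated boundaries, with face maps the scale-induced virtual pullbacks $(\partial^{k+1}N)^{\mathrm{bas}}\to(\partial^kN)^{\mathrm{bas}}$. The symmetric semi-simplicial identities for these face maps come from the compatibility of the chosen regularization on all iterated boundaries. From \cite[Def.~7.1 and Cor.~7.11]{DPP} I take the regularized integrals and the statement that their alternating sum is a cochain map to $\R$, which is the Stokes identity in DPP's shifted-totalization convention. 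The only point to check is that the ``basic'' convention in the text agrees with DPP's, so that phantom directions appear only inside the virtual pullbacks and never as extra ordered faces. Section~3.3 and \S6.4 of \cite{DPP} are exactly this.

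For the second assertion I verify the axioms of \cref{def:trace-system} one at a time. For (i), I put $\logOmega^\bullet(G):=A_G^\bullet$ from the descent datum. This algebra contains $\bOmega^\bullet(G)$ because every ordered lift algebra contains the $b$-forms of $\widetilde G$, and these are $S_r$-invariantly identified with $\bOmega^\bullet(G)$ through $\phi_{G/F}$. For (ii) and (iii), I apply \cref{lem:dpp-unordered-descent} directly. Virtual pullbacks send genuine $b$-forms to $b$-forms, so $\Reg_{G/F}$ preserves $\bOmega^\bullet$, and $\Reg_{F/F}=\id$ because $\mathcal O_{F/F}$ is a single point. For (iv), I define $\Tr_G^{\cT}$ on $A_{G,c}^\bullet$ as the DPP regularized integral over one ordered lift $\widetilde G$, transported through $\phi^{-1}$ and taken with the orientation sign fixed by the descent datum. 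Equivalently, it is $1/r!$ times the sum over the orbit $\mathcal O_{G/F}$. The orientation-line clause of \cref{def:dpp-unordered-descent} makes this independent of the chosen lift: transposing the order multiplies both the sign representation and the integral orientation by $\sgn$.

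The main step is (v), converting the ordered Stokes formula into the unordered one \eqref{eq:stokes-trace}. I would start from DPP's Stokes theorem for a single face $F$, regarded as a manifold with ordinary corners with its induced regularization:
\[
 \textstyle\int^{\Reg}_F\db\eta=\sum_{i}\pm\int^{\Reg}_{\partial_iF}v_i^*\eta.
\]
Here the sum runs over the boundary hypersurfaces of $F$, which are the codimension-one faces $G\prec F$ when faces are embedded, and the sign is the DPP outward-normal sign. I then identify $v_i^*$ with $\Reg_{G/F}$ via \eqref{eq:dpp-unordered-reg} with $r=1$, where the group $S_1$ is trivial. I also identify the DPP boundary sign with $[F:G]$ using the face-orientation datum of (G1). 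The remaining obstacle is to reconcile signs between DPP's shifted totalization and the column convention; the explicit diagonal conjugation cited in the statement handles this. So I expect this sign bookkeeping to be the real content. I would check it first on the corner $[0,1)^2$ and then argue that the sign is local and multiplicative along flags.
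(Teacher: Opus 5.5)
Your proposal is correct and follows essentially the paper's route. You cite the DPP ordered complex and Stokes trace, take the restrictions and their composition from \cref{lem:dpp-unordered-descent}, and obtain \eqref{eq:stokes-trace} from the ordered Stokes formula via the free $S_k$-orbit of lifts, the orientation clause of the descent datum and the $1/k!$ normalization; your single-lift trace agrees with the paper's orbit sum because the $k!$ summands coincide. One correction: the diagonal conjugation of \eqref{eq:dpp-column-conjugation} only concerns the totalized trace and plays no role in the facewise identity \eqref{eq:stokes-trace}, whose sign content is exactly the identification of DPP boundary orientations with $[F:G]$ that you already invoke.
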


\begin{proof}
The ordered symmetric semi-simplicial statement and its regularized Stokes
formula are precisely the cited DPP results.  Under the additional descent
datum, the cdga restriction maps and their strict composition are
\cref{lem:dpp-unordered-descent}.

For a codimension-$k$ face, its ordered lifts form a free $S_k$-orbit.
Tensoring the component index with $\sgn_k$ and taking invariants identifies
that orbit, by the orientation clause of the datum, with the orientation line
of the unordered face.  The alternating ordered face maps then become the
incidence numbers $[F:G]$.  Summing the ordered Stokes formula over the orbit
and applying $1/k!$ cancels its $k!$ ordered lifts and gives
\eqref{eq:stokes-trace}.  The factorial occurs only in the ordered-to-unordered
total trace, not in an individual face map.
\end{proof}

\begin{proposition}[Dictionary on a smooth resolution]\label{prop:ordered-face-dictionary}
Let $N$ be a manifold with ordinary corners and embedded boundary faces.
Let $\cA^\bullet$ be a coefficient system on the ordered iterated boundaries
whose pullbacks and permutation identifications are $S_k$-equivariant; this
includes the DPP logarithmic complexes with an equivariant regularization.
After choosing orientations of the faces of $N$, there is a degree-preserving identification
\begin{equation}\label{eq:ordered-unordered-identification}
 \bigoplus_{\substack{F\in\mathsf{Face}(N)\\ |F|=k}}
 \cA^\bullet(F)
 \cong
 \left(\cA^\bullet((\partial^kN)^{\mathrm{bas}})\otimes\sgn_k\right)^{S_k}.
\end{equation}
Here the summand denoted \(\cA^\bullet(F)\) is the invariant object attached
to the complete \(S_k\)-orbit of ordered lifts.  It may be replaced by a
preassigned cdga on the embedded face only when an unordered descent datum in
the sense of \cref{def:dpp-unordered-descent} is present.
Under this identification the signed face differential agrees with the
alternating ordered-boundary differential.  The factor $1/k!$ in the
ordered-boundary integration formula compensates exactly for the $k!$
ordered lifts of each unoriented codimension-$k$ face.
\end{proposition}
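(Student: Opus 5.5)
The plan is to build the identification \eqref{eq:ordered-unordered-identification} orbit by orbit, then check it against the two differentials and the trace normalization.

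\emph{Step 1: decompose the ordered boundary.} For ordinary corners with embedded faces, $\partial^kN$ is the disjoint union of the ordered lifts of codimension-$k$ faces. A point of $\partial^kN$ is a point of $N$ together with an ordered $k$-tuple of distinct local boundary hypersurfaces through it. Embeddedness makes each such tuple determine a unique connected codimension-$k$ face $F$. The set of ordered lifts of $F$ is then a free $S_k$-torsor $\mathcal O_F$. Hence
\[
 \cA^\bullet((\partial^kN)^{\mathrm{bas}})
 =\bigoplus_{|F|=k}\ \prod_{\widetilde F\in\mathcal O_F}\cA^\bullet(\widetilde F),
\]
equivariantly for the permutation action. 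Taking $(\,\cdot\otimes\sgn_k)^{S_k}$ commutes with this finite direct sum. On one free orbit, the invariants are isomorphic to a single factor: fix a base lift $\widetilde F_0$ and project to its component. This defines the summand written $\cA^\bullet(F)$. The dependence on $\widetilde F_0$ is absorbed by the chosen orientation of $F$. The chosen face orientation, compared with the ordered coorientation $(\nu_1,\dots,\nu_k)$ of the lift, gives a sign $\epsilon(\widetilde F)\in\{\pm1\}$. This sign transforms by $\sgn_k$, so $a\mapsto(\epsilon(\widetilde F)\,\sigma^*a)_{\widetilde F}$ is a canonical isomorphism onto the invariants. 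No descent datum is used, which is exactly the caveat stated in the proposition.

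\emph{Step 2: compare the differentials.} The ordered differential is $\sum_{i=0}^{k}(-1)^i\partial_i^*$, where $\partial_i$ inserts the new hypersurface in slot $i$. These are DPP virtual pullbacks in the log case; they are $S$-equivariant by the assumed equivariance. Fix $G\prec F$ and a lift $\widetilde G$. Exactly one face map lands $\widetilde G$ on some lift of $F$, namely deletion of the hypersurface separating $G$ inside $F$. Permuting that hypersurface to the last slot costs a sign, and this sign combines with $\epsilon(\widetilde G)\epsilon(\widetilde F)$. The combination should equal the boundary-coorientation comparison defining $[F:G]$.

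This sign bookkeeping is the main obstacle. I would first do it on the local model $[0,\infty)^{k+1}\times\mathbb R^{n-k-1}$ with outward-normal-first convention, checking that the product of the three signs is independent of the chosen lifts. Independence follows because every factor is $\sgn$-equivariant. Locality of the incidence sign then globalizes the check. Functoriality of $r_{G/F}$ corresponds to the semi-simplicial identities, so the two differentials agree. The Koszul sign $(-1)^q$ is the same on both sides.

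\emph{Step 3: the trace normalization.} Integrating an invariant element over $\partial^kN$ sums $k!$ equal contributions. After orientation matching, each contribution is the integral over $F$. The prefactor $1/k!$ therefore returns $\int_F$, as in the proof of \cref{thm:dpp}. In the regularized case the same count applies, because the DPP regularized integral is invariant under the permutation identifications.
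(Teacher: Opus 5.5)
Your proposal is correct and follows the paper's own (very brief) proof step for step: decompose $\partial^kN$ into free $S_k$-orbits of ordered lifts, keep one $\sgn_k$-twisted copy per orbit normalized by the induced-orientation sign, match deletion of one entry with the incidence map, and use the orbit count for the factor $1/k!$. One caution for the sign check you flag: with a normal-first convention on both sides and front-indexed face maps $(-1)^i\partial_i^*$, the untwisted sign $\epsilon(\widetilde F)$ turns the ordered differential into the incidence operator with coefficients $(-1)^k[F:G]$ rather than $[F:G]$ (already visible on $[0,\infty)^2$), so the identification may need a $k$-dependent renormalization such as $(-1)^{k(k-1)/2}$; the paper also leaves this convention implicit.
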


\begin{proof}
For ordinary corners, the underlying manifold of $(\partial^kN)^{\mathrm{bas}}$
is the free $S_k$-cover of the
codimension-$k$ corner space obtained by ordering the $k$ distinct local
boundary components.  On each $S_k$-orbit, tensoring with the sign
representation and taking invariants leaves one copy, with the sign determined
by the induced orientation.  Deleting one entry from an ordered tuple gives
the usual alternating boundary map, which becomes the oriented cellular
incidence map after quotienting.  The same orbit count gives the normalization
of the integral.
\end{proof}

\subsection{Two descriptions of the face diagram}\label{subsec:face-dictionary}

Both routes will henceforth use face categories.  On a smooth resolution
$X_\cR$, the category $\mathsf{Face}(X_\cR)$ is the face category of the
refined monoidal complex.  Intrinsically, the category
$\mathsf{Face}(X)$ is the face category of Kottke's unresolved monoidal
complex $\cP_X$.  The blow-down induces the support functor
\[
 q_\cR:\mathsf{Face}(X_\cR)\longrightarrow\mathsf{Face}(X),
\]
which sends a resolved face to the smallest intrinsic face containing its
image.  Refinement may create several resolved faces over one intrinsic face
and may create exceptional faces; therefore $q_\cR$ is not expected to be
bijective.

On the resolved ordinary-corner manifold one may, when convenient, replace
$\mathsf{Face}(X_\cR)$ by the equivalent ordered-boundary presentation
\eqref{eq:ordered-unordered-identification}.  No such replacement is made on
the unresolved $g$-corner.  In both descriptions $G\prec F$ denotes a
codimension-one incidence and $[F:G]$ its oriented incidence sign.  All face
differentials are denoted $d_{\mathrm{face}}$, while $\delta$ remains reserved
for the differential on fields and targets.

\section{Resolution-induced derived traces on affine models}\label{sec:derived-traces}

\subsection{Vertex characters and resolved regularizations}

Write a sharp toric monoid as
\[
 P=\sigma^\vee\cap\Lambda^*,
 \qquad P^{\gp}=\Lambda^*.
\]
A character
\[
 \scale\in V_P^*=\Hom(P^{\gp},\R)
\]
gives the vertex values of \cref{def:vertex-scale}.  Let $\cR$ be a smooth
refinement of the monoidal complex of $X_P$, with blow-down
\cite{KottkeGCorner}
\[
 \beta_{\cR}:X_{\cR}\longrightarrow X_P.
\]
If $\tau\subseteq\sigma$ is a full-dimensional unimodular cone, the
corresponding ordinary-corner chart has free sharp monoid
\[
 Q_\tau=\tau^\vee\cap\Lambda^*,
 \qquad P\subseteq Q_\tau,
 \qquad Q_\tau^{\gp}=P^{\gp}.
\]
The character assigns constants to the free generators of $Q_\tau$, but
these chartwise constants do not by themselves define a DPP scale.  Indeed,
under a change of boundary defining function $\rho'=u\rho$, a DPP scale must
transform by the virtual-morphism rule of
\cite[Defs.~4.12 and~7.1]{DPP},
\begin{equation}\label{eq:dpp-scale-transition}
 \lambda_{\rho'}=u|_F\lambda_\rho,
 \qquad
 \Reg(\log\rho')=\log(u|_F)+\Reg(\log\rho),
\end{equation}
and the restricted unit $u|_F$ is generally a nonconstant function on the
boundary face.

\begin{proposition}[Vertex characters do not determine resolved regularizations]
\label{prop:vertex-character-insufficient}
A character $\scale\in\Hom(P^{\gp},\R)$ does not, in general, determine a
DPP regularization of a smooth monoidal resolution.  Compatible positive
normal sections on the complete ordered boundary diagram are additional
data.
\end{proposition}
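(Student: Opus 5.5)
The claim is a ``not in general'' statement, so I will exhibit an explicit obstruction rather than prove a universal negative. The argument rests on the transformation rule \eqref{eq:dpp-scale-transition}. A DPP regularization assigns to each boundary hypersurface $H$ of $X_\cR$ a normal section, and hence a regularized value $\Reg(\log\rho_H)$, which is a function on $H$. The character $\scale$ fixes at most the constants attached to the free generators of each chart monoid $Q_\tau$, and these constants are normalized only at the point over the vertex. I will show two things. First, the set of DPP regularizations compatible with $\scale$ at the vertex is a torsor for a nontrivial group of positive functions. Second, even the chartwise constant assignment is not a coherent DPP datum across charts.

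\textbf{Step 1 (non-uniqueness).} Fix one DPP regularization $s_\cR$ with normal sections $\lambda_H$. Choose a positive smooth function $u$ on a neighbourhood of the exceptional locus with $u\equiv1$ at every point lying over the vertex $v$, and with $u|_H$ nonconstant on some exceptional hypersurface $H$. Replacing $\rho_H$ by $u\rho_H$ produces, via \eqref{eq:dpp-scale-transition}, a second regularization $s'_\cR$. The two differ by $\log(u|_H)$, which vanishes over the vertex, so both induce the same vertex values $\Reg_\scale$ of \cref{def:vertex-scale}. Their difference is nonetheless detected on faces. Take $H$ of positive dimension; the conifold resolutions $\cR_\pm$ of \eqref{eq:triang-plus} already supply such hypersurfaces. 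Then $\Reg(\ell_{\lambda_p})$ restricted to $H$ for a monomial $p$ with nonzero order along $H$ changes by the nonconstant function $\mathrm{ord}_H(p)\log(u|_H)$. Consequently the regularized restriction maps, and hence the DPP datum, differ.

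\textbf{Step 2 (constants do not glue).} Next I check that ``$\Reg(\log\rho_H)=$ constant from $\scale$'' is not a well-defined prescription in general. In an overlap of two unimodular charts $\tau,\tau'$ sharing a hypersurface $H$, the boundary defining functions of $H$ coming from the two monomial coordinate systems differ by a unit $u$. This unit is built from the other coordinates, for instance a monomial ratio in the adjacent chart. Its restriction $u|_H$ is nonconstant along $H$ away from the vertex, so by \eqref{eq:dpp-scale-transition} constant values in one chart become nonconstant in the other. Compatible normal sections must therefore be chosen globally on the complete ordered boundary diagram, including the codimension-two corners where the virtual morphisms must satisfy the symmetric semi-simplicial compatibility of \cref{thm:dpp}. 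This is additional data beyond $\scale$.

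\textbf{Main obstacle.} The delicate point is Step 2's explicit unit: producing a concrete $u$ with $u|_H$ nonconstant on a DPP face requires the local coordinates of Kottke's blow-up charts. I would do this on the star-refinement or diagonal resolution of $X_\square$, computing the transition between the $\Cone(v_1,v_2,v_4)$ and $\Cone(v_1,v_3,v_4)$ charts along their common facet. If that computation proves awkward, Step 1 alone suffices for the proposition as stated, since it establishes that the map from DPP regularizations to vertex characters is not injective.
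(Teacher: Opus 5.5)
Your proposal is correct, and your Step~2 is the paper's argument: the chartwise constants coming from $\scale$ violate the transition law \eqref{eq:dpp-scale-transition}, because the restricted unit is nonconstant.

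\emph{How the paper runs it.} It uses the smallest case, the blow-up of the origin of $[0,\infty)^2$ with charts $(x,y)=(tr,r)$ and $(x,y)=(s,su)$. There $s=tr$ forces $\lambda_s=t\,e^{\scale(e_2)}$, which cannot equal the constant $e^{\scale(e_1)}$.

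\emph{Your flagged obstacle.} The computation you called the main obstacle is just as short on $\cR_+$. The dual bases of $\Cone(v_1,v_2,v_4)$ and $\Cone(v_1,v_3,v_4)$ give $\rho_1=x_4$, $\rho_2=x_2/x_4$, $\rho_1'=x_2$ and $\rho_3'=x_1/x_3$. Hence $\rho_1'=\rho_2\rho_1$ and $\rho_3'=\rho_2^{-1}$ on the overlap, which is exactly the pattern $s=tr$, $u=t^{-1}$. The chartwise prescriptions $\Reg(\log\rho_1)=\scale(q_4)$ and $\Reg(\log\rho_1')=\scale(q_2)$ then contradict $\Reg(\log\rho_1')=\Reg(\log\rho_1)+\log\rho_2$ on $H_{v_1}$. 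Moreover, $\rho_2$ is nonconstant even along the diagonal interval lying over the vertex.

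\emph{An overstatement in Step~2.} Do not claim nonconstancy for every pair of adjacent charts. In $\cR_\star$ the wall relation $v_1+v_4=v_0$ has no $v_2$ term, so $H_{v_2}$ has the same defining function in $\Cone(v_1,v_2,v_0)$ and in $\Cone(v_2,v_4,v_0)$. One instance is all the proposition needs.

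\emph{Step~1 is a genuinely different and weaker route.} It shows that passing from regularizations to vertex data is not injective, which does establish the literal wording. But it only uses the fact that data at a point cannot fix a normal section along a hypersurface. It works equally well on an unresolved orthant and says nothing specific to resolutions. Step~2 and the paper's proof show more: $\scale$ does not even produce a consistent candidate, because the nonconstant tangential unit obstructs it. That stronger point is what the surrounding text and Remark~\ref{rem:linear-vs-general-dpp} rely on.

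\emph{A wording fix in Step~1.} $H$ cannot be an \emph{exceptional} hypersurface. The resolutions $\cR_\pm$ have none, and the only one in $\cR_\star$ is $H_{v_0}=\beta_\star^{-1}(v)$, on which $u\equiv1$ contradicts nonconstancy. Take a strict transform $H_{v_i}$ instead.
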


\begin{proof}
Blow up the origin in $X_{\mathbb N^2}=[0,\infty)^2$.  On the two standard
charts write
\[
 U_y:\ (x,y)=(tr,r),
 \qquad
 U_x:\ (x,y)=(s,su).
\]
On their overlap $s=tr$ and $u=t^{-1}$.  The exceptional hypersurface is
$r=0$ in $U_y$ and $s=0$ in $U_x$.  Treating the two pure monomial boundary
coordinates independently would assign the constants
\[
 \lambda_r=e^{\scale(e_2)},
 \qquad
 \lambda_s=e^{\scale(e_1)}.
\]
However, $s=tr$ and \eqref{eq:dpp-scale-transition} require
\[
 \lambda_s(t)=t\lambda_r(t)=t e^{\scale(e_2)}
\]
on the interior of the exceptional face.  This cannot equal the constant
$e^{\scale(e_1)}$ for all $t>0$.  Thus additivity of $\scale$ controls the
lattice relation but does not replace the nonconstant tangential unit in the
DPP transition law.
\end{proof}

\begin{remark}[Chosen DPP data]\label{rem:linear-vs-general-dpp}
The character $\scale$ remains meaningful as an intrinsic normalization at a
zero-dimensional stratum, but it is not a parametrization of the full
regularization on a resolution.  For every smooth resolution $X_{\cR}$ we
therefore choose a nondegenerate DPP regularization $s_{\cR}$ on its complete
ordered boundary diagram and write
$\mathbb A_{\cR,s_{\cR}}^\bullet(P)$ and
$\mathcal I_{\cR,s_{\cR}}$ for the resulting total complex and trace.  If one
also prescribes vertex-character values, any compatible extension realizing
them is part of the chosen datum $s_{\cR}$ and is not determined by the
character alone.  We do \emph{not} assume a canonical pullback of DPP
regularizations along a monoidal blow-down.  None is needed for the
unfiltered derived comparison: \cref{thm:refinement-invariance} compares
arbitrary regularized representatives through the common interior.
\end{remark}

\subsection{The total logarithmic face complex}

For a resolved face $F$ and a coefficient complex $\cV$ we write
\[
 \cA_{\log}^\bullet(F;\cV)
 :=A_F^{\bullet,\log}\otimes\cV,
\]
where $A_F^{\bullet,\log}$ is the logarithmic de Rham complex of
Dupont--Panzer--Pym for the full basic positive log structure on the
ordinary-corner face $F$, containing the pullbacks of the unresolved basic
monomials \cite[Def.~6.4]{DPP}.  A subscript $c$ denotes compact
supports, and coefficients are omitted when trivial.  Thus the resolved
$b$/logarithmic route uses
the DPP complex itself; it does not rely on identifying a naively generated
free-logarithm algebra with it.  On an intrinsic generalized face we retain the notation
$\logOmega^\bullet(F)$ for the monoidal complex of
\cref{def:logalgebra}; comparison with the resolved complexes is part of the explicit
strictification data in \textup{(G4)}, not an automatic consequence of the DPP de
Rham theorem.
For a flat coefficient complex $\cV$ on an intrinsic face, we abbreviate
\[
 \logOmega^\bullet(F;\cV):=\logOmega^\bullet(F)\otimes\cV.
\]

Put $n=\dim X_P$.  Write $(\partial^kX_{\cR})^{\mathrm{bas}}$ for the basic log-corner object
on the ordered $k$-fold boundary.  We use the sign representation to remember orientations and set
\begin{equation}\label{eq:total-face-complex}
 \mathbb A_{\cR,s_{\cR}}^{m}(P)
 =\bigoplus_{k\geq0}
 \left(
 \cA_{\log,c}^{m-k}((\partial^kX_{\cR})^{\mathrm{bas}})
 \otimes\sgn_k
 \right)^{S_k}.
\end{equation}
If a component has logarithmic form degree $q$ and face degree $k$, its total degree is $q+k$.  The total differential is
\[
 D=\db+(-1)^qd_{\mathrm{face}},
\]
where $d_{\mathrm{face}}$ is the alternating sum of the regularized virtual pullbacks along ordered face maps.  The simplicial identities and functoriality of virtual pullback imply $D^2=0$.

We record explicitly how this convention is related to the shifted
totalization in Dupont--Panzer--Pym.  On a component of face degree $k$, the
DPP differential is
\begin{equation}\label{eq:dpp-total-differential}
 D_{\mathrm{DPP}}=(-1)^k\db+d_{\mathrm{face}}.
\end{equation}
This is the ordinary differential on the summands shifted by $[-k]$ in the
relative complex of \cite[\S6.4.3]{DPP}.
Define the diagonal sign operator on bidegree $(q,k)$ by
\begin{equation}\label{eq:dpp-column-conjugation}
 \Xi|_{(\cA_{\log,c}^{q}((\partial^kX_{\cR})^{\mathrm{bas}})
 \otimes\sgn_k)^{S_k}}
 =(-1)^{qk}\id.
\end{equation}
Since $\db$ commutes with every regularized virtual pullback, a direct
calculation gives
\begin{equation}\label{eq:dpp-column-intertwining}
 D\Xi=\Xi D_{\mathrm{DPP}}.
\end{equation}
Thus $\Xi$ is a cochain isomorphism from the DPP shifted totalization to the
column totalization used in this paper.

In the DPP convention the symmetrized relative trace is
$\sum_k(-1)^k/k!$ times integration on the ordered $k$-fold boundary
\cite[\S6.4.3 and the formula following Def.~7.6]{DPP}.  Transporting that
trace through $\Xi$ gives the trace compatible with $D$:
\begin{equation}\label{eq:derived-trace}
 \mathcal I_{\cR,s_{\cR}}(\omega)
 =\sum_{k=0}^{n}\frac{(-1)^{nk}}{k!}
 \int_{((\partial^kX_{\cR})^{\mathrm{bas}},s_{\cR}^{(k)})}\omega_k,
 \qquad \omega=(\omega_0,\omega_1,\ldots).
\end{equation}
Here only the top-form component on each $(n-k)$-dimensional ordered face is
integrated; the functional is zero on homogeneous total degrees different
from \(n\).  Thus \eqref{eq:derived-trace} is the cochain trace induced on the
full relative totalization, not a single absolute regularized bulk integral.
Indeed, on total degree $n$ one has $q=n-k$, and the transported coefficient is
\[
 (-1)^k(-1)^{qk}=(-1)^{k+(n-k)k}=(-1)^{nk}.
\]
The factor $1/k!$ compensates for ordered boundary components.  In particular,
the transported signs are all positive for even $n$ and reduce to $(-1)^k$ for
odd $n$.

\begin{proposition}[Totalized Stokes identity]\label{prop:total-stokes}
The functional $\mathcal I_{\cR,s_{\cR}}$ has degree $-n$ and satisfies
\[
 \mathcal I_{\cR,s_{\cR}}\circ D=0.
\]
Equivalently, the ordinary regularized Stokes contributions on the resolved faces, including exceptional faces, cancel in the complete total complex.
\end{proposition}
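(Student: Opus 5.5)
The plan is to transport the statement from the Dupont--Panzer--Pym shifted totalization, where the regularized Stokes formula is already available, through the diagonal sign operator $\Xi$. No new Stokes computation on the column totalization is needed.

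\emph{Degree.} By \eqref{eq:derived-trace}, $\mathcal I_{\cR,s_{\cR}}$ integrates only the component of total degree $n$. On face degree $k$ this component has form degree $n-k$ on an $(n-k)$-dimensional ordered face, and the functional vanishes on all other total degrees. So it is a degree $-n$ map to $\R$.

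\emph{DPP Stokes identity.} Write $\mathcal I^{\mathrm{DPP}}=\sum_k\frac{(-1)^k}{k!}\int_{(\partial^kX_\cR)^{\mathrm{bas}}}$ for the symmetrized relative trace of \cite[\S6.4.3, Def.~7.6]{DPP}. I would cite \cite[Cor.~7.11]{DPP}, recorded in \cref{thm:dpp}, for the statement that this trace is a cochain map on the shifted totalization:
\[
 \mathcal I^{\mathrm{DPP}}\circ D_{\mathrm{DPP}}=0 .
\]
Concretely, for $\eta$ of total degree $n-1$, the regularized Stokes formula on each ordered $k$-fold boundary equates $\int\db\eta_k$ with the alternating sum of regularized integrals of virtual pullbacks to the $(k+1)$-fold boundary. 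The sign $(-1)^k$ in $D_{\mathrm{DPP}}$ together with the factor $1/k!$ then makes these contributions telescope against $d_{\mathrm{face}}\eta_k$ at level $k+1$. After averaging over $S_{k+1}$, each boundary piece appears with multiplicity $k+1$, which accounts for the ratio $\frac{1}{k!}=\frac{k+1}{(k+1)!}$.

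\emph{Transport through $\Xi$.} By construction $\mathcal I_{\cR,s_{\cR}}=\mathcal I^{\mathrm{DPP}}\circ\Xi^{-1}$. This is exactly the sign check already displayed: on bidegree $(n-k,k)$ one has $(-1)^k(-1)^{(n-k)k}=(-1)^{nk}$, using $\Xi^{-1}=\Xi$. The intertwining relation \eqref{eq:dpp-column-intertwining} gives $D=\Xi D_{\mathrm{DPP}}\Xi^{-1}$, and therefore
\[
 \mathcal I_{\cR,s_{\cR}}\circ D=\mathcal I^{\mathrm{DPP}}\circ D_{\mathrm{DPP}}\circ\Xi^{-1}=0 .
\]
The equivalent formulation in the statement is simply the unpacked telescoping above. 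Exceptional faces are ordinary resolved faces of $X_\cR$, so they sit in the same sum and their contributions cancel there as well.

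\emph{Main obstacle.} The only delicate point is sign and normalization bookkeeping. This has two parts: verifying \eqref{eq:dpp-column-intertwining}, and matching the factorial and orientation conventions with those of the cited DPP corollary.

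For the intertwining relation, I would check it on bidegree $(q,k)$ directly. The $\db$ term carries the sign $(-1)^{qk}$ on both sides. The face term goes from $(q,k)$ to $(q,k+1)$, and the required sign identity is
\[
 (-1)^{q(k+1)}=(-1)^q(-1)^{qk},
\]
which holds. The hypothesis that $\db$ commutes with the virtual pullbacks is what makes $D^2=0$, but it is not needed for the intertwining itself. The factorial and orientation matching is then exactly the orbit-counting step of \cref{prop:ordered-face-dictionary}.
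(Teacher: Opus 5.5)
Your proposal is correct and follows the paper's proof: you cite the DPP regularized Stokes corollary $\mathcal I_{\mathrm{DPP}}D_{\mathrm{DPP}}=0$, write $\mathcal I_{\cR,s_{\cR}}=\mathcal I_{\mathrm{DPP}}\Xi^{-1}$ on total degree $n$, and conclude from the intertwining $D\Xi=\Xi D_{\mathrm{DPP}}$. Your explicit bidegree check of that intertwining is accurate, including the observation that it is pure sign bookkeeping and does not need $\db$ to commute with the virtual pullbacks.
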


\begin{proof}
Let $\mathcal I_{\mathrm{DPP}}$ denote the DPP trace in its shifted
totalization convention.  The regularized Stokes theorem on the complete
ordered boundary diagram says
\[
 \mathcal I_{\mathrm{DPP}}D_{\mathrm{DPP}}=0
\]
\cite[Cor.~7.11 and the formula following Def.~7.6]{DPP}.  By construction,
$\mathcal I_{\cR,s_{\cR}}=\mathcal I_{\mathrm{DPP}}\Xi^{-1}$ on total
degree $n$.  Using \eqref{eq:dpp-column-intertwining},
\[
 \mathcal I_{\cR,s_{\cR}}D
 =\mathcal I_{\mathrm{DPP}}\Xi^{-1}D
 =\mathcal I_{\mathrm{DPP}}D_{\mathrm{DPP}}\Xi^{-1}=0.
\]
This incorporates the ordered-incidence orientation signs and the factorial
normalization, rather than introducing a second unrecorded sign in the
componentwise Stokes formula.
\end{proof}

\begin{remark}[Two-dimensional sign check]\label{rem:two-dimensional-trace-sign}
Let $n=2$ and place a one-form $\eta$ in face degree zero.  Then
$D\eta=(\db\eta,-d_{\mathrm{face}}\eta)$ in face degrees zero and one,
whereas \eqref{eq:derived-trace} has coefficient $+1$ in both degrees.
Consequently
\[
 \mathcal I(D\eta)
 =\int_X\db\eta-\int_{\partial X}\Reg\eta=0
\]
by regularized Stokes.  This is the first parity in which the diagonal
transport changes the untransported factor $(-1)^k$.
\end{remark}

\subsection{Invariance under stellar refinement}

A stellar refinement produces a proper blow-down
\[
 \gamma:X_{\cR'}\longrightarrow X_{\cR}
\]
which maps boundary to boundary and restricts to an orientation-preserving
diffeomorphism of interiors.  We use this interior identification rather than a
purported explicit deformation retraction of the radial blow-down.

\begin{lemma}[Relative cohomology and degree under blow-down]\label{lem:relative-degree-one}
Let $\gamma:N'\to N$ be a proper map of oriented $n$-manifolds with
ordinary corners such that $\gamma(\partial N')\subseteq\partial N$ and
\(\gamma^\circ:N'^\circ\to N^\circ\) is an orientation-preserving
diffeomorphism.  Then
\begin{equation}\label{eq:relative-cohomology-interior}
 \gamma^*:H_c^\bullet(N,\partial N;\R)
 \xrightarrow{\cong}H_c^\bullet(N',\partial N';\R)
\end{equation}
and
\begin{equation}\label{eq:relative-fundamental-class}
 \gamma_*[N',\partial N']_{\mathrm{BM}}
 =[N,\partial N]_{\mathrm{BM}}.
\end{equation}
If $N$ and $N'$ are compact, compact supports and the subscript
$\mathrm{BM}$ may be omitted.
In particular, both statements hold for finite compositions of stellar monoidal
blow-downs.
\end{lemma}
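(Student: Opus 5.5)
The plan is to reduce both assertions to statements about the common interior, using excision and Poincaré–Lefschetz duality for manifolds with corners. The key topological input is that $N^\circ$ is homotopy equivalent to $N$ via an inward collar, and similarly for $N'$. Hence $\partial N\hookrightarrow N$ has a closed collar neighbourhood, and relative compactly supported cohomology is computed by the interior. Concretely, I would first establish the natural isomorphism
\[
 H_c^\bullet(N,\partial N;\R)\cong H_c^\bullet(N^\circ;\R),
\]
induced by extension by zero. At the cochain level this is the statement that compactly supported forms on $N^\circ$ compute the same thing as forms on $N$ with compact support that vanish on $\partial N$. Alternatively, it is the long exact sequence of the pair $(N,\partial N)$ compared with the open–closed sequence for $N^\circ\subset N$. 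The closed–open decomposition requires only that $\partial N$ be closed and that $N$ be locally compact Hausdorff, so no collar is even needed for this step.

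Next I would check naturality. Because $\gamma$ is proper, $\gamma^*$ preserves compact supports. Because $\gamma(\partial N')\subseteq\partial N$, $\gamma^*$ is a map of pairs. I need the preimage statement $\gamma^{-1}(N^\circ)=N'^\circ$. This follows from the hypothesis that $\gamma^\circ$ is a diffeomorphism onto $N^\circ$ together with $\gamma(\partial N')\subseteq\partial N$: a boundary point cannot map into the interior. For stellar blow-downs it is exactly \eqref{eq:blowdown-interior-preimage} of \cref{prop:proper-blowdown}. Given this, the square relating extension by zero and restriction commutes. Under the identifications it becomes $(\gamma^\circ)^*:H_c^\bullet(N^\circ)\to H_c^\bullet(N'^\circ)$. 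That map is an isomorphism since $\gamma^\circ$ is a diffeomorphism, and in particular proper. This proves \eqref{eq:relative-cohomology-interior}.

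For \eqref{eq:relative-fundamental-class}, I would identify the relative Borel–Moore fundamental class with the fundamental class of the oriented open manifold $N^\circ$. Equivalently, it is the functional $H_c^n(N,\partial N;\R)\to\R$ given by integration of compactly supported top forms on $N^\circ$. The pushforward $\gamma_*$ is dual to $\gamma^*$. So the claim reduces to
\[
 \int_{N'^\circ}(\gamma^\circ)^*\omega=\int_{N^\circ}\omega,
\]
which is change of variables for an orientation-preserving diffeomorphism. The compact case drops subscripts trivially. Finite compositions of stellar blow-downs satisfy the hypotheses: properness and the interior preimage property come from \cref{prop:proper-blowdown}, and orientation preservation holds because the orientations of $N'$ are chosen so that the blow-down preserves them on interiors, as stated before the lemma. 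Alternatively, both conclusions are functorial and compose.

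The main obstacle is the first identification. It must be made rigorous for ordinary corners and must be compatible with the pushforward direction for BM homology with real coefficients. I would handle it sheaf-theoretically: with $j:N^\circ\to N$, we have $H_c^\bullet(N,\partial N)=H_c^\bullet(N;j_!\R)$ and $j_!\R$ is pulled back compatibly under $\gamma$. Dually, I would use $H^{\mathrm{BM}}_\bullet(N,\partial N)\cong H^{\mathrm{BM}}_\bullet(N^\circ)$, which is Verdier dual to the above over $\R$.
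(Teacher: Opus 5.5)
Your proposal is correct and follows essentially the same route as the paper: identify $H_c^\bullet(N,\partial N;\R)$ and the relative Borel--Moore class with the corresponding objects of the oriented interior, then use that $\gamma^\circ$ is an orientation-preserving diffeomorphism. The differences are minor. You obtain the interior identification from $j_!\R$ (the open--closed sequence) rather than from collar excision. You also state explicitly two points the paper's proof uses only implicitly: the preimage identity $\gamma^{-1}(N^\circ)=N'^\circ$, and the reduction of the fundamental-class statement to change of variables via the integration pairing.
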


\begin{proof}
For an oriented manifold with corners there is a canonical, natural
identification
\[
 H_c^\bullet(N,\partial N;\R)\cong H_c^\bullet(N^\circ;\R),
\]
obtained by excision from a boundary collar; likewise for $N'$.  Under these
identifications, $\gamma^*$ is the pullback induced by the interior map
$\gamma^\circ$.  Since $\gamma^\circ$ is a diffeomorphism,
\eqref{eq:relative-cohomology-interior} follows.

Dually, the relative fundamental class corresponds to the Borel--Moore
fundamental class of the oriented interior,
\[
 [N,\partial N]\longleftrightarrow[N^\circ]_{\mathrm{BM}}.
\]
Proper pushforward is natural for this identification, and the
orientation-preserving diffeomorphism $\gamma^\circ$ sends
$[N'^\circ]_{\mathrm{BM}}$ to $[N^\circ]_{\mathrm{BM}}$.  This gives
\eqref{eq:relative-fundamental-class}.  A stellar monoidal blow-down has exactly
these properties, and so does a finite composition.
\end{proof}

\subsection{The interior bridge and refinement roofs}\label{subsec:interior-bridge}

The total relative logarithmic complex is functorial for morphisms of the
symmetric semi-simplicial boundary diagram.  A monoidal blow-down need not
induce such a morphism degree by degree: an exceptional hypersurface of the
fine resolution may map into a coarse face of higher codimension.  We
therefore do \emph{not} use a symbol $\gamma^*$ for a chain map between the
two total face complexes unless an additional filtered refinement transfer
has been specified.

For an oriented manifold with ordinary corners $N$, put
\[
 \mathbb K_c^\bullet(N):=\Omega_c^\bullet(N^\circ).
\]
Extension by zero across the boundary, followed by the inclusion of smooth
forms into logarithmic forms, gives a canonical cochain map
\begin{equation}\label{eq:interior-bridge-map}
 \epsilon_{N,s}:\mathbb K_c^\bullet(N)
 \longrightarrow \mathbb A_{N,s}^\bullet,
\end{equation}
where the image is placed in face degree zero.  The notation retains the
regularization $s$ only to remember the target complex; the map itself is
independent of $s$.

\begin{lemma}[Interior bridge]\label{lem:interior-bridge}
For every oriented manifold with ordinary corners $N$, with finitely many
embedded boundary faces and endowed with a DPP regularization $s$, the
compactly supported map \eqref{eq:interior-bridge-map} is a
quasi-isomorphism.  Moreover,
\begin{equation}\label{eq:interior-trace-bridge}
 \mathcal I_{N,s}\circ\epsilon_{N,s}
 =\int_{N^\circ}
 \qquad\text{on }\Omega_c^{\dim N}(N^\circ).
\end{equation}
\end{lemma}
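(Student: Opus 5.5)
The trace identity \eqref{eq:interior-trace-bridge} is immediate, so I will settle it first. A compactly supported form on $N^\circ$ extends by zero to a smooth form on $N$ that vanishes near $\partial N$. Its image under $\epsilon_{N,s}$ therefore sits in face degree zero. In \eqref{eq:derived-trace} only the $k=0$ term can contribute, and its coefficient is $+1$. The DPP regularized integral of a smooth form with compact support in the interior is the ordinary integral, because regularization only modifies contributions near the boundary. Hence $\mathcal I_{N,s}\epsilon_{N,s}=\int_{N^\circ}$. The map $\epsilon_{N,s}$ is a cochain map because $d_{\mathrm{face}}$ of such a form is the regularized restriction of a form vanishing near every face, hence zero, and $\db$ restricts to $\dd$ on the interior.

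For the quasi-isomorphism I will factor $\epsilon_{N,s}$ through the DPP relative de Rham theorem. The DPP relative total complex computes $H^\bullet_c(N,\partial N;\R)$, using \cite[\S6.4.3 and the logarithmic de Rham theorem]{DPP} transported along $\Xi$ of \eqref{eq:dpp-column-conjugation}. Independently, $\Omega_c^\bullet(N^\circ)$ computes $H_c^\bullet(N^\circ)\cong H_c^\bullet(N,\partial N)$ by the excision identification used in \cref{lem:relative-degree-one}.

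It remains to check that $\epsilon_{N,s}$ induces the identity under these two identifications. My plan is to insert the intermediate complex of smooth relative forms, namely the total complex of $\Omega_c^\bullet$ on the ordered boundaries $\partial^kN$ with alternating ordinary restrictions. This sits between the other two complexes. The inclusion of interior forms into this complex is a quasi-isomorphism; this is the standard mapping-cone model of relative cohomology, which I would prove by filtering by face degree and using collars. Its inclusion into the logarithmic complex is the DPP quasi-isomorphism: for smooth forms the virtual pullbacks agree with ordinary restrictions, so the inclusion is a cochain map. The composite of these two inclusions is $\epsilon_{N,s}$, so $\epsilon_{N,s}$ is a quasi-isomorphism.

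The main obstacle is this last step. One has to verify that the DPP theorem really is stated for the inclusion of smooth relative forms into the compactly supported logarithmic relative complex, including the phantom directions on $(\partial^kN)^{\mathrm{bas}}$. If it is not, I would argue directly. Filter the logarithmic complex by face degree and locally resolve each column: logarithmic forms on a face are quasi-isomorphic to smooth forms, by a Poincaré lemma in the $\log$ variables that contracts polynomials in $\ell_f$. The $E_1$ comparison with the smooth relative complex is then an isomorphism, and finiteness of the faces gives convergence of the spectral sequence.
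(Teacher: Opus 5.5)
Your proposal is correct and follows the paper's own argument. You factor $\epsilon_{N,s}$ as the inclusion of interior forms into the compactly supported smooth relative complex (the standard collar/excision model), followed by the DPP quasi-isomorphism from smooth to logarithmic relative forms. You obtain the trace identity because only the face-degree-zero term contributes, with coefficient $+1$, and the regularized integral of an interior-supported form is the ordinary integral.

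One small caution concerns the first arrow, which the paper also treats as standard. Filtering by face degree alone does not compare $H_c^\bullet(N^\circ)$ with the columns $H_c^\bullet(\partial^kN)$. That step is better justified by the closed-cover resolution $0\to j_!\R\to\R_N\to\bigoplus_i\R_{H_i}\to\cdots$, or by an inductive collar argument.
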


\begin{proof}
The compactly supported smooth relative de Rham complex of the pair
$(N,\partial N)$ is quasi-isomorphic to $\Omega_c^\bullet(N^\circ)$ by the
standard collar/excision model for relative cohomology.  Dupont--Panzer--Pym identify the relative
logarithmic complex, defined by the symmetric semi-simplicial boundary
totalization, with the same relative cohomology and prove that the inclusion
of compactly supported smooth relative forms into compactly supported
logarithmic relative forms is a
quasi-isomorphism.  The composite of these two canonical maps is precisely
$\epsilon_{N,s}$, hence it is a quasi-isomorphism.  For a form compactly
supported in the interior there are no boundary components and the
regularized integral agrees with the ordinary absolutely convergent integral,
which gives \eqref{eq:interior-trace-bridge}.
\end{proof}

Let now $\gamma:N'\to N$ be a stellar monoidal blow-down.  Its restriction to
the interiors is an orientation-preserving diffeomorphism, hence
\[
 (\gamma^\circ)^*:\mathbb K_c^\bullet(N)
 \xrightarrow{\cong}\mathbb K_c^\bullet(N').
\]

\begin{theorem}[Derived refinement roof]\label{thm:refinement-invariance}
Let $\cR'\to\cR$ be a finite stellar refinement and choose arbitrary DPP
regularizations $s_{\cR}$ and $s_{\cR'}$ on the two smooth resolutions.  Then
the diagram
\begin{equation}\label{eq:refinement-roof}
\begin{tikzcd}[column sep=large]
 & \Omega_c^\bullet(X_{\cR}^{\circ})
   \arrow[dl,"\epsilon_{\cR,s_{\cR}}"',"\simeq"]
   \arrow[rr,"(\gamma^\circ)^*","\cong"']
 && \Omega_c^\bullet(X_{\cR'}^{\circ})
   \arrow[dr,"\epsilon_{\cR',s_{\cR'}}","\simeq"'] & \\
 \mathbb A_{\cR,s_{\cR}}^\bullet(P) &&&&
 \mathbb A_{\cR',s_{\cR'}}^\bullet(P)
\end{tikzcd}
\end{equation}
defines a canonical isomorphism between the two total relative logarithmic
complexes in the derived category.  Under this isomorphism their trace
morphisms agree.  Equivalently, both traces correspond to ordinary integration
on the canonically identified interior.

No chain map preserving the face-degree decomposition is asserted by this
theorem.  Such a filtered comparison requires an additional refinement
transfer $U_\gamma$ as in \cref{def:refinement-descent-datum}.
\end{theorem}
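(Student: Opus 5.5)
The plan is to build the roof \eqref{eq:refinement-roof} entirely from \cref{lem:interior-bridge}, \cref{lem:relative-degree-one} and the interior diffeomorphism, and then to check the trace and independence claims one at a time.

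First, by \cref{prop:proper-blowdown} applied to both $\beta_{\cR}$ and $\beta_{\cR'}$, the stellar blow-down $\gamma:X_{\cR'}\to X_{\cR}$ satisfies $\beta_{\cR'}=\beta_{\cR}\circ\gamma$. It is proper and sends boundary to boundary. Its restriction $\gamma^\circ$ is the composite of the two interior diffeomorphisms $X_{\cR'}^\circ\cong X^\circ_P\cong X_{\cR}^\circ$. It is orientation preserving because all three interiors carry the orientation of $X_P^\circ$.

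Hence $(\gamma^\circ)^*$ is a cochain isomorphism of compactly supported interior de Rham complexes. By \cref{lem:interior-bridge}, both legs $\epsilon_{\cR,s_{\cR}}$ and $\epsilon_{\cR',s_{\cR'}}$ are quasi-isomorphisms. The roof therefore defines the isomorphism
\[
 \Phi=\epsilon_{\cR',s_{\cR'}}\circ(\gamma^\circ)^*\circ\epsilon_{\cR,s_{\cR}}^{-1}
\]
in the derived category. It is canonical in the sense of \cref{rem:choice-dependence}: it uses no choice beyond the common-interior identification. The map $\epsilon$ itself does not depend on $s$, so the regularizations enter only through the targets.

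For the traces, I would first note that $\mathcal I_{\cR,s_{\cR}}$ descends to cohomology by \cref{prop:total-stokes}. It therefore suffices to compare the two traces on classes in the image of $\epsilon$, since every class is of that form. By \eqref{eq:interior-trace-bridge},
\[
 \mathcal I_{\cR',s_{\cR'}}\bigl(\Phi[\epsilon_{\cR}\omega]\bigr)
 =\int_{X_{\cR'}^\circ}(\gamma^\circ)^*\omega
 =\int_{X_{\cR}^\circ}\omega
 =\mathcal I_{\cR,s_{\cR}}(\epsilon_{\cR}\omega).
\]
The middle equality is the change of variables for an orientation-preserving diffeomorphism. Equivalently, it is \eqref{eq:relative-fundamental-class} of \cref{lem:relative-degree-one}. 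This shows that both traces factor through ordinary integration on the interior. Because the argument holds for arbitrary regularizations on each side, no compatibility between $s_{\cR}$ and $s_{\cR'}$ is needed. The same computation applied to $\cR$ with two regularizations $s,s'$ shows that the identification is independent of $s$ as well.

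The main obstacle I expect is the quasi-isomorphism statement inside \cref{lem:interior-bridge}, specifically that it holds with compact supports for the full symmetric semi-simplicial DPP totalization. That lemma is assumed here, so the remaining delicate point is that the trace identity is only a statement on cohomology. It requires $\mathcal I\circ D=0$, which is \cref{prop:total-stokes} transported through $\Xi$. It also requires checking that the degree convention (total degree $n$, face degree $0$) matches, so that $\epsilon$ lands on exactly the components where the $k=0$ coefficient in \eqref{eq:derived-trace} is $+1$.

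The final sentence of the theorem is a disclaimer. Nothing in the argument produces a chain map that preserves face degree, which is consistent with the remark at the start of \cref{subsec:interior-bridge}.
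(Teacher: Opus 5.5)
Your proposal is correct and follows the paper's own proof. In both, the two legs of the roof are quasi-isomorphisms by \cref{lem:interior-bridge}, the middle map $(\gamma^\circ)^*$ is an isomorphism because $\gamma^\circ$ is an orientation-preserving diffeomorphism, and trace compatibility follows from \eqref{eq:interior-trace-bridge} plus change of variables. Your additional checks only make explicit what the paper leaves implicit: that $\mathcal I\circ D=0$ by \cref{prop:total-stokes}, so the traces are well defined on cohomology, and that $\epsilon$ lands in face degree zero, where the coefficient in \eqref{eq:derived-trace} is $+1$.
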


\begin{proof}
The two diagonal maps in \eqref{eq:refinement-roof} are quasi-isomorphisms by
\cref{lem:interior-bridge}, while the middle map is an isomorphism because the
blow-down is an orientation-preserving diffeomorphism on interiors.  Thus the
roof defines an isomorphism in the derived category.  Trace compatibility is
immediate from \eqref{eq:interior-trace-bridge} and change of variables for
$\gamma^\circ$.
\end{proof}

\begin{remark}[Filtered versus unfiltered refinement]\label{rem:filtered-vs-derived-refinement}
The distinction in \cref{thm:refinement-invariance} is essential.  The roof
\eqref{eq:refinement-roof} proves resolution independence of the \emph{total}
derived complex and of its trace, but it forgets the face filtration.  If one
wants a refinement map that retains the stratified face-by-face information,
one must specify a degree-mixing subdivision/aggregation operator $U_\gamma$.
Locally for an elementary stellar blow-up this operator is expected to be
built from the Whitney complex of the normal simplex: a fine face of smaller
codimension is accompanied by a Whitney form of complementary degree so that
total degree is preserved.  The abstract comparison theorem of
\cref{thm:derived-comparison} applies once those filtered data are supplied.
\end{remark}

By assumption \textup{(G3)}, any two admitted resolutions have a common smooth
refinement.  Applying \cref{thm:refinement-invariance} to the two arms gives a
trace-compatible derived roof between them.  Notice that the roof itself uses
only the common interior; \textup{(G3)} is needed elsewhere in the paper when
a filtered or stellar comparison is required.

\begin{definition}[Derived monoidal trace]\label{def:derived-trace}
The roofs of \cref{thm:refinement-invariance} identify all total relative
logarithmic complexes of admitted smooth resolutions in the localization of
cochain complexes at quasi-isomorphisms.  Their common isomorphism class is
denoted
\[
 \mathbb A_{P}^{\bullet,\mathrm{der}}.
\]
For a chosen regularized representative $(\cR,s_{\cR})$, the cochain trace
$\mathcal I_{\cR,s_{\cR}}$ determines a morphism
\[
 \mathcal I_{P}^{\mathrm{der}}:
 \mathbb A_{P}^{\bullet,\mathrm{der}}\longrightarrow\R[-n],
\]
and \cref{lem:interior-bridge} shows that this derived morphism is independent
of both the smooth resolution and the chain-level regularization: under the
interior bridge it is the ordinary orientation pairing on
$H_c^n(X_P^\circ)$.  A scale or DPP regularization still matters for the
chosen chain-level logarithmic representative and for its explicit boundary
terms.
\end{definition}

\begin{corollary}[Derived affine Stokes theorem]\label{cor:derived-affine-stokes}
The morphism $\mathcal I_{P}^{\mathrm{der}}$ is resolution independent.  On
every chosen smooth regularized representative one has the exact chain-level
identity
\[
 \mathcal I_{\cR,s_{\cR}}\circ D=0.
\]
For a monoidal logarithmic form $\eta$ represented on that resolution by
$\beta_{\cR}^*\eta$ in face degree zero,
\[
 \mathcal I_{\cR,s_{\cR}}\bigl(D\beta_{\cR}^*\eta\bigr)=0.
\]
The face-degree-one term on a fixed resolution is the full sum over its
boundary hypersurfaces, including exceptional ones.  Without an additional
filtered refinement transfer $U_\gamma$, however, the individual face-degree
components on two different resolutions are not canonically identified; what
is canonical under \cref{thm:refinement-invariance} is the total derived
complex and trace.
\end{corollary}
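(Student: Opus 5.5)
The proof splits into three assertions: resolution independence of $\mathcal I_P^{\mathrm{der}}$, the chain-level identity $\mathcal I_{\cR,s_\cR}\circ D=0$ on each representative, and its specialization to pulled-back monoidal logarithmic forms. I will take them in that order, since each reduces to a result already established.

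\textbf{Resolution independence.} Let $(\cR_1,s_1)$ and $(\cR_2,s_2)$ be two regularized representatives. By \textup{(G3)} there is an admitted smooth common refinement $\cR_{12}$. Its two comparison maps factor into finite stellar towers, so \cref{thm:refinement-invariance} applies to each step of each tower. We choose an arbitrary DPP regularization $s_{12}$ on $X_{\cR_{12}}$; this is harmless because the theorem allows independent regularizations at every stage. Composing the roofs along each arm gives a roof
\[
\mathbb A_{\cR_1,s_1}^\bullet(P)\xleftarrow{\simeq}\Omega_c^\bullet(X^\circ)\xrightarrow{\simeq}\mathbb A_{\cR_2,s_2}^\bullet(P).
\]
All interior identifications are the canonical ones coming from the blow-downs, since these are diffeomorphisms on the common interior $X_P^\circ$. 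By \eqref{eq:interior-trace-bridge}, both traces pull back along this roof to $\int_{X^\circ}$. Hence the induced derived morphisms agree.

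The step that needs care is independence of the choice of common refinement. I would avoid any directedness argument here. Every roof factors through the single complex $\Omega_c^\bullet(X_P^\circ)$, through the maps $\epsilon$ composed with the blow-down identifications. The derived trace is then pinned down by its value on this complex, namely ordinary integration, which involves no choices.

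\textbf{Chain-level identity.} On a fixed representative, $\mathcal I_{\cR,s_\cR}\circ D=0$ is exactly \cref{prop:total-stokes}. That proposition is obtained by transporting the DPP regularized Stokes theorem through the sign operator $\Xi$ using \eqref{eq:dpp-column-intertwining}, so nothing further is needed.

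\textbf{Pulled-back forms.} For a monoidal logarithmic form $\eta$, \cref{lem:log-pullback-resolution} makes $\beta_\cR^*\eta$ an element of the DPP logarithmic complex on $X_\cR$. Compact support is automatic because $X$ is compact. Placing $\beta_\cR^*\eta$ in face degree zero gives an element of $\mathbb A_{\cR,s_\cR}^\bullet(P)$, and the previous step then yields $\mathcal I_{\cR,s_\cR}(D\beta_\cR^*\eta)=0$.

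Unwinding $D$ in face degree zero gives the component $-d_{\mathrm{face}}\beta_\cR^*\eta$ in face degree one. This is the alternating sum of regularized virtual pullbacks to all boundary hypersurfaces of $X_\cR$, and these include the exceptional ones, because the ordered boundary diagram of an ordinary-corner manifold lists every hypersurface.

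\textbf{Non-identification of components.} The final caveat is a remark rather than a proof. The roof passes through the interior and carries no face filtration, while a blow-down need not preserve face degree, as noted in \cref{subsec:interior-bridge}. Hence no componentwise identification can follow without a filtered transfer $U_\gamma$. A simple example illustrates this: the number of face-degree-one summands differs between $\cR_+$ and a star refinement over the conifold.
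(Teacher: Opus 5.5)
Your proposal is correct and follows the paper's route. The paper gives no separate proof: resolution independence is the interior-bridge argument recorded in \cref{def:derived-trace} via \cref{lem:interior-bridge}, which your factorization through $\Omega_c^\bullet(X_P^\circ)$ reproduces. The \textup{(G3)} detour is harmless, though the paper itself notes that the roof uses only the common interior. The chain-level identity is \cref{prop:total-stokes}, and the pulled-back case is its specialization.

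Two slips should be fixed:
\begin{itemize}
\item The affine model $X_P$ is not compact. Compact support of $\beta_{\cR}^*\eta$ must therefore come from taking $\eta$ compactly supported, which is implicit because $\mathbb A^\bullet_{\cR,s_{\cR}}(P)$ is built from compactly supported forms. Properness of $\beta_{\cR}$ (\cref{prop:proper-blowdown}) then gives compact support of the pullback.
\item For form degree $q$, the face-degree-one component of $D\beta_{\cR}^*\eta$ is $(-1)^q d_{\mathrm{face}}\beta_{\cR}^*\eta$. This equals $-d_{\mathrm{face}}\beta_{\cR}^*\eta$ only for odd $q$, as in the $n=2$ check of \cref{rem:two-dimensional-trace-sign}.
\end{itemize}
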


\subsection{Derived \texorpdfstring{$b$}{b}/logarithmic AKSZ transgression}

Let $(\Y,\omega_\Y=\delta\alpha_\Y,\Theta)$ be an exact $QP$-target.  Pull the
universal $b$/logarithmic AKSZ integrands to a chosen smooth regularized
resolution and regard their face descendants as elements of its total
complex.  The source here is $\bT[1]X_\cR$ with its DPP logarithmic trace
model transported to the column convention by \eqref{eq:dpp-column-conjugation},
not the ordinary source $T[1]X_\cR$ of
\cref{sec:resolution-aksz-data}.

We make the representative used below explicit.  Under the standing formal
mapping-space hypothesis, a compatible resolved field is a family of maps
from the $b$-sources of the ordered resolved faces to $\Y$, compatible with
the DPP virtual face maps.  For every connected ordered codimension-$k$ lift
$\widetilde H\subset(\partial^kX_\cR)^{\mathrm{bas}}$ put
\begin{align}
 \alpha_{\widetilde H,s_\cR}
 &=\int_{(\widetilde H,s_\cR|_{\widetilde H})}
   \ev^*\alpha_\Y,\label{eq:resolved-aksz-alpha}\\
 \omega_{\widetilde H,s_\cR}
 &=\int_{(\widetilde H,s_\cR|_{\widetilde H})}
   \ev^*\omega_\Y,\label{eq:resolved-aksz-omega}\\
 S_{\widetilde H,s_\cR}
 &=\int_{(\widetilde H,s_\cR|_{\widetilde H})}
   \bigl(\iota_{\db}\ev^*\alpha_\Y+\ev^*\Theta\bigr).
   \label{eq:resolved-aksz-action}
\end{align}
Only the component of the appropriate source degree is integrated.  These
expressions form equivariant families under permutation of the ordered lifts.
The factorial and sign in \eqref{eq:derived-trace} enter only when such a
family is paired with the \emph{total} relative trace; they are not inserted
into an individual face transgression.  Write
\(\omega_{\cR,s_\cR}=\omega_{X_\cR,s_\cR}\) and
\(S_{\cR,s_\cR}=S_{X_\cR,s_\cR}\) for the bulk terms.  Finally, define
\(\alpha_{\partial,\cR,s_\cR}\) to be the face-degree-one incidence term
obtained by applying the column differential $D$ to the universal primitive;
equivalently, it is the oriented pullback sum of the terms
\eqref{eq:resolved-aksz-alpha} with $k=1$, with the sign transported by $\Xi$.
This definition is on one chosen resolved field diagram and makes no
cross-resolution identification of mapping spaces.

\begin{theorem}[Resolved representative of the derived BV--BFV identity]\label{thm:derived-bvbfv}
Let $(\cR,s_{\cR})$ be a smooth regularized representative.  Regularized
transgression on its total logarithmic face complex satisfies
\[
 \iota_Q\omega_{\cR,s_{\cR}}
 =(-1)^n\delta S_{\cR,s_{\cR}}
 +\alpha_{\partial,\cR,s_{\cR}},
\]
where $\alpha_{\partial,\cR,s_{\cR}}$ is the face-degree-one component of the
total differential of the transgressed primitive just defined; its next face
differential vanishes by $D^2=0$.  The roofs of
\cref{thm:refinement-invariance} identify
the underlying linear total source complexes and trace morphisms.  Therefore,
if the resulting compactly supported total source cocycles represent
corresponding classes under the common-interior roof, their integrated values
agree.  This applies in particular to the interior-bridge representatives of
compactly supported forms supported away from the boundary.  Agreement of
fields on the common interior, or membership in the basic bulk sector, does
not by itself imply the required correspondence of total cocycle classes.
The roof by itself does not identify nonlinear mapping spaces on two
resolutions.

A canonical identification of the separate BFV descendants, or of full
nonlinear field spaces, on two different resolutions is asserted only when
additional filtered/nonlinear comparison data are supplied.
\end{theorem}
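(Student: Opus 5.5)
The plan is the standard AKSZ Cartan-calculus argument, run on one fixed resolved field diagram. We then combine it with the totalized Stokes identity of \cref{prop:total-stokes} and the roofs of \cref{thm:refinement-invariance}.

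\textbf{Step one: the universal identity on $\bT[1]X_\cR\times\F$.} Work first at the level of the universal integrand on the product of the source with the field space, under the standing formal mapping-space hypothesis. The lifted source vector field $\db$ and the lifted target vector field $Q_\Y$ combine into $Q$. Exactness $\omega_\Y=\delta\alpha_\Y$ and the Hamiltonian property of $\Theta$ give the usual transgression identity
\[
 \iota_Q\ev^*\omega_\Y=\pm\delta\bigl(\iota_{\db}\ev^*\alpha_\Y+\ev^*\Theta\bigr)+\db(\ev^*\alpha_\Y)
\]
up to $\delta$-exact terms. Here $\db$ acts on the source factor as the Chevalley--Eilenberg differential of \cref{def:bsource}. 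The derivation consists only of graded Cartan relations ($\mathcal L_{\db}=[\iota_{\db},\delta]$, together with $\iota_{Q_\Y}\omega_\Y=\delta\Theta$). It is therefore identical to the ordinary-corner computation behind \eqref{eq:facewise-mhi}, with $\dd$ replaced by $\db$.

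\textbf{Step two: apply the resolved total differential and the trace.} Place the universal primitive $\ev^*\alpha_\Y$ in face degree zero of $\mathbb A_{\cR,s_\cR}^\bullet(P)$. Then apply the column differential $D=\db+(-1)^qd_{\mathrm{face}}$, followed by the trace $\mathcal I_{\cR,s_\cR}$ of \eqref{eq:derived-trace}, applied fibrewise over $\F$. Because the DPP virtual pullbacks commute with $\db$ and with field-space operations, fibrewise integration commutes with $\delta$ and $\iota_Q$. By \cref{prop:total-stokes}, the bulk integral of $\db\ev^*\alpha_\Y$ equals, up to the transported sign $(-1)^{n}$ of $\Xi$, the face-degree-one term $\alpha_{\partial,\cR,s_\cR}$ as defined before the theorem. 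This yields the displayed identity.

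The statement that the next face differential of $\alpha_{\partial}$ vanishes is just the face-degree-two component of $D^2=0$. I expect the main obstacle to be sign bookkeeping: checking that the factor $(-1)^{nk}/k!$ from \eqref{eq:derived-trace}, evaluated at $k=0,1$, produces $(-1)^n$ in front of $\delta S$ and no extra sign on $\alpha_\partial$. I would fix this by reducing to the check of \cref{rem:two-dimensional-trace-sign} together with \eqref{eq:dpp-column-intertwining}. The integrand should be handled in the DPP convention first and transported by $\Xi$ only at the end.

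\textbf{Step three: the cross-resolution statements.} Identification of the linear total complexes and traces is exactly \cref{thm:refinement-invariance}, applied to the two arms of a (G3) common refinement, with trace compatibility taken from \eqref{eq:interior-trace-bridge}. Suppose two total cocycles correspond under the roof. Then the derived trace $\mathcal I^{\mathrm{der}}_P$ of \cref{def:derived-trace}, being a single morphism in the derived category, takes equal values on them. For interior-bridge representatives this is change of variables along $\gamma^\circ$.

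The negative assertions need only remarks, not proofs. First, the roof is a zigzag through $\Omega_c^\bullet$ of the interior, so it induces no map between mapping spaces. Second, fields agreeing on the interior need not give roof-corresponding classes, because their boundary components are regularization-dependent (compare \cref{prop:vertex-character-insufficient}). Finally, filtered identifications of separate descendants require a transfer $U_\gamma$, as in \cref{rem:filtered-vs-derived-refinement}.
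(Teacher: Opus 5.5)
Your proposal follows the paper's proof essentially step for step. The paper likewise applies the transgression--Stokes calculation of \cref{app:signs} to the bulk ordered face with its DPP trace, uses \cref{prop:total-stokes} for compatibility with the column differential, reads the higher identities off $D^2=0$, and obtains the cross-resolution clause from \cref{thm:refinement-invariance} and \cref{lem:interior-bridge}.

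Two small repairs are needed. First, the universal identity in Step one holds exactly, not ``up to $\delta$-exact terms''; as written, that qualifier would swallow the $\delta S$ term. Second, \cref{prop:vertex-character-insufficient} does not witness the non-implication clause. Use instead, as the paper does, that $\dd r/r$ on an interval is a face-degree-zero total cocycle (for degree reasons) whose regularized trace depends on the boundary scale \cite[Ex.~7.13]{DPP}. Since the roof is trace-compatible, interior agreement therefore cannot force correspondence of classes.
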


\begin{proof}
Apply the calculation of \cref{app:signs} to the bulk ordered face, with its
DPP regularized trace.  The source part gives the factor $(-1)^n$ and the
face-degree-one term defined above; the target part uses
$\iota_{Q_\Y}\omega_\Y=\delta\Theta$.  This proves the displayed identity.
On the complete ordered diagram, \cref{prop:total-stokes} makes the same
calculation compatible with the transported column differential.  The
codimension-two and higher identities are the face-degree components of
$D^2=0$.  The unfiltered
resolution-independence statement for corresponding source-cocycle classes
and their trace follows from \cref{thm:refinement-invariance} and
\cref{lem:interior-bridge}.  The correspondence hypothesis cannot be replaced
by equality on the interior: already the regularized integral of $dr/r$ on an
interval depends on the chosen boundary scale
\cite[Ex.~7.13]{DPP}.  The field-space qualification is necessary because the
roofs are maps of source complexes, not maps of nonlinear mapping spaces.
\end{proof}

\subsection{Strictification}

A smooth resolution does not in general contain one distinguished resolved face
above each intrinsic face.  Consequently strictification cannot be formulated
by deleting the geometrically exceptional faces and retaining ``the'' strict
transforms of the original ones.  The correct object is a chain-level
coarse/reduced splitting associated with subdivision and aggregation.

Set, at the graded level,
\[
 \mathbb B_P^m
 =\bigoplus_{F\in\mathsf{Face}(X_P)}
   {\logOmega}_c^{m-|F|}(F),
\]
fix a regularized representative $\mathfrak s=(\cR,s_{\cR})$, and let
\(\mathbb A_{\cR,s_{\cR}}^\bullet(P)\) be its resolved total face complex.
A \emph{subdivision/aggregation datum} consists of an auxiliary differential
\(D_0\) on the resolved graded module, a differential \(d_{\mathbb B}\) on
\(\mathbb B_P^\bullet\), and graded maps
\[
 i_0:(\mathbb B_P^\bullet,d_{\mathbb B})\longrightarrow
       (\mathbb A_{\cR,s_{\cR}}^\bullet(P),D_0),\qquad
 p_0:(\mathbb A_{\cR,s_{\cR}}^\bullet(P),D_0)\longrightarrow
       (\mathbb B_P^\bullet,d_{\mathbb B}),
 \qquad p_0i_0=\id,
\]
which are cochain maps, together with a degree \(-1\) homotopy \(h_0\)
satisfying
\begin{equation}\label{eq:affine-initial-contraction}
 D_0h_0+h_0D_0=\id-i_0p_0,
 \qquad h_0i_0=0,\quad p_0h_0=0,\quad h_0^2=0.
\end{equation}
Thus these data form a strong deformation retract before the face-incidence
perturbation is turned on.  The kernel of \(p_0\) is the \emph{initial reduced
refinement summand}.  It need not be preserved by the full resolved
differential.

\begin{proposition}[Linear strictification criterion]\label{prop:strictification}
Assume a subdivision/aggregation datum as above for which
\[
 D=D_0+\delta,
\]
where $i_0,p_0$ have face degree zero in the face-regraded normal-simplex
splitting, $d_{\mathbb B}$ has face degree zero, $D_0$ preserves the intrinsic
support filtration (and may contain the normal-fibre cellular differential),
\(\delta\) has face degree \(+1\), \(h_0\) has face degree \(-1\), and
the perturbation series in \(h_0\delta\) and \(\delta h_0\) are finite (for
example by a finite filtration).  Then the basic perturbation lemma~\cite{GLS},
\cite[Sec.~2]{CrainicPerturb} gives a
strong deformation retract
\[
 (\mathbb A_{\cR,s_{\cR}}^\bullet(P),D)
 \underset{i_\infty}{\overset{p_\infty}{\rightleftarrows}}
 (\mathbb B_P^\bullet,D_{\rm str})
\]
with
\begin{align}
 i_\infty&=(1+h_0\delta)^{-1}i_0,\\
 p_\infty&=p_0(1+\delta h_0)^{-1},\\
 h_\infty&=h_0(1+\delta h_0)^{-1},\\
 D_{\rm str}&=d_{\mathbb B}+p_0\delta(1+h_0\delta)^{-1}i_0.
\end{align}
The perturbed data retain the special side conditions
\[
 p_\infty i_\infty=\id,\qquad
 h_\infty i_\infty=0,\qquad
 p_\infty h_\infty=0,\qquad h_\infty^2=0.
\]
The transferred differential has only face degrees zero and one.  Write
$\jmath_F$ for inclusion of the intrinsic summand indexed by $F$ and $\pi_G$
for projection onto the summand indexed by $G$.  If, in addition, the datum is
\emph{incidence-local} in the sense that
\begin{equation}\label{eq:affine-incidence-locality}
 \pi_Gp_0\delta(h_0\delta)^r i_0\jmath_F=0
 \quad\text{for every }r\geq0\text{ unless }G\prec F,
\end{equation}
then the face-degree-one component is a strict incidence operator on
\(\mathsf{Face}(X_P)\).  In all cases it squares to zero together with the
internal differential, and the transferred functional
\[
 \mathcal I_{P,\mathfrak s}^{\rm str}
 =\mathcal I_{\cR,s_{\cR}}\circ i_\infty
\]
is a cochain map.  The full-differential reduced sector is
\(\ker p_\infty\); it is a \(D\)-subcomplex and is contracted by the
restriction of \(h_\infty\).
\end{proposition}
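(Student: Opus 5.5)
The plan is to apply the homological perturbation lemma directly and then read off the face-degree structure of the transferred data. Start from the strong deformation retract $(i_0,p_0,h_0)$ between $(\mathbb A_{\cR,s_{\cR}}^\bullet(P),D_0)$ and $(\mathbb B_P^\bullet,d_{\mathbb B})$, with the side conditions \eqref{eq:affine-initial-contraction}. Treat $\delta=D-D_0$ as a perturbation. Since $D^2=0$ and $D_0^2=0$, we have $D_0\delta+\delta D_0+\delta^2=0$, so $D_0+\delta$ is a perturbed differential in the sense of the basic perturbation lemma. The finiteness hypothesis makes $(1+h_0\delta)^{-1}=\sum_r(-h_0\delta)^r$ and $(1+\delta h_0)^{-1}$ finite sums, so no completeness issue arises. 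Invoking \cite{GLS} and \cite[Sec.~2]{CrainicPerturb} gives the displayed formulas for $i_\infty,p_\infty,h_\infty,D_{\rm str}$. The same references give that the perturbed data form a strong deformation retract: the maps are chain maps, $Dh_\infty+h_\infty D=\id-i_\infty p_\infty$, and the side conditions are preserved. Preservation of the side conditions I would check directly from $h_0i_0=0$, $p_0h_0=0$, $h_0^2=0$; for example, $h_\infty i_\infty=h_0(1+\delta h_0)^{-1}(1+h_0\delta)^{-1}i_0$ vanishes after expanding, because each term begins with $h_0\cdots h_0 i_0$ or $h_0 i_0$.

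Next comes the degree bookkeeping. Write $D_{\rm str}-d_{\mathbb B}=\sum_{r\ge0}(-1)^r p_0\delta(h_0\delta)^r i_0$. The maps $i_0$ and $p_0$ have face degree $0$, $\delta$ has degree $+1$, and $h_0$ has degree $-1$, so each $h_0\delta$ has degree $0$ and every summand has face degree exactly $+1$. Hence $D_{\rm str}$ has only components of face degree $0$ (namely $d_{\mathbb B}$) and $+1$.

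The face-degree-$+1$ component, evaluated on $\jmath_F$ and projected by $\pi_G$, is a sum of the terms in \eqref{eq:affine-incidence-locality}. Under incidence-locality it vanishes unless $G\prec F$, so it is a strict incidence operator on $\mathsf{Face}(X_P)$. The identity $D_{\rm str}^2=0$ is part of the perturbation lemma. Splitting it by face degree shows that the internal differential and the incidence operator satisfy the mixed and quadratic relations; this is what "squares to zero together with the internal differential" means.

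For the trace, $i_\infty$ is a chain map and $\mathcal I_{\cR,s_{\cR}}\circ D=0$ by \cref{prop:total-stokes}. Therefore $\mathcal I^{\rm str}\circ D_{\rm str}=\mathcal I\circ D\circ i_\infty=0$, so the composite is a cochain map to $\R[-n]$.

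For the reduced sector, $p_\infty$ is a chain map, so $\ker p_\infty$ is a $D$-subcomplex. The side conditions give $p_\infty h_\infty=0$, so $h_\infty$ preserves $\ker p_\infty$, and on it $Dh_\infty+h_\infty D=\id$. Hence $h_\infty$ restricted to $\ker p_\infty$ is a contraction.

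The main obstacle is the sign and degree convention for $\delta$. The column differential carries $(-1)^q d_{\mathrm{face}}$, so I would first verify that $\delta$, defined as $D-D_0$, is homogeneous of face degree $+1$ in the regraded normal-simplex splitting. The point to check is that $D_0$ absorbs every degree-preserving part, including the normal-fibre cellular differential, while nothing of degree $\ge2$ occurs. If $D_0$ left a stray component of face degree $0$, it would have to be moved into $D_0$ while preserving $D_0^2=0$ and the SDR; I would handle this by taking the hypotheses to define $\delta$ as precisely the face-degree-$+1$ part of $D$.
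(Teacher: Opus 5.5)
Your proposal is correct and follows the paper's proof essentially step for step. The steps match: the basic perturbation lemma, with the paper additionally noting that the substitution $h_{\mathrm{std}}=-h_0$ turns the standard resolvents into $(1+h_0\delta)^{-1}$ and $(1+\delta h_0)^{-1}$; face-degree counting of each term $(-1)^r p_0\delta(h_0\delta)^r i_0$; the chain-map argument $\mathcal I_{\cR,s_{\cR}} i_\infty D_{\rm str}=\mathcal I_{\cR,s_{\cR}} D i_\infty=0$ for the trace; and $p_\infty h_\infty=0$ for the reduced sector. Your closing worry is moot, because $\delta$ having face degree $+1$ is a hypothesis. Do not adopt your proposed fix of redefining $\delta$ as the whole face-degree-$+1$ part of $D$: by \textup{(G4)}, $D_0$ may contain the normal-fibre cellular differential, which itself has face degree $+1$ and is not degree-preserving.
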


\begin{proof}
Apply the basic perturbation lemma to
\eqref{eq:affine-initial-contraction}.  Standard formulas are often written
with $D_0h_0+h_0D_0=i_0p_0-\id$; replacing that homotopy by $-h_0$ to match
our convention $D_0h_0+h_0D_0=\id-i_0p_0$ changes the resolvents from
$(1-h_0\delta)^{-1}$ to $(1+h_0\delta)^{-1}$, and similarly on the other
side.  Its identities give
\[
 Dh_\infty+h_\infty D=\id-i_\infty p_\infty,
 \qquad
 Di_\infty=i_\infty D_{\rm str},
 \qquad
 p_\infty D=D_{\rm str}p_\infty.
\]
The special side conditions displayed in the statement are the normalized
form of the basic perturbation lemma and follow from
\(h_0i_0=p_0h_0=h_0^2=0\).  In particular,
\(p_\infty h_\infty=0\), so \(h_\infty\) preserves
\(\ker p_\infty\).
Since \(\delta\) raises face degree by one while \(h_0\) lowers it by one,
every term
\[
 (-1)^r p_0\delta(h_0\delta)^ri_0
\]
has face degree exactly \(+1\).  Thus no higher face-degree operation appears
in the transferred \emph{linear differential}.  Under
\eqref{eq:affine-incidence-locality}, its matrix component from $F$ to $G$
vanishes unless $G\prec F$, which is the additional support statement needed
to call it an incidence operator.  Finally,
\(\mathcal I_{\cR,s_{\cR}}D=0\) and the cochain-map identity for
\(i_\infty\) imply
\(\mathcal I_{P,\mathfrak s}^{\rm str}D_{\rm str}=0\).  On
\(\ker p_\infty\), the displayed homotopy identity reduces to
\(Dh_\infty+h_\infty D=\id\).
\end{proof}

\begin{remark}[Linear versus multiplicative strictification]
\Cref{prop:strictification} is a statement about facewise cochain complexes
and traces.  Homological transfer does not by itself make the transferred
face maps morphisms of differential graded \emph{algebras}.  Therefore this
criterion is sufficient for linear theories such as abelian BF.  A nonlinear
AKSZ theory requires additional multiplicative or, more generally,
\(A_\infty/L_\infty\) compatibility; the higher operations created by such a
transfer are treated separately below and are not silently discarded.
\end{remark}

This formulation is the chain-level replacement for the naive idea of
``contracting the exceptional faces''.  In a stellar model the initial
reduced summand is locally the reduced cochain complex of the normal simplex;
after perturbation the actual contractible subcomplex is \(\ker p_\infty\),
not necessarily the span of geometrically exceptional faces.

\section{Sheafification and descent on Joyce atlases}\label{sec:global-descent}

The affine construction is local in the base, but its definition as a resolution-independent quasi-isomorphism class makes gluing less immediate than for ordinary differential forms.  In this section we globalize it without choosing compatible affine resolutions chart by chart.  The key device is to resolve the whole Joyce manifold once and then work with sheaves pushed forward to the unresolved space.

\subsection{Resolved regularization data}

Throughout this section let $X$ satisfy \textup{(G1)}--\textup{(G3)}.  In particular, $X$ is paracompact with embedded faces and carries the face-orientation datum of \textup{(G1)}.  Choose the fixed smooth refinement of Kottke's monoidal complex $\cP_X$ supplied by \textup{(G2)}.  Kottke's blow-up construction realizes this refinement by a blow-down
\cite{KottkeGCorner}
\[
 \beta_{\cR}:X_{\cR}\longrightarrow X
\]
from a manifold with ordinary corners; by \cref{prop:proper-blowdown}, this
blow-down is proper.  A further refinement $\cR'\to\cR$ gives a commutative triangle
\[
\begin{tikzcd}[column sep=large]
 X_{\cR'} \arrow[rr,"\gamma"] \arrow[dr,"\beta_{\cR'}"'] &&
 X_{\cR} \arrow[dl,"\beta_{\cR}"]\\
 &X.&
\end{tikzcd}
\]

\begin{definition}[Resolved chain-level regularization datum]\label{def:resolved-reg}
A resolved chain-level regularization datum on $X$ is a pair
$(\cR,s_{\cR})$, where $\cR$ is a smooth global refinement and
$s_{\cR}$ is a Dupont--Panzer--Pym regularization on the complete ordered
boundary diagram of $X_{\cR}$.  We write $\mathfrak s=(\cR,s_{\cR})$ when a
specific chain-level representative has been fixed.
\end{definition}

Under \textup{(G2)}, one such pair is part of the admissible package.  The
underlying derived object and its trace morphism will be independent of this
choice by the interior-bridge argument below.  The chain-level logarithmic
representative, its explicit regularized restrictions, and its facewise
boundary terms may still depend on the chosen regularization.

\subsection{The resolved face complex as a sheaf}

For a representative $(\cR,s_{\cR})$ and an open set $U\subseteq X$, define
\begin{equation}\label{eq:resolved-sheaf}
 \mathscr A_{\cR,s_{\cR}}^{m}(U)
 =\bigoplus_{k\geq0}
 \left(
 \cA_{\log}^{m-k}
 \bigl((\partial^k\beta_{\cR}^{-1}(U))^{\mathrm{bas}}\bigr)
 \otimes\sgn_k
 \right)^{S_k},
\end{equation}
with the same total differential $D=\db+(-1)^qd_{\mathrm{face}}$ as in
\cref{eq:total-face-complex}.  Compact support is not imposed here.

Let $j:X^\circ\hookrightarrow X$ be the inclusion of the common interior and
put
\begin{equation}\label{eq:interior-bridge-sheaf}
 \mathscr J_X^\bullet:=j_!\Omega_{X^\circ}^\bullet.
\end{equation}
Since every monoidal resolution is canonically the identity over $X^\circ$,
extension by zero to face degree zero gives a natural map of sheaf complexes
\begin{equation}\label{eq:sheaf-bridge-map}
 \epsilon_{\cR}:\mathscr J_X^\bullet
 \longrightarrow \mathscr A_{\cR,s_{\cR}}^\bullet.
\end{equation}

\begin{lemma}\label{lem:resolved-fine}
The sheaf $\mathscr A_{\cR,s_{\cR}}^\bullet$ is a bounded complex of fine
$C_X^\infty$-modules.  Since \(X\) is locally compact and paracompact, its
terms are soft and \(c\)-soft.
\end{lemma}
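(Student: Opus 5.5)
The plan is to verify the three claims in turn: the sheaf axioms and boundedness, the $C_X^\infty$-module structure together with fineness, and finally softness and $c$-softness from standard sheaf theory.

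\emph{Sheaf and boundedness.} For each $k$, the assignment $U\mapsto \cA_{\log}^{m-k}((\partial^k\beta_\cR^{-1}(U))^{\mathrm{bas}})$ is the pushforward along the proper map $\partial^kX_\cR\to X$ (the composite of the ordered boundary map with $\beta_\cR$) of the DPP sheaf of logarithmic forms. Pushforwards of sheaves are sheaves. Taking $S_k$-invariants of a sheaf twisted by $\sgn_k$ is a limit, so the result is again a sheaf, and finite direct sums preserve the sheaf property. Boundedness comes from dimension counts. The face degree satisfies $0\le k\le n$, because $\partial^kX_\cR=\emptyset$ for $k>n$ on an ordinary-corner manifold. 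The form degree is bounded by the rank of the logarithmic cotangent sheaf, including phantom directions, which is finite on each iterated boundary. The differential $D$ is local, so it defines a map of sheaves.

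\emph{Module structure and fineness.} A function $f\in C^\infty(U)$ acts through $\beta_\cR^*f$, restricted to each ordered boundary stratum. This pullback is smooth because $\beta_\cR$ and the boundary inclusions are smooth maps of manifolds with corners, and $S_k$ acts only on the ordering, so the action commutes with it. DPP logarithmic forms are modules over smooth functions, since $C^{\infty,\log}$ is a $C^\infty$-algebra by Definition~5.1 of \cite{DPP}. Each term is therefore a sheaf of $C_X^\infty$-modules. The multiplication need not commute with $D$, but fineness is a termwise property of the sheaves, so this causes no difficulty. Given a locally finite open cover of the paracompact space $X$, choose a smooth partition of unity $\{\chi_i\}$ subordinate to it; Joyce manifolds with g-corners admit these. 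Multiplication by $\chi_i$ then gives endomorphisms supported in the closures of the $U_i$ that sum to the identity, which is the definition of a fine sheaf.

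\emph{Softness.} A fine sheaf on a paracompact Hausdorff space is soft, and modules over a soft sheaf of rings such as $C_X^\infty$ are soft. On a locally compact space, soft implies $c$-soft (Kashiwara--Schapira, Bredon). The main point to check is that $\beta_\cR^*\chi$ genuinely acts on logarithmic forms compatibly with relation (ii) and with the virtual face maps. This holds because the relations are $C^\infty$-linear, and multiplying by a smooth function preserves the condition ``$g\log\alpha(f)$ extends smoothly''.
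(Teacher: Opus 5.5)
Your proposal is correct and follows essentially the same route as the paper. Both arguments get the sheaf property from the facewise sheaf axiom together with finite direct sums and $S_k$-invariants, let $C_X^\infty$ act through $\beta_\cR^*f$, obtain fineness from a smooth partition of unity, pass from fine to soft to $c$-soft via Bredon, and note that the degrees are bounded. Your extra remarks only fill in details the paper leaves implicit: the pushforward description, the $S_k$-invariance of $\beta_\cR^*f$, the fact that fineness is termwise, and a finite-rank bound on form degree in place of the paper's bound by $\dim X$.
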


\begin{proof}
The sheaf axiom is the ordinary sheaf axiom for logarithmic forms on each
ordered face, followed by finite direct sums and finite-group invariants.
Multiplication by $f\in C_X^\infty(U)$ acts by multiplication with
$\beta_{\cR}^*f$ on every resolved face.  A smooth partition of unity on the
paracompact $g$-corner $X$ therefore acts on the complex, proving fineness.
Fine sheaves of modules over the soft sheaf \(C_X^\infty\) are soft; on a
locally compact paracompact space they are \(c\)-soft
\cite[Chap.~II]{BredonSheaf}.  The de Rham and face degrees are both bounded by
\(\dim X\).
\end{proof}

\begin{lemma}[Boundary collapse under monoidal blow-down]\label{lem:boundary-collapse-blowdown}
Let $\beta_{\cR}:X_{\cR}\to X$ be the blow-down of a smooth finite
refinement.  For an open set $U\subseteq X$ put
\[
 W_U=\beta_{\cR}^{-1}(U),\qquad
 B_U=U\cap(X\setminus X^\circ),\qquad
 A_U=W_U\cap\partial X_{\cR}.
\]
Then
\begin{equation}\label{eq:boundary-exact-preimage}
 A_U=\beta_{\cR}^{-1}(B_U),
\end{equation}
and the restricted blow-down induces a canonical homeomorphism of quotient
spaces
\begin{equation}\label{eq:boundary-collapse-homeomorphism}
 \overline\beta_U:W_U/A_U\xrightarrow{\ \cong\ }U/B_U.
\end{equation}
For the sufficiently small monoidal neighbourhoods used below, the two pairs
are good pairs, and hence
\begin{equation}\label{eq:local-pair-comparison}
 H^\bullet(W_U,A_U;\R)\cong H^\bullet(U,B_U;\R).
\end{equation}
No description, contractibility, or acyclicity of the individual fibres of
$\beta_{\cR}$ is required.
\end{lemma}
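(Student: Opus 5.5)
The proof has three steps: derive the exact preimage identity from \cref{prop:proper-blowdown}, deduce the quotient homeomorphism from properness and injectivity on interiors, and then pass to relative cohomology through good pairs.

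\emph{Step 1: the preimage identity.} Intersect the second identity of \eqref{eq:blowdown-interior-preimage} with $W_U=\beta_\cR^{-1}(U)$. This gives
\[
 \beta_\cR^{-1}(B_U)=W_U\cap\beta_\cR^{-1}(X\setminus X^\circ)=W_U\cap\partial X_\cR=A_U,
\]
which is \eqref{eq:boundary-exact-preimage}. Here we use that, for smooth $\cR$, the set $[X;\cR]\setminus[X;\cR]^\circ$ is the ordinary topological boundary. Surjectivity of $\beta_\cR$ (also from \cref{prop:proper-blowdown}) then gives $\beta_\cR(W_U)=U$ and $\beta_\cR(A_U)=B_U$.

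\emph{Step 2: the quotient homeomorphism.} The restriction $\beta_U:W_U\to U$ is proper, since it is the base change of a proper map to an open subset. Being proper between locally compact Hausdorff spaces, it is closed. It sends $A_U$ into $B_U$, so it induces a continuous map $\overline\beta_U:W_U/A_U\to U/B_U$. This map is bijective for two reasons. Off the collapsed points, $\beta_U$ is the interior diffeomorphism onto $U\cap X^\circ$ by \cref{thm:kottke}(i). The collapsed point $[A_U]$ corresponds to $[B_U]$ by Step 1. It remains to check that $\overline\beta_U$ is closed. A closed set in $W_U/A_U$ pulls back to a closed set $C\subseteq W_U$ that is saturated for the collapse, meaning it either contains $A_U$ or is disjoint from it. Then $\beta_U(C)$ is closed, and it is saturated in $U$ for the same reason, using Step 1 in the case where $C$ meets $A_U$. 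Hence its image in $U/B_U$ is closed. A continuous closed bijection is a homeomorphism. The empty-boundary case ($B_U=\emptyset$) is trivial, with the usual convention $W/\emptyset=W_+$.

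\emph{Step 3: relative cohomology.} The main obstacle is verifying that both pairs are good pairs, that is, that $B_U$ and $A_U$ are closed neighbourhood deformation retracts. For $A_U$, I would use a collar of the ordinary-corner boundary of $X_\cR$: a standard inward vector field that is nowhere tangent-inward-failing gives a neighbourhood of $\partial X_\cR$ retracting onto it. For $B_U$, I would take $U$ to be a small conical monoidal neighbourhood $X_P\cap\{\lambda<\varepsilon\}$. On such a set the radial scaling $x\mapsto x^{t}$ (acting by $\lambda_p\mapsto\lambda_p^{1/t}$, or by a cone-radial flow of the constant $b$-vector field $v_\alpha$ with $\alpha$ in the interior of the dual cone) pushes a neighbourhood of the boundary onto the boundary. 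If this flow is awkward to write down, I would fall back on triangulability of the semialgebraic set $X_P\subset\R_{\ge0}^N$, which makes $(U,B_U)$ a CW pair after shrinking $U$. With good pairs, $H^\bullet(W_U,A_U)\cong\widetilde H^\bullet(W_U/A_U)$ and $H^\bullet(U,B_U)\cong\widetilde H^\bullet(U/B_U)$, and \eqref{eq:boundary-collapse-homeomorphism} then gives \eqref{eq:local-pair-comparison}. None of these steps uses the fibres of $\beta_\cR$, only properness and the interior diffeomorphism.
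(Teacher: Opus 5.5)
Your proposal is correct and follows the paper's proof: the preimage identity comes from \cref{prop:proper-blowdown}, the quotient homeomorphism comes from properness, and the good-pair property comes from collars on the resolved side and a semialgebraic triangulation on the unresolved side. For the homeomorphism you show the induced bijection is closed, while the paper uses that a proper surjection of locally compact Hausdorff spaces is a quotient map, which amounts to the same thing.

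One caution: the scaling $\lambda_p\mapsto\lambda_p^{1/t}$ and the flow of $-v_\alpha$ that you first suggest for $B_U$ do not retract a neighbourhood of $B_U$ onto $B_U$. The scaling contracts the small sublevel neighbourhood toward the vertex and moves boundary points too, and the flow never reaches the boundary in finite time. It is your triangulation fallback, which is exactly the paper's argument, that establishes the good-pair property.
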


\begin{proof}
Equation \eqref{eq:boundary-exact-preimage} is
\eqref{eq:blowdown-interior-preimage} restricted to $U$.  By
\cref{prop:proper-blowdown}, the map $W_U\to U$ is a proper surjection and is
a diffeomorphism from $W_U\setminus A_U$ onto $U\setminus B_U$.  A proper
surjection between locally compact Hausdorff spaces is a quotient map.
Let
\[
 q_W:W_U\to W_U/A_U,
 \qquad
 q_U:U\to U/B_U
\]
be the collapse maps.  Since the blow-down is one-to-one away from $A_U$ and
maps $A_U$ onto $B_U$, there is a unique continuous bijection
$\overline\beta_U$ satisfying
$q_U\beta_{\cR}=\overline\beta_U q_W$.

It remains only to check the topology.  For a subset $V\subseteq U/B_U$,
\[
 q_U^{-1}(V)\text{ is open in }U
 \Longleftrightarrow
 \beta_{\cR}^{-1}q_U^{-1}(V)\text{ is open in }W_U
 \Longleftrightarrow
 q_W^{-1}\overline\beta_U^{-1}(V)\text{ is open in }W_U.
\]
The first equivalence uses that $W_U\to U$ is a quotient map and the second
is the defining commutative square.  Since $q_U$ and $q_W$ are quotient
maps, this is exactly the statement that $V$ is open if and only if
$\overline\beta_U^{-1}(V)$ is open.  Thus
\eqref{eq:boundary-collapse-homeomorphism} is a homeomorphism.

For a sufficiently small monoidal chart, choose finitely many generators
of the local model monoid.  In the resulting embedding into a finite
orthant, $(U,B_U)$ is a semialgebraic pair; after shrinking within the
cofinal family it admits a compatible triangulation.  On the resolved side,
$(W_U,A_U)$ is a manifold-with-corners pair and the boundary admits ordinary
collars.  Hence both are good pairs.  Relative singular cohomology is then
the reduced cohomology of the corresponding quotient, so
\eqref{eq:boundary-collapse-homeomorphism} gives
\eqref{eq:local-pair-comparison}.
\end{proof}

\begin{lemma}[Boundary-stalk acyclicity]\label{lem:boundary-stalk-acyclic}
Let $x\in X\setminus X^\circ$ and put
$W_U=\beta_{\cR}^{-1}(U)$ for an open neighbourhood $U$ of $x$.  There is a
cofinal system of sufficiently small monoidal neighbourhoods $U$ for which
\begin{equation}\label{eq:local-resolved-relative-acyclicity}
 H^\bullet(W_U,\partial_{\mathrm{res}}W_U;\R)=0,
\end{equation}
where $\partial_{\mathrm{res}}W_U$ is the union of the ordinary boundary
hypersurfaces of the resolved neighbourhood.  Consequently
\begin{equation}\label{eq:resolved-boundary-stalk-zero}
 \mathcal H^q(\mathscr A_{\cR,s_{\cR}}^\bullet)_x=0
 \qquad\text{for every }q.
\end{equation}
\end{lemma}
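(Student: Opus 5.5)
The proof has two parts: first the local topological vanishing \eqref{eq:local-resolved-relative-acyclicity}, then its transfer to the stalk of the resolved logarithmic sheaf.

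\emph{Topological step.} Take $U$ to be a monoidal chart neighbourhood of $x$ of the form $U\cong V\times X_{P}^{<\varepsilon}$. Here $V$ is a small open ball in the tangential (unit) directions, and $X_P^{<\varepsilon}$ is a conical neighbourhood of the vertex of $X_P$ with $P$ sharp and nonzero; $P\neq0$ because $x$ lies in the boundary. These neighbourhoods are cofinal. By \cref{lem:boundary-collapse-blowdown},
\[
 H^\bullet(W_U,\partial_{\mathrm{res}}W_U;\R)\cong H^\bullet(U,B_U;\R),
\]
so the resolved side never has to be analysed. The pair $(U,B_U)$ is the product of $V$ with the pair consisting of a cone on $K_\sigma$ and the part of that cone lying over $K_\sigma$. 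Concretely, positive scaling $x\mapsto t\cdot x$ (acting by $\lambda_p\mapsto t^{\deg p}\lambda_p$ for a chosen grading positive on $P\setminus0$) contracts $X_P^{<\varepsilon}$ to the vertex, and this contraction preserves the boundary $B$. Both $U$ and $B_U$ therefore deformation retract onto $V\times\{\text{vertex}\}$; note that the vertex lies in $B_U$. Hence $B_U\hookrightarrow U$ is a homotopy equivalence, and the long exact sequence of the pair gives $H^\bullet(U,B_U)=0$. I expect the main care to be needed in making sure the homotopy stays inside $B_U$ and inside the $\varepsilon$-neighbourhood. The homotopy stays in $B_U$ because boundary faces are cut out by vanishing of monomials, which scaling preserves. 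For the $\varepsilon$-neighbourhood, I would use the $\ell^1$-type slice functional of the earlier sections, so that the neighbourhood is star-shaped under the scaling.

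\emph{Sheaf step.} The stalk cohomology $\mathcal H^q(\mathscr A^\bullet_{\cR,s_\cR})_x$ is the colimit over the cofinal system $U$ of $H^q(\mathscr A^\bullet_{\cR,s_\cR}(U))$, since filtered colimits are exact. For each $U$, the complex $\mathscr A^\bullet(U)$ is the DPP total relative logarithmic complex of the ordinary-corner manifold $W_U$, without compact supports. I would then invoke the DPP de Rham theorem, exactly as in the proof of \cref{lem:interior-bridge} but with supports dropped, to obtain
\[
 H^\bullet(\mathscr A^\bullet(U))\cong H^\bullet(W_U,\partial_{\mathrm{res}}W_U;\R).
\]
Here one needs the DPP relative logarithmic complex without support conditions to compute relative cohomology of the pair. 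This holds because both sides are computed by fine sheaf resolutions of the relative constant sheaf on $W_U$. The regularization $s_\cR$ restricts to $W_U$, so nothing depends on the choice of $s_\cR$.

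Combining the two steps, every term of the colimit vanishes, so the stalk cohomology is zero, which is \eqref{eq:resolved-boundary-stalk-zero}. The main obstacle is justifying the non-compactly-supported DPP comparison on the open, noncompact resolved neighbourhood $W_U$. If the cited compact statement does not apply directly, my fallback is to argue sheaf-theoretically on $W_U$: the DPP total complex resolves the relative sheaf $\ker(\R_{W_U}\to\R_{\partial W_U})$ stalkwise, which is a local statement near a resolved corner point and is covered by DPP. Fineness as in \cref{lem:resolved-fine} then identifies its sections with relative sheaf cohomology, and hence with the singular relative cohomology computed in the topological step.
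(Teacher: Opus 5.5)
Your proposal is correct and follows the paper's proof essentially step for step. You reduce to the unresolved pair $(U,B_U)$ via \cref{lem:boundary-collapse-blowdown}, contract it by the weighted monomial scaling $t^{\lambda(p)}$ on monotone sublevel-set neighbourhoods onto the tangential slice (which lies in $B_U$), and then pass to stalks through the DPP relative de Rham identification and exactness of filtered colimits. The non-compactly-supported comparison you flag as the main obstacle is handled in the paper simply by citing \cite[Prop.~6.15]{DPP}, so your sheaf-theoretic fallback is not needed, though it is a valid alternative.
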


\begin{proof}
Choose a monoidal chart centred on the stratum through $x$, and choose a
cofinal family of sufficiently small relatively compact neighbourhoods $U$
of the following product type.  In the tangential directions take a small
ball; in the sharp normal model take simultaneous sublevel sets for a finite
generating set of the normal monoid, chosen so that decreasing every normal
monomial preserves the set.  Put
\[
 B_U=U\cap(X\setminus X^\circ),\qquad
 W_U=\beta_{\cR}^{-1}(U).
\]
By \cref{lem:boundary-collapse-blowdown},
$\partial_{\mathrm{res}}W_U=\beta_{\cR}^{-1}(B_U)$ and
\begin{equation}\label{eq:local-pair-comparison-stalk}
 H^\bullet(W_U,\partial_{\mathrm{res}}W_U;\R)
 \cong H^\bullet(U,B_U;\R).
\end{equation}

We now contract the unresolved pair, where no lift to the resolution is
needed.  Let $P$ be the sharp normal monoid and choose an integral functional
$\lambda:P\to\N$ strictly positive on $P\setminus\{0\}$.  On the local
monoidal model the action
\[
 h_t(z)(p)=t^{\lambda(p)}z(p),\qquad 0<t\leq1,
\]
extends continuously at $t=0$ to the normal vertex.  By the choice of the
normal sublevel sets it preserves $U$ and $B_U$.  Thus $U$ deformation
retracts onto its tangential slice at the normal vertex, while $B_U$ retracts
onto the same slice.  Since $x$ is a boundary point, this slice is contained
in $B_U$.  Consequently the inclusion $B_U\hookrightarrow U$ is a homotopy
equivalence on this cofinal family and
\[
 H^\bullet(U,B_U;\R)=0.
\]
Together with \eqref{eq:local-pair-comparison-stalk}, this proves
\eqref{eq:local-resolved-relative-acyclicity}.  Notice that the argument does
\emph{not} attempt to lift the weighted normal contraction through an
arbitrary fan refinement; the quotient comparison of
\cref{lem:boundary-collapse-blowdown} is precisely what avoids that
unnecessary assertion.

For each such $U$, the DPP logarithmic relative de Rham theorem
\cite[Prop.~6.15]{DPP} identifies the cohomology of the symmetric
semi-simplicial totalization in \eqref{eq:resolved-sheaf} with
$H^\bullet(W_U,\partial_{\mathrm{res}}W_U;\R)$.  The cohomology-sheaf stalk
is the filtered colimit over these $U$, and filtered colimits of real vector
spaces are exact.  Hence \eqref{eq:resolved-boundary-stalk-zero} follows from
\eqref{eq:local-resolved-relative-acyclicity}.
\end{proof}

\begin{proposition}[Interior bridge for resolved sheaves]\label{prop:sheaf-refinement}
For every smooth regularized resolution, the map
\eqref{eq:sheaf-bridge-map} is a quasi-isomorphism of sheaf complexes.  Hence
for two resolutions $(\cR_1,s_1)$ and $(\cR_2,s_2)$ there is a canonical roof
in $D(\operatorname{Sh}(X))$,
\begin{equation}\label{eq:sheaf-refinement-roof}
 \mathscr A_{\cR_1,s_1}^\bullet
 \xleftarrow{\ \simeq\ }\mathscr J_X^\bullet
 \xrightarrow{\ \simeq\ }\mathscr A_{\cR_2,s_2}^\bullet.
\end{equation}
No direct face-degree-preserving pullback between the two resolved sheaves is
required.
\end{proposition}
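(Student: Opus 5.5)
To show that $\epsilon_{\cR}:\mathscr J_X^\bullet\to\mathscr A_{\cR,s_{\cR}}^\bullet$ is a quasi-isomorphism of sheaf complexes, I will check that it induces isomorphisms on all cohomology-sheaf stalks. There are two kinds of points, interior and boundary, and I treat them separately.

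\emph{Interior points.} Let $x\in X^\circ$ and choose a small ball $U\subset X^\circ$ around $x$. By \cref{prop:proper-blowdown}, $\beta_{\cR}^{-1}(U)$ lies in the resolved interior, and $\beta_{\cR}$ is a diffeomorphism there. Hence every ordered boundary $(\partial^k\beta_{\cR}^{-1}(U))^{\mathrm{bas}}$ with $k\geq1$ is empty. The face-degree-zero summand consists of logarithmic forms with no boundary, which by relation (ii) of the DPP logarithmic functions are just smooth forms. Thus $\mathscr A_{\cR,s_{\cR}}^\bullet|_{X^\circ}=\Omega^\bullet_{X^\circ}$, and $\epsilon_{\cR}$ restricts to the identity on $j_!\Omega^\bullet_{X^\circ}|_{X^\circ}$. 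In particular the stalk map at $x$ is an isomorphism.

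\emph{Boundary points.} Let $x\in X\setminus X^\circ$. The stalk of $j_!\Omega^\bullet_{X^\circ}$ at $x$ is zero, since extension by zero has vanishing stalks off the open set. The cohomology stalks of $\mathscr A_{\cR,s_{\cR}}^\bullet$ at $x$ vanish by \cref{lem:boundary-stalk-acyclic}. Both sides are acyclic at $x$, so the stalk map is a quasi-isomorphism trivially. This step carries the real content, namely the DPP relative de Rham theorem together with the collapse comparison of \cref{lem:boundary-collapse-blowdown}. I can simply invoke these results.

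\emph{Naturality.} The remaining point is that $\epsilon_{\cR}$ really is a morphism of sheaf complexes, i.e.\ that it commutes with restrictions and differentials. Take a section of $j_!\Omega^\bullet_{X^\circ}$ over $U$: it is a form on $U\cap X^\circ$ whose support is closed in $U$. Transported to $W_U$, its support is closed and disjoint from $\partial W_U$, so extension by zero is a smooth form on $W_U$. All of its virtual face pullbacks vanish, so it is a genuine $D$-cochain placed in face degree zero: we get $D\epsilon=\epsilon\,\dd$, because $d_{\mathrm{face}}$ kills it.

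\emph{The roof.} Applying this to $(\cR_1,s_1)$ and $(\cR_2,s_2)$ gives two quasi-isomorphisms out of the common source $\mathscr J_X^\bullet$. Together they form the roof \eqref{eq:sheaf-refinement-roof}. No face-degree-preserving map between the two resolved sheaves is needed, and the roof is canonical once $\mathscr J_X^\bullet$ is fixed.

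The main subtlety I expect is the interior-point identification: it must be clear that the DPP logarithmic complex on an open set with no boundary reduces to ordinary smooth forms. I would settle this directly from relation (ii), since the relevant monomials are positive units there. The boundary case is carried entirely by the preceding lemma.
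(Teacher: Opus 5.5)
Your proposal is correct and follows the paper's proof: you check stalks, the bridge is the ordinary de Rham identity at interior points, and at boundary points both sides are acyclic, by extension by zero on one side and \cref{lem:boundary-stalk-acyclic} on the other. Your extra verification that $\epsilon_{\cR}$ is a chain map of sheaves is a detail the paper builds into the definition of \eqref{eq:sheaf-bridge-map}; your argument for it is sound, since a support closed in $U$ pulls back to a closed set disjoint from $\partial W_U$, so all virtual face pullbacks vanish.
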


\begin{proof}
Check stalks.  At an interior point, a sufficiently small open set meets no
boundary and \eqref{eq:sheaf-bridge-map} is the ordinary de Rham inclusion.
At a boundary point $x$, the stalk of $j_!\Omega_{X^\circ}^\bullet$ is zero,
while \cref{lem:boundary-stalk-acyclic} shows that every cohomology stalk of
$\mathscr A_{\cR,s_{\cR}}^\bullet$ is zero.  Hence the map is a
quasi-isomorphism at every stalk.  Notice that the boundary argument uses the
entire resolved preimage of a shrinking neighbourhood, including its
exceptional fibre, rather than a single corner neighbourhood in
$X_{\cR}$.
\end{proof}

\begin{definition}[Global derived logarithmic face sheaf]\label{def:global-derived-sheaf}
For any chosen resolved chain-level regularization datum $\mathfrak s$, the global derived logarithmic face sheaf
\[
 \mathscr A_{X}^{\bullet,\mathrm{der}}
 \in D\bigl(\operatorname{Sh}(X)\bigr)
\]
is the quasi-isomorphism class represented by any resolved face complex $\mathscr A_{\cR,s_{\cR}}^\bullet$ in the admissible comparison class.
\end{definition}

\Cref{prop:sheaf-refinement} identifies every resolved representative with the same interior bridge $\mathscr J_X^\bullet$ in the derived category.  Thus the object is independent of the resolution without requiring a degreewise map of boundary diagrams.  All descent arguments may be performed on one bounded fine resolved representative.

\begin{remark}[What the unfiltered object remembers]\label{rem:unfiltered-derived-scope}
There is, by construction, a canonical identification
\[
 \mathscr A_X^{\bullet,\mathrm{der}}
 \simeq j_!\Omega_{X^\circ}^\bullet
 \quad\text{in }D(\operatorname{Sh}(X)).
\]
Thus resolution independence at this level is a relative de Rham statement.
Its useful additional datum is the resolved logarithmic representative and
the trace morphism carried through the roof.  The object itself forgets the
face filtration, individual exceptional components, separate BFV descendants,
and nonlinear mapping-space data.  Retaining such information requires the
filtered transfer of \textup{(G4)}, the separate cyclic field datum
\textup{(G5)}, or a multiplicative refinement beyond this unfiltered roof.
\end{remark}

\subsection{\v{C}ech descent and chart changes}

\begin{theorem}[Sheaf descent]\label{thm:sheaf-descent}
Let $\{U_i\}_{i\in I}$ be a locally finite open cover of $X$ and let
$\mathscr A^\bullet$ be any bounded fine representative of
$\mathscr A_X^{\bullet,\mathrm{der}}$.  Then the augmentation
\[
 \Gamma(X,\mathscr A^\bullet)
 \xrightarrow{\simeq}
 \Tot\check C^\bullet\bigl(\{U_i\},\mathscr A^\bullet\bigr)
\]
is a quasi-isomorphism.  Equivalently, the \v{C}ech totalization computes
$R\Gamma(X,\mathscr A_X^{\bullet,\mathrm{der}})$.  The analogous statement
with compact supports holds for a finite cover adapted to the support.
\end{theorem}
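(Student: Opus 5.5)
The plan is to use the standard fact that fine (hence soft, and acyclic for $\Gamma$) sheaves compute derived global sections. Čech descent for a bounded complex of such sheaves then reduces to exactness of the augmented Čech complex of each individual term.

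\emph{Step 1: reduce to one term.} By \cref{lem:resolved-fine}, any resolved representative $\mathscr A_{\cR,s_{\cR}}^\bullet$ is a bounded complex of fine $C^\infty_X$-modules. For a general bounded fine representative $\mathscr A^\bullet$ we use only fineness and boundedness, nothing specific to logarithmic forms. Fix one term $\mathscr A^m$. We show that the augmented Čech complex
\[
 0\to\Gamma(X,\mathscr A^m)\to\textstyle\prod_i\mathscr A^m(U_i)\to\prod_{i<j}\mathscr A^m(U_{ij})\to\cdots
\]
is exact.

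\emph{Step 2: the contraction.} Choose a smooth partition of unity $\{\phi_i\}$ on the paracompact $g$-corner $X$ subordinate to the locally finite cover; this is the same partition used in the proof of \cref{lem:resolved-fine}. Define the contraction
\[
(h c)_{i_0\cdots i_{p-1}}=\sum_i\phi_i\,c_{i\,i_0\cdots i_{p-1}}.
\]
Here $\phi_i c$ is extended by zero from $U_{i i_0\cdots}$ to $U_{i_0\cdots}$, which is legitimate because $\supp\phi_i\subseteq U_i$ is closed. The sum is locally finite, so the sheaf axiom glues it. The usual identity $\check\delta h+h\check\delta=\id$ in positive Čech degree, together with exactness at degree zero (the sheaf axiom), gives exactness. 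Because $h$ is $C^\infty_X$-linear and the internal differential $D$ of $\mathscr A^\bullet$ is local but not $C^\infty$-linear, the contraction need not commute with $D$. That is why we pass to a spectral sequence in the next step rather than asserting a chain homotopy of total complexes.

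\emph{Step 3: assemble.} Filter the mapping cone of the augmentation $\Gamma(X,\mathscr A^\bullet)\to\Tot\check C^\bullet$ by the internal degree $m$. Since $\mathscr A^\bullet$ is bounded, with de Rham and face degrees at most $\dim X$, this filtration is finite. Its $E_0$ page consists of the augmented Čech complexes of the individual $\mathscr A^m$, which are acyclic by Step 2. So $E_1=0$, the cone is acyclic, and the augmentation is a quasi-isomorphism. Since fine sheaves are $\Gamma$-acyclic, $\Gamma(X,\mathscr A^\bullet)$ computes $R\Gamma(X,\mathscr A_X^{\bullet,\mathrm{der}})$. This holds independently of the representative, by the roofs of \cref{prop:sheaf-refinement}, because quasi-isomorphic bounded complexes of acyclic sheaves have quasi-isomorphic global sections.

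\emph{Step 4: compact supports.} Here $\Gamma_c$ is used with a finite cover adapted to the support. The same partition-of-unity contraction preserves compact supports: multiplying by $\phi_i$ does not enlarge supports, and a finite cover gives finite sums. Then $c$-softness from \cref{lem:resolved-fine} identifies $\Gamma_c$ with $R\Gamma_c$.

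The main obstacle is the interaction of $h$ with $D$. The point to verify is that only $C^\infty_X$-module structure is needed for each individual term, and the finite filtration handles the rest. A second point to check is that extension by zero of $\phi_i c$ makes sense on the resolved faces over $U_{i_0\cdots}$. This follows because multiplication acts through $\beta_{\cR}^*\phi_i$, whose support is the closed set $\beta_{\cR}^{-1}(\supp\phi_i)$.
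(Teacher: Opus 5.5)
Your proposal is correct and follows essentially the same route as the paper. Both arguments prove exactness of the augmented \v{C}ech complex of each fine term by a partition-of-unity contraction. Both then use a finite filtration by internal degree; this is the paper's ``spectral sequence in \v{C}ech degree'', and boundedness makes it finite. Finally, both use $\Gamma$-acyclicity of fine sheaves to get $R\Gamma$, and $c$-softness for the compactly supported case.

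Your remark that the contraction is exact only termwise and does not commute with the internal differential usefully sharpens the paper's wording. It is why one must argue per internal degree, not per total degree as the paper's proof phrases it.
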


\begin{proof}
For each fixed total degree, the augmented \v{C}ech complex of the fine
representative $\mathscr A^\bullet$ is exact, with contracting homotopy from a
partition of unity subordinate to the cover.  The face degree is bounded by
$\dim X$, so the resulting double complex is bounded in one direction.  The
spectral sequence in \v{C}ech degree therefore identifies its totalization
with $\Gamma(X,\mathscr A^\bullet)$, and hence computes the derived global
sections of the derived object.  For compact supports, use the \(c\)-soft
terms from \cref{lem:resolved-fine}; their compactly supported sections are
acyclic, and the same bounded double-complex argument applies to a finite
subcover of the support.
\end{proof}

For an affine chart $V\subseteq X_P$ with a chosen smooth regularized representative, write
$\mathscr A_{P}^{\bullet,\mathrm{der}}|_V$ for the derived sheaf represented by its resolved total-face complex.  By \cref{prop:sheaf-refinement}, this object is canonically identified with the restriction of the common interior bridge and is therefore independent of the chosen resolution and chain-level DPP regularization.

Let $(P_i,V_i,\phi_i)$ be a Joyce atlas~\cite{JoyceGCorner}.  Write $U_i=\phi_i(V_i)$.  Restriction of a global resolution to $U_i$ is a smooth resolution of the local monoidal model.  Hence \cref{prop:sheaf-refinement} and the affine derived construction give a natural local comparison
\begin{equation}\label{eq:chart-comparison}
 \mathscr A_{X}^{\bullet,\mathrm{der}}|_{U_i}
 \xrightarrow{\simeq}
 (\phi_i)_*\mathscr A_{P_i}^{\bullet,\mathrm{der}}|_{V_i}.
\end{equation}
The comparison factors through the restriction of the common interior bridge;
no pullback between the two ordered-boundary diagrams is used.

\begin{corollary}[Descent under Joyce chart changes]\label{cor:chart-descent}
On an overlap $U_{ij}=U_i\cap U_j$, the two affine derived complexes are joined by a canonical zigzag of trace-compatible quasi-isomorphisms
\[
 (\phi_i)_*\mathscr A_{P_i}^{\bullet,\mathrm{der}}|_{U_{ij}}
 \xleftarrow{\simeq}
 \mathscr A_{X}^{\bullet,\mathrm{der}}|_{U_{ij}}
 \xrightarrow{\simeq}
 (\phi_j)_*\mathscr A_{P_j}^{\bullet,\mathrm{der}}|_{U_{ij}}.
\]
On triple overlaps these zigzags satisfy the cocycle condition in the derived category, and all higher coherence maps are supplied by the single global sheaf $\mathscr A_{X}^{\bullet,\mathrm{der}}$.
\end{corollary}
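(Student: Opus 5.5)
The plan is to route everything through the single global sheaf $\mathscr A_X^{\bullet,\mathrm{der}}$ and the interior bridge $\mathscr J_X^\bullet=j_!\Omega_{X^\circ}^\bullet$. Fix once the global smooth regularized representative $\mathfrak s=(\cR,s_\cR)$ from \textup{(G2)}. For each chart, the restriction $\beta_\cR^{-1}(U_i)\to U_i$ is a smooth resolution of the local monoidal model. Its resolved face sheaf is literally $\mathscr A_{\cR,s_\cR}^\bullet|_{U_i}$, transported by $\phi_i$, because \eqref{eq:resolved-sheaf} is defined open set by open set via preimages. Hence the comparison \eqref{eq:chart-comparison} can be taken as the composite of two roofs: the roof \eqref{eq:sheaf-refinement-roof} over $U_i$ between $\mathfrak s|_{U_i}$ and whatever chart representative was chosen for $\mathscr A_{P_i}^{\bullet,\mathrm{der}}$, followed by $(\phi_i)_*$. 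The roof is built from two quasi-isomorphisms out of $\mathscr J$, which \cref{prop:sheaf-refinement} supplies on any open set. Restricting to $U_{ij}$ and inverting the $i$-comparison then gives the displayed zigzag.

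The key step is to show that both arrows factor through the \emph{same} object $\mathscr J_X|_{U_{ij}}$. The point is that $\phi_i$ and $\phi_j$ restrict to diffeomorphisms of interiors, and $j_!\Omega^\bullet$ is natural under such maps. So $(\phi_i)_*\mathscr J_{V_i}|_{U_{ij}}=\mathscr J_X|_{U_{ij}}=(\phi_j)_*\mathscr J_{V_j}|_{U_{ij}}$ canonically, by pullback of interior forms along the transition diffeomorphism $\phi_j^{-1}\phi_i$. Every map in sight is then of the form $\epsilon$ followed by $\epsilon^{-1}$ in $D(\operatorname{Sh}(U_{ij}))$, with all $\epsilon$'s emanating from one sheaf $\mathscr J$.

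On a triple overlap $U_{ijk}$, the composite $i\to j\to k$ is $\epsilon_k\epsilon_j^{-1}\epsilon_j\epsilon_i^{-1}=\epsilon_k\epsilon_i^{-1}$. This is exactly the $i\to k$ comparison, which gives the cocycle condition in the derived category. For the higher coherences, I would note that all zigzags arise by restriction from the global morphisms $\epsilon:\mathscr J_X\to\mathscr A$. The descent data is therefore the restriction of a single object along the cover, and its \v{C}ech coherence is that of a sheaf. Via \cref{thm:sheaf-descent}, the \v{C}ech totalization of the bounded fine representative computes $R\Gamma$, so no separate homotopy-coherent gluing is needed.

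Trace compatibility I would check separately. For compactly supported sections over $U_{ij}$, each representative trace composed with $\epsilon$ equals $\int$ over the interior, by \eqref{eq:interior-trace-bridge} applied on the resolved preimages. Integration is invariant under the orientation-preserving transition diffeomorphism, using the face orientations of \textup{(G1)}. So both sides of the zigzag induce the same morphism to $\R[-n]$ from $H_c^n$. The main obstacle I expect is a bookkeeping one: ensuring that the chart representatives $\mathscr A_{P_i}^{\bullet,\mathrm{der}}|_{V_i}$ are defined with the same interior bridge, identified through $\phi_i$ rather than through an arbitrary choice. One also has to verify that orientation conventions make $\phi_j^{-1}\phi_i$ orientation-preserving on interiors, since otherwise the traces differ by a sign. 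I would handle this by declaring the chart identifications orientation-compatible as part of the atlas supplied by \textup{(G1)}.
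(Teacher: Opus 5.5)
Your proposal is correct and follows essentially the paper's argument. Both chart comparisons factor through one common restriction of the global object, which you make explicit as the interior bridge $\mathscr J_X|_{U_{ij}}$ (identified with $\mathscr A_X^{\bullet,\mathrm{der}}|_{U_{ij}}$ by \cref{prop:sheaf-refinement}); the triple-overlap composite therefore collapses to $\epsilon_k\epsilon_j^{-1}\epsilon_j\epsilon_i^{-1}=\epsilon_k\epsilon_i^{-1}$, and the coherence is inherited from the single global sheaf. Your separate trace and orientation check spells out what the paper leaves implicit (essentially the chart clause of \cref{thm:global-trace}), and the appeal to \cref{thm:sheaf-descent} is harmless but unnecessary, since the local objects being restrictions of one global object already does the work.
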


\begin{proof}
Use \cref{eq:chart-comparison} for the two charts.  Both local objects are compared with the same restriction of the global object.  On a triple overlap the three comparisons factor through that same restriction, so the derived cocycle is the identity.  Replacing a local resolution by another one does not change the zigzag by \cref{prop:sheaf-refinement}.
\end{proof}

This argument avoids the need to choose and compare explicit monoid presentations on every overlap.  The chart-change information is carried by the global $b$-geometry and by the common-refinement property of monoidal resolutions.

\subsection{The global derived trace}

Retain \textup{(G1)}--\textup{(G3)} and write $n=\dim X$.  Choose one
chain-level regularized representative $(\cR,s_{\cR})$.  Its compactly
supported total-face trace is
\[
 \mathcal I_{\cR,s_{\cR}}:
 \Gamma_c(X,\mathscr A_{\cR,s_{\cR}}^\bullet)\longrightarrow\mathbb R[-n].
\]
On the common interior bridge one has
\begin{equation}\label{eq:global-derived-trace}
 \mathcal I_{\cR,s_{\cR}}\circ\epsilon_{\cR}
 =\int_{X^\circ}:
 \Omega_c^\bullet(X^\circ)\longrightarrow\mathbb R[-n].
\end{equation}

\begin{theorem}[Global derived Stokes trace]\label{thm:global-trace}
The equality \eqref{eq:global-derived-trace} defines a morphism
\[
 \mathcal I_X^{\mathrm{der}}:
 R\Gamma_c\bigl(X,\mathscr A_X^{\bullet,\mathrm{der}}\bigr)
 \longrightarrow\mathbb R[-n]
\]
which is independent of the chosen smooth resolution and of the chosen
chain-level DPP regularization.  On every regularized representative,
$\mathcal I_{\cR,s_{\cR}}\circ D=0$.  Its restriction to a Joyce chart agrees
with the affine derived trace through the corresponding interior bridge.
\end{theorem}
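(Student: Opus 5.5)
The plan is to build the derived trace from the interior bridge of \cref{prop:sheaf-refinement}, working in three steps. First I fix one bounded fine representative $\mathscr A_{\cR,s_{\cR}}^\bullet$. By \cref{lem:resolved-fine} its terms are $c$-soft, so $R\Gamma_c(X,\mathscr A_X^{\bullet,\mathrm{der}})$ is computed by $\Gamma_c(X,\mathscr A_{\cR,s_{\cR}}^\bullet)$. Likewise $j_!\Omega_{X^\circ}^\bullet$ is a complex of $c$-soft sheaves, and $\Gamma_c(X,j_!\Omega^\bullet_{X^\circ})=\Omega_c^\bullet(X^\circ)$. The sheaf quasi-isomorphism $\epsilon_\cR$ therefore induces a quasi-isomorphism
\[
 \Gamma_c(\epsilon_\cR):\Omega_c^\bullet(X^\circ)\longrightarrow\Gamma_c(X,\mathscr A_{\cR,s_{\cR}}^\bullet),
\]
because a quasi-isomorphism between bounded complexes of $c$-soft sheaves is preserved by $\Gamma_c$. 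Because $X$ is compact, $\Gamma_c=\Gamma$ on $X$ itself, and $\Gamma_c(X,\mathscr A_{\cR,s_{\cR}}^\bullet)$ is the total complex of the compact resolution $X_\cR$ from \eqref{eq:total-face-complex}.

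Second, I show that $\mathcal I_{\cR,s_{\cR}}$ is a cochain map of degree $-n$. On the global compact resolution this is exactly the identity $\mathcal I_{\cR,s_{\cR}}\circ D=0$ of \cref{prop:total-stokes}, which rests on DPP's regularized Stokes theorem on the complete ordered boundary diagram. The affine argument given there is not specific to affine charts. Next I verify \eqref{eq:global-derived-trace}. A compactly supported interior form has no boundary components, so its regularized integral is the ordinary absolutely convergent integral; this is the content of \eqref{eq:interior-trace-bridge} in \cref{lem:interior-bridge}, applied to $N=X_\cR$. I then define $\mathcal I_X^{\mathrm{der}}$ as $\mathcal I_{\cR,s_{\cR}}$ composed with the identification of the first step. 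Under the roof, this morphism equals $\int_{X^\circ}$ on $\Omega_c^\bullet(X^\circ)$.

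Third, I prove independence of the choices. Take two representatives $(\cR_1,s_1)$ and $(\cR_2,s_2)$. Both traces pull back along their bridges to the same functional $\int_{X^\circ}$, and the roof \eqref{eq:sheaf-refinement-roof} is compatible with these bridges. The two derived morphisms therefore agree in $\Hom_{D}(R\Gamma_c,\R[-n])$. Here I use that a morphism out of a derived object is determined by its precomposition with a quasi-isomorphism from $\Omega_c^\bullet(X^\circ)$. Neither regularization enters the interior integral, and the orientation is the one from \textup{(G1)}. For the chart statement, I restrict to $U_i$. Extension by zero along $U_i\subseteq X$ commutes with the bridge maps, and the affine trace of \cref{def:derived-trace} is likewise $\int$ on $\Omega_c(U_i^\circ)$ by \cref{lem:interior-bridge}. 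Both routes therefore reduce to interior integration on $U_i\cap X^\circ$, and the comparison \eqref{eq:chart-comparison} factors through the same bridge.

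I expect the first step to be the main obstacle. It must be justified that $\Gamma_c$ of the bridge is a quasi-isomorphism at the chain level, so that the derived morphism is determined by its restriction to interior forms. My first approach is to use $c$-softness of both complexes together with the standard fact that a quasi-isomorphism of bounded $c$-soft complexes induces one on $\Gamma_c$, via the $c$-softness and boundedness supplied by \cref{lem:resolved-fine}. As a fallback, I would instead invoke the global version of \cref{lem:interior-bridge} on the compact manifold $X_\cR$.
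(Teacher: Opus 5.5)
Your proposal is correct and takes essentially the paper's route. You identify every resolved model with $\Omega_c^\bullet(X^\circ)$ through the sheaf-level interior bridge of \cref{prop:sheaf-refinement}, use \cref{lem:interior-bridge} to see that each regularized trace becomes $\int_{X^\circ}$ there, take the chain-level identity from \cref{prop:total-stokes}, and let chart restriction commute with the interior identification. You spell out two points the paper's short proof leaves implicit: the $c$-softness bookkeeping, and the fact that precomposition with a quasi-isomorphism is bijective on derived Hom. For the $c$-softness claim on $j_!\Omega^\bullet_{X^\circ}$, which you assert without proof, note that it is a $C^\infty_X$-module and hence fine.
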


\begin{proof}
By \cref{prop:sheaf-refinement}, every resolved face sheaf is
quasi-isomorphic to $\mathscr J_X^\bullet=j_!\Omega_{X^\circ}^\bullet$.
Taking compactly supported derived global sections identifies all resolved
models with $\Omega_c^\bullet(X^\circ)$.  On this common model every
regularized trace is ordinary integration by
\cref{lem:interior-bridge}.  Hence the induced morphism in the derived
category is independent of both resolution and regularization.  The
chain-level Stokes identity on a chosen representative is
\cref{prop:total-stokes}.  Restriction to charts commutes with the interior
identification.
\end{proof}

\begin{corollary}[Resolution-independent derived trace of AKSZ source cocycles]\label{cor:global-aksz}
Let $(\Y,\omega_\Y=\delta\alpha_\Y,\Theta)$ be an exact $QP$-target of degree
$n-1$.  On every chosen smooth regularized representative the universal AKSZ
integrands satisfy the resolved BV--BFV identity of \cref{thm:derived-bvbfv}.
After forgetting the face filtration, their pairing with
$\mathcal I_X^{\mathrm{der}}$ is resolution-independent whenever the resulting
compactly supported total source cocycles represent corresponding classes
under the common-interior roof.  In particular, this holds for the
interior-bridge representatives of compactly supported forms supported away
from the boundary.  Compatibility of fields only on the common interior,
including basic bulk fields, does not by itself supply this hypothesis.

This is a statement about universal integrands in the linear total source
complex; it does not construct a nonlinear derived mapping space or identify
nonlinear field spaces across resolutions.  Separate face-degree descendants
are not canonically identified unless a filtered refinement transfer is part
of the comparison data.
\end{corollary}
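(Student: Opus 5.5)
The corollary combines the chain-level identity on a fixed representative with the derived independence of the global trace, so the proof has two separate parts.

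\emph{Chain-level identity.} Fix a smooth regularized representative $(\cR,s_{\cR})$ and restrict attention to it. \Cref{thm:derived-bvbfv} supplies the resolved BV--BFV identity
\[
 \iota_Q\omega_{\cR,s_{\cR}}=(-1)^n\delta S_{\cR,s_{\cR}}+\alpha_{\partial,\cR,s_{\cR}}
\]
for the universal integrands \eqref{eq:resolved-aksz-alpha}--\eqref{eq:resolved-aksz-action}. The higher face-degree identities follow from $D^2=0$. The only global input is that these integrands are local forms on the ordered resolved faces. Hence they are sections of the global fine sheaf $\mathscr A_{\cR,s_{\cR}}^\bullet$ of \eqref{eq:resolved-sheaf}, and not merely elements of an affine complex. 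Compactly supported total cocycles built from them lie in $\Gamma_c(X,\mathscr A_{\cR,s_{\cR}}^\bullet)$. On that space the trace $\mathcal I_{\cR,s_{\cR}}$ is defined and satisfies $\mathcal I_{\cR,s_{\cR}}\circ D=0$ by \cref{thm:global-trace}, which in turn rests on \cref{prop:total-stokes}.

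\emph{Resolution independence.} Take two representatives $(\cR_1,s_1)$ and $(\cR_2,s_2)$, and let $c_1,c_2$ be compactly supported total source cocycles. Assume their classes correspond under the roof \eqref{eq:sheaf-refinement-roof}. This means there is a class $[\eta]\in H_c^\bullet(X^\circ)$, computed through $\mathscr J_X^\bullet=j_!\Omega_{X^\circ}^\bullet$, with $[c_i]=\epsilon_{\cR_i}[\eta]$ in $H^\bullet R\Gamma_c$. Compact supports are legitimate here because the terms are $c$-soft (\cref{lem:resolved-fine}). Then \eqref{eq:global-derived-trace} gives
\[
 \mathcal I_{\cR_i,s_i}(c_i)=\mathcal I_{\cR_i,s_i}(\epsilon_{\cR_i}\eta)+\mathcal I_{\cR_i,s_i}(D\xi_i)=\int_{X^\circ}\eta,
\]
since the trace kills $D$-exact terms. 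The two values therefore agree. For an integrand supported away from the boundary, take $\eta$ to be the integrand itself; then $c_i=\epsilon_{\cR_i}\eta$ literally, with no correction term.

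\emph{The negative statements.} These require counterexamples rather than proofs. The interval example already cited in the proof of \cref{thm:derived-bvbfv} works: by \cite[Ex.~7.13]{DPP}, the regularized integral of $dr/r$ depends on the chosen boundary scale. Its lift shows that agreement of fields on the interior, even for basic bulk fields, does not fix the total class. The failure to identify separate descendants and nonlinear field spaces follows from \cref{rem:unfiltered-derived-scope}: the roof is a map of linear source complexes that forgets the face filtration.

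I expect the main obstacle to be making precise what ``corresponding classes'' means globally. That requires passing from cocycles on a chosen representative to classes in $H^\bullet R\Gamma_c(X,\mathscr A_X^{\mathrm{der}})$. The identification uses that fine representatives compute $R\Gamma_c$ (\cref{thm:sheaf-descent}, compact case) and that \cref{prop:sheaf-refinement} provides the canonical roof.
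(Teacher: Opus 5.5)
Your proposal is correct and follows the paper's route. The paper gives no separate proof: the corollary is read off from \cref{thm:derived-bvbfv} on a fixed representative, together with the interior-bridge identity \eqref{eq:global-derived-trace} and the chain-level Stokes identity $\mathcal I_{\cR,s_{\cR}}\circ D=0$ of \cref{thm:global-trace}, while its negative clauses rest on the DPP $\dd r/r$ scale-dependence example and \cref{rem:unfiltered-derived-scope}, exactly as you argue.
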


\begin{remark}[What globalization does not solve]\label{rem:global-not-strict}
The sheaf $\mathscr A_{X}^{\bullet,\mathrm{der}}$ contains exceptional face components locally.  Global descent shows that these components glue coherently; it does not contract them.  Passing from this global derived linear source complex to a strict intrinsic incidence complex still requires compatible subdivision/aggregation contractions; a strict contravariant face functor would additionally require path-independent restriction maps.  In nonlinear theories the analogous operation is a global BV pushforward.
\end{remark}

\section{Conditional intrinsic \texorpdfstring{$b$}{b}-AKSZ transgression}\label{sec:intrinsic-transgression}

\subsection{Target data}

We use the standard framework of graded symplectic $Q$-manifolds, or
$QP$-manifolds, of Roytenberg and
\v{S}evera~\cite{Roytenberg,Severa}; the degree conventions below are theirs.
Let $(\Y,\omega_\Y,Q_\Y)$ be a graded symplectic $Q$-manifold with
\[
 |\omega_\Y|=n-1,
 \qquad
 \omega_\Y=\delta\alpha_\Y.
\]
Assume $Q_\Y$ is Hamiltonian:
\[
 \iota_{Q_\Y}\omega_\Y=\delta\Theta,
 \qquad
 |\Theta|=n,
 \qquad
 \{\Theta,\Theta\}=0.
\]

For an intrinsic face $F$ of codimension $k$, define the formal mapping space
\[
 \F_F^b=\Map(\bT[1]F,\Y).
\]
Its tangent vectors are sections of the pulled-back tangent bundle of $\Y$, and the evaluation map is
\[
 \ev:\bT[1]F\times\F_F^b\longrightarrow\Y.
\]

\begin{lemma}[Regularized restriction of AKSZ fields]\label{lem:regularized-field-restriction}
Let $G\subseteq F$ and let $\cT$ be a multiplicative regularized Stokes trace
system.  A formal field $\Phi\in\F_F^b$ is contravariantly encoded by a unital
morphism of graded-commutative algebras
\[
 \Phi^*:C^\infty(\Y)\longrightarrow\bOmega^\bullet(F).
\]
No compatibility with $Q_\Y$ and $\db$ is imposed on an arbitrary field.
Define its regularized boundary restriction by
\begin{equation}\label{eq:regularized-field-restriction}
 (\rho^{\cT}_{G/F}\Phi)^*
 :=\Reg_{G/F}\circ\Phi^*:
 C^\infty(\Y)\longrightarrow\bOmega^\bullet(G).
\end{equation}
Then $\rho^{\cT}_{G/F}:\F_F^b\to\F_G^b$ is well defined and strictly
functorial in flags.  Moreover, if $\Reg_{G/F}$ is extended to forms on
$\F_F^b$ by acting on the source coefficients and trivially on the
field-space differential, then for every differential form or tensor
$\tau$ on $\Y$ used in AKSZ transgression,
\begin{equation}\label{eq:evaluation-regularization}
 \Reg_{G/F}^{\mathrm{src}}\bigl(\ev_F^*\tau\bigr)
 =(\rho^{\cT}_{G/F})^*\bigl(\ev_G^*\tau\bigr).
\end{equation}
\end{lemma}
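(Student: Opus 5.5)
The argument is formal: it uses only the algebraic axioms of \cref{def:trace-system} together with the functor-of-points description of the formal mapping space $\F_F^b$. It has three steps, in this order.

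\emph{Well-definedness.} By \cref{def:bsource}, a formal field $\Phi\in\F_F^b$ is the same thing as a unital graded-algebra morphism $\Phi^*:C^\infty(\Y)\to\bOmega^\bullet(F)$, with no $Q$-compatibility imposed. By clause (ii) of \cref{def:trace-system}, $\Reg_{G/F}$ is a unital cdga morphism and carries $\bOmega^\bullet(F)$ into $\bOmega^\bullet(G)$. Hence $\Reg_{G/F}\circ\Phi^*$ is again a unital graded-commutative algebra morphism $C^\infty(\Y)\to\bOmega^\bullet(G)$, that is, a point of $\F_G^b$. Only multiplicativity and unitality are used here; that $\Reg_{G/F}$ commutes with $\db$ is not needed, since fields need not be chain maps.

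\emph{Strict functoriality.} Apply \eqref{eq:reg-functorial} directly:
\[
(\rho^{\cT}_{H/G}\rho^{\cT}_{G/F}\Phi)^*=\Reg_{H/G}\Reg_{G/F}\Phi^*=\Reg_{H/F}\Phi^*,
\]
and $\Reg_{F/F}=\id$ gives $\rho^\cT_{F/F}=\id$. To make the statement meaningful for families, so that $\rho^\cT_{G/F}$ is a map of formal spaces, I would run the same argument with an auxiliary test algebra $R$. Maps $\operatorname{Spec}R\to\F_F^b$ correspond to morphisms $C^\infty(\Y)\to\bOmega^\bullet(F)\hat\otimes R$, and $\Reg_{G/F}\otimes\id_R$ acts on these. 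Naturality in $R$ then shows that the construction is a morphism of the represented functors.

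\emph{Identity \eqref{eq:evaluation-regularization}.} Forms on $\bT[1]F\times\F_F^b$ are described universally: $\ev_F^*$ is the universal family, i.e.\ the morphism $C^\infty(\Y)\to\bOmega^\bullet(F)\hat\otimes C^\infty(\F_F^b)$ corresponding to $\id_{\F_F^b}$.
\begin{itemize}
\item \emph{Functions.} Applying $\Reg^{\mathrm{src}}_{G/F}=\Reg_{G/F}\otimes\id$ to the universal family yields, by the previous step, the family classified by $\rho^\cT_{G/F}$. By definition of pullback this is $(\id\otimes(\rho^\cT_{G/F})^*)\ev_G^*$, which proves the claim for $\tau\in C^\infty(\Y)$.
\item \emph{Forms and tensors.} A form $\tau$ on $\Y$ is generated, as a graded algebra over functions, by the differentials $\delta f$. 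The pullback $\ev^*$ intertwines the target de Rham differential with $\delta$ on the field-space factor plus the source part. Because $\Reg^{\mathrm{src}}$ acts trivially on field-space differentials and on the $\delta$-direction, it commutes with the field-space $\delta$. Multiplicativity of $\Reg^{\mathrm{src}}$ then propagates the identity from generators to products.
\end{itemize}

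The main obstacle is the second bullet. One must check that $\Reg^{\mathrm{src}}$ commutes with the \emph{field-space} de Rham differential, and not merely that it respects the source differential $\db$. Since the source restriction is extended $\delta$-linearly, the commutation holds by definition, with Koszul signs vanishing because $\Reg$ has degree zero. The work lies in stating this extension carefully, including the completed tensor product. It is at this point that I would invoke the standing formal mapping-space hypothesis (or a regular finite-dimensional approximation), which is what guarantees that forms on $\F_F^b$ are generated by pullbacks along $\ev$.
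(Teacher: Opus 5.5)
Your proposal is correct and follows the same route as the paper: well-definedness because $\Reg_{G/F}$ is a unital cdga morphism preserving $\bOmega$, strict functoriality directly from \eqref{eq:reg-functorial}, and \eqref{eq:evaluation-regularization} as naturality of evaluation, extended through products and differentials by multiplicativity. Your functor-of-points/universal-family formulation and your explicit check that $\Reg^{\mathrm{src}}$ commutes with the field-space $\delta$ simply spell out what the paper compresses into its remark about applying $\tau$ to coordinate functions and their differentials.
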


\begin{proof}
Because $\Reg_{G/F}$ is a unital cdga morphism, hence in particular a
unital morphism of graded-commutative algebras, and preserves the field
subalgebra $\bOmega$, the composite in
\eqref{eq:regularized-field-restriction} again defines a graded map and hence a
field on $G$.  Strict functoriality follows immediately from
\eqref{eq:reg-functorial}.  Equation \eqref{eq:evaluation-regularization} is
the naturality of evaluation: both sides are obtained by applying the same
target tensor $\tau$ to the coordinate functions of $\Phi$ and then applying
$\Reg_{G/F}$ to every source coefficient.  Multiplicativity is exactly what
allows this identity to pass through products of coordinate functions and
their differentials.
\end{proof}

\subsection{Transgressed data}

\begin{definition}\label{def:transgression}
For a regularized Stokes trace system $\cT$, define
\begin{align}
 \omega_F^b&=\Tr_F^{\cT}\ev^*\omega_\Y,\\
 \alpha_F^b&=\Tr_F^{\cT}\ev^*\alpha_\Y,\\
 S_F^b&=\Tr_F^{\cT}
 \bigl(\iota_{\db}\ev^*\alpha_\Y+\ev^*\Theta\bigr).
\end{align}
The cohomological vector field is
\[
 Q_F=\widehat{\db}+\widehat{Q_\Y}.
\]
\end{definition}

The notation is formal in the same sense as ordinary AKSZ theory on infinite-dimensional mapping spaces: transgression is interpreted on local functionals and variations are compactly supported whenever necessary.

\begin{lemma}[Degrees]\label{lem:degrees}
If $F$ has codimension $k$ in $X$, then
\[
 |\omega_F^b|=k-1,\qquad
 |\alpha_F^b|=k-1,\qquad
 |S_F^b|=k,\qquad
 |Q_F|=1.
\]
\end{lemma}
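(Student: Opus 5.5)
The plan is to compute each degree by bookkeeping: a target object's internal degree, plus the degree contributed by the evaluation map, minus the degree removed by the trace. The evaluation map $\ev:\bT[1]F\times\F_F^b\to\Y$ has degree zero, so $\ev^*\tau$ has total degree $|\tau|$. Here we count the total degree on $\bT[1]F\times\F_F^b$ as form degree on $F$ (the $\bT[1]$ ghost degree) plus field-space degree, with the field-space de Rham differential counted in the usual AKSZ convention. The trace $\Tr_F^{\cT}$ takes values in $\R[-\dim F]$ by \cref{def:trace-system}\textup{(iv)}, so it integrates out exactly the component of source degree $\dim F$ and lowers total degree by $\dim F$. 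Since $\dim F=n-k$ for a face of codimension $k$, the whole calculation reduces to the rule $|\Tr_F^{\cT}\ev^*\tau|=|\tau|-(n-k)$.

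The steps, in order, are as follows.

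\begin{enumerate}
\item For $\omega_\Y$ we have $|\omega_\Y|=n-1$, which gives $|\omega_F^b|=(n-1)-(n-k)=k-1$.
\item Since $\omega_\Y=\delta\alpha_\Y$ and $\delta$ has degree zero in the convention in which $\omega_\Y$ is assigned degree $n-1$, the primitive $\alpha_\Y$ has the same internal degree $n-1$. Hence $|\alpha_F^b|=k-1$.
\item For the action, $\Theta$ has degree $n$. The vector field $\db$ has degree $+1$ by \cref{prop:dbsquare}, and contraction $\iota_{\db}$ with the lifted source vector field removes one field-space form degree while adding the ghost degree $1$. Therefore $\iota_{\db}\ev^*\alpha_\Y$ has degree $(n-1)+1=n$, matching $\ev^*\Theta$, and $|S_F^b|=n-(n-k)=k$.
\item Finally, $\widehat{\db}$ and $\widehat{Q_\Y}$ are the canonical lifts to the mapping space of degree-one vector fields, and lifting preserves degree. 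Hence $|Q_F|=1$.
\end{enumerate}

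The main point requiring care is that regularization does not disturb this counting. By \cref{def:trace-system}, the maps $\Reg_{G/F}$ are cdga morphisms of degree zero and $\Tr_F^{\cT}$ is of degree zero into $\R[-\dim F]$. So the logarithmic extension $\logOmega^\bullet(F)\supseteq\bOmega^\bullet(F)$ introduces no extra grading: the symbols $\ell_f$ have degree zero and the forms $\vartheta_p$ have degree one, exactly like ordinary forms.

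We also need the sign and degree conventions for $\iota$ and $\delta$ on the formal mapping space, fixed by the standing formal hypothesis of graded Cartan calculus. I expect these to be the only step needing an explicit convention check, and I would match them against the ordinary-corner computation of \cref{sec:resolution-aksz-data}, where codimension-$r$ faces carry two-forms of degree $r-1$ and actions of degree $r$. The intrinsic statement is then the same count with the ordinary trace replaced by the regularized one.
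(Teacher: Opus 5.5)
Your proposal is correct and follows essentially the same argument as the paper. Both use the fact that the regularized trace lowers total degree by $\dim F=n-k$, applied to the degree-$(n-1)$ forms $\omega_\Y,\alpha_\Y$, to the degree-$n$ action integrand $\iota_{\db}\ev^*\alpha_\Y+\ev^*\Theta$, and to $Q_F$ as a sum of degree-one lifts.
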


\begin{proof}
Regularized integration over $F$ lowers degree by $\dim F=n-k$.  The target symplectic form has degree $n-1$ and the Hamiltonian has degree $n$.
\end{proof}

\subsection{The BV--BFV identity}

\begin{theorem}[Intrinsic regularized presymplectic BV--BFV identity]\label{thm:bvbfv}
Assume $\cT$ is a multiplicative regularized Stokes trace system.  The data in
\cref{def:transgression} satisfy
\begin{equation}\label{eq:intrinsic-bvbfv-factorized}
 \iota_{Q_F}\omega_F^b
 =(-1)^{\dim F}\delta S_F^b
 +\sum_{G\prec F}[F:G]\,
 (\rho^{\cT}_{G/F})^*\alpha_G^b,
\end{equation}
where $\rho^{\cT}_{G/F}$ is the regularized restriction of fields from
\cref{lem:regularized-field-restriction} and
\[
 \alpha_G^b=\Tr_G^{\cT}\ev_G^*\alpha_\Y
\]
is the transgressed primitive defined intrinsically on the boundary field
space $\F_G^b$.
\end{theorem}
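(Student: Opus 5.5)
The plan is to run the standard AKSZ transgression computation on the face $F$, with the ordinary Stokes theorem replaced by the exact Stokes identity \eqref{eq:stokes-trace}. All source-side manipulations take place in $\logOmega^\bullet(F)$ and are pushed to the field space through $\Tr_F^{\cT}$.

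First, I compute $\iota_{Q_F}\omega_F^b$ by splitting $Q_F=\widehat{\db}+\widehat{Q_\Y}$ and treating the two lifts separately. Since $\Tr_F^{\cT}$ acts only on source coefficients, it commutes with the field-space operations $\delta$ and $\iota$ up to the Koszul sign $(-1)^{\dim F}$. This is where the factor $(-1)^{\dim F}$ originates, and the signs follow the convention of \cref{app:signs}.

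For the target part, naturality of evaluation together with $\iota_{Q_\Y}\omega_\Y=\delta\Theta$ gives
\[
 \iota_{\widehat{Q_\Y}}\ev^*\omega_\Y=\pm\,\delta\,\ev^*\Theta,
\]
hence a contribution $\pm\delta\Tr_F^{\cT}\ev^*\Theta$.

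For the source part, I use the Cartan identity on $\bT[1]F\times\F_F^b$, in the form
\[
 \iota_{\widehat{\db}}\ev^*\omega_\Y=\mathcal L_{\db}\ev^*\alpha_\Y-\delta\,\iota_{\db}\ev^*\alpha_\Y .
\]
Here $\mathcal L_{\db}$ acts through the source differential $\db$ on the coefficients. This relies on the standing formal mapping-space hypothesis (graded Cartan calculus) and on $\omega_\Y=\delta\alpha_\Y$. The result is $\delta\Tr_F^{\cT}(\iota_{\db}\ev^*\alpha_\Y)$ plus a total-derivative term $\Tr_F^{\cT}(\db\,\ev^*\alpha_\Y)$. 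Combining this with the target part assembles exactly $(-1)^{\dim F}\delta S_F^b$, by \cref{def:transgression}.

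Next, I apply \eqref{eq:stokes-trace} to the total-derivative term. Only the component of source degree $\dim F-1$ contributes, which is exactly the case covered by the axiom. This yields
\[
 \sum_{G\prec F}[F:G]\,\Tr_G^{\cT}\bigl(\Reg_{G/F}\ev_F^*\alpha_\Y\bigr).
\]
To recognize this sum as $(\rho^{\cT}_{G/F})^*\alpha_G^b$, I invoke \eqref{eq:evaluation-regularization} of \cref{lem:regularized-field-restriction}, which turns it into $\Tr_G^{\cT}(\rho^{\cT}_{G/F})^*\ev_G^*\alpha_\Y$. Pulling back along $\rho^{\cT}_{G/F}$ commutes with $\Tr_G^{\cT}$ because the trace is linear in the source coefficients and $(\rho^{\cT}_{G/F})^*$ acts only on the field-space factor. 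This is the step where multiplicativity of $\Reg_{G/F}$ is essential: $\ev^*\alpha_\Y$ is a product of coordinate functions of the field and their variations, and only an algebra morphism that preserves $\bOmega$ makes the regularized restriction a genuine field on $G$.

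The main obstacle I expect is justifying that $\Tr_F^{\cT}$ commutes with $\delta$, $\iota_{\widehat{Q_\Y}}$ and $\iota_{\widehat{\db}}$ when the integrand is merely in $\logOmega_c$ rather than smooth. My approach would be to extend $\Tr_F^{\cT}$ and $\Reg_{G/F}$ to field-space forms coefficientwise, acting trivially on $\delta$ as in \cref{lem:regularized-field-restriction}, and to read all identities on local functionals, or on a regular finite-dimensional approximation, under the standing hypothesis. A secondary point is careful bookkeeping of the sign $(-1)^{\dim F}$ against the incidence signs $[F:G]$. I would resolve this by matching with the ordinary-corner identity \eqref{eq:facewise-mhi}, which is the special case in which $\Reg$ is ordinary restriction.
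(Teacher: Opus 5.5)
Your proposal is correct and follows essentially the same route as the paper. You split $Q_F$ into its target and source lifts, use Cartan calculus to isolate the total-derivative term $\Tr_F^{\cT}\db(\ev_F^*\alpha_\Y)$, convert it with the Stokes axiom \eqref{eq:stokes-trace}, and identify each boundary summand with $(\rho^{\cT}_{G/F})^*\alpha_G^b$ via \cref{lem:regularized-field-restriction}. Like the paper, you defer the sign $(-1)^{\dim F}$ to the operator conventions of \cref{app:signs}, which are inherited unchanged from the ordinary-corner case because regularized restriction acts only on source coefficients and commutes with $\delta$.
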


\begin{proof}
The target part $\widehat{Q_\Y}$ is Hamiltonian with Hamiltonian
$\Tr_F^{\cT}\ev_F^*\Theta$.  For the source part, Cartan's formula and
variation of the kinetic term give the standard AKSZ bulk contribution plus
\[
 \Tr_F^{\cT}\db(\ev_F^*\alpha_\Y).
\]
The Stokes identity \eqref{eq:stokes-trace} turns this into
\[
 \sum_{G\prec F}[F:G]\,
 \Tr_G^{\cT}\Reg_{G/F}^{\mathrm{src}}(\ev_F^*\alpha_\Y).
\]
By \cref{lem:regularized-field-restriction}, each summand equals
$(\rho^{\cT}_{G/F})^*\alpha_G^b$.  Thus the boundary contribution factors
through the boundary field space rather than merely defining a one-form on
bulk fields.  The sign $(-1)^{\dim F}$ is the convention checked in
\cref{app:signs}.
\end{proof}

\begin{corollary}[Closed source]\label{cor:closed}
If $F$ has no boundary, then the presymplectic Hamiltonian identity is
\[
 \iota_{Q_F}\omega_F^b=(-1)^{\dim F}\delta S_F^b.
\]
If, in addition, $\omega_F^b$ is weakly nondegenerate on the chosen field
class and its induced BV bracket is defined, then $Q_F$ is the Hamiltonian
vector field of $(-1)^{\dim F}S_F^b$.  Since $Q_F^2=0$, the classical master
bracket $\{S_F^b,S_F^b\}$ is a field-independent central constant.  If
$k=|F|$, then $|S_F^b|=k$ and the bracket induced by a two-form of degree
$k-1$ has degree $1-k$, so this constant has degree $k+1$.  Over the ground
field $\mathbb R$ concentrated in degree zero it therefore vanishes, and the
classical master equation holds.  Over a graded coefficient base with
positive-degree constants, its vanishing must instead be imposed separately.
\end{corollary}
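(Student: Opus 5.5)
The statement to prove is \cref{cor:closed}. The plan is to specialize \cref{thm:bvbfv} and then run a degree count for the master-equation constant.

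\emph{Identity.} If $F$ has no boundary, then no intrinsic face $G\prec F$ exists in $\mathsf{Face}(X)$. The incidence sum in \eqref{eq:intrinsic-bvbfv-factorized} is therefore empty, and the displayed identity $\iota_{Q_F}\omega_F^b=(-1)^{\dim F}\delta S_F^b$ follows at once. This is also consistent with the Stokes axiom \eqref{eq:stokes-trace}, since $\Tr_F^{\cT}\circ\db=0$ on a face without boundary.

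\emph{Hamiltonian vector field.} Next assume that $\omega_F^b$ is weakly nondegenerate on the chosen field class and that the induced bracket is defined. The presymplectic identity then says precisely that $Q_F$ is the Hamiltonian vector field of $(-1)^{\dim F}S_F^b$. Weak nondegeneracy gives uniqueness of the Hamiltonian vector field, which is all this step needs.

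\emph{Master bracket is constant.} Put $S=(-1)^{\dim F}S_F^b$. Then $Q_F=\{S,\cdot\}$, where $S$ has odd total parity $k=|F|$ shifted by the bracket degree; the Jacobi identity is applied with the standard graded signs. By the graded Jacobi identity,
\[
Q_F^2=\tfrac12\{\{S,S\},\cdot\}.
\]
Since $\widehat{\db}$ and $\widehat{Q_\Y}$ are commuting homological vector fields, $Q_F^2=0$. Consequently the Hamiltonian vector field of $\{S,S\}$ vanishes. By weak nondegeneracy, $\delta\{S,S\}=0$, so $\{S,S\}$ is locally constant on the field class, i.e.\ a field-independent central constant.

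\emph{Degree count.} By \cref{lem:degrees}, $|S_F^b|=k$. A symplectic form of degree $k-1$ induces a bracket of degree $1-k$, so the constant has degree $2k+1-k=k+1\ge 1$. A constant of nonzero degree vanishes over $\mathbb R$ concentrated in degree zero, and hence the classical master equation holds. Over a graded coefficient base such constants may survive, which is exactly the stated caveat.

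\emph{Main obstacle.} The delicate step is the passage from $Q_F^2=0$ to constancy of $\{S,S\}$. It requires the bracket and the Jacobi identity to be defined on the infinite-dimensional field class. I will invoke the standing formal mapping-space hypothesis, or a regular finite-dimensional approximation, together with the separately assumed nondegeneracy, as \cref{rem:trace-not-nondegenerate} demands. I will also check that $k\ge1$, so that the degree of the constant is nonzero.
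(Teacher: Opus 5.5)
Your proposal is correct and matches the paper's reasoning, which is carried inline in the corollary itself: the empty incidence sum specializes \cref{thm:bvbfv}, the Jacobi identity turns $Q_F^2=0$ into constancy of $\{S,S\}$, and the degree count $2k+(1-k)=k+1$ finishes. One small slip: you do not need to check $k\geq1$, since the codimension $k=|F|\geq0$ holds automatically and includes the main case $F=X$, where the constant already has degree $1$; also, $\delta\{S,S\}=0$ follows from a vanishing Hamiltonian vector field directly via $\iota_{X_f}\omega_F^b=\pm\delta f$, without appeal to nondegeneracy.
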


\begin{corollary}[Higher descent]\label{cor:higher}
The facewise AKSZ data form a coherent $\mathrm{BF}^k\mathrm V$-type system.
The $\mathrm{BF}^k\mathrm V$ degree conventions, and their relaxed variant, are
those of \cite[Defs.~2.16 and~2.18]{CFT}.
Here this phrase means precisely the codimension-graded tower of
presymplectic AKSZ data, regularized restriction maps, and incidence identities
displayed in \cref{eq:intrinsic-bvbfv-factorized}; no additional higher
geometric structure is asserted.  In codimension two, the two
boundary-of-boundary contributions cancel because the incidence differential
on $\mathsf{Face}(X)$ squares to zero.
\end{corollary}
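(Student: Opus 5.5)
The plan is to read the statement of \cref{cor:higher} as a bookkeeping claim on top of \cref{thm:bvbfv}, and to verify it in three layers: degrees, functoriality of the restriction maps, and vanishing of the codimension-two defect. First I will fix the tower. To each $F\in\mathsf{Face}(X)$ assign the transgressed data $(\F_F^b,\omega_F^b,\alpha_F^b,S_F^b,Q_F)$ of \cref{def:transgression}, whose degrees are $(k-1,k-1,k,1)$ for $|F|=k$ by \cref{lem:degrees}. These are exactly the degree conventions of a codimension-$k$ stratum in the $\mathrm{BF}^k\mathrm V$ scheme of \cite[Defs.~2.16 and~2.18]{CFT}. To each incidence $G\subseteq F$ assign the regularized field restriction $\rho^{\cT}_{G/F}$ of \cref{lem:regularized-field-restriction}. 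That lemma, together with \eqref{eq:reg-functorial}, gives strict functoriality $\rho^{\cT}_{H/G}\rho^{\cT}_{G/F}=\rho^{\cT}_{H/F}$, so the field spaces form a strict contravariant diagram on $\mathsf{Face}(X)$. The relaxed variant is used because $\omega_F^b$ is only presymplectic; per \cref{rem:trace-not-nondegenerate}, no nondegeneracy is assumed.

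Second, I will record the codimension-one identities. These are literally \eqref{eq:intrinsic-bvbfv-factorized} for each $F$, which \cref{thm:bvbfv} supplies. In addition, each $Q_F$ must be projectable along $\rho^{\cT}_{G/F}$ to $Q_G$, which is what makes the tower coherent rather than a mere collection. I expect this to follow from the fact that $\Reg_{G/F}$ is a cdga morphism commuting with $\db$, so that $\widehat{\db}$ descends. Separately, $\widehat{Q_\Y}$ acts only on the target and commutes with precomposition, so it descends as well. The identity \eqref{eq:evaluation-regularization} is then applied to both summands of $Q_F$.

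Third, I will treat the boundary-of-boundary step. I apply $\delta$ to \eqref{eq:intrinsic-bvbfv-factorized} and use $\delta\alpha_G^b=\omega_G^b$, which follows from $\omega_\Y=\delta\alpha_\Y$ and linearity of $\Tr_G^{\cT}$. Then I substitute the identity for each $G$ and pull back by $\rho^{\cT}_{G/F}$, using functoriality to write every codimension-two term as $(\rho^{\cT}_{H/F})^*\alpha_H^b$. The resulting coefficient of $(\rho^{\cT}_{H/F})^*\alpha_H^b$ is $\sum_{H\prec G\prec F}[F:G][G:H]$ times a common sign, and this vanishes by \cref{lem:intrinsic-incidence}. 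Equivalently, this is $d_{\mathrm{face}}^2=0$ from \cref{prop:intrinsic-face-differential} applied to the contravariant system $F\mapsto$ forms on $\F_F^b$. Higher codimensions follow from the same identity face by face.

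The main obstacle I anticipate is the sign bookkeeping in this third step. The factor $(-1)^{\dim G}$ must be uniform across the two intermediate faces $G$ over a fixed $H$, so that the incidence sum factors out. This holds because both such $G$ have dimension $\dim F-1$. I will also check, via the convention of \cref{app:signs}, that $\iota_{Q_F}$ commutes with $(\rho^{\cT}_{G/F})^*$ up to that same sign, using the projectability established in the second step. If the sign does not factor this cleanly, the fallback is to absorb it into the diagonal regrading used in \eqref{eq:dpp-column-conjugation}.
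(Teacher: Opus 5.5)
Your proposal is correct and follows the paper's argument. Apply \cref{thm:bvbfv} face by face, use strict functoriality of $\rho^{\cT}$ to identify the two composites $F\succ G_i\succ H$ with $\rho^{\cT}_{H/F}$, and cancel their coefficients by \cref{lem:intrinsic-incidence}; the projectability of $Q_F$ and the $\delta$ and $\iota_{Q_F}$ manipulations are extra detail the paper does not need.

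One small correction: by \eqref{eq:transgressed-primitive-sign} one has $\delta\alpha_G^b=(-1)^{\dim G}\omega_G^b$ rather than $\omega_G^b$, and after applying $\delta$ you must contract with $Q_F$, via projectability, before the $G$-identity can be substituted. Since every intermediate $G$ has dimension $\dim F-1$, these signs are uniform and the cancellation is unaffected, so no fallback regrading is needed.
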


\begin{proof}
Apply \cref{thm:bvbfv} on each face.  Strict functoriality of the regularized field restrictions identifies the two composites to every intrinsic codimension-two face, while their incidence signs cancel by \cref{lem:intrinsic-incidence}.
\end{proof}

\section{The exceptional interval}\label{sec:exceptional-interval}

An elementary blow-up of a codimension-two face, and equally a stellar
subdivision of a two-dimensional cone, creates one exceptional normal
interval.  Every codimension-two comparison in this paper reduces at the local level
to a statement about that interval, which is also the first nontrivial member
of the higher-dimensional exceptional-simplex family.  We treat it once, in
the three forms in which it is used: as a cellular complex with a preferred contraction, as
the target of an unregularized Whitney--Dupont retraction from the continuum,
and as the target of a regularized one.

\subsection{The cellular interval and its diagonal splitting}

Let $C^\bullet(I)$ be the cellular cochain complex of $I=[0,1]$ for the CW
decomposition with vertices $0,1$ and one oriented edge:
\begin{equation}\label{eq:interval-complex}
 C^\bullet(I)=\operatorname{span}\{\epsilon_0,\epsilon_1,\epsilon_{01}\},
 \qquad
 d\epsilon_0=-\epsilon_{01},\quad
 d\epsilon_1=\epsilon_{01}.
\end{equation}
Put
\[
  \epsilon_+=\tfrac12(\epsilon_0+\epsilon_1),
  \qquad
  \epsilon_-=\epsilon_1-\epsilon_0,
\]
so that
\[
  d\epsilon_+=0,
  \qquad
  d\epsilon_-=2\epsilon_{01}.
\]
Let $p_+:C^\bullet(I)\to\mathbb R\epsilon_+$ be the diagonal projection and
$i_+$ its inclusion, and define a degree $-1$ operator by
\begin{equation}\label{eq:interval-contraction}
  h_+(\epsilon_{01})=\tfrac12\epsilon_-,
  \qquad
  h_+(\epsilon_+)=h_+(\epsilon_-)=0.
\end{equation}

\begin{proposition}[Diagonal contraction of the exceptional interval]\label{prop:diagonal-interval}
The triple $(i_+,p_+,h_+)$ is contraction data
\[
  C^\bullet(I)\quad\rightleftarrows\quad\mathbb R\epsilon_+.
\]
Equivalently, the reduced sector
\[
  C^\bullet_{\mathrm{exc}}(I)
  =\operatorname{span}\{\epsilon_-,\epsilon_{01}\}
\]
is contractible.  For every cochain complex $A^\bullet$, the exceptional
tensor factor
\[
  A^\bullet\otimes C^\bullet_{\mathrm{exc}}(I)
\]
is contractible by the tensor-product homotopy.
\end{proposition}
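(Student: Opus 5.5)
The plan is to verify the contraction identities directly on the basis $\{\epsilon_+,\epsilon_-,\epsilon_{01}\}$ of $C^\bullet(I)$. This is a basis because $\epsilon_0=\epsilon_+-\tfrac12\epsilon_-$ and $\epsilon_1=\epsilon_++\tfrac12\epsilon_-$. In this basis $i_+p_+$ is the projection onto $\mathbb R\epsilon_+$ along $\operatorname{span}\{\epsilon_-,\epsilon_{01}\}$. Also $d(i_+)=0$ and $p_+d=0$, since $d$ takes values in $\mathbb R\epsilon_{01}$ and kills $\epsilon_+$. Hence $i_+$ and $p_+$ are cochain maps, where $\mathbb R\epsilon_+$ carries the zero differential.

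The main identity is $dh_++h_+d=\id-i_+p_+$, which we check on each basis vector in turn.
\begin{itemize}
\item On $\epsilon_+$: both terms vanish, and the right-hand side is $\epsilon_+-\epsilon_+=0$.
\item On $\epsilon_-$: $h_+\epsilon_-=0$, while $h_+d\epsilon_-=h_+(2\epsilon_{01})=\epsilon_-$. This matches $\id-i_+p_+$ applied to $\epsilon_-$.
\item On $\epsilon_{01}$: $d\epsilon_{01}=0$ because there are no $2$-cells, and $dh_+\epsilon_{01}=\tfrac12 d\epsilon_-=\epsilon_{01}$.
\end{itemize}
The side conditions $h_+i_+=0$, $p_+h_+=0$ and $h_+^2=0$ hold because the image of $h_+$ is $\mathbb R\epsilon_-$, and $h_+$ kills $\epsilon_\pm$.

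For the second assertion, note that $C^\bullet_{\mathrm{exc}}(I)=\ker p_+$ is a subcomplex, since $d\epsilon_-=2\epsilon_{01}$ and $d\epsilon_{01}=0$. It is also preserved by $h_+$. On this subcomplex the homotopy identity reduces to $dh_++h_+d=\id$, so $C^\bullet_{\mathrm{exc}}(I)$ is contractible. Equivalently, $d\colon\mathbb R\epsilon_-\to\mathbb R\epsilon_{01}$ is an isomorphism.

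For the tensor statement, set $H=\id_A\otimes h_+$ with the Koszul sign convention. Concretely, $H(a\otimes c)=(-1)^{|a|}a\otimes h_+c$, and the tensor differential is $d(a\otimes c)=d_Aa\otimes c+(-1)^{|a|}a\otimes dc$.

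In the expansion of $dH+Hd$, the cross terms $d_Aa\otimes h_+c$ appear twice with opposite signs, $(-1)^{|a|}$ and $(-1)^{|a|+1}$, and cancel. What remains is $a\otimes(dh_++h_+d)c=a\otimes c$ on $A^\bullet\otimes C^\bullet_{\mathrm{exc}}(I)$. This tensor product is algebraic and finite, so no completion issue arises.

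The only point needing care is this sign bookkeeping. It is a routine check rather than a real obstacle.
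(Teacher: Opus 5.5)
Your proof is correct and follows the paper's route: a basis-by-basis check of $dh_++h_+d=\id-i_+p_+$ on $\epsilon_+,\epsilon_-,\epsilon_{01}$, followed by the tensor trick. You also write out the Koszul-sign cancellation for $\id_A\otimes h_+$, which the paper only cites as standard, and you verify the cochain-map property and side conditions, which the paper leaves implicit.
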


\begin{proof}
On $\epsilon_{01}$ one has $dh_+(\epsilon_{01})=\epsilon_{01}$, while on
$\epsilon_-$ one has $h_+d(\epsilon_-)=\epsilon_-$.  Both sides vanish on
$\epsilon_+$.  Hence
\[
  dh_++h_+d=\id-i_+p_+.
\]
The tensor-product statement is the standard tensor trick.
\end{proof}

With coefficients in a Lie algebra $\mathfrak g$ we write
\[
 C_{\mathrm{const}}^\bullet(I;\mathfrak g)
 =\mathfrak g\otimes\langle\epsilon_0+\epsilon_1\rangle,
 \qquad
 C_{\mathrm{red}}^\bullet(I;\mathfrak g)
 =\mathfrak g\otimes\langle\epsilon_-,\epsilon_{01}\rangle,
\]
the first being the image of $i_+$ and the second the acyclic complement.

\begin{lemma}[Normalized exceptional Gaussian]\label{lem:interval-torsion}
Let $A^\bullet$ be a finite-dimensional cochain complex and consider the
quadratic BV theory on $A^\bullet\otimes C^\bullet(I)$ together with
its shifted algebraic dual.  Use the cyclic cotangent splitting induced by
\eqref{eq:interval-contraction}, and take as gauge-fixing Lagrangian its
standard UV cotangent factor.  The finite-dimensional BV pushforward produces
the induced quadratic theory on the diagonal factor and a nonzero
field-independent element of the corresponding determinant line.  With the
elementary-collapse bases and the cellular BV normalization this element is
$1$.  A different compatible basis or Gaussian normalization changes it by
an overall constant Berezinian.  No assertion is made here for arbitrary
cotangent Lagrangians.
\end{lemma}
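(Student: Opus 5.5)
The plan is to reduce everything to the interval contraction of \cref{prop:diagonal-interval} and then to one two-dimensional Gaussian integral. First, tensor the contraction $(i_+,p_+,h_+)$ with the identity of $A^\bullet$, using the standard Koszul sign for $\id\otimes h_+$. This gives a strong deformation retract of $A^\bullet\otimes C^\bullet(I)$ onto $A^\bullet\otimes\mathbb R\epsilon_+$ whose complement is $A^\bullet\otimes C^\bullet_{\mathrm{exc}}(I)$. The shifted algebraic dual carries the dual basis. With respect to the pairing, the transposed contraction data $(p_+^\vee,i_+^\vee,h_+^\vee)$ split the dual complex into the dual diagonal line and the dual exceptional pair. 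The cotangent BV space of fields plus antifields is therefore the direct sum of a diagonal cotangent factor and an exceptional cotangent factor, and the BV symplectic form is block diagonal because $h_+$ and $h_+^\vee$ are adjoint. The cyclic splitting in the statement is exactly this decomposition.

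Second, I would write the quadratic action as $S=\langle b,(d_A\otimes1+1\otimes d)a\rangle$ in the evident sign convention. The degree bookkeeping shows that $S$ splits as $S_{\mathrm{diag}}+S_{\mathrm{exc}}$. There is no cross term because $p_+$, $i_+$ are chain maps and the exceptional sector is a subcomplex. The induced theory on the diagonal factor is $S_{\mathrm{diag}}=\langle b_+,d_A a_+\rangle$, which is the quadratic theory of $A^\bullet$ itself. The standard UV Lagrangian in the exceptional factor is the image of $h_+$ together with its dual: $\operatorname{im}(h_+)=\mathbb R\epsilon_-$ in fields and the matching dual direction in antifields. Restricting $S_{\mathrm{exc}}$ to it leaves the pairing between the $\epsilon_-$ and $\epsilon_{01}$ components through $d\epsilon_-=2\epsilon_{01}$, tensored with $\id_A$. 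The $d_A$ contribution there is off-diagonal relative to the gauge-fixing and drops out after the Gaussian. The pushforward is then a finite product, one factor per basis vector of $A^\bullet$, of Gaussian or Berezin integrals of a rank-one pairing. Because the integrand factorizes and the diagonal fields do not enter, the result is field independent and equals $S_{\mathrm{diag}}$ times a constant.

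Third, I would compute this constant. In the elementary-collapse basis $\{\epsilon_0,\epsilon_1,\epsilon_{01}\}$, the collapse of the edge onto a vertex has incidence $\pm1$. The factor $2$ in $d\epsilon_-=2\epsilon_{01}$ is compensated by the $\tfrac12$ in $\epsilon_+$. The change-of-basis determinant from $(\epsilon_0,\epsilon_1)$ to $(\epsilon_+,\epsilon_-)$ is $1$, so the torsion of the elementary collapse is $1$. Under the cellular BV normalization of \cite{CMRCellular}, each Gaussian factor $\int e^{i\langle\cdot,\cdot\rangle}$ is normalized to this torsion, and the degree alternation over $A^\bullet$ makes the Berezinian contributions of bosonic and fermionic pairs cancel. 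The result is the element $1$ of the determinant line. A different compatible basis changes the torsion by the determinant of the basis change, and a different Gaussian normalization rescales each factor uniformly. Both effects are an overall constant Berezinian, as stated.

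The main obstacle is the sign and normalization bookkeeping in the last step. This means checking that the Koszul signs from $\id\otimes h_+$ and from the shift in the dual are consistent with the chosen orientation of the Lagrangian, so that the odd/even factors really cancel and do not leave a $\pm$ or a power of $2$. I would first settle this in the case $A^\bullet=\mathbb R$ in degree zero, and then extend to general $A^\bullet$ by multiplicativity of torsion under tensor product with an acyclic complex of Euler characteristic zero.
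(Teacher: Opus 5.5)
Your first two steps are the paper's argument written out in more detail. The paper also uses \cref{prop:diagonal-interval} to read the exceptional sector as a contractible cotangent pair, and also notes that a quadratic action leaves nothing field dependent. The problem is the normalization step, which is the only nontrivial content of the value $1$, and as written it fails.

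The determinant $1$ of $(\epsilon_0,\epsilon_1)\mapsto(\epsilon_+,\epsilon_-)$ already uses up the factor $\tfrac12$ in $\epsilon_+$. That same $\tfrac12$ cannot then be counted a second time to cancel the $2$ in $d\epsilon_-=2\epsilon_{01}$. Your determinant computation selects the diagonal basis $\epsilon_+$ and the density-compatible exceptional bases $\{\epsilon_-\}$, $\{\epsilon_{01}\}$. In these bases the reduced differential is multiplication by $2$, so each exceptional pair contributes $2^{\pm1}$. Equivalently, the torsion of $C^\bullet(I)$ relative to the cohomology basis $[\epsilon_+]$ is $2^{\pm1}$, not $1$.

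After tensoring, the product formula for torsion gives $\tau(A^\bullet\otimes C^\bullet_{\mathrm{exc}}(I))=\tau(C^\bullet_{\mathrm{exc}}(I))^{\chi(A^\bullet)}=2^{\pm\chi(A^\bullet)}$. The exponent is the Euler characteristic of $A^\bullet$, not of the acyclic factor. So the bosonic/fermionic alternation removes the power of $2$ only when $\chi(A^\bullet)=0$. Your own proposed test case $A^\bullet=\mathbb R$ would return $2^{\pm1}$ and contradict your conclusion.

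The missing ingredient is the normalized basis the paper uses. Take $\{\tfrac12\epsilon_-\}$ and $\{\epsilon_{01}\}$ on the reduced sector. There $d(\tfrac12\epsilon_-)=\epsilon_{01}$ makes the reduced differential the identity, so its torsion is $1$, and the cellular BV convention assigns the elementary acyclic pair unit Gaussian. Multiplicativity over $A^\bullet$ then gives $1^{\chi(A^\bullet)}=1$.

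If you want this to be compatible with the cellular half-density of $(\epsilon_0,\epsilon_1,\epsilon_{01})$, the diagonal factor must be based at $\epsilon_0+\epsilon_1$. This is the pullback of the point cochain under $I\searrow *$, and $(\epsilon_0+\epsilon_1,\tfrac12\epsilon_-)$ has determinant $1$ relative to $(\epsilon_0,\epsilon_1)$. With the diagonal basis $\epsilon_+$ you obtain instead the constant Berezinian $2^{\pm\chi(A^\bullet)}$. That value is covered by the last clause of the lemma, but it is not the normalized value $1$ you claim.
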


\begin{proof}
By \cref{prop:diagonal-interval} the exceptional sector is a contractible
cotangent pair, so its fields are generalized auxiliary fields.  The action is
quadratic, hence there are no interaction vertices from which a
field-dependent effective term could be formed.  In the normalized real
cellular bases
\[
 \bigl\{\tfrac12\epsilon_-\bigr\}
 \quad\text{and}\quad
 \{\epsilon_{01}\},
\]
the reduced differential is the identity because
$d(\tfrac12\epsilon_-)=\epsilon_{01}$.  Its Reidemeister torsion is therefore
$1$.  Tensoring the contraction with $A^\bullet$ and pairing it with the
shifted algebraic dual gives the determinant-line isomorphism of the cyclic
UV pair; the cellular BV convention declares the elementary acyclic pair to
have unit Gaussian.  Returning to other compatible bases multiplies that
element by the Berezinian of a constant change-of-basis matrix, so it remains
field independent.  This is precisely the normalization compatible with the
elementary cellular collapse
$I\searrow *$ used in cellular BV pushforward
\cite{CMRCellular,CMRPushforward}.
\end{proof}

\begin{remark}
The preceding torsion lemma is finite-dimensional.  When the coefficient
factor is an infinite-dimensional space of continuum forms, an additional
analytic choice is required before the same sentence can be interpreted as
an equality of Gaussian half-densities.  We isolate that choice rather than
regard it as automatic.
\end{remark}

\begin{assumption}[Gaussian determinant regularity]\label{ass:gaussian-determinant}
Whenever an infinite-dimensional acyclic quadratic BF sector is integrated
out below, a determinant-line/Gaussian regularization is chosen such that:
\begin{enumerate}[label=\textup{(\roman*)},leftmargin=2.5em]
\item the regularized Gaussian integral exists on the chosen gauge-fixing
Lagrangian and produces a nonzero element of the determinant line;
\item the determinant is compatible with the $A/B$ duality and with
regularized face restriction; and
\item determinant-line identifications are multiplicative under composition
of the deformation retracts used in the paper (the Gaussian Fubini property).
\end{enumerate}
This assumption is automatic for the finite-dimensional cellular sectors and
for a fixed finite-mode cutoff.  No construction of such a determinant
regularization for the full continuum logarithmic complex is claimed here.
\end{assumption}

\subsection{Discretizing the interval: the unregularized data}

The Whitney map, integration map, and Dupont homotopy~\cite{Dupont76} for
\eqref{eq:interval-complex} are
\begin{align}
  W(c_0\epsilon_0+c_1\epsilon_1+a\epsilon_{01})
  &=(1-\theta)c_0+\theta c_1+\dd\theta\,a,
  \label{eq:interval-Whitney}\\
  R(f(\theta)+g(\theta)\dd\theta)
  &=\epsilon_0f(0)+\epsilon_1f(1)+\epsilon_{01}\int_0^1g(u)\,\dd u,
  \label{eq:interval-integration}\\
  (\kappa\alpha)(\theta)
  &=\int_0^\theta\iota_{\partial_u}\alpha(u)\,\dd u
       -\theta\int_0^1\iota_{\partial_u}\alpha(u)\,\dd u.
  \label{eq:interval-Dupont}
\end{align}
They obey
\begin{equation}\label{eq:interval-SDR}
  d\kappa+\kappa d=\id-WR,
  \qquad
  RW=\id,
  \qquad
  \kappa W=R\kappa=\kappa^2=0.
\end{equation}
Most importantly for corners,
\begin{equation}\label{eq:Dupont-endpoint-vanish}
  \operatorname{ev}_0\kappa=0,
  \qquad
  \operatorname{ev}_1\kappa=0.
\end{equation}
The kernel in \eqref{eq:interval-Dupont} is precisely the standard interval
propagator with Dirichlet behavior in its first argument
\cite{CMRQuantum,CMRPushforward}.

\subsection{Discretizing the interval: the regularized data}

On a $b$-boundary the endpoint restriction at $0$ does not exist, and the
ordinary interval contraction must be replaced by a logarithmic one.  Let
$I_b=[0,\varepsilon]$ with logarithmic boundary at $r=0$.  The endpoint
$r=\varepsilon$ is an artificial outer cutoff of the collar, so in this subsection
we use the sub-cdga
\[
 \cA_{\log,0}^\bullet(I_b)\subset A^{\bullet,\log}(I_b)
\]
of DPP logarithmic forms which are smooth on a neighbourhood of
$r=\varepsilon$; logarithmic singularities are allowed only at $r=0$.  Fix a
scale $\lambda>0$ at $r=0$.  Since all forms in this subcomplex are smooth at
the outer endpoint, the DPP regularized integral is independent of the auxiliary
scale chosen there; we denote it simply by $\Tr_{I_b}^{\lambda}$, and write
$\Reg_0^\lambda$ for the regularized restriction at $0$.

The next lemma supplies the point that is needed for a genuine chain homotopy:
the variable-upper-limit finite part is itself a logarithmic function.

\begin{lemma}[Normalized logarithmic primitive on an interval]\label{lem:normalized-log-primitive}
For every $\eta\in\cA_{\log,0}^1(I_b)$ there is a unique
$\mathsf P_\lambda\eta\in\cA_{\log,0}^0(I_b)$ such that
\begin{equation}\label{eq:normalized-log-primitive}
 \db(\mathsf P_\lambda\eta)=\eta,
 \qquad
 \Reg_0^\lambda(\mathsf P_\lambda\eta)=0.
\end{equation}
Moreover, for every $0<r\leq\varepsilon$,
\begin{equation}\label{eq:primitive-as-finite-part}
 (\mathsf P_\lambda\eta)(r)
 =\Tr_{[0,r]}^\lambda(\eta|_{[0,r]}),
\end{equation}
where the right endpoint $r$ is evaluated ordinarily.  In particular,
\begin{equation}\label{eq:primitive-at-epsilon}
 (\mathsf P_\lambda\eta)(\varepsilon)=\Tr_{I_b}^\lambda(\eta).
\end{equation}
\end{lemma}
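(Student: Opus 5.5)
The plan is to work with the explicit local description of DPP logarithmic one-forms on the collar $I_b=[0,\varepsilon]$ and to build $\mathsf P_\lambda$ by hand.

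\emph{Existence.} Since forms in $\cA_{\log,0}^\bullet(I_b)$ are smooth near $r=\varepsilon$ and may carry logarithmic singularities only at $r=0$, I will use the DPP structure theorem for logarithmic forms on a half-line \cite[Def.~6.4]{DPP}. It lets me write any $\eta\in\cA_{\log,0}^1(I_b)$ as a finite sum
\[
 \eta=\sum_{j=0}^{N}\Bigl(f_j(r)\,\dd r+g_j(r)\,\frac{\dd r}{r}\Bigr)\ell_r^{\,j},
\]
with $f_j,g_j$ smooth on $[0,\varepsilon]$. Next I split $g_j=g_j(0)+r\tilde g_j$, which moves the non-constant part into the $\dd r$ terms. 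The primitive is then obtained degree by degree in $\ell_r$:
\begin{itemize}
\item the term $c\,\ell_r^j\,\dd r/r$ has primitive $c\,\ell_r^{j+1}/(j+1)$;
\item the term $f(r)\ell_r^j\,\dd r$ is handled by integrating by parts, downward induction on $j$ producing smooth functions times powers of $\ell_r$, smooth at $\varepsilon$.
\end{itemize}
This gives some $\Phi\in\cA_{\log,0}^0$ with $\db\Phi=\eta$. I then set $\mathsf P_\lambda\eta=\Phi-\Reg_0^\lambda(\Phi)$. Here $\Reg_0^\lambda(\Phi)$ is a constant, since the regularized restriction to a point is a number and constants are killed by $\db$.

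\emph{Uniqueness.} Two such primitives differ by an element $c\in\cA_{\log,0}^0$ with $\db c=0$. The kernel of $\db$ in degree zero consists of the constants: a polynomial $\sum_j a_j\ell_r^j$ with $\db$-closure forces $a_j=0$ for $j\ge1$ by comparing coefficients of $\ell_r^{j-1}\dd r/r$, using that the $\ell_r^j$ are independent over $C^\infty$ modulo relation (ii). Because $\Reg_0^\lambda$ is unital, $\Reg_0^\lambda(c)=c$, so the normalization forces $c=0$.

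\emph{Finite-part formula.} For $0<r\le\varepsilon$ I apply the one-dimensional DPP regularized Stokes formula (\cref{prop:total-stokes} in dimension one, or directly \cite[Cor.~7.11]{DPP}) on $[0,r]$. The scale is $\lambda$ at $0$, and the right endpoint is ordinary because the form is smooth there. This gives
\[
 \Tr_{[0,r]}^\lambda(\db\mathsf P_\lambda\eta)=(\mathsf P_\lambda\eta)(r)-\Reg_0^\lambda(\mathsf P_\lambda\eta)=(\mathsf P_\lambda\eta)(r),
\]
and setting $r=\varepsilon$ yields \eqref{eq:primitive-at-epsilon}.

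The main obstacle is making sure the regularized trace on the subinterval $[0,r]$ uses the same scale $\lambda$ at $0$, and that restricting $\eta$ to $[0,r]$ stays inside the class on which the outer-endpoint independence holds. Near $r$ the form is smooth precisely when $r>0$, so this needs a short check. I would first verify it by computing both sides explicitly on the basis terms $\ell_r^j\,\dd r/r$, where the regularized value is $\bigl(\log r^{\,j+1}-\log\lambda^{\,j+1}\bigr)/(j+1)$-type expressions, rather than relying solely on the abstract Stokes formula.
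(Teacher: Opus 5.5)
Your proposal is correct and follows essentially the same route as the paper: expand $\eta$ with the DPP structure theorem, integrate the constant $\frac{\dd r}{r}$-part directly and the remainder by the integration-by-parts induction on the power of the logarithm, normalize by subtracting $\Reg_0^\lambda$, and obtain the finite-part formula from regularized Stokes on $[0,r]$. The only deviation is in uniqueness. The paper restricts the closed difference to the connected interior and invokes injectivity of the DPP interior restriction \cite[Thm.~5.12]{DPP}. Your coefficient comparison also works, but it has to be run as a downward induction from the top power of $\ell_r$, modulo flat functions. A single comparison of the $\ell_r^{j-1}\frac{\dd r}{r}$ coefficients only yields $j a_j=-r a_{j-1}'$, not $a_j=0$.
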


\begin{proof}
By the local structure theorem for DPP logarithmic functions and
\cite[Def.~6.4]{DPP}, every logarithmic one-form on the one-dimensional basic
chart has a finite expression
\[
 \eta=\sum_{j=0}^{N} f_j(r)(\log r)^j\,\frac{\dd r}{r},
 \qquad f_j\in C^\infty([0,\varepsilon]).
\]
Write $f_j(r)=f_j(0)+r g_j(r)$.  The singular part has the logarithmic
primitive
\[
 \frac{f_j(0)}{j+1}(\log r)^{j+1}.
\]
For the remainder put
\[
 J_j(g)(r):=\int_0^r g(u)(\log u)^j\,\dd u.
\]
We claim $J_j(g)$ is again a DPP logarithmic function.  For $j=0$ it is
smooth.  If $G(r)=\int_0^r g(u)\,\dd u=rh(r)$ with $h$ smooth, integration by
parts gives
\[
 J_j(g)(r)=G(r)(\log r)^j-jJ_{j-1}(h)(r),
\]
so the claim follows by induction on $j$.  Thus termwise integration produces
some $\widetilde{\mathsf P}\eta\in\cA_{\log,0}^0(I_b)$ with
$\db\widetilde{\mathsf P}\eta=\eta$.  Define
\[
 \mathsf P_\lambda\eta
 :=\widetilde{\mathsf P}\eta
   -\Reg_0^\lambda(\widetilde{\mathsf P}\eta).
\]
The regularized value is defined by the DPP virtual restriction; in the local
expansion it is obtained by substituting $\log r=\log\lambda$
\cite[Thm.~5.12 and Ex.~5.16]{DPP}.  Hence
\eqref{eq:normalized-log-primitive} holds.

If two normalized primitives existed, their difference would have zero
differential.  Restricting to the interior and using the injectivity in
\cite[Thm.~5.12]{DPP}, that difference is a constant, and the normalization at
$0$ forces it to be zero.  Finally apply the DPP regularized Stokes formula to
$\mathsf P_\lambda\eta$ on $[0,r]$:
\[
 \Tr_{[0,r]}^\lambda(\eta)
 =(\mathsf P_\lambda\eta)(r)
  -\Reg_0^\lambda(\mathsf P_\lambda\eta)
 =(\mathsf P_\lambda\eta)(r).
\]
This proves \eqref{eq:primitive-as-finite-part} and
\eqref{eq:primitive-at-epsilon}; compare \cite[Cor.~7.11 and Ex.~7.13]{DPP}.
\end{proof}

Choose a smooth function $\chi:[0,\varepsilon]\to[0,1]$ with
$\chi(0)=0$ and $\chi(\varepsilon)=1$; for collar applications we take it
constant near both endpoints.  Define the regularized Poincar\'e map
$p_\lambda$ by
\begin{align}
 p_\lambda(f)
 &=\Reg_0^\lambda(f)\,\epsilon_0+f(\varepsilon)\,\epsilon_1,
 \label{eq:regularized-poincare-0}\\
 p_\lambda(\eta)
 &=\Tr_{I_b}^\lambda(\eta)\,\epsilon_{01},
 \label{eq:regularized-poincare-1}
\end{align}
for $f\in\cA_{\log,0}^0(I_b)$ and
$\eta\in\cA_{\log,0}^1(I_b)$.  Define the Whitney map by
\begin{equation}\label{eq:regularized-whitney}
 i_\chi(\epsilon_0)=1-\chi,
 \qquad
 i_\chi(\epsilon_1)=\chi,
 \qquad
 i_\chi(\epsilon_{01})=\dd\chi.
\end{equation}
For a logarithmic one-form $\eta$, set
\begin{equation}\label{eq:regularized-normal-homotopy}
 K_{\lambda,\chi}\eta
 =\mathsf P_\lambda\eta
  -\chi\,\Tr_{I_b}^\lambda(\eta),
 \qquad
 K_{\lambda,\chi}|_{\cA_{\log,0}^0}=0.
\end{equation}
By \cref{lem:normalized-log-primitive}, this is equivalently the finite-part
formula
\[
 (K_{\lambda,\chi}\eta)(r)
 =\Tr_{[0,r]}^\lambda(\eta)
  -\chi(r)\Tr_{[0,\varepsilon]}^\lambda(\eta),
\]
but the definition through $\mathsf P_\lambda$ makes its membership in the
logarithmic function algebra explicit.

\begin{proposition}[Regularized interval contraction]\label{prop:regularized-interval-contraction}
The maps in
\cref{eq:regularized-poincare-0,eq:regularized-poincare-1,eq:regularized-whitney,eq:regularized-normal-homotopy}
define a special deformation retract
\[
 \bigl(C^\bullet(I),d\bigr)
 \underset{p_\lambda}{\overset{i_\chi}{\rightleftarrows}}
 \bigl(\cA_{\log,0}^\bullet(I_b),\db\bigr),
 \qquad K_{\lambda,\chi}.
\]
Thus
\[
 p_\lambda i_\chi=\id,
 \qquad
 \db K_{\lambda,\chi}+K_{\lambda,\chi}\db
 =\id-i_\chi p_\lambda,
\]
and
\[
 K_{\lambda,\chi}^2=0,
 \qquad
 p_\lambda K_{\lambda,\chi}=0,
 \qquad
 K_{\lambda,\chi}i_\chi=0.
\]
The endpoint maps commute with the retract, with the map at $0$ interpreted as
$\Reg_0^\lambda$ and the map at $\varepsilon$ as ordinary evaluation.
\end{proposition}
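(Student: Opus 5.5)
The plan is to verify each identity directly on the two-term complex $\cA_{\log,0}^0\oplus\cA_{\log,0}^1$, using \cref{lem:normalized-log-primitive} as the only analytic input. First, $i_\chi$ and $p_\lambda$ are cochain maps. For $i_\chi$ this is immediate: $\db(1-\chi)=-\dd\chi$ and $\db\chi=\dd\chi$ match $d\epsilon_0=-\epsilon_{01}$ and $d\epsilon_1=\epsilon_{01}$. For $p_\lambda$ I need, for every logarithmic function $f$,
\[
 \Tr_{I_b}^\lambda(\db f)=f(\varepsilon)-\Reg_0^\lambda(f).
\]
This is the DPP regularized Stokes formula on $I_b$. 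Alternatively, it follows from uniqueness in \cref{lem:normalized-log-primitive}, since $f-\Reg_0^\lambda(f)=\mathsf P_\lambda(\db f)$, combined with \eqref{eq:primitive-at-epsilon}. Next, $p_\lambda i_\chi=\id$. On $\epsilon_0,\epsilon_1$ this holds because $\chi$ is smooth with $\chi(0)=0$ and $\chi(\varepsilon)=1$, and $\Reg_0^\lambda$ agrees with ordinary evaluation on smooth functions. On $\epsilon_{01}$ it holds because $\Tr^\lambda_{I_b}(\dd\chi)=\chi(\varepsilon)-\chi(0)=1$.

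For the homotopy identity I split by degree. On a one-form $\eta$ we have $K\db\eta=0$, and
\[
 \db K\eta=\eta-\Tr^\lambda_{I_b}(\eta)\,\dd\chi=\eta-i_\chi p_\lambda\eta.
\]
On a function $f$ we have $\db K f=0$, and
\[
 K\db f=\mathsf P_\lambda(\db f)-\chi\,\Tr^\lambda_{I_b}(\db f).
\]
By the uniqueness clause, $\mathsf P_\lambda(\db f)=f-\Reg_0^\lambda(f)$. By the Stokes formula above, the trace equals $f(\varepsilon)-\Reg_0^\lambda(f)$. Hence
\[
 K\db f=f-(1-\chi)\Reg_0^\lambda(f)-\chi f(\varepsilon)=f-i_\chi p_\lambda f.
\]
I expect this degree-zero case to be the main point. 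It is exactly where the normalization $\Reg_0^\lambda\mathsf P_\lambda=0$ and the membership of $\mathsf P_\lambda\eta$ in the logarithmic algebra are essential; otherwise $K$ would not land in $\cA^0_{\log,0}$, or the constant term would fail to cancel.

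The side conditions are degree checks plus two evaluations. $K^2=0$ because $K$ vanishes on functions. $K i_\chi(\epsilon_{01})=\mathsf P_\lambda(\dd\chi)-\chi=0$, since $\chi$ is itself the normalized primitive of $\dd\chi$: indeed $\Reg_0^\lambda\chi=\chi(0)=0$. For $p_\lambda K=0$, only the degree-zero part of $p_\lambda$ is relevant. We have $\Reg_0^\lambda(K\eta)=0-\chi(0)\Tr(\eta)=0$, and $(K\eta)(\varepsilon)=\Tr(\eta)-\Tr(\eta)=0$ by \eqref{eq:primitive-at-epsilon}. These two endpoint vanishings are also the endpoint compatibility statement: $\Reg_0^\lambda$ and $\ev_\varepsilon$ kill $K$, and they intertwine $i_\chi,p_\lambda$ with $\epsilon_0,\epsilon_1$, mirroring \eqref{eq:Dupont-endpoint-vanish}.

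Finally, I will check that the map $\Reg^\lambda_0$ lands in constants compatibly with the cdga structure used. This is part of the DPP virtual restriction on the one-dimensional chart, so no extra argument is needed.
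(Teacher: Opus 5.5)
Your proposal is correct and follows essentially the same route as the paper's proof. Both use regularized Stokes for the cochain property of $p_\lambda$, split the homotopy identity by degree using the uniqueness clause of \cref{lem:normalized-log-primitive} (so that $\mathsf P_\lambda(\db f)=f-\Reg_0^\lambda(f)$), and obtain the side conditions from $\Reg_0^\lambda\mathsf P_\lambda=0$, \eqref{eq:primitive-at-epsilon} and $\mathsf P_\lambda(\dd\chi)=\chi$. Your explicit check that $i_\chi$ is a cochain map is a harmless addition that the paper leaves implicit.
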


\begin{proof}
Regularized Stokes gives, for every
$f\in\cA_{\log,0}^0(I_b)$,
\[
 \Tr_{I_b}^\lambda(\db f)
 =f(\varepsilon)-\Reg_0^\lambda(f).
\]
Hence $p_\lambda\db=d\,p_\lambda$.  Since $\chi$ and $\dd\chi$ are smooth,
DPP regularization agrees with ordinary endpoint evaluation and integration on
these terms; therefore $p_\lambda i_\chi=\id$.

For a one-form $\eta$, \cref{lem:normalized-log-primitive} gives
\[
 \db K_{\lambda,\chi}\eta
 =\eta-\dd\chi\,\Tr_{I_b}^\lambda(\eta)
 =\eta-i_\chi p_\lambda\eta.
\]
For a zero-form $f$, the normalized primitive of $\db f$ is
\[
 \mathsf P_\lambda(\db f)=f-\Reg_0^\lambda(f),
\]
by uniqueness in \cref{lem:normalized-log-primitive}.  Hence
\[
 K_{\lambda,\chi}\db f
 =f-\Reg_0^\lambda(f)
 -\chi\bigl(f(\varepsilon)-\Reg_0^\lambda(f)\bigr)
 =f-i_\chi p_\lambda f.
\]
This proves the homotopy identity.  The normalization and
\eqref{eq:primitive-at-epsilon} imply
\[
 \Reg_0^\lambda(K_{\lambda,\chi}\eta)=0,
 \qquad
 (K_{\lambda,\chi}\eta)(\varepsilon)=0,
\]
so $p_\lambda K_{\lambda,\chi}=0$.  Moreover
$\mathsf P_\lambda(\dd\chi)=\chi$ and
$\Tr_{I_b}^\lambda(\dd\chi)=1$, hence
$K_{\lambda,\chi}\dd\chi=0$ and therefore
$K_{\lambda,\chi}i_\chi=0$.  Finally $K_{\lambda,\chi}^2=0$ for degree
reasons, since $K_{\lambda,\chi}$ vanishes on degree-zero forms.
\end{proof}

\subsection{The two-ended logarithmic interval and the product square}

The conifold exceptional fibre has genuine boundary on all four sides.  The
one-ended collar retract above must therefore be replaced, in the angular
directions, by a two-ended relative retract.  We give the complete formulas
because they are used below to retain the tangential logarithms of pulled-back
monomials on the exceptional square.

Let \(\overline I=[0,1]\), with its basic positive log structure at both
endpoints, and choose endpoint scales \(\lambda_0,\lambda_1>0\).  Write
\(\Reg_i\) for the corresponding DPP virtual restriction and
\(\Tr_{\overline I}^{\lambda_0,\lambda_1}\) for the regularized integral, so
that
\[
 \Tr_{\overline I}^{\lambda_0,\lambda_1}(\db f)
 =\Reg_1(f)-\Reg_0(f).
\]

\begin{lemma}[Two-ended normalized primitive]\label{lem:two-ended-primitive}
For every \(\eta\in A^{1,\log}(\overline I)\) there is a unique
\(\mathsf P_0\eta\in A^{0,\log}(\overline I)\) such that
\[
 \db\mathsf P_0\eta=\eta,
 \qquad
 \Reg_0(\mathsf P_0\eta)=0.
\]
Moreover
\[
 \Reg_1(\mathsf P_0\eta)
 =\Tr_{\overline I}^{\lambda_0,\lambda_1}(\eta).
\]
The construction is linear.  Its two regularized endpoint values are fixed by
the displayed normalization and Stokes identity; no componentwise naturality
with respect to both endpoint inclusions is asserted.
\end{lemma}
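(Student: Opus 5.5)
The plan is to mimic \cref{lem:normalized-log-primitive}, now with logarithmic singularities allowed at both endpoints. I would work locally near each end and glue with a partition of unity. Choose a smooth cutoff $\psi$ on $\overline I$ with $\psi=1$ near $0$ and $\psi=0$ near $1$. Split
\[
 \eta=\psi\eta+(1-\psi)\eta .
\]
The first summand is a DPP logarithmic one-form singular only at $0$. By the local structure argument of \cref{lem:normalized-log-primitive}, applied on the collar at $0$ with coordinate $r=\theta$, it has a logarithmic primitive. The second summand is singular only at $1$. Applying the same argument in the coordinate $1-\theta$ gives it a logarithmic primitive as well. On the region where $\psi$ is locally constant the two primitives are smooth, so their sum $\widetilde{\mathsf P}\eta$ is a global element of $A^{0,\log}(\overline I)$ with $\db\widetilde{\mathsf P}\eta=\eta$. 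Concretely, I would take the termwise primitives $\frac{f_j(0)}{j+1}(\log r)^{j+1}$ plus the integrals $J_j(g)$ near each end, and ordinary integration in the middle. Then I would normalize by setting
\[
 \mathsf P_0\eta:=\widetilde{\mathsf P}\eta-\Reg_0(\widetilde{\mathsf P}\eta).
\]
This is legitimate because constants are smooth, so $\Reg_0$ of a constant is that constant.

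For uniqueness, suppose two normalized primitives are given. Their difference $c$ satisfies $\db c=0$. On the interior $c$ is locally constant, hence constant because the interior is connected. The injectivity of the restriction to the interior from \cite[Thm.~5.12]{DPP} then makes $c$ a constant as a logarithmic function. Since $\Reg_0 c=0$, we get $c=0$. Linearity follows from uniqueness, since a linear combination of normalized primitives is again a normalized primitive. The endpoint value comes from the two-ended regularized Stokes identity displayed before the lemma, applied to $f=\mathsf P_0\eta$:
\[
 \Tr_{\overline I}^{\lambda_0,\lambda_1}(\eta)=\Reg_1(\mathsf P_0\eta)-\Reg_0(\mathsf P_0\eta)=\Reg_1(\mathsf P_0\eta).
\]

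The step I expect to be the main obstacle is checking that the glued primitive really is a DPP logarithmic function near the endpoint $1$. There the logarithms are in $1-\theta$, so I would use the chart-change compatibility of relation (ii), as in \cref{rem:dpp-second-relation} and \cref{lem:log-functoriality}, to treat $\theta\mapsto1-\theta$ as a basic chart. A related point is that the local expansion $\sum_j f_j(\log r)^j\,\dd r/r$ holds on each basic collar chart. I would justify this by the same citation to \cite[Def.~6.4]{DPP} used in \cref{lem:normalized-log-primitive}. The caveat about componentwise naturality needs no proof: it only records that $\Reg_1(\mathsf P_0\eta)$ is forced rather than independently normalized.
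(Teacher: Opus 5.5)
Your proposal is correct and follows essentially the same route as the paper: endpoint collars handled by \cref{lem:normalized-log-primitive} (in the coordinate $1-t$ at the right end), normalization by subtracting $\Reg_0$, uniqueness via injectivity of restriction to the interior, and the endpoint value from two-ended regularized Stokes. The only differences are cosmetic. You split $\eta=\psi\eta+(1-\psi)\eta$ and add two globally defined one-ended primitives; each piece vanishes near the opposite endpoint, so the one-ended lemma applies with $\varepsilon=1$. The paper instead glues local primitives by subtracting constants on overlaps. Your derivation of linearity from uniqueness is a small addition the paper leaves implicit.
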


\begin{proof}
Choose a partition of unity consisting of a left endpoint collar, an interior
interval and a right endpoint collar.  On the left collar use
\cref{lem:normalized-log-primitive}.  On the right collar use the same lemma in
the coordinate \(1-t\), and on the interior use ordinary integration.  On
overlaps, two local primitives differ by a constant; subtracting these
constants glues them to a global logarithmic primitive.  The logarithmic
function algebra is preserved because the endpoint calculation is the
one-variable calculation in the proof of
\cref{lem:normalized-log-primitive}, while all interior corrections are
smooth.  Subtracting its regularized value at zero gives the stated
normalization.  Uniqueness follows because a logarithmic function with zero
differential is constant on the interior and the DPP restriction to the
interior is injective.  The last identity is regularized Stokes applied to
\(\mathsf P_0\eta\).
\end{proof}

Define the shifted complete face-diagram complex of the interval by
\[
 \mathfrak R_{\overline I}^m
 =A^{m,\log}(\overline I)
 \oplus A^{m-1,\log}(\{0,1\}).
\]
Thus its only nonzero components are logarithmic zero-forms in degree zero and
triples \((\eta;a_0,a_1)\), consisting of a logarithmic one-form and two
endpoint scalars, in degree one.  With the boundary orientation
\(\partial\overline I=\{1\}-\{0\}\), its differential is
\begin{equation}\label{eq:two-ended-relative-differential}
 D_I f=(\db f;-\Reg_0f,\Reg_1f),
 \qquad
 D_I(\eta;a_0,a_1)=0.
\end{equation}
Let
\[
 C_{\mathrm{nf}}^0(\overline I)=\mathbb R e,
 \qquad
 C_{\mathrm{nf}}^1(\overline I)
 =\mathbb R v_0\oplus\mathbb R v_1,
 \qquad
 d_{\mathrm{nf}}e=v_1-v_0.
\]
This is the cellular chain complex of the interval regraded by normal-face
codimension: the edge has degree zero and its vertices have degree one.
Define
\begin{align}
 I_I(e)&=1,& I_I(v_0)&=(0;1,0),&I_I(v_1)&=(0;0,1),\label{eq:relative-interval-inclusion}\\
 P_I(f)&=\Reg_0(f)e,&
 P_I(\eta;a_0,a_1)&=a_0v_0+
 \bigl(a_1-\Tr_{\overline I}^{\lambda_0,\lambda_1}\eta\bigr)v_1,
 \label{eq:relative-interval-projection}\\
 H_I(f)&=0,&H_I(\eta;a_0,a_1)&=\mathsf P_0\eta.
 \label{eq:relative-interval-homotopy}
\end{align}

\begin{proposition}[Two-ended relative interval retract]
\label{prop:two-ended-relative-retract}
The maps \eqref{eq:relative-interval-inclusion}--
\eqref{eq:relative-interval-homotopy} form a special deformation retract
\[
 (\mathfrak R_{\overline I}^\bullet,D_I)
 \underset{I_I}{\overset{P_I}{\rightleftarrows}}
 (C_{\mathrm{nf}}^\bullet(\overline I),d_{\mathrm{nf}}),
 \]
with
\[
 P_II_I=\id,
 \qquad
 D_IH_I+H_ID_I=\id-I_IP_I,
 \qquad
 H_I^2=P_IH_I=H_II_I=0.
\]
This is a retract of the \emph{total} relative face-diagram complex.
The homotopy vanishes on the endpoint summands and is normalized at the left
endpoint, but the three maps are not claimed to commute componentwise with
both un-totalized endpoint restrictions.
\end{proposition}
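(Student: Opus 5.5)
The proof is a direct verification on generators. The complex $\mathfrak R_{\overline I}^\bullet$ lives only in degrees $0$ and $1$, and $C_{\mathrm{nf}}^\bullet$ is three-dimensional, so every identity can be checked on a degree-zero logarithmic function $f$ and on a degree-one triple $(\eta;a_0,a_1)$. The inputs are \cref{lem:two-ended-primitive}, which gives existence, uniqueness and the endpoint values of $\mathsf P_0$, and the two-ended regularized Stokes formula $\Tr_{\overline I}^{\lambda_0,\lambda_1}(\db f)=\Reg_1 f-\Reg_0 f$. I would proceed in four steps.

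\emph{Step 1: the maps are chain maps.} Since $\Reg_i(1)=1$, we get $D_I I_I(e)=(0;-1,1)=I_I(v_1)-I_I(v_0)=I_I(d_{\mathrm{nf}}e)$. On degree-one elements both sides vanish. For $P_I$ the only nontrivial check is
\[
 P_ID_If=-\Reg_0f\,v_0+\bigl(\Reg_1f-\Tr(\db f)\bigr)v_1=\Reg_0 f\,(v_1-v_0)=d_{\mathrm{nf}}P_If,
\]
which is exactly the Stokes identity.

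\emph{Step 2: the retraction identity.} $P_II_I=\id$ follows on generators. We have $P_I(1)=e$. Also $P_I(0;1,0)=v_0$, and $P_I(0;0,1)=v_1$ because $\Tr(0)=0$.

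\emph{Step 3: the homotopy identity.} On $f$ we have $H_If=0$, and $H_ID_If=\mathsf P_0(\db f)$. By uniqueness in \cref{lem:two-ended-primitive} this equals $f-\Reg_0f$, since $f-\Reg_0 f$ has differential $\db f$ and vanishing $\Reg_0$. The right side is $f-I_IP_If=f-\Reg_0f\cdot 1$, so the identity holds. On a triple, $H_ID_I=0$ and
\[
 D_IH_I(\eta;a_0,a_1)=\bigl(\eta;\,-\Reg_0\mathsf P_0\eta,\,\Reg_1\mathsf P_0\eta\bigr)=(\eta;0,\Tr\eta).
\]
This must equal $(\eta;a_0,a_1)-I_IP_I(\eta;a_0,a_1)=(\eta;a_0,a_1)-(0;a_0,a_1-\Tr\eta)=(\eta;0,\Tr\eta)$, which it does. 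This step is the one place where the sign convention for $D_I$ and the $-\Tr\eta$ correction in $P_I$ must match. I expect it to be the only real obstacle: a mismatch would show up here as $\pm\Tr\eta$ in the wrong slot. The boundary orientation $\{1\}-\{0\}$ is what makes it work.

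\emph{Step 4: the side conditions.} $H_I^2=0$ holds for degree reasons, because $H_I$ lands in degree zero and vanishes there. $H_II_I=0$ because $H_I$ kills degree zero and $I_I(v_i)$ has $\eta=0$, so $\mathsf P_0 0=0$. $P_IH_I=0$ because $P_I(\mathsf P_0\eta)=\Reg_0(\mathsf P_0\eta)e=0$ by the normalization.

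Finally, I would remark that no step uses a componentwise endpoint compatibility, only the totalized differential. This justifies the qualification in the statement; for example, $\Reg_1 H_I\neq 0$ in general.
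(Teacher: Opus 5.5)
Your proposal is correct and follows the paper's proof, which is the same generator-by-generator verification in degrees $0$ and $1$ using uniqueness and the Stokes identity from \cref{lem:two-ended-primitive}. You also write out the cochain-map checks for $I_I$ and $P_I$, including the Stokes step for $P_ID_I=d_{\mathrm{nf}}P_I$, which the paper only says follow from the same formulas.
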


\begin{proof}
For a zero-form \(f\),
\[
 H_ID_If=\mathsf P_0(\db f)=f-\Reg_0(f)
 =f-I_IP_If.
\]
For \((\eta;a_0,a_1)\), put
\(T=\Tr_{\overline I}^{\lambda_0,\lambda_1}\eta\).  Then
\[
 D_IH_I(\eta;a_0,a_1)=(\eta;0,T)
 =(\eta;a_0,a_1)-I_IP_I(\eta;a_0,a_1).
\]
These formulas also show that \(I_I\) and \(P_I\) are cochain maps and that
\(P_II_I=\id\).  The side conditions follow from
\(\Reg_0\mathsf P_0=0\) and from the fact that \(H_I\) vanishes on degree
zero and on the endpoint components.  The endpoint coordinates in
\eqref{eq:two-ended-relative-differential} are part of the total differential;
the proof uses no separate endpoint-naturality statement.
\end{proof}

Now put \(E=\overline I_1\times\overline I_2\), orient it by the ordered
product orientation, and use product DPP scales.  Let
$\mathsf F_r(E)$ denote its oriented embedded faces of codimension $r$ and
define the complete \emph{unordered} column face complex
\begin{equation}\label{eq:square-complete-face-complex}
 \mathfrak R_E^m
 =\bigoplus_{r=0}^2\ \bigoplus_{F\in\mathsf F_r(E)}
   A^{m-r,\log}(F),
 \qquad D_E=\db+(-1)^q d_{\mathrm{face}}
 \quad\text{on form degree }q.
\end{equation}
The product regularization identifies the ordered lifts by permuting the two
interval factors, so \eqref{eq:square-complete-face-complex} is the unordered
representative of the invariant ordered DPP complex under
\cref{prop:ordered-face-dictionary}.  In particular, both ordered lifts of a
vertex and their orientation signs are already accounted for.

For a form $\alpha$ of form degree $q$ supported on a codimension-$r$ face
$F_1\leq\overline I_1$ and a form $\beta$ of form degree $q'$ supported on a
codimension-$s$ face $F_2\leq\overline I_2$, set
\begin{equation}\label{eq:square-exterior-product-sign}
 \mu(\alpha\otimes\beta)
 =(-1)^{rq'}\operatorname{pr}_1^*\alpha\wedge
   \operatorname{pr}_2^*\beta
 \quad\text{on }F_1\times F_2,
\end{equation}
where product coorientations list the first-factor normals before the
second-factor normals.  The sign is the standard interchange of the
face-degree suspension of $\alpha$ with the form degree of $\beta$.
The product rule for $\db$, the oriented product-boundary formula and
factorization of the product virtual face maps give
\[
 D_E\mu(\alpha\otimes\beta)
 =\mu\bigl(D_{I_1}\alpha\otimes\beta
   +(-1)^{|\alpha|}\alpha\otimes D_{I_2}\beta\bigr).
\]
Thus wedge product of pullbacks, with
\eqref{eq:square-exterior-product-sign}, gives the cochain map
\begin{equation}\label{eq:square-face-diagram-tensor}
 \mu:
 \mathfrak R_{\overline I_1}^\bullet\otimes
 \mathfrak R_{\overline I_2}^\bullet
 \longrightarrow \mathfrak R_E^\bullet .
\end{equation}
The tensor product here is algebraic.  Its image consists of finite sums of
separated-variable logarithmic forms; it is not the full DPP logarithmic face
diagram of \(E\).  No completed tensor-product identity for the full spaces of
smooth or logarithmic forms is asserted.

For \(i=1,2\), choose a finite-dimensional subcomplex
\(\mathfrak L_i^\bullet\subset\mathfrak R_{\overline I_i}^\bullet\) which
contains the image of \(I_{I_i}\), is preserved by \(D_{I_i}\) and
\(H_{I_i}\), and on which \(P_{I_i}\) and both endpoint maps restrict.
Assume that \(\mu\) is injective on
\(\mathfrak L_1\otimes\mathfrak L_2\), as it is for the explicit
separated-variable complex below.  Define the finite separated product
subcomplex
\begin{equation}\label{eq:finite-separated-square}
 \mathfrak R_{E,\mathrm{sep}}^\bullet
 :=\mu\bigl(\mathfrak L_1^\bullet\otimes
 \mathfrak L_2^\bullet\bigr)\subset\mathfrak R_E^\bullet.
\end{equation}
Likewise set
\[
 C_{\mathrm{nf}}^\bullet(E)
 =C_{\mathrm{nf}}^\bullet(\overline I_1)
  \otimes
  C_{\mathrm{nf}}^\bullet(\overline I_2).
\]
It has one generator for the two-cell in degree zero, four edge generators in
degree one and four vertex generators in degree two; its differential is the
oriented cellular boundary, regraded by face codimension.

\begin{proposition}[Finite product-square logarithmic contraction]
\label{prop:product-square-log-contraction}
The tensor trick applied to
\cref{prop:two-ended-relative-retract} gives a special deformation retract of
total face complexes
\[
 (\mathfrak R_{E,\mathrm{sep}}^\bullet,D_E)
 \underset{I_E}{\overset{P_E}{\rightleftarrows}}
 (C_{\mathrm{nf}}^\bullet(E),d_{\mathrm{nf}}).
\]
Explicitly,
\[
 I_E=I_{I_1}\otimes I_{I_2},
 \qquad
 P_E=P_{I_1}\otimes P_{I_2},
\]
and, on a homogeneous tensor,
\begin{equation}\label{eq:square-tensor-homotopy}
 H_E^{\log}(\alpha\otimes\beta)
 =H_{I_1}\alpha\otimes\beta
 +(-1)^{|\alpha|}(I_{I_1}P_{I_1}\alpha)
   \otimes H_{I_2}\beta.
\end{equation}
Their domain contains every selected nonconstant tangential logarithm included in
\(\mathfrak L_1\otimes\mathfrak L_2\); the maps contract its reduced part
explicitly rather than identifying the selected logarithmic coefficients with
constants.
\end{proposition}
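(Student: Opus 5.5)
The plan is to apply the standard tensor trick for special deformation retracts to the two restricted retracts on \(\mathfrak L_1\) and \(\mathfrak L_2\), and then transport the resulting retract along \(\mu\). First, restrict \cref{prop:two-ended-relative-retract} to each \(\mathfrak L_i^\bullet\). This is legitimate because \(\mathfrak L_i\) contains the image of \(I_{I_i}\), is stable under \(D_{I_i}\) and \(H_{I_i}\), and \(P_{I_i}\) restricts to it. Hence \((I_{I_i},P_{I_i},H_{I_i})\) is a special deformation retract \(\mathfrak L_i\rightleftarrows C^\bullet_{\mathrm{nf}}(\overline I_i)\), and its side conditions are inherited verbatim.

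Next, on the algebraic tensor product \(\mathfrak L_1\otimes\mathfrak L_2\) with the Koszul tensor differential, put \(I=I_{I_1}\otimes I_{I_2}\), \(P=P_{I_1}\otimes P_{I_2}\), and let \(H\) be given by \eqref{eq:square-tensor-homotopy}. Then verify the identities directly.
\begin{itemize}
\item \(PI=\id\) is immediate.
\item The homotopy identity comes from \(DH+HD=(\id-I_1P_1)\otimes\id+I_1P_1\otimes(\id-I_2P_2)=\id-IP\), using that \(I_1P_1\) is a degree-zero chain map commuting with \(D_{I_1}\).
\item The side conditions follow termwise:
  \begin{itemize}
  \item \(HI=0\) uses \(H_1I_1=0\) and \(H_2I_2=0\);
  \item \(PH=0\) uses \(P_1H_1=0\), \(P_2H_2=0\);
  \item \(H^2=0\) uses \(H_1^2=0\), \(H_1I_1=0\), \(P_1H_1=0\), \(H_2^2=0\).
  \end{itemize}
\end{itemize}
This is the classical asymmetric tensor homotopy, and only sign bookkeeping is involved.

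Then use the injectivity assumption on \(\mu\) restricted to \(\mathfrak L_1\otimes\mathfrak L_2\). Together with the cochain-map identity \(D_E\mu=\mu(D_{I_1}\otimes\id+(-1)^{|\cdot|}\id\otimes D_{I_2})\) stated before \eqref{eq:square-face-diagram-tensor}, it shows that \(\mu\) is an isomorphism of complexes onto \(\mathfrak R_{E,\mathrm{sep}}^\bullet\) with the restricted \(D_E\). Transporting \((I,P,H)\) by \(\mu\) gives \(I_E=\mu I\), \(P_E=P\mu^{-1}\) and \(H_E^{\log}=\mu H\mu^{-1}\). These are exactly the displayed formulas under the identification. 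The target is the tensor product of the cellular normal-face complexes, which is \(C^\bullet_{\mathrm{nf}}(E)\), and no extra sign appears there because the normal-face complexes carry no form degree.

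Finally, the assertion about retained logarithms needs no extra work. The domain is \(\mu(\mathfrak L_1\otimes\mathfrak L_2)\), which contains the chosen tangential logarithms by construction. The reduced part is \(\ker P_E\), contracted by \(H_E^{\log}\), and no identification with constants is made.

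The main obstacle is the sign consistency in the transport step. The Koszul sign \((-1)^{|\alpha|}\) in the tensor differential uses total degree, that is, form degree plus face degree, whereas \(\mu\) carries the interchange sign \((-1)^{rq'}\). I would check, by cases on \(r,s\in\{0,1\}\) and \(q,q'\in\{0,1\}\), that the stated cochain-map identity for \(\mu\) is exactly the one needed. This amounts to a finite case check, and I expect it to be the only delicate point.
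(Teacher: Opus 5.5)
Your proposal is correct and is essentially the paper's argument. You restrict the two-ended interval retracts to the invariant finite subcomplexes $\mathfrak L_i$, apply the standard asymmetric tensor trick (your verification of the homotopy identity and the three side conditions is right), and transport the result through the injective cochain map $\mu$ onto $\mathfrak R_{E,\mathrm{sep}}$.

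The sign check you single out as the delicate point is exactly the product-rule identity for $\mu$ that the paper records just before the statement and uses as given. That check does go through: it reduces to the product incidence rule $d_{\mathrm{face}}(\alpha\wedge\beta)=d_{\mathrm{face}}\alpha\wedge\beta+(-1)^{r}\alpha\wedge d_{\mathrm{face}}\beta$.
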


\begin{proof}
The special deformation-retract identities follow from the standard tensor
trick and the side conditions in
\cref{prop:two-ended-relative-retract}.  The wedge map
\eqref{eq:square-face-diagram-tensor} intertwines the tensor differential with
the DPP de Rham differential plus the four oriented face maps by
\eqref{eq:square-exterior-product-sign}, and the
invariance assumptions on \(\mathfrak L_i\) make all displayed maps
endomorphisms of the finite separated total subcomplex.  The tensor homotopy
identity decomposes every selected separated logarithmic coefficient into its
cellular regularized projection and an explicitly contracted reduced term.
\end{proof}

\begin{remark}[Totalization versus componentwise face naturality]
\label{rem:two-ended-not-face-natural}
The qualification in \cref{prop:two-ended-relative-retract} is essential.
For example,
\[
 H_I(\dd t;0,0)=t,
 \qquad \Reg_1(t)=1,
\]
whereas the pullback of \(\dd t\) to the right endpoint is zero.  Thus neither
the interval homotopy nor its product-square tensor is a componentwise natural
transformation of the un-totalized normal-face diagrams.  What is used below
is the special deformation retract after the normal-face incidence maps have
been incorporated into \(D_I\) and \(D_E\).  Compatibility in \textup{(G4)}
means compatibility with restrictions in the centre and gluing of the normal
cell bundles; it does not impose the stronger, and here false, componentwise
normal-face condition.  This differs from the one-ended Whitney retract of
\cref{prop:regularized-interval-contraction}, whose homotopy vanishes at both
endpoints and is endpoint-compatible.
\end{remark}

\begin{remark}\label{rem:no-full-square-tensor}
\Cref{prop:product-square-log-contraction} is a finite algebraic statement.
Extending it to the full DPP complex would require locally convex topologies,
a completed tensor product for nuclear Fr\'echet spaces, and continuity of
the regularization and homotopy operators.  None of those analytic assertions
is used in the conifold construction below.
\end{remark}

\begin{remark}\label{rem:regularized-reduces}
Restrict the construction to the ordinary smooth de Rham subcomplex, take
$\varepsilon=1$ and $\chi(\theta)=\theta$.  On smooth functions the DPP
regularized restriction agrees with ordinary evaluation, and on smooth compactly
supported top forms the regularized integral agrees with ordinary integration
\cite[Lem.~5.15 and Sec.~7]{DPP}.  On this smooth subcomplex the maps
\cref{eq:regularized-poincare-0,eq:regularized-poincare-1,eq:regularized-whitney,eq:regularized-normal-homotopy}
therefore reduce to $R$, $W$, and $\kappa$ of
\cref{eq:interval-Whitney,eq:interval-integration,eq:interval-Dupont}.  No such
statement is made for a genuinely logarithmic function such as $\log r$, whose
ordinary value at $0$ does not exist; there the scale-dependent regularized value is
essential.  Thus the regularized construction extends the ordinary one rather than
turning every logarithmic form into an ordinarily integrable form.  The
unregularized data are
what \cref{thm:codim2-continuum-propagator} tensors up to a codimension-two
collar; the regularized data are what \cref{cor:product-log-contraction}
tensors up to a logarithmic collar.
\end{remark}

\begin{remark}
The two comparison theorems that use this section answer different questions
and neither contains the other.
\Cref{thm:abelian-continuum-blowup} compares two resolutions differing by a
single codimension-two blow-up, on a local resolved collar and without
regularization; its content is refinement invariance.
\Cref{thm:abelian-continuum-cellular} compares the continuum and cellular
theories on one global compact resolution equipped with a regularization
datum; its content is discretization.  What they share is exactly
\cref{prop:diagonal-interval,lem:interval-torsion}.
\end{remark}

\section{Global linear strictification}\label{sec:global-linear-strict}

\begin{definition}[Centre-compatible finite refinement tower after normal-face
totalization]\label{def:face-compatible-stellar}
A finite resolution tower is called \emph{centre-compatible after normal-face
totalization} relative to a
specified class of basic linear coefficient complexes if every centre is an
embedded face with a compatible product collar and its resolved normal
directions form a globally defined finite contractible cell bundle.  Denote
its fibre over a centre $Z$ of codimension $c$ by $K_Z$.  For an ordinary
stellar centre one takes $K_Z=\sd\Delta^{c-1}$; a generalized-corner centre may
instead have a contractible polyhedral link subdivision.  The following
chain-level data must exist for every coefficient complex in the specified
class:
\begin{enumerate}[label=\textup{(\alph*)},leftmargin=2.5em]
\item graded subdivision and aggregation maps of face degree zero after the
normal cell fibre is given the face grading $r=c-q$
\[
 i_0:\mathscr B_{X,\cV}^\bullet\longrightarrow\mathscr A_L^\bullet,
 \qquad
 p_0:\mathscr A_L^\bullet\longrightarrow\mathscr B_{X,\cV}^\bullet,
 \qquad p_0i_0=\id;
\]
\item an auxiliary differential \(D_0\) for which \(i_0,p_0\) are cochain
maps between
\((\mathscr B_{X,\cV}^\bullet,d_{\log,\cV})\) and
\((\mathscr A_L^\bullet,D_0)\), together with a homotopy \(h_0\) satisfying
\[
 D_0h_0+h_0D_0=\id-i_0p_0,
 \qquad h_0i_0=p_0h_0=h_0^2=0;
\]
where $d_{\log,\cV}$ has face degree zero, while $D_0$ preserves intrinsic
support and may additionally contain the normal-fibre cellular differential
of face degree $+1$;
\item on the associated graded for the normal-face filtration, aggregation
admits one of the following two explicitly specified forms.  In the
locally constant normal model, its kernel is the coefficient complex on the
centre tensored with the face-regraded reduced cellular \emph{chain} complex
$\widetilde C_{\mathrm{face}}^\bullet(K_Z)$, equipped with a special cellular
contraction.  More generally, a logarithmic cellularization of the total
normal-face complex, compatible with restrictions in the centre, first gives a
special deformation retract
\[
 \mathscr L_Z^\bullet
 \underset{I_Z^{\log}}{\overset{P_Z^{\log}}{\rightleftarrows}}
 \cV_Z^\bullet\otimes C_{\mathrm{face}}^\bullet(K_Z),
 \qquad H_Z^{\log},
\]
and this is composed with cellular augmentation and a specified special
contraction of
$\widetilde C_{\mathrm{face}}^\bullet(K_Z)$.  Thus the total initial kernel
has a two-stage filtration: its logarithmic cellularization kernel is
contracted by $H_Z^{\log}$ and the remaining reduced cellular quotient by the
cellular homotopy.  For an ordinary stellar centre with locally constant
normal coefficients the first stage is the identity and the second is the
barycentric cone contraction \eqref{eq:barycentric-cone} on
$K_Z=\sd\Delta^{c-1}$.  No identification of logarithmic differential forms
with cellular chains is permitted without such a cellularization retract;
\item for the full resolved differential \(D=D_0+\delta\), the operators
\(h_0\delta\) and \(\delta h_0\) are nilpotent with respect to a finite tower
filtration; moreover \(\delta\) has face degree \(+1\) and \(h_0\) has face
degree \(-1\).
\item the datum is incidence-local: if $\jmath_F$ includes the intrinsic
summand indexed by $F$ and $\pi_G$ projects to the summand indexed by $G$, then
\begin{equation}\label{eq:g4-incidence-locality}
 \pi_Gp_0\delta(h_0\delta)^r i_0\jmath_F=0
 \quad\text{for every }r\geq0\text{ unless }G\prec F.
\end{equation}
A sufficient geometric way to verify this condition in the models considered here is
to require the subdivision, aggregation, homotopy, and perturbation maps to preserve
the relevant order-ideal support filtration of the intrinsic face poset.  No converse
between that filtration condition and \eqref{eq:g4-incidence-locality} is needed below.
\end{enumerate}
All data are required to commute with restriction in the centre and to glue
under changes of trivialization of the normal cell bundles.  Here
\emph{centre-compatible} refers precisely to these centre restrictions and
this gluing condition.  The incidences between individual normal cells are already
components of \(D_0\) and \(\delta\); \textup{(G4)} does not additionally
require \(h_0\), \(i_0\), and \(p_0\) to form a componentwise natural
transformation of the un-totalized normal-face diagram.  Such a stronger
condition may be imposed in other models, but is neither used in the filtered
transfer theorem nor satisfied by the anchored relative interval retract of
\cref{prop:two-ended-relative-retract}.  These conditions are the concrete
content of \textup{(G4)}.
\end{definition}

The strengthened formulation of \textup{(G4)} is deliberate.  A graded
splitting alone is not enough for homological perturbation: one needs an
initial contraction and a finite perturbation series.  It is these chain-level
data, rather than a literal choice of one ``strict transform'' for each
intrinsic face, that are used below.  Conversely, \textup{(G4)} should not be
read as an existence theorem: it is the explicit chain-level hypothesis under
which the filtered transfer below is carried out.

\begin{definition}[BV-cyclic field datum]\label{def:bf-cyclic-enhancement}
Fix dual basic coefficient systems $V$ and $V^*$ and, if boundary
polarizations are present, the corresponding relative/absolute $A$- and
$B$-field complexes.  The two face directions must be distinguished.  The
$A$-fields form a contravariant face-cochain system, with restriction
\[
 r^A_{G/F}:\mathscr F_F^A\longrightarrow\mathscr F_G^A
 \qquad(G\prec F).
\]
The complementary $B$-fields form the BF-dual face-\emph{chain} system, with
corestriction
\[
 (r^A_{G/F})_!: \mathscr F_G^B\longrightarrow\mathscr F_F^B
\]
characterized by graded adjointness.  In a finite cellular model this is the
shifted algebraic transpose of $r^A_{G/F}$.  In a continuum model it requires
a Poincar\'e--Lefschetz/Gysin realization compatible with the relative and
absolute boundary conditions, and its existence is part of \textup{(G5)}; it
is not obtained by pretending that algebraic duality preserves the direction
of restriction.

Put
\[
 W_A=V,\qquad W_B=V^*,\qquad s_A=1,\qquad s_B=n-2.
\]
For a terminal resolved representative $\beta_L:X_L\to X$, let
$q_L:\mathsf{Face}(X_L)\to\mathsf{Face}(X)$ be the support map.  For
$A$ define the underlying graded field objects by
\begin{align*}
 (\mathscr F_X^A)^m
 &=\bigoplus_{G\in\mathsf{Face}(X)}
   \bigl(\Omega_b^\bullet(G;V|_G)[1]\bigr)^{m-|G|},\\
 (\mathscr F_L^A)^m
 &=\bigoplus_{H\in\mathsf{Face}(X_L)}
   \bigl(\Omega_b^\bullet
   (H;(\beta_L|_H)^*(V|_{q_L(H)}))[1]\bigr)^{m-|H|}.
\end{align*}
The objects $\mathscr F_X^B$ and $\mathscr F_L^B$ are the complementary
relative/absolute BF-dual chain totalizations.  Facewise their local field
spaces are modeled on $\Omega_b^\bullet(-;V^*)[n-2]$, but their incidence
arrows are the corestrictions above.  For finite-dimensional cellular models
they are, by definition, the shifted algebraic duals of the corresponding
$A$ totalizations.  This dual regrading makes the transpose differential
cohomological of degree $+1$, although it decreases geometric face
codimension.

For $Z\in\{A,B\}$ denote the resolved total field complex and the initial
intrinsic field complex by
\[
 (\mathscr F_L^Z,D^Z),\qquad
 (\mathscr F_X^Z,d_X^Z),
 \qquad D^Z=D_0^Z+\delta_Z.
\]
Here $d_X^Z$ has dual face degree zero, while $D_0^Z$ preserves intrinsic
support and may contain the normal-fibre cellular differential in the
appropriate cochain/chain grading.  The perturbation $\delta_A$ consists of
resolved restrictions and $\delta_B$ of the adjoint corestrictions.  On each
intrinsic summand, $d_X^Z$ is the $b$-de Rham differential together with the
coefficient-complex differential.  In particular, the intrinsic bulk
components are
\[
 \mathscr F_{X,\mathrm{bulk}}^A=\Omega_b^\bullet(X;V)[1],
 \qquad
 \mathscr F_{X,\mathrm{bulk}}^B=\Omega_b^\bullet(X;V^*)[n-2].
\]
These are field complexes; they are not the logarithmic DPP trace/descent
complexes $\mathscr A_L^\bullet$ and $\mathscr B_{X,\cV}^\bullet$ of
\textup{(G4)}.

The definition below is the continuum \(b\)-field datum.  One may formulate a
finite cellular analogue by replacing every displayed \(b\)-de Rham
\(A\)-complex with a finite cellular cochain complex and taking the shifted
algebraic dual for \(B\).  Such an analogue becomes a datum for the continuum
complexes above only after type-correct comparison maps and compatible
deformation retracts between the cellular and \(b\)-de Rham fields have been
constructed; algebraic duality alone does not provide them.

A \emph{BV-cyclic field datum} on a centre-compatible finite refinement tower consists
of the following data and identities.
\begin{enumerate}[label=\textup{(\alph*)},leftmargin=2.5em]
\item For each $Z\in\{A,B\}$ there are maps of dual face degrees $0,0,-1$,
respectively,
\[
 i_0^Z:\mathscr F_X^Z\longrightarrow\mathscr F_L^Z,
 \qquad
 p_0^Z:\mathscr F_L^Z\longrightarrow\mathscr F_X^Z,
 \qquad
 h_0^Z:\mathscr F_L^Z\longrightarrow\mathscr F_L^Z[-1]
\]
which are an initial special deformation retract:
\begin{align*}
 p_0^Zi_0^Z&=\id,
 &D_0^Zi_0^Z&=i_0^Zd_X^Z,
 &p_0^ZD_0^Z&=d_X^Zp_0^Z,\\
 D_0^Zh_0^Z+h_0^ZD_0^Z&=\id-i_0^Zp_0^Z,
 &h_0^Zi_0^Z&=0,
 &p_0^Zh_0^Z&=0,\qquad (h_0^Z)^2=0.
\end{align*}
\item The perturbations have degree $+1$ and the homotopies degree $-1$ in
their respective cochain/dual-chain total gradings; the products
$h_0^Z\delta_Z$ and $\delta_Zh_0^Z$ are nilpotent for the finite tower
filtration.  If $\jmath_F^Z$ and $\pi_F^Z$ denote inclusion and projection of
intrinsic face summands, incidence locality means, for every $r\geq0$,
\begin{align}
 \pi_G^Ap_0^A\delta_A(h_0^A\delta_A)^ri_0^A\jmath_F^A&=0
 &&\text{unless }G\prec F,\label{eq:g5-incidence-locality-A}\\
 \pi_F^Bp_0^B\delta_B(h_0^B\delta_B)^ri_0^B\jmath_G^B&=0
 &&\text{unless }G\prec F.\label{eq:g5-incidence-locality-B}
\end{align}
All $A$-maps commute with restrictions, all $B$-maps commute with
corestrictions, and the data glue along the refinement tower.
\item There are resolved and intrinsic families of regularized BF pairings,
written collectively as
\[
 \langle-,-\rangle_L:\mathscr F_L^A\otimes\mathscr F_L^B\longrightarrow\mathbb R,
 \qquad
 \langle-,-\rangle_X:\mathscr F_X^A\otimes\mathscr F_X^B\longrightarrow\mathbb R,
\]
whose component on a codimension-$k$ face has the standard BF degree $k-1$
(in particular, degree $-1$ on bulk BV fields).  The displayed maps are
collective bigraded notation, and the identities below are understood in each
bidegree.  In the shifted BF sign convention, the same symbol denotes the
Koszul-transposed pairing when the $B$-argument is written first.  The initial
differentials are graded adjoint:
\[
 \langle D_0^A a,b\rangle_L
 +(-1)^{|a|}\langle a,D_0^B b\rangle_L=0,
 \qquad
 \langle d_X^A x,y\rangle_X
 +(-1)^{|x|}\langle x,d_X^B y\rangle_X=0.
\]
\item Inclusion and projection are adjoint for these pairings:
for homogeneous elements, with the standard BF Koszul sign convention,
\[
 \langle i_0^A x,b\rangle_L=\langle x,p_0^B b\rangle_X,
 \qquad
 \langle a,i_0^B y\rangle_L=\langle p_0^A a,y\rangle_X.
\]
\item The homotopies satisfy the graded chain-adjoint relation
\begin{equation}\label{eq:g5-skew-adjoint}
 \langle h_0^A a,b\rangle_L
 =(-1)^{|a|}\langle a,h_0^B b\rangle_L,
\end{equation}
and the face perturbations are graded adjoint,
\begin{equation}\label{eq:g5-delta-adjoint}
 \langle \delta_A a,b\rangle_L
 +(-1)^{|a|}\langle a,\delta_B b\rangle_L=0.
\end{equation}
\item The pairings are nondegenerate on the chosen field class, restriction
and corestriction are adjoint, and all maps preserve the chosen complementary
boundary polarizations.  Along a flag $H\prec G\prec F$, the $A$ restrictions
compose contravariantly and the $B$ corestrictions compose covariantly:
\[
 r^A_{H/G}r^A_{G/F}=r^A_{H/F},
 \qquad
 (r^A_{G/F})_!(r^A_{H/G})_!=(r^A_{H/F})_!.
\]
Locality alone is not enough to identify the transferred terms with these
prescribed face arrows.  For \(G\prec F\), define the signed intrinsic
cochain arrow
\[
 \partial^A_{G/F}:=[F:G]\,r^A_{G/F}.
\]
Its signed BF-dual chain arrow
\(\partial^B_{F/G}:\mathscr F_G^B\to\mathscr F_F^B\) is the unique map in the
dual-chain grading satisfying, for homogeneous
\(a\in\mathscr F_F^A\) and \(b\in\mathscr F_G^B\),
\begin{equation}\label{eq:g5-signed-adjoint-arrow}
 \langle \partial^A_{G/F}a,b\rangle_G
 +(-1)^{|a|}\langle a,\partial^B_{F/G}b\rangle_F=0.
\end{equation}
The transfer data are required to recover exactly these signed arrows:
\begin{align}
 \pi_G^Ap_0^A\delta_A(1+h_0^A\delta_A)^{-1}i_0^A\jmath_F^A
 &=\partial^A_{G/F},\label{eq:g5-arrow-matching-A}\\
 \pi_F^Bp_0^B\delta_B(1+h_0^B\delta_B)^{-1}i_0^B\jmath_G^B
 &=\partial^B_{F/G}.\label{eq:g5-arrow-matching-B}
\end{align}
The inverse series are finite by the nilpotence requirement in \textup{(b)}.
\end{enumerate}
These conditions are the concrete content of \textup{(G5)}.  They are
additional to, rather than a pairing merely placed on, the cochain-level
hypotheses in \textup{(G4)}.  In particular, contractibility of the reduced
trace complex does not by itself imply that the reduced BF field sector is a
symplectic contractible pair.  A comparison between the G4 and G5 retracts,
when desired, is further input and is not part of either definition.
The output of a general \textup{(G5)} datum is therefore naturally bivariant:
contravariant in the $A$-fields and covariant in their BF-dual $B$-fields.
As with \textup{(G4)}, this is a chain-level input, not a general existence
theorem.  In particular, the arrow-matching identities are strictly stronger
than incidence locality: they prevent the equation
\((D_{\mathrm{str}}^Z)^2=0\) from being misread as pathwise functoriality of
otherwise unidentified transferred components.
\end{definition}

\subsection{Basic linear coefficients and the intrinsic graded sheaf}
\label{subsec:basic-linear-coefficients}

Let \(\cV^\bullet\) be a bounded complex of finite-rank flat vector bundles on
\(X\).  We call the induced coefficient system on a resolution \emph{basic}
if on every resolved face it is pulled back from the restriction of
\(\cV^\bullet\) to the support face under
\(q_L:\mathsf{Face}(X_L)\to\mathsf{Face}(X)\).  Thus the coefficients carry
no independent angular or radial degrees of freedom along a refinement
simplex.

Ignoring the face differential, define
\begin{equation}\label{eq:intrinsic-graded-sheaf}
 \mathscr B_{X,\cV}^{m}
 =\bigoplus_{F\in\mathsf{Face}(X)}
 \logOmega^{m-|F|}(F;\cV|_F).
\end{equation}
Its internal logarithmic and coefficient differential is denoted
\(d_{\log,\cV}\).  Before strictification there is no assumed intrinsic
face-degree-one operator on \eqref{eq:intrinsic-graded-sheaf}.

Fix a centre-compatible representative \(X_L\to X\) and write
\[
 \mathscr A_L^\bullet
 :=\mathscr A_{\cR_L,s_L}^\bullet(\cV).
\]
The maps \(i_0,p_0\) of \textup{(G4)} compare the intrinsic graded sheaf with
the entire resolved face sheaf.  Put
\[
 \mathscr E_{0,L}^\bullet=\ker p_0.
\]
Then
\[
 \mathscr A_L^\bullet
 =i_0\mathscr B_{X,\cV}^\bullet\oplus\mathscr E_{0,L}^\bullet
\]
as graded sheaves.  We call \(\mathscr E_{0,L}\) the \emph{initial reduced
refinement summand}.  It is a \(D_0\)-subcomplex, but it need not be preserved
by the full differential \(D\).

\subsection{The local reduced simplex model}

Consider one stellar blow-up with centre \(Z\) of codimension \(c\).  In a
compatible collar its normal part is the radial replacement
\[
 Q_\varepsilon
 \longleftarrow
 [0,\varepsilon)_r\times\Delta^{c-1}.
\]
After barycentric subdivision, let \(b\) denote the barycentre vertex.  The
augmented simplicial chain complex has the cone homotopy
\begin{equation}\label{eq:barycentric-cone}
 H_b[w_0,\ldots,w_q]
 =
 \begin{cases}
 [b,w_0,\ldots,w_q],&b\notin\{w_0,\ldots,w_q\},\\
 0,&b\in\{w_0,\ldots,w_q\},
 \end{cases}
\end{equation}
with the usual incidence sign, and on reduced chains
\[
 \partial H_b+H_b\partial=\id.
\]
The barycentre is fixed by every permutation of the vertices.  In the face
regrading $r=c-q$, the chain cone homotopy has face degree $-1$ and is
equivariant under the structure group of the normal simplex bundle.

\begin{lemma}[Reduced normal-face factor under \textup{(G4)}]\label{lem:exceptional-simplex}
For one centre-compatible stellar blow-up and basic linear coefficients which
are locally constant in the normal simplex direction, the associated-graded
initial reduced refinement sector supplied by the first alternative in
\textup{(G4)} is
\[
 j_{Z*}\left(
  \logOmega^\bullet(Z;\cV|_Z)
  \otimes
  \widetilde C_{\mathrm{face}}^\bullet(\sd\Delta^{c-1})
 \right).
\]
It is contracted by the identity on the coefficient factor tensored with the
barycentric cone homotopy.  The contraction is compatible with restriction in
the $Z$-direction and has face degree $-1$.
\end{lemma}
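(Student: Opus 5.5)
The plan is to reduce everything to the collar of the centre $Z$ and to the first alternative of clause (c) in \cref{def:face-compatible-stellar}. In that alternative the kernel of aggregation on the associated graded is already prescribed to be the coefficient complex on the centre tensored with the face-regraded reduced cellular chain complex of $K_Z$. So the work consists of three checks: that for a single stellar blow-up $K_Z=\sd\Delta^{c-1}$ and that the coefficient complex on the centre is $\logOmega^\bullet(Z;\cV|_Z)$; that the barycentric cone homotopy \eqref{eq:barycentric-cone} gives the required special contraction; and that everything glues along $Z$, which accounts for the pushforward $j_{Z*}$.

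First, I fix a compatible product collar $U\cong Z\times Q_\varepsilon$ and use the radial replacement $[0,\varepsilon)_r\times\Delta^{c-1}$ from \cref{sec:global-linear-strict}. By \cref{def:face-compatible-stellar}, an ordinary stellar centre has normal cell fibre $\sd\Delta^{c-1}$. The resolved faces supported over $Z$ are then indexed by cells of this fibre, and a $q$-cell sits in face degree $r=c-q$, exactly as in the regrading of \cref{prop:simplex-acyclic}. The coefficients are basic in the sense of \cref{subsec:basic-linear-coefficients} and locally constant along the normal simplex. Hence on each such resolved face the coefficient is the pullback of $\cV|_Z$, and the logarithmic forms contributing to the associated graded for the normal-face filtration are those pulled back from $Z$. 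This gives the associated-graded piece over $Z$ as $\logOmega^\bullet(Z;\cV|_Z)\otimes C_{\mathrm{face}}^\bullet(\sd\Delta^{c-1})$. Aggregation $p_0$ acts on the cellular factor by the augmentation onto the unresolved summand $\logOmega^\bullet(Z;\cV|_Z)$, with $i_0$ as the corresponding splitting. Its kernel is therefore the reduced complex $\widetilde C_{\mathrm{face}}^\bullet$. Because the logarithmic cellularization stage of (c) is the identity in the locally constant case, there is no further kernel to contract.

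Second, I take the contraction to be $\id\otimes H_b$. By \eqref{eq:barycentric-cone}, $\partial H_b+H_b\partial=\id$ on reduced chains. Tensoring with the identity on the coefficient factor, using the Koszul sign as in \cref{prop:simplex-acyclic} and \cref{prop:diagonal-interval}, gives the contraction identity for the total differential on the associated graded. Face degree $-1$ follows because $H_b$ raises cellular dimension by one and $r=c-q$. The side conditions $h_0^2=0$ and the vanishing against $i_0$ and $p_0$ come from $H_b^2=0$, which holds since a simplex already containing $b$ is sent to zero, together with the relative vertex splitting of \cref{lem:exceptional-acyclic} applied with $v=b$.

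Third, I verify compatibility with restriction in the $Z$-direction and the gluing. $H_b$ acts only on the normal factor while restriction acts only on the coefficient factor, so the two commute. The barycentre is fixed by every vertex permutation, which makes $H_b$ equivariant under the structure group of the normal simplex bundle. The local contractions therefore glue across changes of trivialization and define a contraction of $j_{Z*}(\cdots)$.

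The step I expect to be the main obstacle is the first identification. It requires that the associated graded of the logarithmic resolved sheaf over $Z$ really splits as the coefficient complex on $Z$ times cellular chains, rather than containing genuinely angular logarithmic forms. My approach would be to invoke the basic/locally constant hypothesis to place such forms in the filtration-positive part. I would then note explicitly that the lemma is stated for the alternative of (G4) in which this identification is part of the supplied datum, so that no additional cellularization retract is needed.
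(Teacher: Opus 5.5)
Your final route is the paper's proof. You take the splitting $\ker p_0\cong\logOmega^\bullet(Z;\cV|_Z)\otimes\widetilde C^\bullet_{\mathrm{face}}(\sd\Delta^{c-1})$ as the defining content of the first alternative of clause (c) in \cref{def:face-compatible-stellar}. You contract with $\id\otimes H_b$ from \eqref{eq:barycentric-cone} and read off face degree $-1$ from $r=c-q$. You globalize because the barycentre is fixed by all vertex permutations, and you get compatibility with $Z$-restrictions because $H_b$ acts only on the normal factor.

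However, the attempted derivation in your first step is wrong and should be deleted, not treated as an obstacle to overcome. You infer that basic, normally locally constant coefficients force the logarithmic forms on resolved faces over $Z$ to be pulled back from $Z$; that inference is invalid, and so is the idea of moving angular forms into a ``filtration-positive part.''
\begin{itemize}
\item ``Basic'' only says that the flat bundle $\cV$ is pulled back from the support face. It says nothing about the form factor, which can depend on the angular variables. For example, $\log t$ and $\log(1-t)$ in \cref{ex:explicit-product-log} live in the same support summand over the centre.
\item No filtration pushes such forms elsewhere.
\item \cref{rem:g4-existence-scope} states explicitly that the locally constant alternative applies only to a normal model that is already cellular.
\item The paper's proof stresses that no identification of forms with cellular chains is used.
\item Passing from logarithmic forms to cellular chains is the job of the second alternative's cellularization retract, as \cref{prop:product-square-log-contraction} does for the conifold.
\end{itemize}
So the identification is a hypothesis of the lemma, and the proof is essentially ``by definition'' plus the cone contraction, exactly as you conclude in your fallback.
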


\begin{proof}
This is the locally constant case of the associated-graded clause in
\textup{(G4)}.  The normal-face
indexing is the augmented face complex of the subdivided simplex; aggregation
removes the coarse constant mode and leaves the face-regraded reduced cellular
chain complex.  The barycentric cone homotopy
\eqref{eq:barycentric-cone} contracts that factor, is equivariant under vertex
permutations, and therefore globalizes over the normal simplex bundle.  No
identification of differential forms with cellular chains is used.
\end{proof}

\begin{corollary}\label{cor:one-stage-contractible}
Under either alternative in the associated-graded clause of \textup{(G4)}, the
initial reduced refinement complex is contractible.  In the logarithmic case
its contraction is the composite
\[
 H_Z^{\log}+I_Z^{\log}H_bP_Z^{\log},
\]
with the tensor Koszul signs.  It lowers total face degree by one and commutes
with the internal differential on \(\cV^\bullet\).
\end{corollary}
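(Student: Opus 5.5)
The plan is to treat the two alternatives of the associated-graded clause of \textup{(G4)} separately and, in the logarithmic case, compose two special deformation retracts. In the locally constant alternative, \cref{lem:exceptional-simplex} identifies the initial reduced refinement sector with $\logOmega^\bullet(Z;\cV|_Z)\otimes\widetilde C_{\mathrm{face}}^\bullet(\sd\Delta^{c-1})$. The operator $\id\otimes H_b$, with the tensor Koszul sign, then contracts it, since $\partial H_b+H_b\partial=\id$ on reduced chains. This case is the degenerate one: $H_Z^{\log}=0$ and $I_Z^{\log},P_Z^{\log}$ are identities, so the displayed composite reduces to $H_b$.

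In the logarithmic alternative, start from the special deformation retract $(I_Z^{\log},P_Z^{\log},H_Z^{\log})$ between $\mathscr L_Z^\bullet$ and $\cV_Z^\bullet\otimes C_{\mathrm{face}}^\bullet(K_Z)$. Post-compose it with the cellular augmentation splitting, whose kernel is $\cV_Z\otimes\widetilde C_{\mathrm{face}}(K_Z)$, contracted by $\id\otimes H_b$ (or by the specified contraction for a general link $K_Z$). The initial reduced kernel is $\ker(\epsilon P_Z^{\log})$. Set $\widetilde H=H_Z^{\log}+I_Z^{\log}H_bP_Z^{\log}$ and verify directly that
\[
 D\widetilde H+\widetilde HD
 =(\id-I_Z^{\log}P_Z^{\log})+I_Z^{\log}(\id-\iota\epsilon)P_Z^{\log}
 =\id-I_Z^{\log}\iota\epsilon P_Z^{\log}.
\]
Here we use that $I_Z^{\log}$ and $P_Z^{\log}$ are cochain maps. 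The right-hand side is the identity on the kernel $\ker(\epsilon P_Z^{\log})$, which is what contractibility of the reduced complex requires. This is the standard composition of SDRs, and the side conditions $P_Z^{\log}H_Z^{\log}=0$ and $H_Z^{\log}I_Z^{\log}=0$ ensure that $\widetilde H$ preserves the kernel. The same computation recovers the locally constant case.

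For the grading claims, note that $H_b$ has face degree $-1$ in the regrading $r=c-q$, while $I_Z^{\log}$ and $P_Z^{\log}$ have face degree $0$. The main obstacle is $H_Z^{\log}$: we must check from the interval model of \cref{prop:two-ended-relative-retract}, or impose it as part of the specified cellularization, that it also lowers total face degree by one. For instance $H_I$ sends endpoint-degree triples to face-degree-zero functions. All operators act only on normal data and on $\logOmega$, trivially on $\cV^\bullet$, and with Koszul signs they therefore commute with the coefficient differential.
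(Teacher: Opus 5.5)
Your argument is correct and is essentially the paper's. The contraction is the standard composite of the two special deformation retracts, and contractibility follows by restricting the resulting homotopy identity to the kernel of the projection. The paper does this in one line: it restricts $D_0h_0+h_0D_0=\id-i_0p_0$ from \textup{(G4)}(b) to $\ker p_0$, using $p_0h_0=0$.

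Two small corrections. First, the side conditions that make $H_Z^{\log}+I_Z^{\log}H_bP_Z^{\log}$ preserve $\ker(\epsilon P_Z^{\log})$ are $P_Z^{\log}H_Z^{\log}=0$ and $\epsilon H_b=0$, not $H_Z^{\log}I_Z^{\log}=0$. Second, the face-degree claim must be read in the total grading, where every SDR homotopy, including $H_Z^{\log}$, has degree $-1$ automatically. In the normal-face grading alone, $H_I$ does not lower degree: it sends the face-degree-zero form $\eta$ to the face-degree-zero function $\mathsf P_0\eta$.
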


\begin{proof}
Restrict the initial contraction identity
$D_0h_0+h_0D_0=\id-i_0p_0$ to $\ker p_0$.  There it becomes
$D_0h_0+h_0D_0=\id$.  In the locally constant case the
associated-graded contraction is the barycentric
cone homotopy of \cref{lem:exceptional-simplex}, which has face degree $-1$;
in the logarithmic case the standard composition formula for special
deformation retracts gives the displayed sum.  Compatibility with the
internal coefficient differential is part of \textup{(G4)}.
\end{proof}

\subsection{Finite perturbation and the actual reduced sector}\label{subsec:combining-tower}

For the finite tower, the one-stage contractions and the identity on the
coarse summand assemble, by \textup{(G4)}, into the initial contraction
\begin{equation}\label{eq:initial-global-sdr}
 (\mathscr A_L^\bullet,D_0)
 \underset{i_0}{\overset{p_0}{\rightleftarrows}}
 (\mathscr B_{X,\cV}^\bullet,d_{\log,\cV}),
 \qquad
 D_0h_0+h_0D_0=\id-i_0p_0.
\end{equation}
Write \(D=D_0+\delta\).  Nilpotence in \textup{(G4)} makes the following
operators finite sums:
\begin{equation}\label{eq:hpl-finite}
 h_\infty=h_0(1+\delta h_0)^{-1},\qquad
 i_\infty=(1+h_0\delta)^{-1}i_0,\qquad
 p_\infty=p_0(1+\delta h_0)^{-1}.
\end{equation}

\begin{proposition}[Filtered reduced-sector contraction]\label{prop:filtered-exceptional-contraction}
The maps in \eqref{eq:hpl-finite} define a strong deformation retract
\begin{equation}\label{eq:global-sdr}
 (\mathscr A_L^\bullet,D)
 \underset{i_\infty}{\overset{p_\infty}{\rightleftarrows}}
 (\mathscr B_{X,\cV}^\bullet,D_{\mathrm{str}}),
\end{equation}
where
\begin{equation}\label{eq:transferred-differential}
 D_{\mathrm{str}}
 =d_{\log,\cV}
  +p_0\delta(1+h_0\delta)^{-1}i_0.
\end{equation}
More precisely,
\begin{equation}\label{eq:perturbed-sdr-identities}
 Dh_\infty+h_\infty D=\id-i_\infty p_\infty,
 \qquad
 Di_\infty=i_\infty D_{\mathrm{str}},
 \qquad
 p_\infty D=D_{\mathrm{str}}p_\infty.
\end{equation}
The full-differential reduced sector
\[
 \mathscr E_{\infty,L}^\bullet:=\ker p_\infty
\]
is a \(D\)-subcomplex and is contracted by \(h_\infty\).  Finally,
\(D_{\mathrm{str}}\) has only face degrees zero and one.
\end{proposition}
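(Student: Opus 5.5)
This is the basic perturbation lemma applied to the initial contraction \eqref{eq:initial-global-sdr}. The argument follows the affine \cref{prop:strictification} essentially verbatim, now on the global sheaf level.

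\emph{Step 1: the input.} Clause (b) of \cref{def:face-compatible-stellar} supplies the special deformation retract $(i_0,p_0,h_0)$ for $D_0$, with the side conditions $h_0i_0=p_0h_0=h_0^2=0$. Clause (d) makes $h_0\delta$ and $\delta h_0$ nilpotent for the finite tower filtration. Hence $(1+h_0\delta)^{-1}$ and $(1+\delta h_0)^{-1}$ are finite geometric series. All operators are morphisms of graded sheaves, because (G4) requires the data to commute with centre restrictions and to glue. The construction is therefore local and can be carried out on sections over every open set.

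\emph{Step 2: the perturbation identities.} I would apply the basic perturbation lemma \cite{GLS,CrainicPerturb}. As in the proof of \cref{prop:strictification}, the sign of $h_0$ is changed to match the convention $D_0h_0+h_0D_0=\id-i_0p_0$, which turns the resolvents into $(1+\cdot)^{-1}$. This yields the three identities \eqref{eq:perturbed-sdr-identities} and $D_{\mathrm{str}}^2=0$.

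The only calculations are the standard ones. One checks $Di_\infty=i_\infty D_{\mathrm{str}}$ by expanding $D=D_0+\delta$ and using $D_0i_0=i_0d_{\log,\cV}$ together with the resolvent identity
\[
(1+h_0\delta)^{-1}=1-h_0\delta(1+h_0\delta)^{-1}.
\]
The identity $p_\infty D=D_{\mathrm{str}}p_\infty$ follows dually, and the homotopy identity follows similarly. The normalized side conditions $p_\infty i_\infty=\id$, $h_\infty i_\infty=0$, $p_\infty h_\infty=0$ and $h_\infty^2=0$ follow from the initial side conditions; for instance, $p_0h_0=0$ kills all mixed terms in $p_\infty i_\infty$.

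\emph{Step 3: the reduced sector and face degrees.} The subsheaf $\ker p_\infty$ is a $D$-subcomplex because $p_\infty D=D_{\mathrm{str}}p_\infty$. It is preserved by $h_\infty$ because $p_\infty h_\infty=0$. On it the homotopy identity reduces to $Dh_\infty+h_\infty D=\id$, which is the claimed contraction.

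For the face-degree claim, expand $p_0\delta(1+h_0\delta)^{-1}i_0$ as the series of terms $(-1)^rp_0\delta(h_0\delta)^ri_0$. Each such term has face degree $(+1)(r+1)+(-1)r=+1$, because $i_0$ and $p_0$ have face degree $0$, $\delta$ has face degree $+1$, and $h_0$ has face degree $-1$. The internal differential $d_{\log,\cV}$ has face degree $0$. So $D_{\mathrm{str}}$ has only face degrees zero and one.

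\emph{Main obstacle.} The only delicate point is bookkeeping rather than mathematics. One must make sure that $D_0$ may contain the normal-fibre cellular differential of face degree $+1$ without spoiling the grading claim. It does not, because that term lies in $D_0$ and never appears in $D_{\mathrm{str}}$ except through $d_{\log,\cV}=p_0D_0i_0$, which has face degree zero on the intrinsic summand.

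One must also confirm that nilpotence is uniform on sections over every open set. I would derive this from the finiteness of the tower and the boundedness of the face degree by $\dim X$.
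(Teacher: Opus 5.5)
Your proposal is correct and follows the paper's proof essentially verbatim. Both use the basic perturbation lemma with the sign substitution $h_{\mathrm{std}}=-h_0$, which gives the resolvents $(1+h_0\delta)^{-1}$ and $(1+\delta h_0)^{-1}$; both get $D$-stability of $\ker p_\infty$ from $p_\infty D=D_{\mathrm{str}}p_\infty$; and both make the face-degree count $(r+1)-r=1$ for each term $p_0\delta(h_0\delta)^r i_0$. Your explicit use of $p_\infty h_\infty=0$ to see that $h_\infty$ preserves $\ker p_\infty$ is a point the paper leaves implicit here (it is stated in \cref{prop:strictification}). One small correction: uniform nilpotence comes from the finite tower filtration of clause (d) alone, not from the bound $\dim X$ on face degree, since $h_0\delta$ has face degree zero.
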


\begin{proof}
With the convention
$D_0h_0+h_0D_0=\id-i_0p_0$, the basic perturbation lemma~\cite{GLS},
\cite[Sec.~2]{CrainicPerturb} is the
usual formula after the substitution $h_{\mathrm{std}}=-h_0$; hence the
resolvents are $(1+h_0\delta)^{-1}$ and $(1+\delta h_0)^{-1}$.  Applied to
\eqref{eq:initial-global-sdr}, it gives
\eqref{eq:global-sdr}--\eqref{eq:perturbed-sdr-identities}.  Because
\(p_\infty\) is a cochain map, its kernel is \(D\)-stable.  On that kernel,
\eqref{eq:perturbed-sdr-identities} becomes
\(Dh_\infty+h_\infty D=\id\).  For the face degree, every summand
\[
 (-1)^r p_0\delta(h_0\delta)^ri_0
\]
has degree \(+1\), since \(\delta\) raises face degree by one and \(h_0\)
lowers it by one.  Hence the transferred differential is the sum of its
face-degree-zero internal part and one face-degree-one operator.
\end{proof}

Write
\[
 D_{\mathrm{str}}=d_{\log,\cV}+\delta_{\mathrm{str}}.
\]
Then \(D_{\mathrm{str}}^2=0\) implies
\[
 d_{\log,\cV}\delta_{\mathrm{str}}
 +\delta_{\mathrm{str}}d_{\log,\cV}=0,
 \qquad
 \delta_{\mathrm{str}}^2=0.
\]
By the incidence-locality clause \eqref{eq:g4-incidence-locality},
$\delta_{\mathrm{str}}$ has a component from $F$ to $G$ only when $G\prec F$.
Thus it is a genuine strict linear incidence operator on the intrinsic face
poset.  This assertion concerns the incidence complex: without an additional
path-independence condition, it does not manufacture restriction morphisms for
all inclusions or a strict contravariant functor on the whole face category.
Its components need not be algebra
homomorphisms; no such multiplicativity is claimed in the linear theorem.

\begin{theorem}[Conditional cyclic HPL for an abelian BF pair]\label{thm:cyclic-hpl-bf}
Assume \textup{(G4)} together with the separate BV-cyclic field datum
\textup{(G5)} for the dual $A$- and $B$-field complexes.  For
$Z\in\{A,B\}$ set
\begin{align*}
 h_\infty^Z&=h_0^Z(1+\delta_Zh_0^Z)^{-1},&
 i_\infty^Z&=(1+h_0^Z\delta_Z)^{-1}i_0^Z,&
 p_\infty^Z&=p_0^Z(1+\delta_Zh_0^Z)^{-1},\\
 D_{\mathrm{str}}^Z
 &=d_X^Z+p_0^Z\delta_Z(1+h_0^Z\delta_Z)^{-1}i_0^Z.
\end{align*}
These are finite sums by \textup{(G5)} and give strong deformation retracts
\[
 (\mathscr F_L^Z,D^Z)
 \underset{i_\infty^Z}{\overset{p_\infty^Z}{\rightleftarrows}}
 (\mathscr F_X^Z,D_{\mathrm{str}}^Z).
\]
Each $D_{\mathrm{str}}^Z$ has only face degrees zero and one.  Its
face-degree-one components are exactly the prescribed signed restriction and
corestriction arrows by
\cref{eq:g5-arrow-matching-A,eq:g5-arrow-matching-B}; in particular they are
incidence-local in their respective cochain and dual-chain gradings.
The perturbed inclusions and
projections remain adjoint, the perturbed homotopies retain the graded
chain-adjoint relation above, and the transferred differentials are graded adjoint for the
intrinsic BF pairing.  Consequently the transferred quadratic action,
understood in the collective bigraded facewise sense of \textup{(G5)},
\[
 S_X^{\mathrm{BF,str}}(A,B)
 =\langle B,D_{\mathrm{str}}^A A\rangle_X
\]
defines a strict incidence-local classical linear BV--BFV/Stokes family on the
intrinsic field complexes: its bulk component is BV, and its positive
face-degree components are the corresponding BFV descendants.  Its
face-degree-one part satisfies the BFV/Stokes identity.  Together with the
flag identities in \textup{(G5)}, this incidence family is a strict bivariant
maximally extended face system: the $A$-fields restrict contravariantly and
the BF-dual $B$-fields corestrict covariantly.
\end{theorem}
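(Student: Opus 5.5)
The plan is to apply the basic perturbation lemma separately to the $A$- and $B$-data of \textup{(G5)}(a), exactly as in \cref{prop:filtered-exceptional-contraction}. Then I transport the BF pairing through the resulting formulas. For each $Z\in\{A,B\}$, the initial special deformation retract of \textup{(G5)}(a) and the nilpotence of $h_0^Z\delta_Z$ and $\delta_Zh_0^Z$ from \textup{(G5)}(b) make all resolvents finite sums. The lemma, with the sign substitution $h_{\mathrm{std}}=-h_0^Z$ used before, then yields the strong deformation retracts, the identities $D^Zh_\infty^Z+h_\infty^ZD^Z=\id-i_\infty^Zp_\infty^Z$, and the special side conditions. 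The face-degree argument of \cref{prop:filtered-exceptional-contraction} shows that every summand $p_0^Z\delta_Z(h_0^Z\delta_Z)^ri_0^Z$ has dual face degree $+1$, so $D_{\mathrm{str}}^Z$ has only components of degrees zero and one. The matching identities \eqref{eq:g5-arrow-matching-A} and \eqref{eq:g5-arrow-matching-B} identify these components literally with $\partial^A_{G/F}$ and $\partial^B_{F/G}$, which gives incidence locality for free.

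The central step is adjointness of the perturbed data. Relations \eqref{eq:g5-skew-adjoint} and \eqref{eq:g5-delta-adjoint} make $h_0^B$ and $\delta_B$ the graded transposes of $h_0^A$ and $\delta_A$ (the latter up to the sign $-1$). I would compute the transpose of a word $h_0^A\delta_Ah_0^A\cdots$ by moving one factor at a time across the pairing, tracking Koszul signs. This should give, degree by degree,
\[
\langle (h_0^A\delta_A)^r a,b\rangle_L=\pm\langle a,(\delta_Bh_0^B)^r b\rangle_L .
\]
Here the sign depends only on $r$ and $|a|$, and it must be matched against the alternating sign in the resolvent. Combined with the adjointness of $i_0$ and $p_0$ in \textup{(G5)}(d), this should give $\langle i_\infty^Ax,b\rangle_L=\langle x,p_\infty^Bb\rangle_X$, its mirror, and the chain-adjoint relation for $h_\infty^Z$. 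The adjointness of $D_{\mathrm{str}}^A$ and $D_{\mathrm{str}}^B$ then follows in two parts. For $d_X^Z$ it comes from \textup{(G5)}(c). For the degree-one part it comes from the adjointness just proved, or more directly by applying \eqref{eq:g5-signed-adjoint-arrow} to the matched arrows. I expect this sign bookkeeping, and checking that it stays consistent with the collective bigraded pairing of degree $k-1$ on codimension-$k$ faces, to be the main obstacle. I would first verify it on a single word with $r=1$, then induct on $r$.

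Finally, set $S_X^{\mathrm{BF,str}}=\langle B,D_{\mathrm{str}}^AA\rangle_X$. The bulk BV statement (classical master equation) should follow from $(D_{\mathrm{str}}^A)^2=0$ and the adjointness of $D_{\mathrm{str}}^A$ with $D_{\mathrm{str}}^B$, given the nondegeneracy in \textup{(G5)}(f). The BFV/Stokes identity in face degree one comes from the standard abelian BF computation. In that computation the boundary term of $\delta S$ is the pairing against the face-degree-one part of $D_{\mathrm{str}}^A$, namely $\sum_{G\prec F}[F:G]r^A_{G/F}$, using \eqref{eq:g5-signed-adjoint-arrow}. Higher descendants are the face-degree components of $(D_{\mathrm{str}}^Z)^2=0$. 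For the bivariant maximally extended face system, the matched arrows are the prescribed restrictions and corestrictions, so the flag identities of \textup{(G5)}(f) give pathwise contravariant functoriality for $A$ and covariant functoriality for $B$. This goes beyond the mere vanishing $\delta_{\mathrm{str}}^2=0$.
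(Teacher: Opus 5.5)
Your proposal is correct and is essentially the paper's proof: two separate applications of the perturbation lemma, term-by-term adjointness of the finite resolvents, transport of adjointness to $i_\infty^Z,p_\infty^Z,h_\infty^Z$ and $D_{\mathrm{str}}^Z$, and then arrow matching and the flag identities. Vanishing of the non-incident components comes from clause (b) of \textup{(G5)}, not from the matching identities.

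The sign you leave open is $+$ at every step, because \textup{(G5)(e)} gives $\langle h_0^A\delta_A a,b\rangle_L=\langle a,\delta_Bh_0^Bb\rangle_L$ directly. Your shortcut of getting adjointness of the degree-one part of $D_{\mathrm{str}}^Z$ from \eqref{eq:g5-signed-adjoint-arrow} is a valid alternative to the paper's use of $D^Zi_\infty^Z=i_\infty^ZD_{\mathrm{str}}^Z$.
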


\begin{proof}
Finiteness of the two field-level perturbation series is the nilpotence clause
of \textup{(G5)}; \textup{(G4)} controls the distinct logarithmic
trace/descent series.  Applied separately to the two initial retracts in
\textup{(G5)}, the basic perturbation lemma gives the displayed strong
deformation retracts and transferred differentials.  The homotopy and
perturbation adjointness relations
in \textup{(G5)} imply, for homogeneous $a$,
\[
 \langle h_0^A\delta_A a,b\rangle_L
 =\langle a,\delta_Bh_0^B b\rangle_L,
 \qquad
 \langle \delta_Ah_0^A a,b\rangle_L
 =\langle a,h_0^B\delta_B b\rangle_L.
\]
Therefore the finite inverse series $(1+h_0^A\delta_A)^{-1}$ and
$(1+\delta_Bh_0^B)^{-1}$ are adjoint term by term.  Together with the
initial adjointness of $i_0^A$ and $p_0^B$ this gives
\[
 \langle i_\infty^A x,b\rangle_L
 =\langle x,p_\infty^B b\rangle_X,
\]
and similarly with $A$ and $B$ exchanged.  Using the second displayed
identity in the same way shows that the perturbed homotopies satisfy the same
graded chain-adjoint relation as $h_0^A,h_0^B$.
Finally, the identities
$D^Zi_\infty^Z=i_\infty^ZD_{\mathrm{str}}^Z$ and the graded adjointness of
$D_0^A,D_0^B$ and $\delta_A,\delta_B$ assumed in \textup{(G5)} imply
\[
 \langle D_{\mathrm{str}}^A x,y\rangle_X
 +(-1)^{|x|}\langle x,D_{\mathrm{str}}^B y\rangle_X=0.
\]
Nondegeneracy is part of \textup{(G5)}, so the intrinsic pairing defines the
linear BV symplectic structure.  Taking the face-degree-one component of the
last cyclicity identity gives the strict BFV/Stokes identity.  The matching
identities
\cref{eq:g5-arrow-matching-A,eq:g5-arrow-matching-B} identify those
components with the signed restrictions and corestrictions, while the flag
identities in \textup{(G5)} give their strict composition along arbitrary
face flags.  Hence the transferred incidence family is the claimed bivariant
face system.  No identification
with the logarithmic retract of
\textup{(G4)} is used in this argument.
\end{proof}

\subsection{Geometric existence of stellar towers}\label{subsec:g4-existence}

Hypothesis \textup{(G4)} has a geometric part and a chain-level part.  The
geometric part can be arranged by a finite barycentric stellar refinement of
an already smooth monoidal resolution, and doing so changes none of the
unfiltered derived objects already constructed.  The chain-level data
\textup{(a)}--\textup{(e)} of
\cref{def:face-compatible-stellar} are genuine additional input, and we
delimit below exactly which of them the following proposition supplies.

\begin{proposition}[Existence of unimodular stellar towers]\label{prop:g4-existence}
Let $X$ be compact and satisfy \textup{(G1)}, and let $\beta_{\cR}:X_{\cR}\to X$
be any smooth monoidal resolution.  There is a finite composition of
elementary radial blow-ups
\[
 \gamma:X_{\cR^\sharp}\longrightarrow X_{\cR}
\]
in which every stage is an elementary radial blow-up of an embedded
ordinary-corner face along a unimodular centre $Z$, equipped with a product
collar and a globally defined exceptional simplex bundle with fibre
$\Delta^{c-1}$, $c=\operatorname{codim}Z$.  Taking the star subdivisions in
decreasing dimension gives the barycentric normal cellulation
$\sd\Delta^{c-1}$ over every original centre.  Since $\gamma$ is a finite
stellar refinement, \cref{thm:refinement-invariance,prop:sheaf-refinement}
identify the derived objects of
\cref{sec:derived-traces,sec:global-descent} attached to $X_{\cR^\sharp}$ with
those attached to $X_{\cR}$, through the refinement roof.
\end{proposition}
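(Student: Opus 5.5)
The plan is to build $\gamma$ as the barycentric (full star) subdivision of the smooth monoidal complex $\cR$, realized as an ordered sequence of elementary stellar subdivisions. Afterwards I would read off the geometric properties stage by stage and invoke the refinement roof at the end.

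Concretely, list the cones of $\cR$ by decreasing dimension. For each cone $\tau$, star-subdivide at its barycentric ray $\rho_\tau$, the sum of its primitive generators, in the current fan. Performing these stars in decreasing dimension is the standard description of the barycentric subdivision $\sd$ of the fan: the resulting cones are spanned by chains $\rho_{\tau_0},\dots,\rho_{\tau_j}$ with $\tau_0\supset\dots\supset\tau_j$. Two points must be checked at each step.

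First, unimodularity is preserved. If $\tau'$ is a unimodular cone containing the current centre cone $\sigma$ with primitive generators $e_1,\dots,e_c$ of $\sigma$, replacing $e_i$ by $e_1+\dots+e_c$ has determinant $\pm1$. Hence every new cone is again unimodular, and the refinement stays smooth.

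Second, each stage is geometric. By \cref{thm:kottke}, star subdivision at $\sigma$ corresponds to blowing up the embedded face $Z_\sigma$ of the current ordinary-corner manifold. This face is embedded by (G1) and because faces of an ordinary-corner resolution with embedded boundary faces remain embedded after blow-up. In the unimodular case the face carries a product collar $Z\times Q_\varepsilon$ built from the $c$ boundary defining functions dual to $e_1,\dots,e_c$. Since those functions are globally defined along the embedded face, the radial replacement of \cref{prop:radial-homotopy} gives a globally defined exceptional bundle with fibre $\Delta^{c-1}$. Its structure group only permutes vertices if the boundary hypersurfaces are globally distinguished; embeddedness makes this trivial.

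Finiteness follows from finiteness of $\cR$, which comes from (G1) together with compactness. The later stars, at barycentres of faces of $\sigma$, are taken in decreasing dimension. Restricted over the original centre, they subdivide the exceptional simplex into $\sd\Delta^{c-1}$, by the fan-level description above applied to the normal slice $K_\sigma$.

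The final claim is formal. $\gamma$ is a finite composition of stellar refinements and so is itself a stellar refinement. \Cref{thm:refinement-invariance} (affine) and \cref{prop:sheaf-refinement} (sheaf level) then identify the derived objects through the common-interior roof, for arbitrary DPP regularizations on both sides. No filtered transfer is involved.

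I expect the main obstacle to be the global, rather than chartwise, existence of the product collar and of the trivialized simplex bundle at intermediate stages. After earlier blow-ups, a later centre is a face of a partially subdivided manifold, and it must be checked that its normal boundary hypersurfaces are still globally distinct and embedded. My first approach is to argue via the fan: a face of the current resolution corresponds to a cone of the current refinement. Its $c$ facets of codimension one correspond to its $c$ rays, which are distinct global boundary hypersurfaces because faces are embedded. Collars then exist by the standard ordinary-corner collar theorem applied to these globally defined hypersurfaces.
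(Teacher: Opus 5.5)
Your proposal is correct and follows essentially the same route as the paper. Both arguments realize the barycentric subdivision of the smooth fan by star subdivisions taken in decreasing dimension, use a determinant computation to show that every new cone is unimodular, invoke Kottke's theorem to identify each step with an elementary radial blow-up of an embedded ordinary-corner face carrying a product collar and a globally defined simplex bundle, and finish with the refinement roof. Your unimodularity check, applied to every current cone containing the centre rather than only to the subdivided cone itself, is if anything slightly more complete than the paper's single-cone computation $\det T_i=1$.
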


\begin{proof}
Since $\cR$ is smooth, every cone of $\cP_{X_{\cR}}$ is already unimodular, and
the construction below is ordinary, not generalized, corner geometry
throughout; the Joyce structure of $X$ enters only through $\beta_{\cR}$
itself, which is already fixed.

For a unimodular cone $\tau=\Cone(w_1,\ldots,w_k)$, put $w_0=w_1+\cdots+w_k$
and, for $i=1,\ldots,k$, let
$\tau_i=\Cone(w_1,\ldots,w_{i-1},w_0,w_{i+1},\ldots,w_k)$ be the cone with
$w_i$ replaced by $w_0$ --- one of the $k$ cones obtained by star-subdividing
$\tau$ at the ray through $w_0$, and geometrically the same cone regardless of
where $w_0$ is listed among its $k$ generators.  The change of basis from
$(w_1,\ldots,w_k)$ to $(w_1,\ldots,w_{i-1},w_0,w_{i+1},\ldots,w_k)$ is right
multiplication by the matrix $T_i$ agreeing with the identity off column $i$,
with column $i$ equal to $(1,\ldots,1)^{\mathsf T}$.  Expanding $\det T_i$
along column $i$: the $(i,i)$-cofactor is the identity minor, contributing
$1$; every other cofactor in that column has a zero column, since deleting row
$j\ne i$ removes the unique nonzero entry of the standard basis column $e_j$
appearing elsewhere in $T_i$.  Hence $\det T_i=1$ and every $\tau_i$ is again
unimodular.  Perform this stellar step on all cones of dimension at least two in
$\cP_{X_{\cR}}$ in decreasing order of dimension.  A new ray lies in the
relative interior of the cone being subdivided, so the step restricts to the
identity on its proper faces; consequently steps made on adjacent maximal
cones agree on their intersection.  The resulting compatible fan is the
barycentric subdivision.  Since $X$ is compact, $\cP_{X_{\cR}}$ is finite, so
the process terminates and produces a smooth refinement
$\cR^\sharp\to\cR$ that is a finite composition of elementary stellar
subdivisions of already-smooth cones.

By \cref{thm:kottke}, each combinatorial stellar step is realized by a blow-up
of the current stage along the embedded face dual to the subdivided cone;
since that cone is unimodular, the blow-up is precisely the elementary radial
blow-up $F\times[0,\varepsilon)^k\to F\times[0,\varepsilon)\times\Delta^{k-1}$
of an \emph{ordinary} corner face $F$ of an ordinary-corners manifold.  The
geometric clauses are therefore ordinary differential topology on $X_{\cR}$,
with no further $g$-corner input: $F$ has a product collar by the tubular
neighbourhood theorem for embedded faces of a manifold with ordinary
corners~\cite{Melrose}, and its normal bundle is a genuine rank-$k$ vector
bundle, so the exceptional simplex bundle over $F$ is globally defined with
fibre $\Delta^{k-1}$.  The decreasing-dimensional sequence induces the stated
barycentric cellulation on the final normal fibre.  Finally, $\gamma$ is by
construction a finite
composition of elementary radial blow-ups, hence a finite stellar refinement in
the sense used throughout Parts~II--III, and the stated identification is
\cref{thm:refinement-invariance} affinely and \cref{prop:sheaf-refinement}
globally.
\end{proof}

\begin{remark}[Exactly what the proposition supplies]\label{rem:g4-existence-scope}
\Cref{prop:g4-existence} establishes the geometric hypotheses of
\cref{def:face-compatible-stellar}: embedded centres, product collars, and a
globally defined finite contractible cell bundle with the ordinary stellar
fibre $K_Z=\sd\Delta^{c-1}$.  It also supplies the canonical barycentric cone
homotopy \eqref{eq:barycentric-cone} on the \emph{reduced cellular chain
factor}, once a cellular normal model has already been chosen.

It supplies none of the chain-level maps $i_0,p_0,h_0$ for the full DPP
logarithmic form complex.  In particular, ``basic coefficients'' in
\cref{subsec:basic-linear-coefficients} means that the flat coefficient bundle
is pulled back from the support face; it does not make the logarithmic
\emph{form} factor constant in the radial or angular variables.  The
locally-constant alternative in clause~\textup{(c)} applies only to a normal
model which is already cellular.  For the full resolved logarithmic complex a
Whitney--Dupont or other cellularization retract of the total normal-face
complex, compatible in the centre and under gluing, must still be supplied
explicitly.  Finiteness of the geometric tower likewise
does not by itself prove nilpotence of $h_0\delta$ or $\delta h_0$; those
operators and the required filtration behavior first have to be defined.

The initial resolution $\beta_{\cR}$ is not altered by the proposition and may
come from a genuinely non-simplicial cone, as in
\cref{prop:pentagon-resolve}.  In that situation
\cref{lem:exceptional-acyclic} proves only that the reduced \emph{cellular}
fibre is acyclic.  It does not globalize a contraction of logarithmic forms or
establish its compatibility with centre restrictions and gluing.  Incidence locality
\eqref{eq:g4-incidence-locality}, logarithmic cellularization, gluing, and all
perturbed identities therefore remain genuine parts of \textup{(G4)}.

The restricted conclusion is that the geometric cellular presentation can be
reached by a further smooth refinement without changing the unfiltered derived
object.  It remains open whether clauses~\textup{(a)}--\textup{(e)} can be
constructed for every basic coefficient complex.  The product--Whitney
conifold class below is a separate finite example in which the necessary
logarithmic cellularization and perturbation data are written down explicitly.
\end{remark}

\subsection{Transferred trace and the linear strictification theorem}

The deformation retract is defined for every basic linear coefficient complex
in the specified \textup{(G4)} transfer class,
but a scalar integration functional requires additional coefficient data.  For the
trace statements in this subsection, assume either $\cV=\underline{\R}$ or that a
compatible degree-zero morphism of flat coefficient complexes
\[
 \tau_{\cV}:\cV^\bullet\longrightarrow\underline{\R}
\]
has been fixed.  Apply $\tau_{\cV}$ coefficientwise before the transported
DPP integration of \eqref{eq:derived-trace}, and denote the resulting resolved trace by
$\mathcal I_{\cR,s_{\cR},\tau_{\cV}}$.  In abelian BF theory the scalar functional is
instead obtained after the canonical evaluation pairing
$V^*\otimes V\to\R$; no scalar trace on either field complex separately is implied.

Define
\begin{equation}\label{eq:strict-trace-transfer}
 \mathcal I_{X,\mathfrak s,\tau_{\cV}}^{\mathrm{str}}
 :=\mathcal I_{\cR,s_{\cR},\tau_{\cV}}\circ i_\infty.
\end{equation}

\begin{proposition}[Strict linear Stokes identity]\label{prop:strict-stokes-global}
The functional \eqref{eq:strict-trace-transfer} is a cochain map:
\[
 \mathcal I_{X,\mathfrak s,\tau_{\cV}}^{\mathrm{str}}
 \circ D_{\mathrm{str}}=0.
\]
The corresponding face-degree-one part of this cochain-map identity is a strict
linear Stokes identity on
\(\mathsf{Face}(X)\).
\end{proposition}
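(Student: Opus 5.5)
The plan is to derive the identity from three facts established earlier: the Stokes identity for the resolved trace, the intertwining identity of the perturbed inclusion from \cref{prop:filtered-exceptional-contraction}, and the face-degree decomposition of $D_{\mathrm{str}}$.

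First, I will check that $\mathcal I_{\cR,s_{\cR},\tau_{\cV}}$ still vanishes on $D$-boundaries of the coefficient-twisted resolved complex $\mathscr A_L^\bullet=\mathscr A_{\cR_L,s_L}^\bullet(\cV)$. The trace is $\mathcal I_{\cR,s_{\cR}}\circ(\id\otimes\tau_{\cV})$ applied coefficientwise. Since $\tau_{\cV}$ is a degree-zero morphism of flat coefficient complexes, it commutes with the flat part of $\db$, with the coefficient differential (landing in the zero differential of $\underline{\R}$), and with the regularized virtual face pullbacks. The last point holds because these act only on the form factor and the coefficients are basic, i.e.\ pulled back from support faces. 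So $(\id\otimes\tau_{\cV})\circ D=D\circ(\id\otimes\tau_{\cV})$, and \cref{prop:total-stokes} gives $\mathcal I_{\cR,s_{\cR},\tau_{\cV}}\circ D=0$. Compact supports are handled as in \cref{thm:global-trace}, using that $i_\infty$ is a finite sum of support-preserving operators.

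Second, I will use $Di_\infty=i_\infty D_{\mathrm{str}}$ from \eqref{eq:perturbed-sdr-identities}. It yields
\[
 \mathcal I^{\mathrm{str}}_{X,\mathfrak s,\tau_{\cV}}D_{\mathrm{str}}
 =\mathcal I_{\cR,s_{\cR},\tau_{\cV}}\,i_\infty D_{\mathrm{str}}
 =\mathcal I_{\cR,s_{\cR},\tau_{\cV}}\,D\,i_\infty=0.
\]
This is the cochain-map statement.

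Third, for the Stokes interpretation I write $D_{\mathrm{str}}=d_{\log,\cV}+\delta_{\mathrm{str}}$. Take a homogeneous element $\eta$ of total degree $n-1$ supported on a single intrinsic summand $F$, with form degree $\dim F-1$. Decompose $\mathcal I^{\mathrm{str}}$ into its components $\mathcal I^{\mathrm{str}}_F$ on the intrinsic summands. The identity then reads
\[
 \mathcal I^{\mathrm{str}}_F\bigl(d_{\log,\cV}\eta\bigr)+\sum_G\mathcal I^{\mathrm{str}}_G\bigl((\delta_{\mathrm{str}}\eta)_G\bigr)=0.
\]
By the incidence-locality clause \eqref{eq:g4-incidence-locality}, only $G\prec F$ contributes, so this is a strict linear Stokes identity on $\mathsf{Face}(X)$, with the transferred incidence components replacing $[F:G]\Reg_{G/F}$.

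The main obstacle is the sign bookkeeping. The total differential carries the factor $(-1)^q$, and the transported trace carries $(-1)^{nk}/k!$. I will absorb both into the definition of the componentwise traces $\mathcal I^{\mathrm{str}}_F$, and claim only the face-degree-one identity in that normalized form, not a match with \eqref{eq:stokes-trace} sign for sign. I will also emphasize that $\mathcal I^{\mathrm{str}}_F$ includes the correction terms of $i_\infty$ through $h_0\delta$, so it is not merely the resolved integral over the faces above $F$.
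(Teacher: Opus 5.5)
Your proposal is correct and follows the paper's proof. The core is exactly your second step, $\mathcal I_{\cR,s_{\cR},\tau_{\cV}}\,i_\infty D_{\mathrm{str}}=\mathcal I_{\cR,s_{\cR},\tau_{\cV}}\,D\,i_\infty=0$ via \eqref{eq:perturbed-sdr-identities}, with the face-degree statement read off from \cref{prop:filtered-exceptional-contraction} together with incidence locality. Your first step, checking that the $\tau_{\cV}$-twisted resolved trace still annihilates $D$-boundaries, only makes explicit what the paper packs into calling $\tau_{\cV}$ ``compatible''.
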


\begin{proof}
By \eqref{eq:perturbed-sdr-identities}, \(i_\infty\) is a cochain map.
Therefore
\[
 \mathcal I^{\mathrm{str}}D_{\mathrm{str}}
 =\mathcal I_{\cR,s_{\cR},\tau_{\cV}}i_\infty D_{\mathrm{str}}
 =\mathcal I_{\cR,s_{\cR},\tau_{\cV}}Di_\infty=0.
\]
The face-degree statement follows from
\cref{prop:filtered-exceptional-contraction}.
\end{proof}

\begin{theorem}[Filtered linear transfer under \textup{(G4)}]\label{thm:global-linear-strict}
Let $X$ satisfy \textup{(G1)}--\textup{(G4)}.  Then basic linear coefficients in
the specified transfer class admit a global strong
deformation retract from the resolved total face complex to a strict intrinsic
linear face complex.  Whenever the coefficient system carries a compatible scalar
trace as above, its transferred trace satisfies strict linear Stokes.
\end{theorem}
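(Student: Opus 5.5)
The plan is to assemble the theorem from results already proved in \cref{sec:global-linear-strict}, so the argument is mostly bookkeeping. The main work is to check that the local and one-stage data of \textup{(G4)} glue into one global initial contraction on which the perturbation lemma can be run.

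First, fix the centre-compatible representative $X_L\to X$ supplied by \textup{(G4)} (reachable by \cref{prop:g4-existence} without changing the derived object, by \cref{prop:sheaf-refinement}). Write $\mathscr A_L^\bullet$ for its resolved total face sheaf with basic coefficients $\cV^\bullet$. By clauses \textup{(a)}--\textup{(b)} of \cref{def:face-compatible-stellar}, each stage of the finite tower carries maps $i_0,p_0,h_0$ of face degrees $0,0,-1$. These commute with restriction in the centre and glue under changes of normal trivialization. I will compose the one-stage retracts along the tower using the standard composition formula for special deformation retracts. On each associated-graded piece, the contraction is the one given by \cref{lem:exceptional-simplex} or \cref{cor:one-stage-contractible}, and the identity is used on the coarse summand. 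This produces the global initial contraction \eqref{eq:initial-global-sdr} onto $(\mathscr B_{X,\cV}^\bullet,d_{\log,\cV})$.

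This gluing step is the main obstacle. One must verify that the composed $h_0$ still satisfies the side conditions $h_0i_0=p_0h_0=h_0^2=0$; the composition formula preserves special retracts, so this should follow. One must also verify that the composite is a map of sheaves on $X$, not just on charts. For the sheaf property I would argue that all data are $C^\infty_X$-linear or at least local, and commute with restrictions. Restriction then commutes with the composites, and by \cref{lem:resolved-fine} and \cref{thm:sheaf-descent} the retract passes to global and compactly supported sections.

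Second, write $D=D_0+\delta$ and apply \cref{prop:filtered-exceptional-contraction}. Clause \textup{(d)} makes the series \eqref{eq:hpl-finite} finite, which yields the global strong deformation retract \eqref{eq:global-sdr}. The transferred differential $D_{\mathrm{str}}$ has only face degrees $0$ and $1$. Incidence locality \eqref{eq:g4-incidence-locality} from clause \textup{(e)} then shows that $\delta_{\mathrm{str}}$ is a strict incidence operator on $\mathsf{Face}(X)$ with $\delta_{\mathrm{str}}^2=0$, as in the discussion following that proposition. This gives the "strict intrinsic linear face complex" of the statement. I will stress that this is an incidence complex, and that no path-independent functor is claimed.

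Finally, for the trace clause, assume $\cV=\underline{\R}$ or that a compatible $\tau_\cV$ is fixed. By \cref{prop:total-stokes} (applied coefficientwise after $\tau_\cV$), $\mathcal I_{\cR,s_\cR,\tau_\cV}\circ D=0$. \Cref{prop:strict-stokes-global} then gives $\mathcal I^{\mathrm{str}}\circ D_{\mathrm{str}}=0$ for the transferred trace \eqref{eq:strict-trace-transfer}. Its face-degree-one component is the strict linear Stokes identity on $\mathsf{Face}(X)$.
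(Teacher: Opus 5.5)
Your proposal is correct and follows the paper's proof, which simply cites \cref{prop:filtered-exceptional-contraction} for the global strong deformation retract and \cref{prop:strict-stokes-global} for the transferred Stokes identity. The ``gluing'' step you flag as the main obstacle is not needed: clauses \textup{(a)}--\textup{(b)} of \cref{def:face-compatible-stellar} already assume the global initial contraction \eqref{eq:initial-global-sdr} on the terminal representative $\mathscr A_L^\bullet$ as part of \textup{(G4)}, rather than giving it stage by stage, so no composition along the tower and no appeal to \cref{thm:sheaf-descent} is required.
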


\begin{proof}
The deformation retract is \cref{prop:filtered-exceptional-contraction}.  When a
compatible scalar trace is present, its Stokes statement is
\cref{prop:strict-stokes-global}.  No independence statement for two different
choices of \textup{(G4)} data is needed here.
\end{proof}

\begin{remark}[What \textup{(G4)} proves, and what it does not]\label{rem:g4-scope}
\Cref{thm:global-linear-strict} is a conditional transfer theorem for the
linear logarithmic trace/descent complex.  It neither proves existence of the
\textup{(G4)} contraction data nor, by itself, equips the transferred complex
with a nondegenerate BV pairing.  In particular, the logarithmic DPP complex
used to regularize traces is not being identified with the $b$-de Rham field
complex.
\end{remark}

\begin{remark}[Conditional status of the strictification theorems]
In general, the global assertions in
\cref{thm:global-linear-strict,thm:cyclic-hpl-bf} are implication theorems:
the subdivision/aggregation retract, its incidence locality, the two cyclic
field retracts, and their nondegenerate pairing are hypotheses, not outputs of
Kottke's resolution theorem or of DPP regularization.  The conifold calculation
below is a separate explicit finite result: it constructs the
\textup{(G4)} data on a specified product--Whitney coefficient class and a
cyclic logarithmic--cellular BF shadow on its algebraic dual.  It does not
construct the \(b\)-de Rham field retracts required by \textup{(G5)}, either
for the conifold or for a general generalized corner.
\end{remark}

\begin{corollary}[Global incidence-level classical abelian BF under \textup{(G5)}]\label{cor:global-strict-bf}
Let $V$ be a finite-dimensional graded vector space with differential, or
more generally a flat finite-rank complex on $X$.  Assume
\textup{(G1)}--\textup{(G4)} and the separate \textup{(G5)} datum for the dual
BF field systems.  Then abelian BF
theory has a global strict intrinsic classical linear field system whose bulk
complex is
\[
 \Omega_b^\bullet(X;V)[1]
 \oplus
 \Omega_b^\bullet(X;V^*)[n-2],
\]
with the nondegenerate transferred BF pairing and strict incidence-level
BV--BFV/Stokes descendants indexed by codimension-one arrows of
$\mathsf{Face}(X)$.  With the arrow-matching and flag identities required in
\textup{(G5)}, it is a strict bivariant maximally extended face system:
contravariant for $A$
and covariant for the BF-dual $B$.  Forgetting the pairing leaves the field-level
linear retract supplied by \textup{(G5)}; it does not recover or identify the
logarithmic trace/descent retract of \cref{thm:global-linear-strict} unless an
additional comparison datum is supplied.
\end{corollary}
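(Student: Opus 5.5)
The plan is to obtain \cref{cor:global-strict-bf} as a specialization of \cref{thm:cyclic-hpl-bf} to the abelian BF field systems. Everything substantive is already packaged into the hypotheses \textup{(G4)} and \textup{(G5)}. The work therefore consists of three steps: identifying the data, reading off the bulk component, and checking that nothing beyond the stated transfer is being asserted.

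First, the field systems are set up concretely. For a flat finite-rank complex $V$ on $X$, the $A$-system is $\mathscr F_X^A$ with facewise components $\Omega_b^\bullet(G;V|_G)[1]$, and the $B$-system is the BF-dual chain totalization modeled on $\Omega_b^\bullet(-;V^*)[n-2]$, exactly as in \cref{def:bf-cyclic-enhancement}. The codimension-zero summand of $\mathsf{Face}(X)$ is $X$ itself, so the bulk parts are $\Omega_b^\bullet(X;V)[1]$ and $\Omega_b^\bullet(X;V^*)[n-2]$. Their direct sum is the displayed bulk complex. The flat structure on $V$ enters only through $d_X^Z$, which is the $b$-de Rham differential twisted by the coefficient differential. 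The pairing $V^*\otimes V\to\R$ supplies the intrinsic BF pairing $\langle-,-\rangle_X$ of \textup{(G5)(c)}.

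Second, \cref{thm:cyclic-hpl-bf} is applied verbatim. It yields strong deformation retracts onto $(\mathscr F_X^Z,D_{\mathrm{str}}^Z)$ for $Z\in\{A,B\}$. The perturbation preserves adjointness of the perturbed inclusions, projections and homotopies, and the transferred differentials are graded adjoint for $\langle-,-\rangle_X$. Nondegeneracy in \textup{(G5)(f)} makes $\langle-,-\rangle_X$ the linear BV symplectic structure. The face-degree-one part of the cyclicity identity gives the BFV/Stokes identities. These are indexed by $G\prec F$ because of incidence locality \eqref{eq:g5-incidence-locality-A}--\eqref{eq:g5-incidence-locality-B}. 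The arrow-matching identities \eqref{eq:g5-arrow-matching-A}--\eqref{eq:g5-arrow-matching-B} and the flag composition laws in \textup{(G5)(f)} then upgrade the incidence family to the strict bivariant face system, contravariant in $A$ and covariant in $B$.

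Third, there is the negative statement. Forgetting the pairing leaves only the two field retracts of \textup{(G5)}, and \cref{thm:cyclic-hpl-bf} explicitly uses no identification with the logarithmic retract of \cref{thm:global-linear-strict}. I would note that the two theorems act on different complexes: $b$-de Rham field complexes on one side, and DPP trace/descent complexes $\mathscr A_L,\mathscr B_{X,\cV}$ on the other. A comparison map is therefore additional data and cannot be derived here.

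The main obstacle is globality. \cref{thm:cyclic-hpl-bf} is phrased for the total complexes on a terminal representative $X_L$, so one must check that the \textup{(G5)} data are genuinely global: they glue along the refinement tower and commute with restrictions and corestrictions. I would take this directly from clause \textup{(b)} of \textup{(G5)} rather than reprove it. The finiteness of the perturbation series comes from the finite tower filtration, and the role of \textup{(G4)} is only to fix that centre-compatible tower.
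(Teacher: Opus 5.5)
Your proposal is correct and is essentially the paper's own proof: apply \cref{thm:cyclic-hpl-bf} to the two field retracts supplied by \textup{(G5)}, take the bulk identification with $\Omega_b^\bullet$ from the \textup{(G5)} field objects (not from the DPP logarithmic de Rham theorem), and observe that no type-changing identification with the \textup{(G4)} retract is used. The only imprecision is that the intrinsic pairing $\langle-,-\rangle_X$ is itself part of the \textup{(G5)} datum (clause \textup{(c)}), so the evaluation pairing $V^*\otimes V\to\mathbb R$ enters it but does not by itself supply it.
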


\begin{proof}
Apply \cref{thm:cyclic-hpl-bf} to the dual $A/B$ field retracts supplied by
\textup{(G5)}.  The transferred differentials are cyclic for the
nondegenerate BF pairing and the face-degree-one operator satisfies strict
Stokes.  The bulk identification with $\Omega_b$ is part of the field-level
input in \textup{(G5)}, not a consequence of the DPP logarithmic de Rham
theorem.  Thus this proof uses the field-level retract and makes no
type-changing identification with the \textup{(G4)} retract.
\end{proof}

\subsection{The first nonlinear obstruction}

The preceding theorem fails to imply strict nonlinear descent because an interaction is not merely a differential.  Let the resolved coefficient complex carry a differential graded Lie bracket.  Homotopy transfer along \cref{eq:global-sdr} gives an $L_\infty$ structure on the intrinsic complex.  With Koszul symmetrization understood, its first operations are
\begin{align}
 \ell_1(x)&=D_{\mathrm{str}}x,\\
 \ell_2(x,y)&=p_\infty[ i_\infty x,i_\infty y],\label{eq:transferred-l2}\\
 \ell_3(x,y,z)&=
 \sum_{\mathrm{cyc}}\!\epsilon(x,y,z)\,
 p_\infty\bigl[
 h_\infty[i_\infty x,i_\infty y],i_\infty z
 \bigr].\label{eq:transferred-l3}
\end{align}
Here $\epsilon$ is the Koszul sign of the permutation of the inputs, and the
convention is the one attached to $Dh_\infty+h_\infty D=\id-i_\infty p_\infty$;
we use the standard rooted-tree convention for homotopy transfer~\cite{LodayVallette}.
The displayed normalization fixes the convention in this paper; suspended
conventions may change the corresponding overall sign, and no later argument
uses that sign rather than the vanishing class of the operation.  Higher brackets are
sums over rooted trees with internal edges labelled by $h_\infty$.

\begin{proposition}[First nonlinear transfer diagnostic]\label{prop:first-nonlinear-obstruction}
The ternary operation \cref{eq:transferred-l3} is the first possible higher
operation in the transferred interaction and vanishes identically for abelian
targets.  For a fixed contraction, a nonzero value shows that this transferred
representative is not strict.  It is not by itself an invariant obstruction:
the strictification obstruction is the class of the higher operation modulo
$L_\infty$ gauge equivalence.
\end{proposition}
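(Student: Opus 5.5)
The argument has three parts: a tree count showing that $\ell_3$ is the first operation that can appear, a check that it vanishes for an abelian bracket, and a note on how $\ell_3$ changes with the contraction.

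\emph{First possible higher operation.} In the homotopy transfer formula of \cite{LodayVallette}, each $k$-ary transferred bracket $\ell_k$ is a sum over rooted binary trees with $k$ leaves. The leaves are decorated by $i_\infty$, the internal vertices by the resolved bracket, the internal edges by $h_\infty$, and the root by $p_\infty$.
\begin{itemize}
\item For $k\le 2$ there are no internal edges. Hence $\ell_1=D_{\mathrm{str}}$ and $\ell_2$ is the projected bracket of \eqref{eq:transferred-l2}.
\item Both of these are present in a strict dg Lie model, so neither counts as a higher operation.
\item The smallest tree with an internal edge has three leaves and exactly one edge. This yields \eqref{eq:transferred-l3}, after symmetrizing over the three cyclic placements with the Koszul sign $\epsilon$.
\end{itemize}
So $\ell_3$ is the first operation that a strict model would not contain. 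I would state this as a count of internal edges, since every term of $\ell_k$ with $k\ge 3$ carries at least one factor $h_\infty$.

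\emph{Vanishing for abelian targets.} If the target is abelian, the resolved dg Lie bracket is identically zero. Every tree with at least one vertex then vanishes. In particular the inner bracket $[i_\infty x,i_\infty y]$ is already zero, so $\ell_3=0$, and likewise $\ell_k=0$ for all $k\ge 2$. This agrees with the linear theory of \cref{thm:global-linear-strict,thm:cyclic-hpl-bf}, where only $D_{\mathrm{str}}$ survives.

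\emph{Contraction dependence; the main obstacle.} This is the delicate part.
\begin{itemize}
\item The first claim is only that, for the fixed data $(i_\infty,p_\infty,h_\infty)$, a nonzero $\ell_3$ means the transferred $L_\infty$ structure is not a dg Lie algebra. This is the definition of strictness for that representative, so it is immediate.
\item To see that $\ell_3$ is not an invariant, I would compare two choices of contraction data, or act by an $L_\infty$ isomorphism $\phi=(\phi_1,\phi_2,\dots)$ with $\phi_1=\id$.
\item The standard formula for the transformed brackets changes $\ell_3$ by terms of the schematic form
\[
\ell_2(\phi_2(x,y),z)-\phi_2(\ell_2(x,y),z)+\ell_1\phi_3(x,y,z)+\phi_3(\ell_1 x,y,z)+\cdots,
\]
with Koszul signs and symmetrization understood.
\item If $\ell_2$ is nonzero, a suitable $\phi_2$ can in general move $\ell_3$ around. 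Only its class modulo such gauge terms is meaningful, and that class is exactly what the statement calls the strictification obstruction.
\end{itemize}
I expect the main difficulty to lie in bookkeeping the signs and symmetrization of this gauge action. Since the proposition makes no assertion about the value of that class, it suffices to exhibit the gauge formula and note that $\ell_3$ itself is not preserved by it.
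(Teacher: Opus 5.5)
Your proposal is correct and follows essentially the same route as the paper. Both arguments use the rooted-tree description of homotopy transfer, both observe that a strict dg Lie model has no operations of arity $\geq 3$, so that $\ell_3$ (the first tree with an internal $h_\infty$ edge) is the first possible discrepancy, and both note that every bracket-containing tree vanishes for an abelian target. The last point, non-invariance up to $L_\infty$ isomorphism, the paper gets from the fact that a change of contraction changes the transferred structure by such an isomorphism, while you write out the arity-three gauge formula directly. One small correction: your qualifier ``if $\ell_2$ is nonzero'' is unnecessary, because the $\ell_1\phi_3$ and $\phi_3\circ\ell_1$ terms alone already shift $\ell_3$ by an $\ell_1$-coboundary.
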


\begin{proof}
The homotopy-transfer theorem~\cite{LodayVallette} gives the displayed operations and the $L_\infty$ identities.  A strict differential graded Lie model has no operations of arity at least three, so the first possible discrepancy is $\ell_3$.  For an abelian target the bracket is zero, and every tree with an internal bracket vanishes.  In the nonabelian case, changing the contraction changes the transferred structure by an $L_\infty$ isomorphism; therefore nonvanishing of one formula is not by itself an invariant obstruction, but failure to remove its class by such an isomorphism is precisely the cubic strictification obstruction.
\end{proof}

\begin{remark}
The proposition is a general warning, not a calculation for the conifold.  A
direct nonlinear interval computation is needed before declaring $\ell_3$
nontrivial, and the next subsection carries it out for the exceptional
interval, where the transferred higher operations turn out to vanish on the
basic sector.  That computation is finite-dimensional and cellular; it is not
required for the linear comparison proved above.
\end{remark}

\subsection{Exact nonabelian compression of the exceptional interval}\label{sec:interval-compression}

We use the cellular interval \eqref{eq:interval-complex} of
\cref{sec:exceptional-interval}.  The canonical cellular nonabelian BF action on a $1$-simplex was computed explicitly in \cite{CMRCellular}.  We use the same notation for the graded $\mathfrak g$-valued $A$-fields $A_0,A_1,A_{01}$ and the dual edge field $B_{01}$.

\begin{proposition}[Canonical interval block]\label{prop:canonical-interval-block}
For a finite-dimensional unimodular Lie algebra $\mathfrak g$, the contribution of the open $1$-cell to the cellular quantum BF action is
\begin{align}
 \overline S_I={}&\left\langle B_{01},
 \frac12[A_{01},A_0+A_1]
 +F(\ad_{A_{01}})(A_1-A_0)\right\rangle \\
 &\hspace{2cm}-i\hbar\,\trg\log G(\ad_{A_{01}}),\label{eq:interval-action}
\end{align}
where
\[
 F(z)=\frac z2\coth\frac z2,
 \qquad
 G(z)=\frac{2}{z}\sinh\frac z2.
\]
Equivalently, the associated unimodular $L_\infty$ operations have the following support property: every operation of arity greater than two contains at least one edge input $\epsilon_{01}$, and every quantum operation contains only edge inputs.
\end{proposition}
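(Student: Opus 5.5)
The plan is to take the action \eqref{eq:interval-action} from the explicit computation of Cattaneo--Mnev--Reshetikhin~\cite{CMRCellular}, and then read off the support property from its structure. The derivation is not redone here.

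\emph{Step one: reduce to the CMR computation.} The cellular BF action on a $1$-simplex is obtained by BV pushforward from the continuum nonabelian BF theory on $I$. For that pushforward I would use the Whitney map $W$, the integration map $R$ and the Dupont homotopy $\kappa$ of \eqref{eq:interval-Whitney}--\eqref{eq:interval-Dupont}. The effective action is a sum over trees and one-loop graphs whose internal edges carry the propagator $\kappa$. The vertex is the cubic BF vertex $\langle B,[A,A]\rangle$. The leaves are Whitney forms $(1-\theta)A_0+\theta A_1+\dd\theta\,A_{01}$.

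\emph{Step two: sum the trees.} I would keep the edge input $A_{01}$ in the $\dd\theta$ slot. The trees then sum to the solution of the ODE $\dd a+[A_{01}\dd\theta,a]$-type, i.e.\ to a gauge transformation by $\exp(\theta\,\ad_{A_{01}})$. Integrating this against the Whitney form of $B_{01}$ gives
\[
 \tfrac12[A_{01},A_0+A_1]+F(\ad_{A_{01}})(A_1-A_0),
\]
where the generating function $\tfrac z2\coth\tfrac z2$ comes from the Bernoulli numbers in $\int_0^1$ of the exponential.

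\emph{Step three: the one-loop term.} The single loop is a cycle of $\kappa$-edges with only $A_{01}\dd\theta$ insertions. It gives $\trg\log$ of the Fredholm-type determinant of $\dd+\ad_{A_{01}}\dd\theta$ on the reduced interval sector, namely $\log\frac{\sinh(z/2)}{z/2}$. This step needs unimodularity: it kills the tadpoles involving $\trg\ad$, and it guarantees that the trace is well defined.

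\emph{Step four: the support property.} This follows directly from the formula. Expand $F(z)=1+\sum_{k\ge1}c_kz^{2k}$. The arity-$\le2$ terms are the constant term $A_1-A_0$ (which is $\ell_1$) and $\tfrac12[A_{01},A_0+A_1]$. Every term of higher polynomial degree carries a factor $\ad_{A_{01}}^{2k}$, so it contains an edge input. $\log G$ depends only on $\ad_{A_{01}}$, so the quantum operations have only edge inputs.

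\emph{Main obstacle.} The hard part is sign and normalization bookkeeping. The CMR conventions for the $1$-simplex have to be matched to the orientation $d\epsilon_0=-\epsilon_{01}$, $d\epsilon_1=\epsilon_{01}$ of \eqref{eq:interval-complex}. The constant in the loop term must also be normalized so that $G(0)=1$, consistent with \cref{lem:interval-torsion}. I would fix both by checking the quadratic part against $\langle B_{01},A_1-A_0\rangle$ and the torsion value $1$.
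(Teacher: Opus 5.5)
Your proposal is correct and follows essentially the paper's route: the action is taken from the explicit $1$-simplex computation of \cite{CMRCellular}, and the support property is then read off from its expansion. The paper reads it from the Bernoulli-coefficient list of operations, you from the resummed $F$ and $G$, which amounts to the same thing; the one item worth adding is that the endpoint blocks contribute only the binary brackets $\ell_2(\alpha\epsilon_i,\beta\epsilon_i)=[\alpha,\beta]\epsilon_i$, so they do not affect the arity-$>2$ or quantum claims.
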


\begin{proof}
This is the $1$-simplex building block of cellular BF theory.  Expanding the two analytic functions gives the Bernoulli series
\[
 F(z)=1+\frac{z^2}{12}-\frac{z^4}{720}+\cdots,
 \qquad
 \log G(z)=\frac{z^2}{24}-\frac{z^4}{2880}+\cdots.
\]
The explicit operations are
\[
 \ell_2(\alpha\epsilon_i,\beta\epsilon_i)
 =[\alpha,\beta]\epsilon_i,
 \qquad i=0,1,
\]
and, for $n\geq0$, the only further classical operations have $n$ edge inputs and one endpoint input, with coefficients $B_n^\pm/n!$.  The quantum operations $q_n$ have $n$ edge inputs and coefficients $B_n/(n\,n!)$.  Resumming these Bernoulli series gives \cref{eq:interval-action}; see \cite[Rem.~8.4 and Eqs.~(111)--(112)]{CMRCellular}.
\end{proof}

The constant and reduced subcomplexes
$C_{\mathrm{const}}^\bullet(I;\mathfrak g)$ and
$C_{\mathrm{red}}^\bullet(I;\mathfrak g)$, and the symmetric contraction
\eqref{eq:interval-contraction} of the second, are those of
\cref{sec:exceptional-interval}; the contraction is
\cref{prop:diagonal-interval} tensored with $\mathfrak g$.

\begin{theorem}[Exact basic-sector compression]\label{thm:interval-exact-compression}
The restriction of the canonical interval $qL_\infty$ structure to $C_{\mathrm{const}}^\bullet(I;\mathfrak g)$ is the strict unimodular Lie algebra $\mathfrak g$:
\[
 \ell_2(\alpha,\beta)=[\alpha,\beta],
 \qquad
 \ell_n=0\ (n\neq2),
 \qquad
 q_n=0\ (n\geq1).
\]
Moreover, BV pushforward along the acyclic complement $C_{\mathrm{red}}^\bullet(I;\mathfrak g)$ sends the full cellular BF theory of the interval, including its endpoint blocks, to the cellular BF theory of a point, up to quantum canonical transformation.  Consequently the BV cohomology class of the partition function is unchanged by collapsing the exceptional interval.
\end{theorem}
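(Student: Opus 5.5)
The plan has two parts: first compute the restriction to the constant sector directly, then realize the pushforward as homotopy transfer along the diagonal contraction of \cref{prop:diagonal-interval}.

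\textbf{Restriction to the constant sector.} I would read this off the explicit operations listed in \cref{prop:canonical-interval-block}. An element of $C_{\mathrm{const}}^\bullet(I;\mathfrak g)$ has the form $\alpha(\epsilon_0+\epsilon_1)$ and has no edge component. By the support property:
\begin{itemize}
\item every classical operation of arity greater than two needs an edge input, so it vanishes on these inputs;
\item every quantum operation $q_n$ with $n\geq1$ needs only edge inputs, so it vanishes too.
\end{itemize}
For the differential, $d(\epsilon_0+\epsilon_1)=0$ by \eqref{eq:interval-complex}, so $\ell_1=0$. For the bracket, the endpoint rule $\ell_2(\alpha\epsilon_i,\beta\epsilon_i)=[\alpha,\beta]\epsilon_i$ gives
\[
\ell_2\bigl(\alpha(\epsilon_0+\epsilon_1),\beta(\epsilon_0+\epsilon_1)\bigr)=[\alpha,\beta](\epsilon_0+\epsilon_1).
\]
Here the cross terms $\epsilon_0,\epsilon_1$ vanish, and so does the $n=0$ edge-free mixed operation: its coefficient multiplies $A_1-A_0$, which is zero on the diagonal. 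Unimodularity of $\mathfrak g$ is what makes the strict Lie algebra unimodular, consistent with $q_n=0$.

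\textbf{BV pushforward along $C_{\mathrm{red}}$.} I would take the cyclic cotangent splitting induced by $h_+$ and use the same UV gauge-fixing Lagrangian as in \cref{lem:interval-torsion}. The effective action is then the homotopy-transferred $qL_\infty$ structure, a sum over rooted trees and one-loop graphs with internal edges labelled by $h_+\otimes\id_{\mathfrak g}$ and leaves labelled by $i_+$. The key observation is that $h_+$ outputs only $\tfrac12\epsilon_-\otimes\mathfrak g$, which is an endpoint (vertex-type) element and not an edge element.

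\begin{itemize}
\item \emph{Trees.} Every internal vertex receives inputs that are either constant leaves or $h_+$-outputs, so no edge input ever reaches a vertex. By the support property, only $\ell_2$ and the $n=0$ mixed operation can appear. The composite $h_+\ell_2(i_+x,i_+y)$ vanishes because $\ell_2$ of two diagonal elements is diagonal and $h_+$ kills $\epsilon_+$. The $n=0$ mixed term, evaluated on diagonal inputs, produces only $\epsilon_\pm$ components, which $h_+$ also kills. So every tree with an internal edge vanishes, and only the bare corolla $\ell_2$ survives.
\item \emph{Loops.} A one-loop graph needs some vertex with an $h_+$-output fed back in; all quantum vertices need edge inputs, so every loop graph vanishes.
\item \emph{Constant term.} The remaining field-independent Gaussian factor equals $1$ by \cref{lem:interval-torsion}.
\end{itemize}

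This gives the point theory $\langle B,\tfrac12[A,A]\rangle$ with no quantum correction, where the point $B$ is dual to $\epsilon_+$. I would then compare this with the cellular theory of a point, including the endpoint blocks. Those blocks are point theories at $\epsilon_0$ and $\epsilon_1$, and they combine through the diagonal embedding. Because the splitting is noncanonical, this identification holds only up to a canonical transformation, which accounts for the phrase ``up to quantum canonical transformation''. The final claim about the partition function then follows from invariance of BV pushforward under change of gauge-fixing data \cite{CMRCellular}.

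\textbf{Main obstacle.} The delicate step is the pairing of the $B$-fields. Because $B$ is dual, the $B$-side of the contraction is the transpose, under which $h_+^*$ maps $\epsilon_-^*$ to $\epsilon_{01}^*$. I need to verify that no effective vertex pairs a $B_{01}$ leg against an $h_+$-propagator carrying an endpoint insertion. My first check would be the explicit quadratic and cubic terms of \eqref{eq:interval-action} after the change of variables $A_0,A_1\mapsto A_+,A_-$.
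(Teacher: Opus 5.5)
Your treatment of the constant sector is the paper's argument: both read the restriction off the support property in \cref{prop:canonical-interval-block}. For the pushforward you take a genuinely different route. The paper does no graph computation. It observes that $I\searrow *$ is an elementary collapse and invokes the compatibility of canonical cellular BF actions with elementary collapses from \cite{CMRCellular}, together with the abstract quasi-isomorphism result of \cite{CMRPushforward}. That yields two things at once, for any admissible induction data: the point action up to quantum canonical transformation, and the $\Delta$-exactness of the change in the partition function.

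You instead evaluate the tree-and-wheel expansion of the BF pushforward in the symmetric gauge fixed by $h_+$. This works, and it proves more. In that gauge the pushforward is exact, and the constant is pinned down by \cref{lem:interval-torsion}. The mechanism is also explicit: every propagator output lies in $\mathfrak g\otimes\mathbb R\epsilon_-$, which has no edge component, so only the endpoint brackets act, and $h_+$ kills what they produce. The paper's route is shorter and gauge-independent. For the partition-function clause you still need the same general CMR input: you cite gauge-fixing independence modulo $\Delta$-exact terms, and you should add Fubini for composed pushforwards.

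Several steps need tidying.
\begin{enumerate}
\item The ``$n=0$ mixed operation'' is the unary differential $\ell_1=d$, the $n=0$ term of $F(\ad_{A_{01}})(A_1-A_0)$. It belongs to the kinetic term that defines the propagator and is not a vertex, so drop it rather than arguing about its outputs. Those outputs are edge cochains in general ($d\epsilon_-=2\epsilon_{01}$), not $\epsilon_\pm$ components.
\item The loop graphs are wheels of classical vertices, not of ``quantum vertices''. They vanish for the following reason. For edge-free $x$, the operator $h_+\ell_2(x,\cdot)$ has only the off-diagonal block $\mathfrak g\otimes\epsilon_{01}\to\mathfrak g\otimes\epsilon_-$ on $C_{\mathrm{red}}^\bullet(I;\mathfrak g)$. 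Hence any product of two such operators is zero, and each has zero supertrace. Operations of arity at least three need at least two edge inputs, and the original $q_n$ need only edge inputs, so they contribute nothing either.
\item Your ``main obstacle'' dissolves. Every BF vertex has exactly one $B$-leg. On the Lagrangian spanned by $\operatorname{im}h_+$ and $\operatorname{im}h_+^\vee$, the propagator contracts a $B_{01}$-leg with the $\epsilon_-$-component of an $A$-leg through $h_+$. This is precisely the internal edge of the trees and wheels you already wrote down, so the transpose is built in. A contracted $B_{01}$-leg is nonzero only at a vertex with $\epsilon_{01}$-output, and the support property excludes that on edge-free inputs.
\item With generator $\epsilon_+=\tfrac12(\epsilon_0+\epsilon_1)$, the transferred bracket is $\tfrac12[\cdot,\cdot]$, not $[\cdot,\cdot]$. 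To get exactly the point action, identify the residual line through $\epsilon_0+\epsilon_1$, which is a linear symplectic rescaling.
\end{enumerate}
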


\begin{proof}
The constant cochain $\epsilon_0+\epsilon_1$ contains no edge component.  By the support property in \cref{prop:canonical-interval-block}, every operation of arity at least three and every quantum operation vanishes on constant inputs.  The two endpoint binary brackets add to the constant cochain, so the remaining binary operation is precisely the original bracket of $\mathfrak g$.  Thus the inclusion of constants is a strict unimodular $L_\infty$ morphism.

The quotient by constants is the two-term acyclic complex in \cref{eq:interval-contraction}.  The collapse $I\searrow *$ is an elementary simple-homotopy equivalence.  Compatibility of canonical cellular BF actions with elementary collapses and cellular aggregations implies that the corresponding BV pushforward gives the point action up to a quantum canonical transformation; the partition function changes only by a BV-exact term.  This is the elementary-collapse case of the subdivision and aggregation theorem of \cite{CMRCellular}.  The general fact that an interacting BV pushforward along an acyclic ultraviolet sector is a quasi-isomorphism is also established abstractly in \cite{CMRPushforward}.
\end{proof}

\begin{corollary}[No cubic obstruction for the conifold interval]\label{cor:no-cubic-conifold}
For the isolated exceptional interval in either small resolution of the positive real conifold, the transferred ternary operation on the intrinsic basic sector is zero.  More generally, all transferred operations of arity at least three and all quantum operations vanish after full compression of that interval.
\end{corollary}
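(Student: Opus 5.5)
The proof reduces the corollary to \cref{thm:interval-exact-compression} by identifying the local model. First I will check that the exceptional normal fibre of either small resolution $\cR_\pm$ is a single cellular interval. Each diagonal triangulation \eqref{eq:triang-plus}--\eqref{eq:triang-minus} subdivides the cone over the square by one interior two-dimensional cone, $\Cone(v_1,v_4)$ or $\Cone(v_2,v_3)$. On the positive slice $K_\sigma$ this wall is one diagonal segment of the square. Its endpoints are two original vertices, and its interior is the only cell not supported on a proper face of $\sigma$. The reduced exceptional sector over the vertex is therefore the relative interval complex. By \cref{lem:exceptional-acyclic}, the cellular model of the isolated exceptional interval is exactly \eqref{eq:interval-complex}, with its endpoints identified with the two retained coarse directions. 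Hence the intrinsic basic sector is the diagonal summand $C^\bullet_{\mathrm{const}}(I;\mathfrak g)$, and the reduced complement is $C^\bullet_{\mathrm{red}}(I;\mathfrak g)$ with the contraction of \cref{prop:diagonal-interval}.

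Next I will run the homotopy transfer of \cref{sec:global-linear-strict} with $i_+,p_+,h_+$ in place of $i_\infty,p_\infty,h_\infty$. By the rooted-tree formula, the transferred ternary operation \eqref{eq:transferred-l3} is a sum of terms $p_+[h_+[i_+x,i_+y],i_+z]$, together with the contributions of the canonical higher operations of \cref{prop:canonical-interval-block} evaluated on trees. The inner bracket $[i_+x,i_+y]$ is computed with the endpoint brackets $\ell_2(\alpha\epsilon_i,\beta\epsilon_i)=[\alpha,\beta]\epsilon_i$, so it lies in the span of $\epsilon_0,\epsilon_1$ and is in fact again constant. Since $h_+$ vanishes on $\epsilon_+$, every tree with an internal edge labelled $h_+$ vanishes at its lowest vertex.

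It remains to treat trees with no internal edge, i.e.\ the restrictions of the canonical operations $\ell_n$ with $n\ge3$ and $q_n$ to constant inputs. By the support property in \cref{prop:canonical-interval-block}, each of these requires an edge input $\epsilon_{01}$, which constants never supply, so they vanish. This is precisely the first assertion of \cref{thm:interval-exact-compression}, and it gives $\ell_3=0$. Running the same induction over all trees gives $\ell_n=0$ for $n\ge3$ and $q_n=0$. The inductive claim is that the image of $i_+$ is closed under $\ell_2$ and is annihilated by $h_+$, and that no operation with only constant inputs produces an edge component. The full-compression statement then follows from the pushforward clause of \cref{thm:interval-exact-compression}, which holds up to quantum canonical transformation.

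The main obstacle is the first step: justifying that in the conifold the exceptional sector is isolated, so that the only new cell is the one diagonal interval. I expect to settle it by noting that the vertices of $K_\sigma$ are unchanged by either triangulation and that the four edges of the square are proper faces. One caveat has to be kept in the statement: the transferred operations vanish for this chosen contraction, and any other contraction gives an $L_\infty$-isomorphic, hence still strictifiable, structure, in line with \cref{prop:first-nonlinear-obstruction}.
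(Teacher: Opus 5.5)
Your overall route is the paper's. You identify the fibre of $\beta_\pm$ over the vertex as a single cellular interval whose basic sector is the constant subcomplex, and then apply \cref{thm:interval-exact-compression}. Your tree computation along $(i_+,p_+,h_+)$ unpacks that theorem's first clause and is fine at tree level.

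The step you single out as the main obstacle is, however, argued incorrectly. In the slice $K_\sigma(\cR_+)$ the open diagonal is not the only cell off $\partial K_\sigma$: the two triangles $\Cone(v_1,v_2,v_4)$ and $\Cone(v_1,v_3,v_4)$ are interior as well. If the diagonal were the only such cell, the relative complex would be a single copy of $\mathbb Z$ in degree one, not an interval. Nor is the exceptional interval the diagonal segment with endpoints $v_1,v_4$; those rays label the boundary hypersurfaces $H_{v_1},H_{v_4}$, not points of $\beta_+^{-1}(v)$.

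The correct identification comes from the orbit--cone correspondence. The fibre is the union of the orbits of cones whose relative interiors lie in the interior of $\sigma$. So its open edge is dual to the new two-cone $\Cone(v_1,v_4)$ (resp.\ $\Cone(v_2,v_3)$), and its two endpoints are dual to the two maximal cones. This is exactly how the paper dualizes the star square in \cref{def:conifold-product-whitney}. Cellularly, $C_\bullet(K_\sigma(\cR_+),\partial K_\sigma(\cR_+))$ is $\mathbb Z^2\to\mathbb Z$, and regraded by $q\mapsto 2-q$ it is \eqref{eq:interval-complex}. The subdivision map sends the coarse square to the sum of the two triangles, i.e.\ to $\epsilon_0+\epsilon_1$; this is what actually identifies the basic sector with the constants. \Cref{lem:exceptional-acyclic} only gives acyclicity of the reduced absolute complex and cannot supply this identification.

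Separately, an induction over trees does not by itself control the transferred quantum operations. Transfer of the unimodular structure also produces one-loop wheels of $h_+$-edges closed by a supertrace. You can take $q_n=0$ directly from the restriction statement of \cref{thm:interval-exact-compression}, as the paper does. Alternatively, check the wheels:
\begin{itemize}
\item for constant $c$, the operator $h_+\ell_2(c,-)$ on the reduced sector sends the $\epsilon_{01}$-component into the $\epsilon_-$-component and kills $\epsilon_-$, so it is nilpotent and traceless;
\item vertices of arity at least three with a single non-constant input vanish by the support property in \cref{prop:canonical-interval-block}.
\end{itemize}
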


\begin{proof}
The normal exceptional fibre is a single interval and the intrinsic basic sector is its constant subcomplex.  Apply \cref{thm:interval-exact-compression}.
\end{proof}

The preceding statement must be distinguished from a relative BV--BFV description in which the edge mode is retained.  Put
\[
 a=A_{01}.
\]
Variation of \cref{eq:interval-action} with respect to $B_{01}$ gives
\begin{equation}\label{eq:interval-transport-equation}
 \frac12[a,A_0+A_1]
 +F(\ad_a)(A_1-A_0)=0.
\end{equation}
Formally solving this equation yields
\begin{equation}\label{eq:interval-parallel-transport}
 A_1=e^{-\ad_a}A_0.
\end{equation}
Thus $a$ records angular parallel transport between the two endpoint fields.  The corresponding one-loop contribution is
\begin{equation}\label{eq:interval-one-loop}
 S_I^{(1)}(a)
 =-i\hbar\,\trg\log\left(
 \frac{2}{\ad_a}\sinh\frac{\ad_a}{2}
 \right)
 =-i\hbar\left(
 \frac1{24}\trg(\ad_a^2)
 -\frac1{2880}\trg(\ad_a^4)+\cdots
 \right).
\end{equation}

\begin{proposition}[Basic versus retained-holonomy sectors]\label{prop:two-interval-sectors}
The holonomy relation \cref{eq:interval-parallel-transport} and the determinant \cref{eq:interval-one-loop} are data of the relative sector in which $a$ is retained.  They do not obstruct the basic intrinsic compression, which integrates out $a$ together with the endpoint-difference mode and is quantum-canonically equivalent to the point theory.
\end{proposition}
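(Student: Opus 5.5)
The plan is to separate the two sectors by the splitting $C^\bullet(I;\mathfrak g)=C_{\mathrm{const}}^\bullet(I;\mathfrak g)\oplus C_{\mathrm{red}}^\bullet(I;\mathfrak g)$ of \cref{prop:diagonal-interval}. I will then show that the holonomy relation and the determinant depend only on coordinates that the basic compression integrates out.

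First I change coordinates on the $A$-fields to $A_+=\tfrac12(A_0+A_1)$, $A_-=A_1-A_0$ and $a=A_{01}$. The edge mode and the endpoint difference together span $C_{\mathrm{red}}^\bullet$: by \eqref{eq:interval-contraction}, $\epsilon_-$ and $\epsilon_{01}$ form the acyclic pair. In these coordinates \eqref{eq:interval-action} reads $\langle B_{01},[a,A_+]+F(\ad_a)A_-\rangle-i\hbar\,\trg\log G(\ad_a)$. The $B_{01}$-equation \eqref{eq:interval-transport-equation} therefore constrains only the pair $(A_-,a)$ relative to $A_+$. Solving it formally, using the identity $F(z)\tanh(z/2)=z/2$ and its inverse, gives $A_-=(e^{-\ad_a}-1)A_0$, which is \eqref{eq:interval-parallel-transport}. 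Both the relation and the one-loop term \eqref{eq:interval-one-loop} are functions of $a$ (and $A_-$). They are therefore meaningful only on a gauge-fixing in which $a$ is kept as a boundary or relative field, that is, in the retained-holonomy sector.

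Second, for the basic compression, I invoke \cref{thm:interval-exact-compression}. The BV pushforward is taken along a Lagrangian in the cotangent of $C_{\mathrm{red}}^\bullet$, so $a$ and $A_-$ are both integration variables. On constant inputs $a=0$, and then $F(0)=1$, $G(0)=1$ and $\log G(0)=0$, so no holonomy constraint and no determinant survive on the image of $i_+$. The theorem gives the point theory up to quantum canonical transformation, and \cref{lem:interval-torsion} fixes the Gaussian normalization at $1$. Hence the data attached to $a$ cannot obstruct the basic compression. Any $a$-dependence is absorbed into the pushforward: the determinant enters the integrand, and the integrated result differs from the point action only by a BV-exact/canonical term.

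The main obstacle is making "do not obstruct" precise. One must show that including $-i\hbar\,\trg\log G(\ad_a)$ in the integrand does not produce a field-dependent effective term on $C_{\mathrm{const}}^\bullet$ beyond what a quantum canonical transformation can remove. I would handle this by citing the elementary-collapse compatibility in \cite{CMRCellular}, which already includes the quantum operations $q_n$. I would add the support property from \cref{prop:canonical-interval-block}: every $q_n$ has only edge inputs, so Feynman diagrams of the pushforward with external legs on the constants contain no $q_n$ vertex attached to an external leg. Unimodularity of $\mathfrak g$ removes tadpoles, so only the normalized constant from \cref{lem:interval-torsion} remains. If a fully explicit check is needed, I would verify this order by order in the Bernoulli expansion through $\trg(\ad_a^2)$ and appeal to \cite{CMRPushforward} for the all-orders statement.
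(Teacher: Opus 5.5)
Your proposal is correct and follows essentially the same route as the paper. For the retained-$a$ sector you read the holonomy relation and the one-loop term off the $B_{01}$-equation of \eqref{eq:interval-action}; your manipulation with $F(z)\tanh(z/2)=z/2$ is equivalent to the paper's identity $\frac{\coth(z/2)-1}{\coth(z/2)+1}=e^{-z}$. For the basic sector you invoke \cref{thm:interval-exact-compression}, i.e.\ the elementary-collapse compatibility of \cite{CMRCellular}, exactly as the paper does.

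The supplementary Feynman-diagram paragraph is unnecessary and overclaims, so rely on the cited theorem instead. That every $q_n$ vertex has only internal legs does not by itself rule out its contribution. Also, the general conclusion is the point action up to quantum canonical transformation, not ``only the normalized constant''.
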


\begin{proof}
If $a$ is retained, \cref{eq:interval-action} is the effective interval block and its $B$-equation is exactly \cref{eq:interval-transport-equation}; the elementary identity
\[
 \frac{\coth(z/2)-1}{\coth(z/2)+1}=e^{-z}
\]
gives \cref{eq:interval-parallel-transport}.  The one-loop term is the second line of \cref{eq:interval-action}.  In the basic sector, however, $a$ belongs to the acyclic complement and is integrated out.  \cref{thm:interval-exact-compression} identifies that pushforward with the point theory up to canonical transformation.  The two procedures correspond to different choices of residual fields and should not be conflated.
\end{proof}

\part*{IV.\ BF theory and local extensions}\addcontentsline{toc}{part}{IV.\ BF theory and local extensions}

\section{Abelian BF theory}\label{sec:abelian-bf}

Abelian BF theory is the smallest target for which both the ordinary
resolution-level and intrinsic $b$-formulations can be written explicitly.
The statements below must nevertheless be separated: ordinary BF is shown to
be invariant among admitted resolutions, while intrinsic $b$-BF satisfies its
own regularized BV--BFV identity.  Their source complexes are not declared
equivalent.  The strict resolved-$b$/intrinsic-$b$ comparison uses the separate
field-level datum \textup{(G5)} in \cref{thm:cyclic-hpl-bf}.  Since the theory
is quadratic, the pushforwards that are actually asserted below are Gaussian
and require no graph expansion.

\subsection{Linear BV data}

Let $V$ be a finite-dimensional graded vector space with graded dual $V^*$.
The $n$-dimensional abelian BF theory has superfields $A$ and $B$ of
complementary degree, and the same expressions
\[
 \omega=\int\langle\delta B,\delta A\rangle,
 \qquad
 S=\int\langle B,\dd A\rangle
\]
define the theory on either source, once the differential and the
integration functional are fixed.  The two formulations use different data:

\begin{center}
\small
\begin{tabular}{>{\raggedright\arraybackslash}p{0.19\textwidth}
                >{\raggedright\arraybackslash}p{0.35\textwidth}
                >{\raggedright\arraybackslash}p{0.35\textwidth}}
\toprule
 & ordinary resolution formulation & intrinsic $b$-formulation\\
\midrule
source & $N$, ordinary corners, typically $N=X_\cR$ & $X$, generalized corners\\
fields & $\Omega^\bullet(N;V)[1]\oplus\Omega^\bullet(N;V^*)[n-2]$
        & $\bOmega^\bullet(X;V)[1]\oplus\bOmega^\bullet(X;V^*)[n-2]$\\
differential & de Rham $\dd$ & $b$-de Rham $\db$\\
integration & $\int_N$ & regularized trace $\Tr_X^{\cT}$\\
face terms & ordinary restriction & regularized restriction $\Reg_{H/X}$\\
\bottomrule
\end{tabular}
\end{center}

Explicitly, on the resolution side
\begin{equation}\label{eq:bf-fields}
  \F_N^{\mathrm{BF}}
  =\Omega^\bullet(N;V)[1]
   \oplus
   \Omega^\bullet(N;V^*)[n-2],
\end{equation}
with $Q(A,B)=(\dd A,\dd B)$, and the facewise descendants are obtained by
restriction and Stokes' formula.  The underlying linear BV complex is
\begin{equation}\label{eq:bf-linear-complex}
  \mathbb F_N
  =\bigl(\Omega^\bullet(N;V)[1]\oplus
  \Omega^\bullet(N;V^*)[n-2],\dd\bigr),
\end{equation}
with cohomology $H^\bullet(N;V)[1]\oplus H^\bullet(N;V^*)[n-2]$.

\subsection{The intrinsic regularized identity}

\begin{definition}
On a regularized $g$-corner $(X,\cT)$ define
\begin{align}
 \omega_X^{\mathrm{BF}}&=\Tr_X^{\cT}\langle\delta B,\delta A\rangle,\\
 S_X^{\mathrm{BF}}&=\Tr_X^{\cT}\langle B,\db A\rangle.
\end{align}
\end{definition}

\begin{theorem}[Regularized BF identity]\label{thm:bf}
The cohomological vector field
\[
 Q A=\db A,\qquad Q B=\db B
\]
satisfies
\[
 \iota_Q\omega_X^{\mathrm{BF}}
 =(-1)^n\delta S_X^{\mathrm{BF}}
 +\sum_{H\prec X}[X:H]\,
 \Tr_H^{\cT}\Reg_{H/X}\langle B,\delta A\rangle.
\]
The analogous identity holds on every intrinsic face.
\end{theorem}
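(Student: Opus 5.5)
This is the abelian specialisation of \cref{thm:bvbfv}, and I would prove it that way. Abelian BF fits the AKSZ framework with the linear target
\[
 \Y=V[1]\oplus V^*[n-2],\qquad
 \omega_\Y=\langle\delta B,\delta A\rangle,\qquad
 \alpha_\Y=\langle B,\delta A\rangle,\qquad
 \Theta=0 .
\]
Here $\omega_\Y$ has degree $n-1$, and $Q_\Y=0$, so the target part of $Q_F$ vanishes.

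With these choices, \cref{def:transgression} gives:
\begin{itemize}
\item $\omega_X^b=\omega_X^{\mathrm{BF}}$;
\item $\alpha_H^b=\Tr_H^{\cT}\langle B,\delta A\rangle$;
\item $S_X^b=\Tr_X^{\cT}\iota_{\db}\ev^*\alpha_\Y$.
\end{itemize}
The last expression should reduce to $\Tr_X^{\cT}\langle B,\db A\rangle$, up to the standard AKSZ sign convention of \cref{app:signs}. Identity \eqref{eq:intrinsic-bvbfv-factorized} then becomes the displayed identity. The pulled-back primitive $(\rho^{\cT}_{H/X})^*\alpha_H^b$ equals $\Tr_H^{\cT}\Reg_{H/X}\langle B,\delta A\rangle$ by the evaluation identity \eqref{eq:evaluation-regularization} of \cref{lem:regularized-field-restriction}. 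That lemma applies because the integrand is bilinear in source coefficients and $\Reg_{H/X}$ is multiplicative.

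As a cross-check independent of the general AKSZ machinery, I would also verify the identity directly, since the theory is quadratic.
\begin{enumerate}
\item Compute $\iota_Q\omega_X^{\mathrm{BF}}=\Tr_X^{\cT}\bigl(\langle\db B,\delta A\rangle\pm\langle\delta B,\db A\rangle\bigr)$.
\item Compute $\delta S_X^{\mathrm{BF}}=\Tr_X^{\cT}\bigl(\langle\delta B,\db A\rangle\pm\langle B,\db\delta A\rangle\bigr)$.
\item Use the Leibniz rule for $\db$ on the pairing to write the difference as $\Tr_X^{\cT}\db\langle B,\delta A\rangle$.
\item Apply the Stokes identity \eqref{eq:stokes-trace} to this term, which produces exactly the sum over $H\prec X$ with signs $[X:H]$.
\end{enumerate}
In step 4 I take $\eta$ to be the component of $\langle B,\delta A\rangle$ of source degree $\dim X-1$. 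Only that component contributes, since $\Tr$ is concentrated in degree $\dim X$. Compact support holds because $X$ is compact.

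The main obstacle is the sign bookkeeping: checking that the relative sign between the bulk term and $\delta S$ is exactly $(-1)^n$, with the shift $[1]$ on $A$ and $[n-2]$ on $B$ and with $\delta$ anticommuting with $\db$. I would fix conventions as in \cref{app:signs} and test them on the lowest nontrivial bidegree.

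For the final clause, I would repeat the argument on an intrinsic face $F$, using $\Tr_F^{\cT}$ and $\Reg_{G/F}$. The sign becomes $(-1)^{\dim F}$, exactly as in \cref{thm:bvbfv}.
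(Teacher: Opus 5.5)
Your proposal matches the paper's proof. The paper likewise specializes \cref{thm:bvbfv} to the cotangent linear BF target with $F=X$, and its ``equivalently'' argument is your direct Leibniz-plus-Stokes check, carried out componentwise in homogeneous source degrees via \eqref{eq:regularized-transgression-stokes} so that the $\pm$ signs in your steps 1--3 never require assigning a single degree to the inhomogeneous superfield $B$.
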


\begin{proof}
Abelian BF is the cotangent linear AKSZ target, and the displayed symplectic
form and action are the specialization of \cref{def:transgression}.  Apply
\cref{thm:bvbfv} with $F=X$.  Equivalently, decompose $A$ and $B$ into
homogeneous source-degree components and apply
\eqref{eq:regularized-transgression-stokes} to each complementary pair.  The
bulk components sum to $\iota_Q\omega_X^{\mathrm{BF}}$, while regularized
Stokes gives the displayed face sum.  This componentwise calculation avoids
assigning one degree $|B|$ to the inhomogeneous superfield $B$.
\end{proof}

\begin{remark}
The theory is linear, but the need for regularized face restriction is
already visible: a component of $A$ or $B$ proportional to
$\vartheta_p=\db\log\lambda_p$ has no ordinary pullback to a face on which
$\lambda_p$ vanishes.
\end{remark}

\subsection{Bulk invariance on resolutions}

\begin{theorem}[Bulk abelian BF cochain invariance]\label{thm:bf-bulk}
Let $\beta:\widetilde N\to N$ be a finite composition of elementary radial
blow-ups of ordinary corner faces.  Pullback is a quasi-isomorphism of the
underlying linear BF cochain complexes
\[
  \beta^*:\mathbb F_N\longrightarrow\mathbb F_{\widetilde N}.
\]
Consequently, the classical linear on-shell spaces of abelian BF theory are
canonically isomorphic.
\end{theorem}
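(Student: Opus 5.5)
The plan is to reduce the statement to homotopy invariance of de Rham cohomology under the blow-down, applied separately to the two factors of $\mathbb F_N$. The complex \eqref{eq:bf-linear-complex} is a direct sum of two shifted de Rham complexes with coefficients in the fixed finite-dimensional graded spaces $V$ and $V^*$. Since $\beta$ is smooth and pullback commutes with $\dd$, the map $\beta^*$ acts diagonally. It is a cochain map on each summand, and it commutes with the shifts $[1]$ and $[n-2]$, since those only regrade. It therefore suffices to show that
\[
 \beta^*:\Omega^\bullet(N;W)\longrightarrow\Omega^\bullet(\widetilde N;W)
\]
is a quasi-isomorphism for a finite-dimensional graded vector space $W$.

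First I would reduce to trivial coefficients. Because $W$ is finite-dimensional with no differential of its own, $\Omega^\bullet(N;W)\cong\Omega^\bullet(N)\otimes W$ is a finite direct sum of shifted copies of $\Omega^\bullet(N)$. On each copy, $\beta^*$ acts as $\beta^*\otimes\id_W$, so cohomology commutes with this tensor product. If $V$ is instead a complex with a differential, I would filter by the de Rham degree; the finite spectral sequence then reduces the claim to the same statement.

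Next I would apply \cref{cor:de-rham-blowup}, which states that $\beta^*:H^\bullet_{\mathrm{dR}}(N)\to H^\bullet_{\mathrm{dR}}(\widetilde N)$ is an isomorphism for a finite composition of elementary radial blow-ups. That corollary rests on \cref{prop:radial-homotopy}, where the blow-down is shown to be a homotopy equivalence with homotopies fixed outside the collar, together with homotopy invariance of de Rham cohomology on manifolds with corners. For a composition, functoriality $(\beta_1\beta_2)^*=\beta_2^*\beta_1^*$ composes the isomorphisms. Combining the two factors gives the quasi-isomorphism of $\mathbb F_N$.

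The main obstacle I expect lies in the step that turns a topological homotopy equivalence into a de Rham isomorphism. The inverse $g$ in \cref{prop:radial-homotopy} is only continuous, so smooth de Rham cohomology must first be identified with singular cohomology. I would do this with the de Rham theorem for manifolds with corners, using collars to see that it holds in that setting; naturality of the de Rham isomorphism under the smooth map $\beta$ is what lets the homotopy equivalence pass to $\beta^*$. Finally, the classical linear on-shell space is the cohomology of $\mathbb F$, namely $H^\bullet(N;V)[1]\oplus H^\bullet(N;V^*)[n-2]$. The isomorphism induced by $\beta^*$ on this cohomology is canonical because it is induced by the fixed map $\beta$ and involves no choices.
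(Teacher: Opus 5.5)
Your proposal is correct and follows the paper's proof: $\beta^*$ acts diagonally on the two shifted de Rham summands of $\mathbb F_N$, and by \cref{cor:de-rham-blowup} it is an isomorphism on the cohomology of each summand. Your extra step, which passes from the merely continuous homotopy inverse of \cref{prop:radial-homotopy} to de Rham cohomology through the natural de Rham theorem for manifolds with corners, only makes explicit what the paper compresses into the remark that de Rham cohomology is homotopy invariant.
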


\begin{proof}
Pullback commutes with the de Rham differential and acts diagonally on the
two summands in \eqref{eq:bf-linear-complex}.  By
\cref{cor:de-rham-blowup}, it is an isomorphism on the cohomology
of each de Rham summand.  Hence it is a quasi-isomorphism of the direct sum.
\end{proof}

\begin{remark}
The theorem compares the underlying linear BF cochain complexes.  The
integrated symplectic
forms are related more subtly, because integration over manifolds with
different boundary decompositions is not literally preserved by pullback on
all representatives.  On cohomology with compatible compact-support or
relative conditions, the pairings agree by change of variables on the common
interior and by Stokes control of boundary terms.  A full statement at the
symplectic-chain level requires fixing those conditions.  This is one of the
places where the intrinsic route is cleaner: \cref{thm:bf} is an identity of
regularized traces on a single source, with no comparison of boundary
decompositions to control.
\end{remark}

\subsection{Facewise invariance}

For every face $G$ of $N$, use the coefficient complex
\[
  \cA_N(G)=
  \Omega^\bullet(G;V)[1]
  \oplus
  \Omega^\bullet(G;V^*)[n-2].
\]
Restriction of forms gives the contravariant maps.

For a radial blow-up along $F$, faces not meeting $F$ are unchanged.  Over
$F$, the new faces are products of strata of $F$ with faces of the simplex.
On a sufficiently small product collar, the normal-simplex direction is
contractible.

\begin{theorem}[Facewise abelian BF comparison]\label{thm:bf-facewise}
Let $\beta:\widetilde N\to N$ be an elementary radial blow-up of a
product-type ordinary corner face.  Assume $N$ is compact, or work with
compact supports in a fixed truncation, and use the relative totalizations of
the facewise BF complexes.  Then there is a canonical roof of
quasi-isomorphisms
\[
 \Tot(N;\cA_N)
 \xleftarrow{\simeq}
 \mathbb F_c(N^\circ)
 \xrightarrow[\cong]{(\beta^\circ)^*}
 \mathbb F_c(\widetilde N^\circ)
 \xrightarrow{\simeq}
 \Tot(\widetilde N;\cA_{\widetilde N}),
\]
where $\mathbb F_c(M)=\Omega_c^\bullet(M;V)[1]\oplus
\Omega_c^\bullet(M;V^*)[n-2]$.  Thus the two facewise BF totalizations are
canonically isomorphic in the derived category.  No filtered refinement map
$U_\beta$ is asserted.
\end{theorem}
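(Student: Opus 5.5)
The proof follows the pattern of \cref{thm:refinement-invariance}, with the logarithmic DPP complex replaced by ordinary smooth facewise BF complexes. On each side, $M=N$ or $M=\widetilde N$, the relative totalization $\Tot(M;\cA_M)$ is the column totalization of the facewise complexes $\Omega^\bullet(G;V)[1]\oplus\Omega^\bullet(G;V^*)[n-2]$ with the signed restriction differential \eqref{eq:horizontal-coeff}. Extension by zero into face degree zero gives a cochain map
\[
 \mathbb F_c(M^\circ)\longrightarrow\Tot(M;\cA_M).
\]
It is a cochain map because a form compactly supported in $M^\circ$ restricts to zero on every boundary face, so $d_{\mathrm{face}}$ kills its image. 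The middle arrow $(\beta^\circ)^*$ is an isomorphism of complexes because $\beta$ is an orientation-preserving diffeomorphism on interiors by \cref{prop:radial-homotopy}. It therefore remains to show that each diagonal map is a quasi-isomorphism. The $V$ and $V^*$ summands are treated separately, and each is a finite direct sum of copies of the scalar case tensored with a finite-dimensional graded vector space.

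For the scalar case I would filter $\Tot(M)$ by face codimension. The totalization of the restriction maps over the boundary faces is the standard \v{C}ech-type model of relative de Rham cohomology. It is computed by the Mayer--Vietoris/semi-simplicial resolution of the boundary by its faces: the complex $\Omega^\bullet(\partial M)\to\bigoplus_{|G|=1}\Omega^\bullet(G)\to\bigoplus_{|G|=2}\Omega^\bullet(G)\to\cdots$, using the unordered-face presentation of \cref{prop:ordered-face-dictionary}. Exactness of this boundary resolution in the ordinary-corner setting holds locally in a corner chart $[0,\infty)^k\times\R^{n-k}$, where it reduces to the exactness of the augmented cochain complex of a simplex tensored with forms. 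It globalizes by a partition of unity, as in \cref{thm:sheaf-descent}. The outcome is that $\Tot(M)$ computes $H^\bullet(M,\partial M)$. Lastly I would identify $H^\bullet(M,\partial M)$ with $H_c^\bullet(M^\circ)$ by collar excision; this is the identification already used in \cref{lem:relative-degree-one}. One must still check that the resulting composite isomorphism is the one induced by extension by zero. To do this I would apply a cutoff function in the collar: a relative cocycle is cohomologous to one supported away from $\partial M$.

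A cleaner alternative is to simply reuse \cref{lem:interior-bridge}. The DPP relative totalization restricted to smooth forms is exactly the complex above, and DPP show that the smooth-to-logarithmic inclusion and the interior inclusion are both quasi-isomorphisms. I would present the elementary argument and mention the DPP one as a cross-check. The sign conventions are those of $\Xi$ in \eqref{eq:dpp-column-conjugation}; they affect only signs, not quasi-isomorphism.

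I expect the main obstacle to be the local exactness of the face resolution near corners of codimension at least two. There a point lies on several boundary hypersurfaces, and one needs simultaneous extension of compatible facewise forms from the union of faces to a neighbourhood. I would handle it by induction on the codimension, using product collars, which exist by \cref{prop:g4-existence}, and a Seeley-type extension. Canonicity of the roof is then immediate, since every map in it is extension by zero or pullback along $\beta^\circ$. No map $U_\beta$ between the two face decompositions is ever constructed, consistent with the final sentence of the statement.
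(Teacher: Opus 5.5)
Your proposal is correct and follows essentially the same route as the paper. The paper's proof asserts that the alternating totalization over all faces is the smooth relative de Rham model, then uses the collar/excision argument from the smooth part of \cref{lem:interior-bridge}, the diffeomorphism $\widetilde N^\circ\cong N^\circ$, and tensoring with $V$, $V^*$ plus the BF shifts; your row-exactness argument simply fills in the first of these standard steps. For that step, inclusion--exclusion with the coordinate projections in local corner charts, followed by fineness of the sheaves, already suffices, so you do not need global product collars from \cref{prop:g4-existence} or a Seeley-type extension.
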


\begin{proof}
For an ordinary-corner manifold the alternating totalization over all faces
is the smooth relative de Rham model; by the collar/excision argument it is
quasi-isomorphic to compactly supported forms on the interior, exactly as in
the smooth part of \cref{lem:interior-bridge}.  The blow-down restricts to an
orientation-preserving diffeomorphism
$\widetilde N^\circ\cong N^\circ$, giving the middle isomorphism.  Tensor
with $V$ and $V^*$ and add the BF shifts.  The local reduced simplex model
\cref{prop:simplex-acyclic} explains the same comparison geometrically, but
its acyclicity alone is not used to manufacture a filtered map $U_\beta$.
\end{proof}

\begin{corollary}\label{cor:bf-compositions}
The derived-category comparison of \cref{thm:bf-facewise} is stable under a
finite composition of elementary radial blow-ups.  Two resolutions connected
through a common refinement by such moves have canonically isomorphic
relative abelian BF descent totalizations in the derived category.
\end{corollary}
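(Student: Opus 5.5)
The plan is to reduce the composite statement to \cref{thm:bf-facewise} by composing the roofs it supplies, all of which pass through the common interior. Let
\[
 N=N_0\xleftarrow{\beta_1}N_1\xleftarrow{\beta_2}\cdots\xleftarrow{\beta_m}N_m
\]
be a finite composition of elementary radial blow-ups. For each $j$, \cref{thm:bf-facewise} gives the roof
\[
 \Tot(N_{j-1};\cA)\xleftarrow{\simeq}\mathbb F_c(N_{j-1}^\circ)
 \xrightarrow{\cong}\mathbb F_c(N_j^\circ)\xrightarrow{\simeq}\Tot(N_j;\cA).
\]
Both outer arrows are the same canonical bridge, namely extension by zero followed by inclusion into the relative totalization. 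Inside the composite zigzag, the consecutive pair
\[
 \mathbb F_c(N_j^\circ)\xrightarrow{\simeq}\Tot(N_j;\cA)\xleftarrow{\simeq}\mathbb F_c(N_j^\circ)
\]
therefore consists of one and the same quasi-isomorphism. Hence it cancels in the localization at quasi-isomorphisms.

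After this cancellation the composite roof becomes
\[
 \Tot(N)\leftarrow\mathbb F_c(N^\circ)\xrightarrow{(\beta_1^\circ)^*\cdots(\beta_m^\circ)^*}\mathbb F_c(N_m^\circ)\to\Tot(N_m).
\]
Since pullback is functorial, the middle map equals $((\beta_1\cdots\beta_m)^\circ)^*$. This is the direct roof attached to the composite blow-down. It is canonical, and it does not depend on how the composite is factored into elementary moves.

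For two resolutions $X_{\cR_1},X_{\cR_2}$ joined through a common refinement $X_{\cR_{12}}$, I would apply the previous step to each arm $\gamma_i$ of \eqref{eq:common-resolution-diagram} and then invert one arm. The resulting roof is
\[
 \Tot(X_{\cR_1})\leftarrow\mathbb F_c(X^\circ)\to\Tot(X_{\cR_2}),
\]
where the middle identification is $(\gamma_2^\circ)^{*}\circ((\gamma_1^\circ)^{*})^{-1}$. Because every blow-down restricts to the identity over $X^\circ$, this map is simply the canonical identification of interiors through $\beta_1,\beta_2$. It is therefore independent of the choice of common refinement.

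I expect the only delicate point to be canonicity, namely checking that the bridge maps used in consecutive roofs really coincide. Two facts should settle this. First, the bridge into $\Tot(N_j;\cA)$ is defined intrinsically on $N_j$, without reference to which neighbouring blow-up is being compared. Second, the compact-support truncation hypothesis of \cref{thm:bf-facewise} is preserved along the finite tower, because each blow-down is proper.
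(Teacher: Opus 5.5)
Your proposal is correct and is essentially the argument the paper intends. The corollary is stated without a separate proof and follows by composing the common-interior roofs of \cref{thm:bf-facewise}, or equivalently by running that proof once for the composite blow-down, which is still an orientation-preserving diffeomorphism of interiors. Your observation that the resulting derived morphism $\epsilon_{2}\circ(\text{interior identification})\circ\epsilon_{1}^{-1}$ does not depend on the chosen common refinement is correct, and slightly sharper than the remark after \cref{thm:conifold-bf}, which only claims canonicity relative to the chosen common refinement. One thing to fix is the order of composition, since pullback is contravariant: the middle maps should read $(\beta_m^\circ)^*\circ\cdots\circ(\beta_1^\circ)^*=((\beta_1\circ\cdots\circ\beta_m)^\circ)^*$ and $((\gamma_2^\circ)^*)^{-1}\circ(\gamma_1^\circ)^*$; the latter is ill-typed as you wrote it.
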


\subsection{What is invariant, and where comparison data enter}

The individual BFV or $\mathrm{BF}^k\mathrm V$ spaces attached to faces are
not invariant term by term.  A resolution introduces new faces and therefore
new descendants.  The invariant object in
\cref{thm:bf-facewise} is the \emph{total derived descent system};
the exceptional descendants are invisible in this derived object, in the
sense that the relative difference is acyclic through the common-interior
roof.  This statement does not yet give a literal filtered contraction of a
geometrically selected exceptional subcomplex.  Under \textup{(G4)} the
actual full-differential reduced sector is instead $\ker p_\infty$, and that
sector is contracted by $h_\infty$.  This is analogous to replacing one cell
by a cellular subdivision: the list of cells changes, while the derived
cellular object changes only by an acyclic enlargement.

The two formulations therefore produce different objects.  The ordinary
resolution formulation produces, for each smooth refinement $\cR$, a total
descent complex $\Tot(X_\cR;\cA)$, well defined up to the quasi-isomorphisms
of \cref{cor:bf-compositions}.  The intrinsic route produces a single
complex on the intrinsic face category $\mathsf{Face}(X)$, with the strict identity
\cref{thm:bf}.  These facts alone do not identify the ordinary de Rham source
with the intrinsic $b$-source.  The following two transfers concern instead
the resolved $b$/logarithmic and intrinsic $b$ models and are related but
type-distinct.
Under \textup{(G4)} the
full-differential reduced \emph{trace/descent} refinement sector $\ker p_\infty$ contracts by
\cref{prop:filtered-exceptional-contraction}.  Separately, a strict BV--BFV
field theory requires the cyclic field-level retracts in \textup{(G5)}, as in
\cref{thm:cyclic-hpl-bf,cor:global-strict-bf}.  They are not obtained by
forgetting structure on the logarithmic retract; an identification requires
an additional comparison datum.  The resolved $b$/logarithmic route gives a
computable representative; the intrinsic $b$-route gives a strict model only
after the multiplicative trace datum and the conditional hypotheses
\textup{(G4)}--\textup{(G5)} have been supplied.
\Cref{thm:conifold-bf} gives the unconditional derived comparison in the
first non-ordinary example, while \cref{thm:conifold-trace} separates its
derived trace statement from the additional \textup{(G4)}--\textup{(G5)}
input required for strictification.

\section{Local continuum and cellular extensions}\label{sec:local-extensions}

The preceding abelian comparison is the field theory needed for the main strictification
result.  We record two further tests of the resolution mechanism: a finite-dimensional
nonabelian cellular shadow and an explicit codimension-two continuum propagator.  The
more general perturbative continuum transfer to nonlinear targets is not needed below and
is deferred to future work.

\subsection{A finite-dimensional nonabelian cellular shadow}

For a codimension-$k$ product blow-up the normal blow-down
$q_0:I\times\Delta^{k-1}\to C\Delta^{k-1}$ is a simple-homotopy equivalence relative to
the outer face.  Indeed the radial/barycentric homotopies can be chosen fixed
on that face, so after cellular approximation relative to the outer face they
give a homotopy equivalence of finite CW pairs.  The normal models are
contractible, hence their fundamental group is trivial and the relative
Whitehead torsion lies in $\operatorname{Wh}(1)=0$~\cite{CohenSimple}.
The product formula for torsion and the gluing theorem, applied relative to
the unchanged complement of the collar, show that taking the product with a
finite CW model of the centre and gluing to the identity preserve simple
homotopy.  We use this relative CW-pair statement in the following
finite-dimensional theorem.

\begin{theorem}[Nonabelian cellular resolution comparison]\label{thm:nonabelian-cellular}
Let two smooth monoidal resolutions of a compact generalized-corner manifold be connected
through an admitted common refinement by finite sequences of elementary radial blow-ups,
and choose blow-up-adapted finite ball-complex models (for instance, adapted
triangulations).  For a finite-dimensional unimodular
Lie algebra $\mathfrak g$, chosen representatives of the Cattaneo--Mnev--Reshetikhin
cellular BF actions are related by finite-dimensional
BV pushforwards and canonical transformations, after choosing the induction
data and determinant-line identifications of the cellular theory.  For either
chosen resolution
$X_{\cR_i}$, put
\[
 \F_{\mathrm{res}}(\cR_i)
 =H^\bullet(X_{\cR_i};\mathfrak g)[1]\oplus
 H_\bullet(X_{\cR_i};\mathfrak g^*)[-2].
\]
The simple-homotopy maps through the common refinement identify these
residual spaces, and under this identification the two partition functions
define the same residual BV cohomology class.
\end{theorem}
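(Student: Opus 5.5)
The plan is to reduce the statement to a finite chain of elementary moves and then invoke cellular BF invariance under simple-homotopy moves. By hypothesis the two resolutions $X_{\cR_1}$ and $X_{\cR_2}$ are joined through the admitted common refinement $X_{\cR_{12}}$, and each blow-down $\gamma_i:X_{\cR_{12}}\to X_{\cR_i}$ is a finite sequence of elementary radial blow-ups. It therefore suffices to treat one elementary radial blow-up $\beta:\widetilde N\to N$ of a product-type ordinary corner face $F$. Compositions and the two arms of the roof are then handled by concatenating the resulting BV pushforwards and canonical transformations. Their determinant-line identifications compose multiplicatively in the finite-dimensional cellular setting, which is the automatic case of \cref{ass:gaussian-determinant}.

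For a single move, I will use the relative CW-pair statement recorded just before the theorem. The normal blow-down $q_0:I\times\Delta^{k-1}\to C\Delta^{k-1}$ is a simple-homotopy equivalence relative to the outer face. Taking the product with a finite CW model of $F$ and gluing by the identity to the unchanged complement preserves simple homotopy, by the product and gluing formulas with $\operatorname{Wh}(1)=0$. With blow-up-adapted ball-complex models, the fine complex on $\widetilde N$ and the coarse complex on $N$ therefore become related by a finite sequence of elementary expansions, collapses and cellular aggregations, all supported in the collar. The Cattaneo--Mnev--Reshetikhin theorem \cite{CMRCellular} says the canonical cellular unimodular BF action transforms under each elementary collapse or aggregation by a finite-dimensional BV pushforward followed by a quantum canonical transformation. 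Its prototype is \cref{thm:interval-exact-compression} for the exceptional interval, and \cref{lem:interval-torsion} supplies the unit Gaussian normalization for the acyclic pair. Chaining these steps relates the chosen representatives on $N$ and $\widetilde N$, modulo the induction data and determinant-line choices named in the statement.

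For the residual spaces, I will push everything forward further to cohomology. Cellular cohomology of either model computes $H^\bullet(X_{\cR_i};\mathfrak g)$ and $H_\bullet(X_{\cR_i};\mathfrak g^*)$. The simple-homotopy equivalences, or equivalently \cref{cor:de-rham-blowup} and its homology dual, identify these spaces across each move, and hence identify $\F_{\mathrm{res}}(\cR_1)$ with $\F_{\mathrm{res}}(\cR_2)$ through $\F_{\mathrm{res}}(\cR_{12})$. Since a BV pushforward of a canonical transformation changes the effective action only by a canonical transformation, the two partition functions on residual fields differ by a BV-exact term. They therefore define the same residual BV cohomology class.

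The main obstacle is the second step. Applying the CMR aggregation theorem needs a genuinely \emph{cellular} simple-homotopy equivalence between the chosen ball complexes, and one that is compatible with the boundary or outer-face condition so that the corner faces are unchanged. I would first choose the ball-complex model on $\widetilde N$ as a product subdivision of the collar model on $N$, so that the move is literally a subdivision followed by collapses of $I\times\Delta^{k-1}$ onto the cone. I would then cite the subdivision/aggregation compatibility of \cite{CMRCellular}, rather than constructing abstract simple-homotopy data.
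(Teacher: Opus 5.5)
Your proposal is correct and follows essentially the same route as the paper. Each elementary radial blow-down becomes a simple-homotopy equivalence of the blow-up-adapted models via $\operatorname{Wh}(1)=0$ and the product/gluing formulas. The equivalence is factored into elementary expansions and collapses, and the Cattaneo--Mnev--Reshetikhin results (elementary-collapse compatibility of the cellular action, and simple-homotopy invariance of the residual pushforward modulo $\Delta$-exact terms) are composed along both arms of the common refinement. The only difference is in handling cellularity: you propose an explicit product model in which the prism literally collapses onto a cone subcomplex, whereas the paper cellularly approximates the homotopy equivalence of \cref{prop:radial-homotopy} relative to the outer collar.
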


\begin{proof}
Each elementary radial blow-down is a simple-homotopy equivalence by the preceding
normal-model argument.  More explicitly, the relative homotopy equivalence
constructed in \cref{prop:radial-homotopy} is cellularly approximated relative
to the outer collar.  Its normal source and target are contractible and have
trivial fundamental group, so their Whitehead torsion lies in
\(\operatorname{Wh}(1)=0\); the product and gluing formulas used above preserve
this conclusion for the global CW pair.

For a chosen induction datum, \cite[Thm.~8.6]{CMRCellular} associates a
cellular action, well defined up to canonical BV transformation.
\cite[Lem.~8.19]{CMRCellular} identifies BV pushforward along an elementary
collapse with the action on the collapsed complex up to such a
transformation, and \cite[Prop.~8.20]{CMRCellular} identifies the pushforward
to residual fields as a simple-homotopy invariant modulo
\(\Delta_{\mathrm{res}}\)-exact half-densities.  Factor each simple equivalence
into elementary expansions and collapses and compose these finite-dimensional
pushforwards along the two arms of the common refinement.
The induced cohomology and homology isomorphisms identify
$\F_{\mathrm{res}}(\cR_1)$ with $\F_{\mathrm{res}}(\cR_2)$, which gives the
displayed residual-space and BV-cohomology statement, with determinant lines
transported by the chosen contraction data.  No identification with the cohomology
of the unresolved topological space $X$ is required.
\end{proof}

\subsection{Stellar subdivision and the transferred BF interaction}\label{sec:stellar-transfer}

The preceding comparison is at the level of partition functions and residual
BV cohomology.  We now ask the sharper local question: does resolving an
exceptional simplex change the transferred \emph{interaction} itself?  The
answer separates the one-dimensional case from the general one.

For a triangulation $K$ of a smooth manifold, let
\[
  \bigl(C^\bullet(K),d_K\bigr)
  \underset{R}{\overset{W}{\rightleftarrows}}
  \bigl(\Omega^\bullet(|K|),d\bigr),
  \qquad
  ds+sd=\id-WR,
\]
be the Whitney--integration--Dupont contraction~\cite{Dupont76}.  Tensoring with $\mathfrak g$ gives a contraction of the differential graded Lie algebra $\Omega^\bullet(|K|;\mathfrak g)$ onto cellular cochains.  Homotopy transfer produces brackets
\[
  \ell_1=d_K,
  \qquad
  \ell_2(a,b)=R[Wa,Wb],
\]
and, with standard Koszul signs,
\[
  \ell_3(a,b,c)
  =\sum_{\mathrm{cyc}}\!\epsilon(a,b,c)\,
    R\bigl[s[Wa,Wb],Wc\bigr].
\]
Higher brackets are sums over rooted trees whose internal edges are decorated by $s$.  These brackets generate the tree-level cellular BF action.

Let $K_\star$ be a stellar subdivision of one simplex of $K$.  The stellar welding contraction between $C^\bullet(K_\star)$ and $C^\bullet(K)$ will be denoted
\[
  C^\bullet(K)
  \underset{p_\star}{\overset{i_\star}{\rightleftarrows}}
  C^\bullet(K_\star),
  \qquad a_\star.
\]
Composing it with Dupont contraction on $K_\star$ gives contraction data from forms to coarse cochains with
\[
  W_{\mathrm{comp}}=W_\star i_\star,
  \qquad
  R_{\mathrm{comp}}=p_\star R_\star,
\]
\[
  s_{\mathrm{comp}}
  =s_\star+W_\star a_\star R_\star.
\]
Albert proves that the first two maps agree with the coarse Whitney and integration maps and that
\[
  s_{\mathrm{comp}}-s=d\Upsilon-\Upsilon d
\]
for a degree $-2$ operator $\Upsilon$; for subdivision of a one-simplex, the stronger equality $s_{\mathrm{comp}}=s$ holds \cite{AlbertStellar}.

\subsubsection*{Exact transfer for subdivision of an edge}

\begin{theorem}[No tree-level counterterm for an edge subdivision]\label{thm:edge-exact}
Let $K_\star$ be obtained from $K$ by stellar subdivision of an edge.  Transfer the dg Lie algebra $\Omega^\bullet(|K|;\mathfrak g)$ directly to $C^\bullet(K;\mathfrak g)$ using coarse Dupont data, or first to $C^\bullet(K_\star;\mathfrak g)$ and then through stellar welding to $C^\bullet(K;\mathfrak g)$.  The two transferred $L_\infty$ structures coincide term by term.  Consequently, the tree-level effective BF actions are identical under the canonical identification of coarse fields.
\end{theorem}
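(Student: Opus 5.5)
The plan is to reduce the statement to Albert's edge-subdivision identity $s_{\mathrm{comp}}=s$, quoted just before the theorem, and then compare the homotopy-transfer tree formulas directly.

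The transferred $L_\infty$ structure from a dg Lie algebra along contraction data $(W,R,s)$ is given by the rooted-tree formula. In it, each tree with $m$ leaves contributes $R\circ(\text{nested brackets with internal edges decorated by } s)\circ W^{\otimes m}$, with Koszul signs that depend only on the tree combinatorics and the degrees. The output is therefore a universal function of the triple $(W,R,s)$ and the bracket, and the bracket is the same $\Omega^\bullet(|K|;\mathfrak g)$ in both routes. So it suffices to show that the two routes amount to transferring along the \emph{same} contraction data.

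First, I would show that two-step transfer agrees with one-step transfer along the composite contraction. The route ``forms $\to C^\bullet(K_\star)\to C^\bullet(K)$'' does not literally compose contraction data. Instead it first produces an $L_\infty$ structure on $C^\bullet(K_\star;\mathfrak g)$ and then transfers that $L_\infty$ structure along $(i_\star,p_\star,a_\star)$. I would invoke the standard composition property of homotopy transfer: transferring an $L_\infty$ algebra along a contraction, after first transferring it along another, equals transferring once along the composite contraction $(W_\star i_\star,\,p_\star R_\star,\,s_\star+W_\star a_\star R_\star)$. This holds provided side conditions hold so that the composite is again a special deformation retract. I expect this step to be the main obstacle. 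The tree formula for the second transfer has internal edges decorated by $a_\star$, with vertices that are themselves transferred brackets $R_\star[\ldots s_\star\ldots]W_\star$. One must check that expanding these nested trees reorganizes exactly into trees decorated by $s_\star+W_\star a_\star R_\star$. The check goes edge by edge: an internal edge of the big tree either stays inside a vertex, giving $s_\star$, or passes between vertices, giving $W_\star a_\star R_\star$. The side conditions $a_\star i_\star=0$, $p_\star a_\star=0$, $a_\star^2=0$ and $s_\star W_\star=0$, $R_\star s_\star=0$, $s_\star^2=0$ kill the cross terms, in particular those in which two homotopies would be adjacent. I would carry this out by induction on the number of internal vertices, citing the composition lemma in \cite{LodayVallette} if it is available in that form.

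Second, I would plug in Albert's results \cite{AlbertStellar}: $W_\star i_\star=W$, $p_\star R_\star=R$, and for an edge $s_{\mathrm{comp}}=s$. The composite contraction therefore equals the coarse Dupont contraction on the nose, so every tree term, and hence every $\ell_n$, coincides term by term. The effective tree-level BF action is $\sum_n\frac1{(n+1)!}\langle B,\ell_n(A,\ldots,A)\rangle$ under the canonical identification of coarse fields, so the actions are identical. I would note explicitly that only equality of the homotopies, not the weaker $d\Upsilon-\Upsilon d$ relation, gives term-by-term equality; this is why the statement is restricted to edges.
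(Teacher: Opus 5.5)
Your proposal is correct and follows the paper's route: Albert's identities $W_\star i_\star=W$, $p_\star R_\star=R$ and $s_{\mathrm{comp}}=s$, combined with the universality of the rooted-tree transfer formulas. You also make explicit a step the paper's proof leaves implicit, namely that transferring twice agrees with transferring once along $(W_\star i_\star,\,p_\star R_\star,\,s_\star+W_\star a_\star R_\star)$.

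That step is easier than you expect. In a one-step tree, each internal edge sits between two bracket vertices. Expanding it as $s_\star$ or $W_\star a_\star R_\star$ therefore never puts two homotopies next to each other, so there are no cross terms to cancel. The expansion is simply a bijection between 2-colourings of internal edges and trees of trees, and this identity holds without using the side conditions.
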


\begin{proof}
For an edge subdivision, Albert's one-dimensional compatibility theorem gives
\[
  W_{\mathrm{comp}}=W,
  \qquad
  R_{\mathrm{comp}}=R,
  \qquad
  s_{\mathrm{comp}}=s.
\]
Every homotopy-transfer bracket is a universal rooted-tree expression in $W$, $R$, the dg Lie bracket, and the propagator $s$.  Since all these ingredients coincide, every transferred bracket coincides.  Pairing the brackets with the dual $B$-field yields equality of the tree-level BV actions.
\end{proof}

This proves that the simplest exceptional interval does not generate a new classical local interaction.  At the finite-dimensional quantum level, the remaining measure-dependent discrepancy is precisely of the canonical/$\Delta$-exact type controlled by cellular BV pushforward.

\subsubsection*{Higher stellar subdivisions: equality is replaced by equivalence}

Set
\[
  \Xi=s_{\mathrm{comp}}-s=d\Upsilon-\Upsilon d.
\]
The binary bracket is unchanged because it depends only on $W$ and $R$.  The first possible defect occurs at arity three:
\[
  \ell_3^{\mathrm{comp}}(a,b,c)-\ell_3(a,b,c)
  =\sum_{\mathrm{cyc}}\!\epsilon(a,b,c)\,
   R\bigl[\Xi[Wa,Wb],Wc\bigr].
\]
Thus the correction is supported on trees containing at least one internal propagator.

\begin{theorem}[Canonical equivalence for a general stellar subdivision]\label{thm:stellar-equivalence}
Assume a complete descending filtration for which the homotopy-transfer series converge formally.  For an arbitrary stellar subdivision, the directly transferred and the fine-then-coarse transferred $L_\infty$ structures on $C^\bullet(K;\mathfrak g)$ are related by a filtered $L_\infty$ isomorphism whose linear part is the identity.  The corresponding tree-level BF actions are related by a classical canonical BV transformation.
\end{theorem}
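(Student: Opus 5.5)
The plan is to treat the composite data $(W_{\mathrm{comp}},R_{\mathrm{comp}},s_{\mathrm{comp}})$ and the coarse Dupont data $(W,R,s)$ as two contractions with identical inclusion and projection. Their homotopies differ by the exact operator $\Xi=d\Upsilon-\Upsilon d$ supplied by Albert's theorem. I would then invoke the standard result that homotopy transfer is independent of the homotopy up to $L_\infty$ isomorphism, and make the isomorphism explicit. Concretely, I interpolate $s_t=s+t\Xi$ for $t\in[0,1]$. Since $W$ and $R$ do not change, each $s_t$ still satisfies $ds_t+s_td=\id-WR$: indeed $d\Xi+\Xi d=d^2\Upsilon-d\Upsilon d+d\Upsilon d-\Upsilon d^2=0$. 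The side conditions are not needed, or can be restored by the usual normalization, which does not change $W$ or $R$. This gives a one-parameter family of transferred structures $\ell^{(t)}$, all with the same $\ell_1=d_K$ and $\ell_2=R[W-,W-]$.

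The key step is to differentiate in $t$. Differentiating a rooted-tree expression replaces one internal edge $s_t$ by $\Xi$. Writing $\Xi=d\Upsilon-\Upsilon d$ and moving the $d$'s through the tree by the dg Lie Leibniz rule, the derivative takes the form $\tfrac{d}{dt}\ell^{(t)}=[\ell^{(t)},\theta_t]$. Here the bracket is the Nijenhuis--Richardson (Chevalley--Eilenberg) bracket, and $\theta_t$ is a degree-zero coderivation given by trees with one internal edge labelled $\Upsilon$. At the leaves, the $d$'s are absorbed into $\ell_1=d_K$ through $dW=Wd_K$; at the root they become $d_KR$, or they reach another internal edge, where $ds_t+s_td=\id-WR$ produces the splitting terms matched by lower trees. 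This is the standard gauge-flow argument, as in Loday--Vallette or Fukaya's homotopy-of-contractions lemma.

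The components of $\theta_t$ have arity at least two, since any tree containing an internal edge has at least two leaves. The flow $\tfrac{d}{dt}\Phi_t=\theta_t\circ\Phi_t$ is therefore solvable in the complete descending filtration, using the assumed formal convergence. Its time-one map $\Phi_1$ is a filtered $L_\infty$ isomorphism from $\ell$ to $\ell^{\mathrm{comp}}$ whose linear part is the identity. As a consistency check, the arity-three component of this flow recovers the displayed formula $\ell_3^{\mathrm{comp}}-\ell_3=\sum\epsilon\,R[\Xi[Wa,Wb],Wc]$, since $\Xi$ enters linearly at the first order.

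For the BF statement, I pair with the dual $B$-field. The tree-level action is $S=\langle B,\sum\frac1{n!}\ell_n(A,\dots,A)\rangle$, and an $L_\infty$ isomorphism with identity linear part acts on fields by the formal map $A\mapsto\Phi_1(A)$. Extending it by the transposed action on $B$ produces a symplectomorphism of $T^*[-1]$, namely the classical canonical transformation generated by $\langle B,\sum\theta(A,\dots)\rangle$ integrated over $t$, and this carries $S$ to $S^{\mathrm{comp}}$.

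The step I expect to be the main obstacle is the sign and combinatorial bookkeeping in the identity $\tfrac{d}{dt}\ell^{(t)}=[\ell^{(t)},\theta_t]$. In particular, I need to check that the terms where $d$ hits an internal $s_t$ edge cancel against the $WR$ splitting exactly, rather than leaving a residual term. I would handle this first at arity three by hand, and then by induction on the number of internal edges.
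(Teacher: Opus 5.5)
Your proposal is correct and follows essentially the paper's argument: interpolate $s_t=s+t(d\Upsilon-\Upsilon d)$ with $W,R$ fixed, obtain a gauge flow whose generator has no linear component, integrate it in the complete filtration to a filtered $L_\infty$ isomorphism with identity linear part, and cotangent-lift that isomorphism to a classical canonical transformation of the BF action. The main difference is how the side conditions are handled. The paper first restores them along the path by $\check s_t=\widehat s_t\,d\,\widehat s_t$ with $\widehat s_t=(\id-WR)s_t(\id-WR)$, checks that $\check s_0=s$ and $\check s_1=s_{\mathrm{comp}}$, and then cites the abstract path-of-contractions result. You instead keep the unnormalized path and write the gauge generator explicitly as trees with one $\Upsilon$-edge. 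That is legitimate, since your computation uses only $ds_t+s_td=\id-WR$, the chain-map property of $W$ and $R$, and the exactness of $\dot s_t$. One minor correction: a binary tree with an internal edge has at least three leaves, not two, though all you need is that the generator has no arity-one part.
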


\begin{proof}
The family
\[
  s_t=s+t(d\Upsilon-\Upsilon d),\qquad 0\leq t\leq1,
\]
still satisfies
\[
  ds_t+s_td=\id-WR.
\]
The three side conditions are not automatic along this family, but they can be
imposed without leaving it.  Put
\[
  \widehat s_t=(\id-WR)\,s_t\,(\id-WR),
  \qquad
  \check s_t=\widehat s_t\,d\,\widehat s_t .
\]
Since $RW=\id$, the operator $\id-WR$ is idempotent and commutes with $d$, so
$\widehat s_tW=0$, $R\widehat s_t=0$ and
$d\widehat s_t+\widehat s_td=(\id-WR)^3=\id-WR$.  Given these two annihilation
conditions one has $Rd\widehat s_t=d_KR\widehat s_t=0$, hence
$d\widehat s_td\widehat s_t=d\widehat s_t$ and therefore
$d\widehat s_t\widehat s_td=0$; consequently $\check s_t$ satisfies
$d\check s_t+\check s_td=\id-WR$ together with all three side conditions,
the last because
$\check s_t^2=\widehat s_t(d\widehat s_t\widehat s_td)\widehat s_t=0$.
Both replacements are polynomial in $s_t$, so $t\mapsto\check s_t$ is again
polynomial, and $\check s_0=s$, $\check s_1=s_{\mathrm{comp}}$ because $s$ and
$s_{\mathrm{comp}}$ already satisfy the side conditions.  Replacing $s_t$ by
$\check s_t$, we have a path of contraction data with fixed inclusion and
projection.  Homotopy transfer applied to a path of contractions gives a path of Maurer--Cartan elements in the convolution Lie algebra controlling $L_\infty$ structures \cite{LodayVallette}.  Differentiation with respect to $t$ is a gauge vector field; integration produces a filtered $L_\infty$ isomorphism with linear part $\id$.  Under the cotangent BF pairing, an $L_\infty$ gauge equivalence is represented by a classical canonical transformation of the BV action.  Formal completeness ensures that the rooted-tree series and the integrated gauge transformation are defined.
\end{proof}

\begin{corollary}[Interpretation of exceptional interactions]\label{cor:exceptional-interactions}
A higher-dimensional stellar subdivision may change individual higher brackets and hence may produce local interaction terms involving exceptional cellular modes.  These terms do not define new tree-level physics: after the exceptional modes are transferred out, they differ from the coarse interaction by an $L_\infty$ gauge equivalence.  Literal equality occurs for edge subdivision; canonical equivalence is the correct general statement.
\end{corollary}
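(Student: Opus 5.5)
The plan is to derive the corollary directly from \cref{thm:stellar-equivalence} and \cref{thm:edge-exact}, adding only an interpretation of what the transferred correction is supported on. The argument has three parts.

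\emph{First}, I will establish the first sentence by locating the discrepancy. Albert's identities $W_{\mathrm{comp}}=W$ and $R_{\mathrm{comp}}=R$ show that $\ell_1$ and $\ell_2$ are unchanged, since both are built from $W$ and $R$ alone. Every higher bracket is a rooted-tree sum in which each internal edge carries a propagator. The difference of two such sums telescopes into terms containing at least one factor $\Xi=s_{\mathrm{comp}}-s$, as in the arity-three formula displayed before \cref{thm:stellar-equivalence}.

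Next I will observe that $\Xi=d\Upsilon-\Upsilon d$ with $\Upsilon$ supported over the subdivided simplex. This is because $s_\star$, $a_\star$ and $s$ agree off the star of the new vertex, the welding contraction being the identity there. Hence the correction terms are local, and after composing with $R_{\mathrm{comp}}=p_\star R_\star$ they involve exactly the modes eliminated by welding, namely the exceptional cellular modes.

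To show that such terms may actually be nonzero, I will not compute an example. It is enough to note that $\Xi$ is not required to vanish, so ``may'' is the correct quantifier. If a witness is wanted, I would try a $2$-simplex subdivided at its barycentre with $\mathfrak g=\mathfrak{su}(2)$ and evaluate $\ell_3$ on three edge cochains. That computation would be the main obstacle if carried out, and I regard it as optional.

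\emph{Second}, for the ``no new tree-level physics'' sentence I will invoke \cref{thm:stellar-equivalence} under its formal-completeness hypothesis. That theorem gives a filtered $L_\infty$ isomorphism with identity linear part between the direct and the fine-then-coarse transferred structures. Via the cotangent BF pairing, this isomorphism becomes a classical canonical BV transformation of the tree-level actions. Since the identity linear part fixes the coarse field identification, the two interactions define the same gauge-equivalence class.

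\emph{Third}, the final sentence merely collects the two cases. For an edge subdivision $s_{\mathrm{comp}}=s$, and \cref{thm:edge-exact} gives term-by-term equality. For a general stellar subdivision, canonical equivalence is what \cref{thm:stellar-equivalence} supplies.

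I expect the only delicate point to be the locality claim for $\Upsilon$ in the first part. I would justify it from the explicit support of Albert's construction, and otherwise phrase the corollary so that it does not depend on that locality.
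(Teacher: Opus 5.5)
Your proposal is correct and follows the paper's route. The paper gives no separate proof: the corollary is read off from the displayed arity-three defect $\ell_3^{\mathrm{comp}}-\ell_3=\sum_{\mathrm{cyc}}\epsilon(a,b,c)\,R[\Xi[Wa,Wb],Wc]$, with $\ell_1,\ell_2$ unchanged because $W_{\mathrm{comp}}=W$ and $R_{\mathrm{comp}}=R$. To this it adds \cref{thm:stellar-equivalence} for the gauge/canonical equivalence and \cref{thm:edge-exact} for literal equality in the edge case, which is exactly your decomposition.

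Two small cautions on your locality aside, neither of which affects the conclusion:
\begin{itemize}
\item The support claim for $\Upsilon$ is neither needed nor available from the paper. At most, the localization of $\Xi=(s_\star-s)+W_\star a_\star R_\star$ near the subdivided simplex can be argued from the simplexwise Dupont homotopies and the relative welding homotopy.
\item The exceptional modes enter through the internal propagator term $W_\star a_\star R_\star$. They do not enter through the outer projection $p_\star R_\star$, which is precisely what removes them.
\end{itemize}
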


\subsubsection*{What the stellar calculation establishes}

\Cref{thm:nonabelian-cellular,thm:edge-exact,thm:stellar-equivalence} answer the first refinement question at three levels:
\begin{enumerate}[label=(\roman*),leftmargin=2.2em]
\item the full finite-dimensional canonical cellular partition function is independent of resolution in residual BV cohomology;
\item for an exceptional interval, the classical transferred interaction is exactly unchanged;
\item for a higher exceptional simplex, changed higher operations are gauge-equivalent, with the first defect explicitly controlled by $\Xi=d\Upsilon-\Upsilon d$.
\end{enumerate}
What remains open is the promotion of this result to the complete continuum, facewise AKSZ--BV--BFV system.  The propagator required for a product codimension-two resolution chart, together with its compatibility with every local face restriction, is \cref{thm:codim2-continuum-propagator} below.  Global compatibility under changes of monoidal chart and composition of refinements remains open.

\subsection{The codimension-two continuum collar}

Let $\widetilde C_\Gamma=\Gamma\times I_r\times I_\theta$ be the resolved product collar,
with exceptional face $E=\{r=0\}$ and adjacent faces $F_0=\{\theta=0\}$,
$F_1=\{\theta=1\}$.  Let $j_E:E\hookrightarrow\widetilde C_\Gamma$ and let
$\pi_E$ be the radial retraction.  Put
\[
 P_E:=\pi_E^*j_E^*.
\]
Choose the normalized radial integration homotopy $H_r$ so that
\[
 dH_r+H_rd=\id-P_E,\qquad
 H_r^2=H_rP_E=P_EH_r=0,\qquad j_E^*H_r=0.
\]
Let
$(W_E,R_E,\kappa_E)$ be the Whitney--integration--Dupont contraction on the angular
interval, tensored with the identity on $\Omega^\bullet(\Gamma)$, and set
\[
 \mathcal I=\pi_E^*W_E,\qquad
 \mathcal P=R_Ej_E^*,\qquad
 \mathcal K=H_r+\pi_E^*\kappa_Ej_E^*.
\]

\begin{theorem}[Codimension-two collar propagator]\label{thm:codim2-continuum-propagator}
These maps form a special deformation retract
\[
 \Omega^\bullet(\widetilde C_\Gamma)
 \quad\rightleftarrows\quad
 \Omega^\bullet(\Gamma)\otimes C^\bullet(I),
\]
with
\[
 d\mathcal K+\mathcal Kd=\id-\mathcal I\mathcal P,
 \qquad \mathcal P\mathcal I=\id,
 \qquad \mathcal K^2=\mathcal K\mathcal I=\mathcal P\mathcal K=0.
\]
Moreover $j_E^*\mathcal K=\kappa_Ej_E^*$, while on $F_0,F_1$ it is the corresponding
radial homotopy; it vanishes at the two endpoint corners.
\end{theorem}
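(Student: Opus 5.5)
The plan is to recognize the displayed data as the composite of two special deformation retracts and then invoke the standard composition formula for such retracts. The first retract is radial:
\[
 \Omega^\bullet(\widetilde C_\Gamma)\rightleftarrows\Omega^\bullet(E),
 \qquad \text{inclusion }\pi_E^*,\ \text{projection }j_E^*,\ \text{homotopy }H_r.
\]
Its identities are the stated properties of $H_r$, namely $dH_r+H_rd=\id-\pi_E^*j_E^*$ and $H_r^2=H_rP_E=P_EH_r=0$, together with $j_E^*\pi_E^*=\id$ because $\pi_E$ retracts onto $E$. The second retract is the angular Whitney--Dupont contraction $(W_E,R_E,\kappa_E)$ of \eqref{eq:interval-SDR}, tensored with $\Omega^\bullet(\Gamma)$. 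Since $E\cong\Gamma\times I_\theta$ and the tensor product is taken with the identity (with the Koszul sign on $\kappa_E$), this still satisfies \eqref{eq:interval-SDR}.

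The composition rule for a chain $L\rightleftarrows M\rightleftarrows N$ with data $(i_1,p_1,h_1)$ and $(i_2,p_2,h_2)$ gives $i=i_1i_2$, $p=p_2p_1$ and $h=h_1+i_1h_2p_1$. With $i_1=\pi_E^*$, $p_1=j_E^*$, $h_1=H_r$ and $i_2=W_E$, $p_2=R_E$, $h_2=\kappa_E$, this reproduces $\mathcal I$, $\mathcal P$ and $\mathcal K$ exactly. I will verify the identities directly:
\[
 d\mathcal K+\mathcal Kd=(\id-P_E)+\pi_E^*(\id-W_ER_E)j_E^*=\id-\mathcal I\mathcal P,
\]
using that $\pi_E^*$ and $j_E^*$ are chain maps, and
\[
 \mathcal P\mathcal I=R_Ej_E^*\pi_E^*W_E=R_EW_E=\id.
\]
For the side conditions, $\mathcal K\mathcal I=H_r\pi_E^*W_E+\pi_E^*\kappa_EW_E$ vanishes. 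The second term vanishes by $\kappa_EW_E=0$. For the first, $H_r\pi_E^*=H_rP_E\pi_E^*=0$, since $P_E\pi_E^*=\pi_E^*j_E^*\pi_E^*=\pi_E^*$. Similarly, $\mathcal P\mathcal K=R_Ej_E^*H_r+R_E\kappa_Ej_E^*=0$ by $j_E^*H_r=0$ and $R_E\kappa_E=0$. For $\mathcal K^2$, the cross terms $H_r\pi_E^*\kappa_Ej_E^*$ and $\pi_E^*\kappa_Ej_E^*H_r$ vanish by the same two facts, $H_r^2=0$ is given, and $\pi_E^*\kappa_E j_E^*\pi_E^*\kappa_Ej_E^*=\pi_E^*\kappa_E^2j_E^*=0$.

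Next come the face statements. I will compute $j_E^*\mathcal K=j_E^*H_r+j_E^*\pi_E^*\kappa_Ej_E^*=\kappa_Ej_E^*$. On $F_a=\{\theta=a\}$, $a\in\{0,1\}$, the radial retraction $\pi_E$ preserves $\theta$. Hence pullback to $F_a$ intertwines $\pi_E^*$ with the radial retraction of $F_a\cong\Gamma\times I_r$ onto the corner $E\cap F_a$, and $H_r$ restricts to the corresponding radial homotopy on $F_a$. This holds because $H_r$ is fibre integration in $r$ alone, with $\theta$ as a parameter. The angular term restricts to $\pi^*\,\ev_a\kappa_E\,j^*$, which vanishes by \eqref{eq:Dupont-endpoint-vanish}. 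So $\mathcal K|_{F_a}$ is exactly the radial homotopy of $F_a$. At the corners $E\cap F_a$, both summands vanish: $j_E^*H_r=0$ kills the radial term, and endpoint vanishing of $\kappa_E$ kills the angular term.

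The main obstacle is the construction of $H_r$ with all of its stated properties at once, which the statement takes as a choice. I would first try the $\theta$-parametric normalized fibre integral $H_r\omega=\int_0^r\iota_{\partial_s}\phi_s^*\omega\,ds$ for the scaling homotopy $\phi_s$ to $E$. This gives $j_E^*H_r=0$, and $H_rP_E=0$ because $\pi_E^*$-pulled-back forms have no $dr$ component. The conditions $H_r^2=0$ and $P_EH_r=0$ then follow from the absence of $dr$ in $H_r\omega$ together with vanishing at $r=0$. If any side condition fails, the fallback is the same idempotent correction $\widehat s=(\id-P_E)s(\id-P_E)$ and $\check s=\widehat s d\widehat s$ used in the proof of \cref{thm:stellar-equivalence}. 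That correction preserves restriction to $F_a$, because every operator involved commutes with pullback to $F_a$.
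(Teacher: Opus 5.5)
Your proposal is correct and follows the paper's proof. The paper likewise reads $(\mathcal I,\mathcal P,\mathcal K)$ as the composite of the radial retract $(\pi_E^*,j_E^*,H_r)$ with the tensored Whitney--Dupont retract. It verifies $d\mathcal K+\mathcal Kd=\id-\mathcal I\mathcal P$ by the same one-line computation, and it obtains the side conditions and face statements from the normalization of $H_r$, from $j_E^*H_r=0$, and from $\operatorname{ev}_0\kappa=\operatorname{ev}_1\kappa=0$.

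Your term-by-term check of the side conditions makes explicit what the paper leaves implicit. So does your remark that identifying $\mathcal K|_{F_a}$ with the radial homotopy of $F_a$ also requires $H_r$ to commute with restriction to $F_a$. That property is automatic for the $\theta$-parametric fibre integral you propose, but the paper's stated normalization of $H_r$ does not include it.
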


\begin{proof}
The tensor trick gives the type-correct identity
\[
d\mathcal K+\mathcal Kd
=(\id-P_E)+\pi_E^*(\id-W_ER_E)j_E^*
=\id-\pi_E^*W_ER_Ej_E^*
=\id-\mathcal I\mathcal P.
\]
The side conditions follow from the displayed normalization of $H_r$ and
those of $\kappa_E$.  Face compatibility uses
$j_E^*H_r=0$ and the endpoint identities
$\operatorname{ev}_0\kappa=\operatorname{ev}_1\kappa=0$.
\end{proof}

\begin{theorem}[Abelian exceptional interval]\label{thm:abelian-continuum-blowup}
For abelian BF theory on $\widetilde C_\Gamma$, choose complementary
relative/absolute $A/B$ boundary conditions (or an algebraic dual model) for
which the $B$-retract is adjoint to the $A$-retract above.  Then the preceding
retract reduces the continuum complex to
\[
 \Omega^\bullet(\Gamma;V)\otimes C^\bullet(I)
 \oplus
 \Omega^\bullet(\Gamma;V^*)\otimes C_\bullet(I).
\]
The diagonal endpoint mode is the coarse corner field and the anti-diagonal/edge pair is
contractible.  Classical BV elimination therefore gives exactly the coarse abelian BF corner
action, with no field-dependent counterterm.  Under \cref{ass:gaussian-determinant} (or a
finite-dimensional cutoff), the remaining quantum factor is the field-independent torsion of
the reduced interval complex and may be absorbed by the elementary-collapse normalization.
\end{theorem}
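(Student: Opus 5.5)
The plan is to build the $A$-side retract from \cref{thm:codim2-continuum-propagator} and obtain the $B$-side retract as its adjoint, using the chosen complementary relative/absolute conditions (or, in a finite model, the shifted algebraic transpose). Concretely, for the $A$-fields the retract will be $(\mathcal I,\mathcal P,\mathcal K)$ tensored with the identity on $V$. On the $B$-side we take the graded transposes $(\mathcal P^\vee,\mathcal I^\vee,\mathcal K^\vee)$ for the BF pairing
\[
 \langle B,A\rangle=\int_{\widetilde C_\Gamma}\langle B\wedge A\rangle .
\]
By the stated boundary hypothesis, this transpose is again a special deformation retract. Its target is $\Omega^\bullet(\Gamma;V^*)\otimes C_\bullet(I)$, because the transpose of the Whitney/integration pair on $C^\bullet(I)$ is the cellular chain pair. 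The adjointness relations (inclusion against projection, homotopy against homotopy with the Koszul sign) then hold by construction. The reduced BF pairing is the Poincaré pairing on $\Gamma$ tensored with the evaluation pairing $C^\bullet(I)\otimes C_\bullet(I)\to\R$.

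The second step splits $C^\bullet(I)$ by \cref{prop:diagonal-interval} as $\R\epsilon_+\oplus\operatorname{span}\{\epsilon_-,\epsilon_{01}\}$, and splits $C_\bullet(I)$ dually. The diagonal mode $\epsilon_+$ pulls back to the constant angular field, i.e. $\pi_E^*$ of a form on $\Gamma$. This is exactly the coarse corner field of the unblown collar, so we identify it with the coarse field via the interior diffeomorphism. The anti-diagonal/edge pair, together with its dual, is a contractible cotangent pair. We compose the continuum retract with the diagonal contraction $h_+$ and its transpose; by the standard composition of special deformation retracts, this remains special and cyclic.

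For the classical statement, the action $\langle B,dA\rangle$ is quadratic. BV elimination of a contractible cyclic pair therefore sets the auxiliary fields to the values forced by the homotopies and produces no vertices. The effective action is the transferred quadratic form $\langle B,D_{\mathrm{str}}A\rangle$. Since all maps are cochain maps intertwining $d$, and $\epsilon_+$ is closed, the transferred differential on the diagonal sector is $d_\Gamma\otimes\id$. This is exactly the coarse abelian corner action, and no field-dependent counterterm can appear, since such a term would require an interaction.

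For the quantum factor, with the finite cutoff or under \cref{ass:gaussian-determinant}, the Gaussian over the reduced sector is field independent. Its determinant factorizes by the Fubini clause (iii) into the continuum contraction and the cellular reduced interval. \Cref{lem:interval-torsion} evaluates the latter to $1$ in elementary-collapse bases, so the constant is absorbed by that normalization.

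The main obstacle is step one: checking that $\mathcal K^\vee$ is a genuine homotopy on $B$-fields with the correct relative boundary conditions, including face compatibility at $E$ and at $F_0,F_1$. I would verify this using $j_E^*H_r=0$ and the endpoint vanishing \eqref{eq:Dupont-endpoint-vanish}, so that Stokes boundary terms vanish in $\langle \mathcal K a,b\rangle=\pm\langle a,\mathcal K^\vee b\rangle$. This is precisely the part the hypothesis of the theorem is designed to grant.
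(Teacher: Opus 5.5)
Your proposal is correct and follows the paper's route: apply \cref{thm:codim2-continuum-propagator} to the $A$-complex and its hypothesis-granted adjoint to the $B$-complex, split off the contractible anti-diagonal/edge pair with \cref{prop:diagonal-interval}, eliminate it classically as a contractible cotangent pair, and defer the determinant claim to \cref{ass:gaussian-determinant}. Your extra details flesh out the paper's three-line sketch without changing the argument: the no-mixed-term consequence of the cyclic splitting, the transferred differential $d_\Gamma\otimes\id$ on the diagonal sector, and the Fubini factorization together with \cref{lem:interval-torsion}.
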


\begin{proof}
Apply \cref{thm:codim2-continuum-propagator} to the $A$-complex and its dual to the
$B$-complex, then use the diagonal interval contraction of
\cref{prop:diagonal-interval}.  The classical statement is elimination of a contractible
cotangent pair; the determinant statement is exactly the scope of
\cref{ass:gaussian-determinant}.
\end{proof}

\subsection{The nonabelian exceptional interval implements gauge transport}

Write $\Gamma_0=E\cap F_0$ and $\Gamma_1=E\cap F_1$ for the two codimension-three
faces of the resolved collar $\widetilde C_\Gamma$ of the previous subsection.

For nonabelian BF theory the interval sector is not eliminated by the linear contraction without producing nonlinear structure.  The relevant one-dimensional pushforward is nevertheless explicit.  Let $\mathfrak l$ be a complete filtered dg Lie algebra; in the application one takes formally, or after a finite-mode cutoff,
\[
  \mathfrak l=\Omega^\bullet(\Gamma;\mathfrak g)
\]
with differential $d_\Gamma$ and the wedge--Lie bracket.  The Whitney--Dupont pushforward of BF theory on $I$ retains endpoint ghosts $c_0,c_1$ and an edge field $a$.  Its classical cellular action has the universal interval term
\begin{equation}\label{eq:nonabelian-interval-term}
  \left\langle a^+,
    F_+(\operatorname{ad}_a)c_1
    -F_-(\operatorname{ad}_a)c_0
  \right\rangle,
\end{equation}
where
\begin{equation}\label{eq:Fpm}
  F_+(z)=\frac{z}{1-e^{-z}},
  \qquad
  F_-(z)=\frac{z}{e^z-1}=e^{-z}F_+(z).
\end{equation}
The remaining terms are the endpoint Chevalley--Eilenberg terms, the $d_\Gamma$ contribution, and their cotangent lift.  In finite dimension the one-loop correction is $-i\hbar\,\trg\log G(\operatorname{ad}_a)$ with $G$ as in \eqref{eq:interval-action}.  These formulas are the interval effective action of canonical nonabelian BF theory \cite{CMRCellular,CMRPushforward}.  In the superfield notation of \cref{prop:canonical-interval-block} one has $c_0=A_0$, $c_1=A_1$, $a=A_{01}$, $a^+=B_{01}$, and \eqref{eq:nonabelian-interval-term} is the $F_\pm$ form of the interval term in \eqref{eq:interval-action}: since
\[
  F_\pm(z)=F(z)\pm\frac z2,
\]
the identity $F_+(\operatorname{ad}_a)c_1-F_-(\operatorname{ad}_a)c_0=F(\operatorname{ad}_a)(c_1-c_0)+\frac12[a,c_0+c_1]$ recovers it exactly.

\begin{proposition}[Exceptional holonomy relation]\label{prop:exceptional-holonomy}
In a formal neighborhood of $a=0$, the equation obtained by varying $a^+$ in \eqref{eq:nonabelian-interval-term} is equivalent to
\begin{equation}\label{eq:ghost-transport}
  c_1=e^{-\operatorname{ad}_a}c_0.
\end{equation}
The nonlinear infrared projection sends a continuum connection in the exceptional direction to
\begin{equation}\label{eq:log-holonomy}
  a=-\log\operatorname{Pexp}
  \left(-\int_0^1 A_\theta\,\dd\theta\right).
\end{equation}
Hence the exceptional interval identifies the two endpoint corner complexes by gauge transport.  Its linearization at the trivial holonomy is the abelian relation $c_1=c_0$.
\end{proposition}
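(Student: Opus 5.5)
The plan is to prove the three assertions of \cref{prop:exceptional-holonomy} in turn: the ghost-transport equation \eqref{eq:ghost-transport}, the holonomy formula \eqref{eq:log-holonomy} for the nonlinear infrared projection, and the linearization at trivial holonomy.

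\textbf{Step 1: the $a^+$-equation.} Varying $a^+$ in \eqref{eq:nonabelian-interval-term} gives
\[
 F_+(\ad_a)c_1=F_-(\ad_a)c_0 .
\]
In a formal neighbourhood of $a=0$, $F_+(\ad_a)=\id+\tfrac12\ad_a+\cdots$ is invertible as a power series in the complete filtered algebra $\mathfrak l$. Applying its inverse and using the identity $F_-(z)=e^{-z}F_+(z)$ from \eqref{eq:Fpm}, the equation becomes
\[
 c_1=F_+(\ad_a)^{-1}e^{-\ad_a}F_+(\ad_a)c_0=e^{-\ad_a}c_0 .
\]
Here $F_+$ and $e^{-z}$ commute because both are functions of the single operator $\ad_a$. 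Conversely, applying $F_+(\ad_a)$ to $c_1=e^{-\ad_a}c_0$ recovers the $a^+$-equation, so the two are equivalent. This is the same computation as in \cref{prop:two-interval-sectors}, now written in the $F_\pm$ form.

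\textbf{Step 2: the holonomy formula.} I expect this to be the main obstacle, because ``nonlinear infrared projection'' has to be pinned down before anything can be proved. I would define it as the $L_\infty$ morphism obtained by homotopy transfer along the Whitney--Dupont contraction $(W_E,R_E,\kappa_E)$ tensored with $\mathfrak l$. Its linear part is $R_E$, which sends $A_\theta\,\dd\theta\mapsto\int_0^1A_\theta\,\dd\theta$. The higher components are rooted trees whose internal edges are decorated by $\kappa_E$, as in \cref{sec:stellar-transfer}; they give iterated integrals over ordered simplices $0\le\theta_1\le\cdots\le\theta_k\le1$. I would resum these trees by invoking the known identification of the canonical interval pushforward with the logarithm of parallel transport \cite{CMRCellular,CMRPushforward}. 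This is consistent with Step 1: a flat connection on $I$ transports $c_0$ to $c_1$ by its path-ordered exponential, and matching that with $e^{-\ad_a}$ forces $e^{-a}=\operatorname{Pexp}\bigl(-\int A_\theta\,\dd\theta\bigr)$. The signs must then be checked against the Dupont convention \eqref{eq:interval-Dupont}. As a sanity check, the Baker--Campbell--Hausdorff expansion of \eqref{eq:log-holonomy} should reproduce the $\kappa_E$-tree iterated integrals at second order, with leading term $a=\int_0^1A_\theta\,\dd\theta$ and next term $\tfrac12\int_{\theta_1<\theta_2}[A_{\theta_1},A_{\theta_2}]$ up to sign.

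\textbf{Step 3: linearization.} Setting $a=0$ in \eqref{eq:ghost-transport} gives $c_1=c_0$. This is exactly the abelian identification of the two endpoint corner complexes by the diagonal mode in \cref{thm:abelian-continuum-blowup}. The interpretation ``identifies the endpoint complexes on $\Gamma_0$ and $\Gamma_1$ by gauge transport'' is then a restatement of Steps 1 and 2.
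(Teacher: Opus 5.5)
Your proposal is correct. Steps~1 and~3 coincide with the paper's proof: invert $F_+(\ad_a)$ near $a=0$, use $F_-(z)=e^{-z}F_+(z)$, then set $a=0$.

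The two routes differ in Step~2. The paper does not resum the tree expansion. It straightens the connection by a gauge transformation relative to the endpoints, citing \cite{CMRPushforward}, so only a constant connection $a\,\dd\theta$ ever has to be projected. This reduction makes \eqref{eq:log-holonomy} transparent, for three reasons:
\begin{itemize}
\item The Dupont homotopy vanishes at both endpoints by \eqref{eq:Dupont-endpoint-vanish}, so a gauge transformation fixed at the endpoints induces no change of the projected edge variable.
\item We have $\kappa_E(a\,\dd\theta)=0$ by \eqref{eq:interval-Dupont}, and brackets of one-forms on $I$ vanish. Hence every higher term dies on a constant connection, which is therefore sent to $a$.
\item Gauge transformations fixed at the endpoints preserve holonomy, and $\operatorname{Pexp}(-\int_0^1 a\,\dd\theta)=e^{-a}$.
\end{itemize}

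Your route needs the full Magnus/BCH resummation. Like the paper, you ultimately take it from the literature. In exchange it gives an explicit low-order check of the sign conventions in \eqref{eq:interval-Dupont}, which the paper does not provide.

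Two cautions apply if you carry out that check.
\begin{itemize}
\item The components of the projection morphism are not the bracket trees of \cref{sec:stellar-transfer}. If $\kappa_E$ sits only on internal edges, every tree with two or more leaves $A_\theta\,\dd\theta$ contains a bracket of two one-forms on $I$ and therefore vanishes. The nonzero higher terms come from trees in which $\kappa_E$ (and $W_ER_E$) act on the leaves.
\item Your matching with $e^{-\ad_a}$ does not \emph{force} \eqref{eq:log-holonomy}. It determines $a$ only modulo the centre, and for abelian $\mathfrak g$ not at all. So it is a consistency check and cannot replace either the citation or the straightening argument.
\end{itemize}
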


\begin{proof}
Since $F_+(0)=1$, the formal power series $F_+(\operatorname{ad}_a)$ is invertible.  Using the second identity in \eqref{eq:Fpm}, the equation
\[
  F_+(\operatorname{ad}_a)c_1
  =F_-(\operatorname{ad}_a)c_0
\]
is equivalent to \eqref{eq:ghost-transport}.  Formula \eqref{eq:log-holonomy} is the classical nonlinear projection associated with the Whitney--Dupont BV pushforward on the interval; it is obtained by straightening the connection by a gauge transformation relative to the endpoints \cite{CMRPushforward}.  Setting $a=0$ gives the final statement.
\end{proof}

\begin{theorem}[Formal local normal form of a nonabelian codimension-two resolution move]\label{thm:nonabelian-codim2-normal-form}
Assume product geometry near the exceptional hypersurface and work
perturbatively around trivial exceptional holonomy.  Under the standing formal
mapping-space hypothesis of the introduction, or after a fixed finite-mode
cutoff, classical tree-level homological transfer with the propagator of
\cref{thm:codim2-continuum-propagator} represents the local resolved theory by:
\begin{enumerate}[label=(\roman*),leftmargin=2.2em]
\item continuum BF data along $\Gamma$;
\item two endpoint corner fields associated with $\Gamma_0$ and $\Gamma_1$;
\item one exceptional edge field $a$ whose geometric meaning is the logarithm of angular holonomy;
\item the universal interval interaction \eqref{eq:nonabelian-interval-term}.
\end{enumerate}
The exceptional interval therefore defines a gauge-theoretic identity correspondence between the two endpoint corner theories.  In the abelian limit this correspondence reduces by a strict BV contraction to the diagonal corner theory of \cref{thm:abelian-continuum-blowup}.  In the nonabelian theory, eliminating $a$ globally requires a choice of holonomy sector and is not part of a canonical strict facewise pushforward.
No analytic continuum BV pushforward, loop expansion, or determinant identity
is asserted by this theorem.
\end{theorem}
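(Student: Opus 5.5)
The plan is to build the normal form in two stages and then apply classical homological transfer once. The first stage is the linear retract of \cref{thm:codim2-continuum-propagator}, tensored with $\mathfrak g$. The second stage is the interval pushforward of \cref{prop:canonical-interval-block}.

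Concretely, the starting point is the dg Lie algebra $\Omega^\bullet(\widetilde C_\Gamma;\mathfrak g)$, with the wedge--Lie bracket and the cotangent $B$-pairing. Tensoring the special deformation retract $(\mathcal I,\mathcal P,\mathcal K)$ with $\mathfrak g$ is harmless, because $\mathfrak g$ is a finite-dimensional constant coefficient. This gives a contraction onto $\Omega^\bullet(\Gamma;\mathfrak g)\otimes C^\bullet(I)$, with the $B$-side dual retract supplied by the complementary boundary conditions of \cref{thm:abelian-continuum-blowup}. The homotopy-transfer theorem \cite{LodayVallette} then produces a transferred $L_\infty$ structure as a sum over rooted trees decorated by $\mathcal K$. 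Convergence is formal: either we are near trivial exceptional holonomy, using the complete filtration by polynomial degree in $a$, or we use the finite-mode cutoff. Pairing with the dual $B$-field gives the tree-level effective action.

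The identification of terms follows from the factorization $\mathcal K=H_r+\pi_E^*\kappa_Ej_E^*$ together with the side conditions $j_E^*H_r=0$ and $H_r\pi_E^*=0$. Every tree whose internal edge carries $H_r$ is fed by inputs of the form $\pi_E^*W_E(\cdot)$. Since $H_rP_E=0$, the homotopy $H_r$ annihilates $\pi_E^*$-pulled-back forms, so only the term $\pi_E^*\kappa_Ej_E^*$ survives on internal edges. Moreover, $j_E^*\pi_E^*=\id$. The transferred brackets are therefore exactly those of $\Omega^\bullet(\Gamma)\widehat\otimes\bigl(\Omega^\bullet(I;\mathfrak g)\to C^\bullet(I;\mathfrak g)\bigr)$, transferred along the Whitney--Dupont data on the angular interval alone.

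With the fields named, the pieces of the action line up with the statement:
\begin{enumerate}[label=(\roman*),leftmargin=2.2em]
\item the $d_\Gamma$ term gives the continuum BF data along $\Gamma$;
\item the two endpoint cochains $\epsilon_0,\epsilon_1$ give the corner fields on $\Gamma_0,\Gamma_1$, using $\operatorname{ev}_i\kappa=0$ from \eqref{eq:Dupont-endpoint-vanish}, so that endpoint blocks restrict correctly;
\item the edge cochain $\epsilon_{01}$ gives $a$;
\item the interval trees resum to the $F_\pm$ interaction \eqref{eq:nonabelian-interval-term}, by the Bernoulli computation cited in \cref{prop:canonical-interval-block} and the identity $F_\pm=F\pm z/2$.
\end{enumerate}
The interpretation of $a$ as a logarithmic holonomy, and of the correspondence as gauge transport, is then \cref{prop:exceptional-holonomy}.

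The abelian limit follows by setting the bracket to zero. All trees of arity at least $3$ vanish (\cref{prop:first-nonlinear-obstruction}), $F_\pm\to1$, and the remaining linear complex is contracted by \cref{prop:diagonal-interval}, recovering \cref{thm:abelian-continuum-blowup}. For the final caveat, I would argue that eliminating $a$ nonabelianly means solving \eqref{eq:ghost-transport} for a nonlinear holonomy. That requires a choice of branch or holonomy sector, so it is not a strict linear contraction. This is consistent with \cref{prop:two-interval-sectors}.

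The main obstacle is the decoupling step: showing that the continuum trees along $\Gamma$ really commute with the interval transfer, and that the radial homotopy contributes nothing nonlinear. The brackets are wedge products, and $H_r$ acting on a product of two $\pi_E^*$-forms vanishes only because such a product is again pulled back. I would verify this first: that $\pi_E^*$ is an algebra map, so the image of $\mathcal I$ is closed under the bracket modulo the angular factor. If that closure fails, I would fall back to a two-step transfer, first radial and then angular, and compose the two transfers using the path argument of \cref{thm:stellar-equivalence}.
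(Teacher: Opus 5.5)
Your proposal is correct and follows the paper's proof. Both arguments apply homological perturbation with the propagator of \cref{thm:codim2-continuum-propagator} and identify the angular Whitney--Dupont tree expansion with the classical interval cellular BF action. Both then use \cref{prop:exceptional-holonomy} for the holonomy interpretation and \cref{prop:diagonal-interval} for the abelian limit.

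Your decoupling step rests on $H_r\pi_E^*=H_rP_E\pi_E^*=0$ and on $\pi_E^*$ being a dg Lie morphism, so every internal edge sees only $\pi_E^*\kappa_Ej_E^*$. This supplies the justification behind the paper's one-line claim that the contraction is effectively the angular Whitney--Dupont one, and it makes your fallback two-step transfer unnecessary.
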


\begin{proof}
At a finite-mode cutoff the statement follows from
\cref{thm:codim2-continuum-propagator} and ordinary homological perturbation
applied to the BF interaction; formally the same calculation is performed in
the complete filtration.  Since the contraction is the Whitney--Dupont
contraction in the angular direction, its rooted-tree expansion is exactly the
classical interval cellular BF action.  \Cref{prop:exceptional-holonomy}
identifies its residual edge variable and its endpoint relation.  The abelian
reduction follows from \cref{prop:diagonal-interval}.  In the nonabelian case
the holonomy is a genuine residual modulus: different values need not be
related by gauge transformations fixing both endpoints.  Therefore a further
pushforward to a single diagonal corner field is canonical only after
restricting to, or choosing a gauge slice in, the trivial-holonomy sector.
\end{proof}

\begin{remark}[A correction to the naive acyclicity principle]
The linear exceptional complex is acyclic after the diagonal corner mode is separated.  Nonlinearly, however, the exceptional interval carries holonomy.  Thus \enquote{exceptional acyclicity} must be interpreted perturbatively around a chosen residual holonomy sector.  The cellular resolution theorem remains valid globally in residual BV cohomology, but a continuum facewise comparison must either retain the holonomy field or specify how it is integrated.
\end{remark}

\subsection{Consequences for the refinement program}

The codimension-two local problem can now be separated into three controlled layers:
\[
\begin{array}{c}
\text{continuum resolved collar}
\\[1mm]
\downarrow\ \text{strict face-compatible propagator}
\\[1mm]
\text{continuum on }\Gamma\ \widehat\otimes\ \text{cellular BF on }I
\\[1mm]
\downarrow\ \text{exceptional interval reduction}
\\[1mm]
\text{coarse corner theory plus residual holonomy}.
\end{array}
\]
The first arrow is a genuine theorem for differential forms and is compatible with every restriction map.  The second arrow is exact in abelian BF theory.  In nonabelian BF theory it is exact only as a correspondence retaining the holonomy variable; reduction to a single coarse corner requires an additional choice.

This is the precise local form of the admissibility question recorded in \cref{sec:open-problems}.  For a codimension-two elementary resolution move, the required propagator exists in product geometry.  The remaining nonlinear datum is not an unknown ultraviolet counterterm but a finite-dimensional exceptional holonomy sector.  Consequently, a global refinement-invariance theorem should be formulated over the derived moduli of exceptional flat connections, rather than over a point, unless the relevant holonomy is constrained to be trivial.

\subsection{Continuum--cellular abelian comparison}

\begin{definition}[Regularized cellularization datum]\label{def:regularized-cellularization}
On a resolved ordered-face diagram, a regularized cellularization datum is a face-compatible
collection of special deformation retracts from logarithmic forms to a face-adapted finite
cellulation, compatible with regularized restriction and adjoint for the $A/B$ pairing.
Existence of such global data is an explicit hypothesis; the radial factor on product
logarithmic collars is supplied by \cref{prop:regularized-interval-contraction}.
\end{definition}

\begin{corollary}[Local product logarithmic collar]\label{cor:product-log-contraction}
Given a face-compatible tangential Whitney--Dupont retract on $\Gamma$, tensoring it with
\cref{prop:regularized-interval-contraction} in the logarithmic radial direction and with the
ordinary interval retract in the angular direction gives a face-compatible retract on
$\Gamma\times I_b\times I_\theta$.  This supplies only the local collar factor; no global
gluing or existence statement for \cref{def:regularized-cellularization} is implied.
\end{corollary}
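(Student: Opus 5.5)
The retract on $\Gamma\times I_b\times I_\theta$ will be built in two stages by the tensor trick, as in \cref{thm:codim2-continuum-propagator}. The first stage is the tangential Whitney--Dupont retract $(W_\Gamma,R_\Gamma,\kappa_\Gamma)$ on $\Gamma$. The second stage is the tensor product of the two one-dimensional retracts: the regularized logarithmic interval retract $(i_\chi,p_\lambda,K_{\lambda,\chi})$ of \cref{prop:regularized-interval-contraction} in the radial variable, and the ordinary Whitney--integration--Dupont retract $(W,R,\kappa)$ of \eqref{eq:interval-Whitney}--\eqref{eq:interval-SDR} in the angular variable. As in \cref{prop:product-square-log-contraction}, I would work on the algebraic (separated-variable) tensor product. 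I would realize it inside forms on the product by the signed exterior product $\mu$ of \eqref{eq:square-exterior-product-sign}, and make no completed-tensor claim, consistent with \cref{rem:no-full-square-tensor}.

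\textbf{Step 1.} Set $I=I_\Gamma\otimes i_\chi\otimes W$ and $P=R_\Gamma\otimes p_\lambda\otimes R$. Define the homotopy by the iterated tensor formula
\[
 \mathcal K=\kappa_\Gamma\otimes\id\otimes\id
 +(I_\Gamma R_\Gamma)\otimes K_{\lambda,\chi}\otimes\id
 +(I_\Gamma R_\Gamma)\otimes(i_\chi p_\lambda)\otimes\kappa,
\]
with Koszul signs. The identities $d\mathcal K+\mathcal Kd=\id-IP$, $PI=\id$, and $\mathcal K^2=\mathcal KI=P\mathcal K=0$ then follow from the three special deformation retracts by the standard tensor-trick calculation. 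That calculation uses only the side conditions already proved for each factor, and the fact that $\mu$ intertwines differentials by the Leibniz rule, with $\db$ in the radial factor.

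\textbf{Step 2 (face compatibility).} The faces of the product are products of faces of the factors. In the tangential factor, compatibility is the hypothesis. In the angular factor, it is \eqref{eq:Dupont-endpoint-vanish}, since $\kappa$ vanishes at both endpoints and $W,R$ commute with endpoint evaluation. In the radial factor, it is the last sentence of \cref{prop:regularized-interval-contraction}. There the face $r=0$ is interpreted through $\Reg_0^\lambda$, and one uses $\Reg_0^\lambda K_{\lambda,\chi}=0$ and $(K_{\lambda,\chi}\eta)(\varepsilon)=0$. Since each summand of $\mathcal K$ is a tensor product of factor maps and each factor map commutes with its own face restriction, $\mathcal K$, $I$ and $P$ commute with restriction to every product face. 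The restriction to a face replaces the corresponding factor retract by its restricted one: identity on a point, or the endpoint value.

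\textbf{Main obstacle.} The delicate point is the logarithmic radial face. Regularized restriction $\Reg_0^\lambda$ is a cdga map only on the DPP complex, so I must check that restriction of $\mu(\alpha\otimes\beta\otimes\gamma)$ to $\{r=0\}$ equals $\mu(\alpha\otimes\Reg_0^\lambda\beta\otimes\gamma)$. I would verify this from factorization of the product virtual face maps, as used for $\mu$ before \eqref{eq:square-face-diagram-tensor}. I would also restrict throughout to the subcomplex $\cA_{\log,0}^\bullet(I_b)$, so that the outer endpoint involves only ordinary evaluation. Finally I would note explicitly, as the statement does, that nothing is claimed about gluing these local collar retracts or about existence of \cref{def:regularized-cellularization} globally.
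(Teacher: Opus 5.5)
Your proposal is correct and matches what the paper intends. The corollary is stated there without a separate proof; it is meant to follow from the tensor trick, as in \cref{thm:codim2-continuum-propagator} and \cref{prop:product-square-log-contraction}, combined with the endpoint compatibility in the last sentence of \cref{prop:regularized-interval-contraction} and the Dupont endpoint vanishing \eqref{eq:Dupont-endpoint-vanish}. Your iterated tensor homotopy, your factorwise face-restriction check (using a product-type scale at $r=0$), and your restriction to the algebraic separated-variable tensor product are exactly that argument.
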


\begin{theorem}[Classical continuum--cellular comparison for abelian BF]\label{thm:abelian-continuum-cellular}
Let a compact smooth monoidal resolution carry a resolved regularization, a face-adapted
finite cellulation and a regularized cellularization datum.  The cyclic special deformation
retract splits the quadratic continuum action as
\[
 S_N^{\mathrm{cont}}=S_K^{\mathrm{cell}}+S_{\mathrm{UV}}
\]
with no mixed term.  Classical elimination of the acyclic ultraviolet pair gives exactly the
cellular abelian BF action and its BFV boundary operator.  If
\cref{ass:gaussian-determinant} holds, the Gaussian pushforward differs only by the chosen
field-independent determinant-line torsion and is compatible with composition of retracts.
\end{theorem}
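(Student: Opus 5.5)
The plan is to treat the statement as a cyclic homological-transfer argument for a quadratic action, and to apply the regularized cellularization datum of \cref{def:regularized-cellularization} separately to the $A$-complex and the dual $B$-complex. First, I fix the retract data on the $A$-side, $(i^A,p^A,h^A)$ from the resolved logarithmic total face complex to the finite cellular cochain complex $C^\bullet(K;V)[1]$, and on the $B$-side the adjoint data $(i^B,p^B,h^B)$ to $C_\bullet(K;V^*)[n-2]$. Adjointness for the $A/B$ pairing is part of the datum, and I will verify it in the form required by \textup{(G5)}(c)--(e): $i^A$ is adjoint to $p^B$, $i^B$ is adjoint to $p^A$, and $h^A$ is graded chain-adjoint to $h^B$. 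The continuum fields then split as $A=i^Ap^AA+(1-i^Ap^A)A$, and similarly for $B$. The UV summands $\ker p^A$ and $\ker p^B$ are the images of $D h+hD$.

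Second, I show that there is no mixed term. I write $S_N^{\mathrm{cont}}(A,B)=\mathcal I\langle B,DA\rangle$, where $\mathcal I$ is the resolved regularized trace, and substitute the splitting. A mixed term such as $\langle i^Bb,\,D(1-i^Ap^A)A\rangle$ becomes, after using adjointness of $D$ and the cochain property of $i^B$, a pairing of $p^A(1-i^Ap^A)$ with the remaining field; this vanishes because $p^Ai^A=\id$. The other mixed term is handled symmetrically. What remains is $S_K^{\mathrm{cell}}(a,b)=\langle b,d_Ka\rangle$ with $d_K=p^ADi^A$, plus a UV term $S_{\mathrm{UV}}$ that is quadratic in $\ker p^A\oplus\ker p^B$. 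The face-degree-one part of $d_K$ is the cellular boundary operator, and its BFV identity follows from $\mathcal I\circ D=0$ (\cref{prop:total-stokes}) transported through $i^A$, exactly as in \cref{prop:strict-stokes-global}.

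Third, I carry out classical elimination. Since $h$ contracts $\ker p$ by \cref{prop:filtered-exceptional-contraction}, with no perturbation needed here because the data are already retracts for the full $D$, the UV pair is acyclic and the pairing restricted to it is nondegenerate. The Euler--Lagrange equations of $S_{\mathrm{UV}}$ therefore force the UV fields to be $D$-exact. The gauge-fixing Lagrangian $\operatorname{im}h^A\oplus\operatorname{im}h^B$ kills them, and the effective action is exactly $S_K^{\mathrm{cell}}$ with no correction term, because the action is quadratic and has no vertices. For the quantum statement I invoke \cref{ass:gaussian-determinant}(i)--(iii). Item (i) provides a nonzero determinant element. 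Item (ii) ensures the $A/B$ duality makes that element field-independent, as in \cref{lem:interval-torsion}. Item (iii), the Fubini property, gives compatibility under composition of retracts.

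I expect the main obstacle to be the sign bookkeeping in the no-mixed-term step. The regularized trace carries the column-convention signs $(-1)^{nk}/k!$ of \eqref{eq:derived-trace}, and the BF Koszul conventions must match the adjointness relations the datum supplies. To handle this, I would first reduce to a single ordered face and its incidence using \cref{prop:ordered-face-dictionary}, and check that the cyclicity identity $\langle Da,b\rangle+(-1)^{|a|}\langle a,Db\rangle=0$ holds on the total complex by Stokes. Only after that would I expand the mixed terms.
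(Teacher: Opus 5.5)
Your proposal is correct and follows the paper's route. Adjointness of the inclusion/projection pair, together with $p^Ai^A=\id$ and the cochain property, makes the cellular image orthogonal to the ultraviolet kernels, so the quadratic action splits with no mixed term, and the quantum clause is read off directly from \cref{ass:gaussian-determinant}. Your explicit mixed-term expansion and the gauge-fixing Lagrangian $\operatorname{im}h^A\oplus\operatorname{im}h^B$ simply spell out what the paper's three-line proof leaves implicit.

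Two small corrections. The BFV/Stokes identity should come from cyclicity of $d_K=p^ADi^A$, that is, from adjointness of $i,p$ plus cyclicity of $D$. It should not come from transporting $\mathcal I\circ D=0$ through $i^A$ alone as in \cref{prop:strict-stokes-global}, because in BF theory there is no scalar trace on the $A$-complex by itself. Also, field-independence of the Gaussian follows from the absence of mixed terms, not from clause \textup{(ii)} of the assumption.
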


\begin{proof}
Adjointness and the special side conditions make the cellular and ultraviolet sectors
orthogonal for the BF pairing.  The classical action therefore splits.  The quantum clause
is precisely the determinant regularity and Fubini condition of
\cref{ass:gaussian-determinant}.
\end{proof}

\begin{remark}
For nonabelian BF, the exceptional interval can retain angular holonomy and nonlinear
transfer creates higher operations.  The finite-dimensional cellular theorem above remains
valid, but a general continuum nonlinear pushforward requires additional configuration-space
and renormalization input and is not a theorem of the present paper.
\end{remark}

\part*{V.\ Resolved--intrinsic \texorpdfstring{$b$}{b}/logarithmic comparison}\addcontentsline{toc}{part}{V.\ Resolved--intrinsic b/logarithmic comparison}

\section{Comparison with monoidal resolutions}

\subsection{\texorpdfstring{$b$}{b}-étaleness of a refinement}

Let
\[
 \beta:X_\cR\longrightarrow X
\]
be the blow-up associated with a smooth refinement of the monoidal complex of
a $g$-corner with embedded faces \cite{KottkeGCorner}.  In an affine chart
write $P=\sigma^\vee\cap\Lambda^*$.  A unimodular cone
$\tau\subseteq\sigma$ of the refinement (taken full-dimensional in the
chart) has free chart monoid
$Q_\tau=\tau^\vee\cap\Lambda^*$, and the inclusion
$P\subseteq Q_\tau$ induces the identity on the common groupification
$\Lambda^*$.

\begin{proposition}\label{prop:betale}
For a monoidal refinement, the $b$-derivative is a vector-bundle isomorphism
\[
 \bT\beta:\bT X_\cR\xrightarrow{\cong}\beta^*\bT X,
\]
and dually
\[
 \beta^*\bT^*X\xrightarrow{\cong}\bT^*X_\cR.
\]
Hence
\[
 \beta^*:\bOmega^\bullet(X)\longrightarrow\bOmega^\bullet(X_\cR)
\]
is a morphism of differential graded algebras.
\end{proposition}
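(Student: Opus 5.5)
The argument is local on $X_\cR$, so I would work chart by chart and then glue. Over an affine chart $X_P$ with $P=\sigma^\vee\cap\Lambda^*$, fix a full-dimensional unimodular cone $\tau\subseteq\sigma$ of the refinement, with free chart monoid $Q_\tau=\tau^\vee\cap\Lambda^*\cong\mathbb N^n$. The corresponding chart of $X_\cR$ is $X_{Q_\tau}\cong[0,\infty)^n$, and the blow-down restricted to it is the map $X_{Q_\tau}\to X_P$ induced by the inclusion $P\subseteq Q_\tau$. In monomial terms this means $\beta^*\lambda_p=\lambda_p$ for $p\in P$, where the right-hand side is now read as a monomial in $Q_\tau$. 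Lower-dimensional cones of the refinement are faces of such $\tau$ after shrinking, and non-sharp factors contribute only ordinary $\mathbb R$-directions, so I would treat them by the same argument.

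\Cref{prop:btangent} trivializes both bundles: $\bT X_{Q_\tau}\cong X_{Q_\tau}\times V_{Q_\tau}^*$ and $\bT X_P\cong X_P\times V_P^*$. Since $P^{\gp}=Q_\tau^{\gp}=\Lambda^*$, we have $V_P=V_{Q_\tau}$. The key step is to show that $\bT\beta$ sends the constant $b$-field $v_\alpha$ on $X_{Q_\tau}$ to $\beta^*v_\alpha$ for every $\alpha\in\Hom(\Lambda^*,\mathbb R)$. To check this, test both sides on the generating monomials $\lambda_p$, $p\in P$. On one side, $v_\alpha(\beta^*\lambda_p)=\alpha(p)\beta^*\lambda_p$. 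On the other, $\beta^*(v_\alpha\lambda_p)=\alpha(p)\beta^*\lambda_p$. These agree, and a $b$-vector field is determined by its action on the monomials together with smooth functions of them. Consequently, in the two trivializations $\bT\beta$ is the identity matrix on $\Lambda^*\otimes\mathbb R$, which is a fibrewise isomorphism. Dualizing gives $\beta^*\bT^*X\cong\bT^*X_\cR$, and in particular $\beta^*\vartheta_p=\vartheta_p$.

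For globalization, the map $\bT\beta$ is defined intrinsically as the $b$-derivative of an interior $b$-map, as in \cref{thm:kottke} and Joyce's construction. The chartwise computation therefore patches, and being an isomorphism is a local property. For the dga statement, pullback along the bundle isomorphism is a morphism of graded-commutative algebras $\bOmega^\bullet(X)\to\bOmega^\bullet(X_\cR)$. It remains to check that it commutes with $\db$. Since $\db$ is a derivation determined by its values on functions, and the algebra is locally generated by smooth functions of monomials together with the $\vartheta_p$, it suffices to verify $\db\beta^*\lambda_p=\beta^*(\lambda_p\vartheta_p)$, which follows from \cref{prop:dbsquare} and the identity above, together with $\db\vartheta_p=0$ on both sides. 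An alternative route to the same conclusion is to note that the anchor identity $\bT\beta$ intertwines brackets of constant sections, both of which vanish, so $\beta$ is a Lie algebroid morphism and hence induces a Chevalley--Eilenberg dga map.

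The main obstacle is the justification that $\bT\beta$ is characterized by its action on monomials. At points of $X_\cR$ lying over deep strata, ordinary derivatives degenerate, so I would argue via Joyce's definition of $\bT$ as the bundle generated by derivations of the monoid-valued functions. I would also check carefully that the non-full-dimensional and unit cases introduce no discrepancy between the groupifications of the two monoids.
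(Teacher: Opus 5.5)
Your proposal is correct and follows the paper's route. You work on a full-dimensional unimodular chart $\tau\subseteq\sigma$ and use $P\subseteq Q_\tau$ with $Q_\tau^{\gp}=P^{\gp}=\Lambda^*$. Both canonical $b$-trivializations then have fibre $\Hom(\Lambda^*,\R)$, $\bT\beta$ is the identity on it, and you glue over the refined charts.

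The paper only asserts the identity-on-fibres step and the dga consequence; your monomial test and your generator check of $\db\beta^*=\beta^*\db$ supply the missing detail. The obstacle you flag is settled by pairing with $\vartheta_p$, or by continuity from the dense interior where $\beta$ is a diffeomorphism.
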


\begin{proof}
Locally, a refinement subdivides the cone without changing its ambient
lattice.  Thus $Q_\tau^{\gp}=P^{\gp}=\Lambda^*$.  Under the canonical local
trivializations, both $b$-tangent bundles have fibre
$\Hom(\Lambda^*,\R)$, and the $b$-derivative is the identity on this fibre.
The statement glues over refined charts.
\end{proof}

\begin{remark}[Which theory on the resolution is being compared]\label{rem:two-sources-on-resolution}
A manifold with ordinary corners carries two AKSZ sources: the ordinary
$T[1]N$ used in \cref{sec:resolution-aksz-data}, and the $b$-tangent algebroid
$\bT[1]N$, which for ordinary corners is Melrose's.  The anchor
$\bT N\to TN$ is an isomorphism over the interior and drops rank along every
boundary hypersurface, so the induced map
$\Omega^\bullet(N)\to\bOmega^\bullet(N)$ is injective but not surjective, and
the two mapping spaces are different.  In this section $X_\cR$ carries its
$b$-source: by \cref{thm:dpp} the Dupont--Panzer--Pym regularization of
\textup{(G2)} is a regularized Stokes trace system on the ordered face diagram
of $X_\cR$, and we write
\[
 \F_{X_\cR}^b=\Map(\bT[1]X_\cR,\Y),
 \qquad
 \bigl(\omega_{X_\cR}^b,\alpha_{X_\cR}^b,S_{X_\cR}^b,Q_{X_\cR}\bigr),
\]
and likewise on every ordered face of $X_\cR$, for the data that
\cref{def:transgression} and \cref{thm:bvbfv} produce from it.  The
ordinary-corner system $\mathfrak{AKSZ}_{\mathrm{res}}(X,\Y;\cR)$ of
\cref{sec:resolution-aksz-data} maps to this one by pullback along the anchor;
the two agree on integrands with no logarithmic normal component, and the
comparison below is between the $b$-data on $X_\cR$ and the intrinsic data on
$X$.  Schematically, the three objects and the two distinct comparisons are
\[
 \operatorname{AKSZ}\bigl(T[1]X_\cR\bigr)
 \longrightarrow
 \operatorname{AKSZ}_{b,\log}\bigl(\bT[1]X_\cR\bigr)
 \xleftarrow[\text{derived/linear strict under (G4)}]{\ \beta^*\ \text{on the basic bulk sector}\ }
 \operatorname{AKSZ}_{b,\log}\bigl(\bT[1]X\bigr).
\]
The left arrow is induced by the anchor and is not asserted to be an
equivalence; the right comparison is the content of this part of the paper.
\end{remark}

\subsection{The basic bulk sector and the facewise comparison}

A field on \(X\) pulls back to a field on \(X_\cR\).  We call
\[
 \F_{X_\cR}^{\mathrm{bas,bulk}}
 =\operatorname{im}\bigl(
   \beta^*:\F_X^b\to\F_{X_\cR}^b
  \bigr)
\]
the \emph{basic bulk sector}.  This terminology is intentionally restricted
to bulk fields: there is generally no single resolved face corresponding to a
given intrinsic face, so a facewise ``basic sector'' must be defined by the
subdivision/aggregation maps of the total face complex rather than by
termwise pullback.

Pullback is injective: \(\beta\) is the identity on the common dense interior,
and a smooth \(b\)-form, hence every component of a \(b\)-field in the chosen
formal class, is determined by its restriction to that interior.  Thus
\(\F_{X_\cR}^{\mathrm{bas,bulk}}\) is genuinely identified with
\(\F_X^b\), not merely named as an image.

\begin{proposition}[Derived affine trace compatibility]\label{prop:tracecompat}
Let $(\cR_1,s_1)$ and $(\cR_2,s_2)$ be two regularized smooth resolutions of
the affine model $X_P$.  If compactly supported total cocycles
$a_i\in\mathbb A_{\cR_i,s_i}^\bullet(P)$ represent corresponding classes
under the common-interior roof, then
\[
 \mathcal I_{\cR_1,s_1}(a_1)=\mathcal I_{\cR_2,s_2}(a_2).
\]
In particular, for $\varphi\in\Omega_c^n(X_P^\circ)$, the representatives
\[
 a_i=\epsilon_{\cR_i,s_i}\bigl((\beta_i^\circ)^*\varphi\bigr)
\]
have the common value $\int_{X_P^\circ}\varphi$.  If linear strictification
data are fixed on one representative $\mathfrak s=(\cR,s_{\cR})$, the strict
intrinsic trace is related to its resolved trace by
\[
 \mathcal I_{P,\mathfrak s}^{\mathrm{str}}
 =\mathcal I_{\cR,s_{\cR}}\circ i_\infty.
\]
No resolution-independent value is asserted here for an arbitrary
face-degree-zero logarithmic top form: such a form need not be a cocycle in
the relative totalization, and its absolute regularized integral can depend
on the chain-level regularization.
\end{proposition}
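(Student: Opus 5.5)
The plan is to read everything through the common-interior roof of \cref{thm:refinement-invariance}, applied to the two arms of an admitted smooth common refinement supplied by \textup{(G3)}. First I will make precise what it means for $a_1$ and $a_2$ to ``represent corresponding classes''. Both $\epsilon_{\cR_i,s_i}$ are quasi-isomorphisms by \cref{lem:interior-bridge}, and the middle maps $(\gamma_i^\circ)^*$ are isomorphisms. Hence there is a class $c\in H_c^n(X_P^\circ)$ such that $[a_i]=[\epsilon_{\cR_i,s_i}((\beta_i^\circ)^*\varphi_c)]$ for a single representative form $\varphi_c\in\Omega^n_c(X_P^\circ)$. This uses the common interior $X_P^\circ$, which is canonically identified with each $X_{\cR_i}^\circ$ by $\beta_i^\circ$, and by \cref{prop:proper-blowdown} it needs no reference to the intermediate common refinement.

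Second, I pass from classes to values. The cochain trace $\mathcal I_{\cR_i,s_i}$ vanishes on $D$-coboundaries by \cref{prop:total-stokes}, so it depends only on the cohomology class. It therefore suffices to evaluate it on the bridge representatives. By \eqref{eq:interior-trace-bridge},
\[
\mathcal I_{\cR_i,s_i}\bigl(\epsilon_{\cR_i,s_i}((\beta_i^\circ)^*\varphi)\bigr)=\int_{X_{\cR_i}^\circ}(\beta_i^\circ)^*\varphi .
\]
Since $\beta_i^\circ$ is an orientation-preserving diffeomorphism onto $X_P^\circ$, this equals $\int_{X_P^\circ}\varphi$. Taking $\varphi=\varphi_c$ gives the equality of the two traces, and taking $\varphi$ arbitrary gives the ``in particular'' statement directly.

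Third, the strict clause is simply the definition $\mathcal I^{\mathrm{str}}_{P,\mathfrak s}=\mathcal I_{\cR,s_\cR}\circ i_\infty$ from \cref{prop:strictification}. I only need to note that this composite is a cochain map, since $i_\infty$ intertwines $D_{\mathrm{str}}$ with $D$. The final caveat needs no proof beyond an example. I would cite the scale dependence of the regularized integral of $dr/r$ \cite[Ex.~7.13]{DPP}, embedded as a face-degree-zero component that is not $D$-closed because its regularized restriction is nonzero.

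The main obstacle is the first step: checking that ``corresponding'' is independent of the chosen common refinement and of the regularizations. I will handle this by noting that the roof never uses the face diagrams, only $\Omega_c^\bullet$ of the one canonical interior, so the identification of classes is intrinsic to $H_c^n(X_P^\circ)$.
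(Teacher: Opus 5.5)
Your argument is correct and is essentially the paper's proof. The paper cites the trace compatibility of \cref{thm:refinement-invariance}, which is itself proved from \cref{lem:interior-bridge} and change of variables on the common interior, together with the definition in \cref{prop:strictification}; you unpack exactly this, with \cref{prop:total-stokes} letting the trace descend to classes.

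The one slip is in your optional illustration of the final disclaimer, which needs no proof anyway. A face-degree-zero top form such as $\chi\,\dd r/r$ is automatically $D$-closed: its face-degree-one image is an $n$-form on an $(n-1)$-dimensional basic face, hence zero. So the scale dependence in \cite[Ex.~7.13]{DPP} does not show a failure of $D$-closedness. It shows instead that the same interior form represents non-corresponding classes in differently regularized complexes, as in the proof of \cref{thm:derived-bvbfv}.
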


\begin{proof}
The first statement is precisely trace compatibility in
\cref{thm:refinement-invariance}, applied to the two arms of a common-interior
roof.  The second follows from \eqref{eq:interior-trace-bridge} and ordinary
change of variables for the orientation-preserving diffeomorphisms of
interiors.  The last identity is the transferred-trace definition in
\cref{prop:strictification}; it does not require choosing one resolved face
above each intrinsic face.
\end{proof}

\begin{theorem}[Basic-sector comparison]\label{thm:basic-comparison}
Let \(\beta:X_\cR\to X\) be a smooth monoidal resolution.
\begin{enumerate}[label=\textup{(\roman*)},leftmargin=2.5em]
\item On bulk fields, \(\beta^*\) identifies the intrinsic $b$-source with
\(\F_{X_\cR}^{\mathrm{bas,bulk}}\) and intertwines \(\db\), evaluation of
target tensors, and all universal AKSZ integrands.
\item After universal AKSZ integrands have been represented by compactly
supported cocycles of the relative total source complexes, their traces on two
resolutions agree provided those cocycles represent corresponding classes
under the common-interior roof.  In particular this applies
to compactly supported smooth top forms on the common interior.  No equality
of arbitrary absolute regularized bulk integrals is claimed.
\item Under \textup{(G1)}--\textup{(G4)} and for basic linear coefficients in
the specified transfer class, the comparison of the full linear total face
complexes is the strong deformation retract
\[
 (\mathscr A_L^\bullet,D)
 \underset{i_\infty}{\overset{p_\infty}{\rightleftarrows}}
 (\mathscr B_{X,\cV}^\bullet,D_{\mathrm{str}}).
\]
\pagebreak[3]
When the coefficients carry a compatible scalar trace, it intertwines the resolved
total-face trace identity with the strict intrinsic linear Stokes identity.  If the
BV-cyclic field datum \textup{(G5)} is also
chosen, the corresponding classical abelian BF BV--BFV field systems are cyclically
chain-equivalent through the separate field-level retracts of \textup{(G5)};
this assertion is not obtained by forgetting structure on the displayed
logarithmic strong deformation retract.
\end{enumerate}
There is in general no assertion that the comparison in \textup{(iii)} is
termwise pullback along a bijection of faces.
\end{theorem}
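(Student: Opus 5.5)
The proof assembles earlier results clause by clause; nothing new has to be constructed, but the scope qualifications have to be kept explicit.

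For (i) we use \cref{prop:betale}. The $b$-derivative $\bT\beta$ is a vector-bundle isomorphism, so $\beta^*:\bOmega^\bullet(X)\to\bOmega^\bullet(X_\cR)$ is a morphism of differential graded algebras. It therefore intertwines $\db$ on source functions. A field $\Phi$, encoded by a unital algebra morphism $\Phi^*:C^\infty(\Y)\to\bOmega^\bullet(X)$, is sent to $\beta^*\circ\Phi^*$. Injectivity, which follows from density of the common interior as noted before \cref{prop:tracecompat}, identifies $\F_X^b$ with $\F_{X_\cR}^{\mathrm{bas,bulk}}$. Naturality of evaluation gives $\ev_{X_\cR}^*\tau=\beta^*\ev_X^*\tau$ on this sector for every target tensor $\tau$. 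Combined with $\beta^*\db=\db\beta^*$, this shows that $\iota_{\db}\ev^*\alpha_\Y$, $\ev^*\omega_\Y$ and $\ev^*\Theta$ are pulled back, which is the claim about the universal integrands. Only integrands are compared here, not regularized integrals.

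For (ii) we invoke \cref{prop:tracecompat} on affine charts and \cref{thm:global-trace} globally. Cocycles that correspond under the common-interior roof of \cref{thm:refinement-invariance} (respectively \cref{prop:sheaf-refinement}) have equal traces, because both traces factor through $\int_{X^\circ}$ by \cref{lem:interior-bridge}. The smooth compactly supported case follows from \eqref{eq:interior-trace-bridge} and change of variables along $\beta^\circ$. We record, as in \cref{prop:tracecompat}, that an arbitrary face-degree-zero logarithmic top form need not be a relative cocycle, so no claim is made for it.

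For (iii) we take the retract supplied by \cref{prop:filtered-exceptional-contraction} under (G4). Its trace statement is \cref{prop:strict-stokes-global}: $\mathcal I\circ i_\infty$ is a cochain map, and its face-degree-one part is strict Stokes. Intertwining holds because $Di_\infty=i_\infty D_{\mathrm{str}}$. The BF clause is \cref{thm:cyclic-hpl-bf} and \cref{cor:global-strict-bf}, applied to the independent (G5) field retracts. The main point to guard, rather than a computational obstacle, is that these field retracts are not derived from the logarithmic retract, since the two act on type-distinct complexes. The final sentence of the theorem holds because $i_0,p_0$ are subdivision/aggregation maps of the total face complex. The transferred face-degree-one operator is only incidence-local by \eqref{eq:g4-incidence-locality} and is not a termwise pullback, since $q_\cR$ is generally not bijective (\cref{subsec:face-dictionary}).
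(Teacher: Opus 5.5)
Your proposal is correct and follows the paper's own proof. The paper likewise assembles (i) from \cref{prop:betale}, (ii) from \cref{prop:tracecompat}, and (iii) from \cref{prop:filtered-exceptional-contraction,prop:strict-stokes-global}, with \cref{thm:cyclic-hpl-bf,cor:global-strict-bf} for the BF clause; your added details (injectivity via the dense interior, the global version through \cref{thm:global-trace}, and the gloss on the face-bijection disclaimer) are consistent elaborations rather than a different route.
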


\begin{proof}
Part~\textup{(i)} is \cref{prop:betale}.  Part~\textup{(ii)} follows from
\cref{prop:tracecompat}.  Part~\textup{(iii)} is
\cref{prop:filtered-exceptional-contraction,prop:strict-stokes-global}; the classical abelian BF statement additionally uses the cyclic perturbation
result \cref{thm:cyclic-hpl-bf,cor:global-strict-bf}.
\end{proof}

\begin{remark}
For nonlinear targets, the image of \(i_\infty\) is not automatically closed
under the transferred product or bracket.  Homotopy transfer may generate
higher operations such as \(\ell_3\) in \cref{eq:transferred-l3}.  Therefore
the linear equivalence of total face complexes above must not be promoted to a nonlinear
strict equivalence without additional hypotheses.
\end{remark}

\subsection{The comparison picture}

There are two different comparison statements, and it is useful not to mix
their types.  At the level of bulk $b$-fields the blow-down gives the honest
pullback
\begin{equation}\label{eq:bulk-comparison-picture}
 \beta^*:\F_X^b\longrightarrow\F_{X_\cR}^{\mathrm{bas,bulk}}.
\end{equation}
At the level of the full linear trace/descent complex, under \textup{(G4)} and for basic
linear coefficients in its specified transfer class, the comparison is instead the strong deformation
retract
\begin{equation}\label{eq:face-comparison-picture}
 (\mathscr A_L^\bullet,D)
 \underset{i_\infty}{\overset{p_\infty}{\rightleftarrows}}
 (\mathscr B_{X,\cV}^\bullet,D_{\mathrm{str}}).
\end{equation}
Thus there is no type-correct square in which a mapping space, a trace, and a
full AKSZ--BV--BFV system are simply four vertices of one diagram.  For a
general nonlinear target, integrating out the reduced refinement modes and
identifying the result with a strict intrinsic nonlinear theory is the
additional BV-pushforward statement isolated in \cref{conj:comparison}; it is
not part of the proved linear comparison.

\part*{VI.\ Worked examples}\addcontentsline{toc}{part}{VI.\ Worked examples}

\section{The positive real conifold}\label{sec:conifold}

This section tests the framework on the smallest non-ordinary example.  The
common-interior derived comparison is valid for the continuum relative
complexes.  On the explicitly specified product--Whitney coefficient class we
also construct the full global linear contraction after normal-face
totalization, and on its finite dual
sub-class we construct a cyclic logarithmic--cellular abelian BF shadow.
The latter is not a construction of the full \(b\)-de Rham datum
\textup{(G5)}.  These finite-model statements do not extend to arbitrary
continuum coefficients.

\subsection{Two squares}\label{subsec:conifold-two-squares}

Recall the model \eqref{eq:conifold-model} and the ray realization
\eqref{eq:square-rays} of its cone $\sigma$.  Two different squares occur in
the discussion of this example and it is worth separating them once; both are
drawn in \cref{fig:conifold-two-squares}.

The \emph{ray square} is the transverse slice $\sigma\cap\{z=1\}$, whose
vertices are the four rays $v_1,v_2,v_4,v_3$ in cyclic order.  This is the
picture in which refinements are visible: a diagonal of the ray square is a
new two-dimensional cone, and the star refinement adds the centre.

The \emph{dual-generator square} is a transverse polygon for
$\sigma^\vee$.  After a unimodular change of lattice coordinates relative to
the presentation in \cref{eq:conifold-monoid-presentation}, write
$P=\sigma^\vee\cap\Lambda^*$ as in \cref{subsec:weakly-toric}.  Its four
primitive ray generators $q_i$ are the facet normals of $\sigma$, one for
each pair of adjacent primal rays:
\begin{equation}\label{eq:conifold-facets}
\begin{aligned}
 q_1&=(0,1,0) &&\text{for the facet }\Cone(v_1,v_2),\\
 q_2&=(0,-1,1) &&\text{for the facet }\Cone(v_3,v_4),\\
 q_3&=(1,0,0) &&\text{for the facet }\Cone(v_1,v_3),\\
 q_4&=(-1,0,1) &&\text{for the facet }\Cone(v_2,v_4).
\end{aligned}
\end{equation}
Both $q_1+q_2$ and $q_3+q_4$ equal $(0,0,1)$, so $P$ has the single relation
\begin{equation}\label{eq:conifold-monoid-relation}
 q_1+q_2=q_3+q_4,
\end{equation}
and $X_P\cong X_\square$ under $x_i\leftrightarrow q_i$.  The relation pairs
$q_1$ with $q_2$ and $q_3$ with $q_4$, that is, it identifies the two sums of
opposite rays of $\sigma^\vee$.  The groupification $P^{\gp}$ has rank three.
The boundary hypersurfaces of $X_P$ are instead indexed by the four facets of
$\sigma^\vee$, equivalently by the primal rays $v_i$.  Thus a generator
$q_i$ is not the label of a single boundary hypersurface; its monomial may
vanish on a union of them.

\begin{figure}[ht]
\centering
\begin{tikzpicture}[scale=1.0,every node/.style={font=\small}]
  \begin{scope}[xshift=-4.6cm]
    \coordinate (a) at (0,0); \coordinate (b) at (2.2,0);
    \coordinate (c) at (0,2.2); \coordinate (d) at (2.2,2.2);
    \draw[thick] (a)--(b)--(d)--(c)--cycle;
    \draw[dashed] (a)--(d);
    \fill (a) circle (1.5pt) node[below left] {$v_1$};
    \fill (b) circle (1.5pt) node[below right] {$v_2$};
    \fill (c) circle (1.5pt) node[above left] {$v_3$};
    \fill (d) circle (1.5pt) node[above right] {$v_4$};
    \node at (1.1,-0.45) {$q_1$};
    \node at (1.1,2.65) {$q_2$};
    \node at (-0.45,1.1) {$q_3$};
    \node at (2.65,1.1) {$q_4$};
    \node at (1.1,-1.15) {ray square};
  \end{scope}
  \begin{scope}[xshift=1.2cm,yshift=1.1cm]
    \coordinate (p1) at (-1.55,0); \coordinate (p3) at (0,-1.55);
    \coordinate (p2) at (1.55,0);  \coordinate (p4) at (0,1.55);
    \draw[thick] (p1)--(p3)--(p2)--(p4)--cycle;
    \draw[dashed] (p1)--(p2);
    \fill (p1) circle (1.5pt) node[left] {$q_1$};
    \fill (p3) circle (1.5pt) node[below] {$q_3$};
    \fill (p2) circle (1.5pt) node[right] {$q_2$};
    \fill (p4) circle (1.5pt) node[above] {$q_4$};
    \node at (0,-2.25) {dual-generator square};
  \end{scope}
\end{tikzpicture}
\caption{The two squares attached to $X_\square$.  On the left, the
transverse slice of the cone: its vertices are the rays $v_j$ and its edges
are the facets whose primitive normals are $q_i$, so a dashed diagonal is a
candidate refinement.  On the right, the polygon of dual ray generators; its
edges are dual to the $v_i$ and hence index the boundary hypersurfaces.  Its
dashed diagonal is the pair
$\{q_1,q_2\}$ appearing in the relation \eqref{eq:conifold-monoid-relation}.
Four boundary hypersurfaces meet at the vertex, but there are only three
logarithmic normal directions.}
\label{fig:conifold-two-squares}
\end{figure}

The intrinsic route uses the face lattice of $\sigma^\vee$: its boundary
hypersurfaces are the facets indexed by $v_i$, while its ray generators
$q_i$ obey the nonfree relation above.  The resolution route uses the primal
ray square, since that is where refinements are drawn.  A choice of diagonal
in the ray square corresponds to a simplicial resolution, whereas the
intrinsic $b$-source retains the unsplit monoid relation.

\subsection{Joyce corners versus iterated boundaries}\label{subsec:conifold-joyce-corners}
The same example is also the minimal test for the higher-face combinatorics.
In Joyce's notation, the corner spaces and iterated boundaries of
$X_\square$ are not related as they would be for an ordinary corner.  His
explicit calculation gives
\begin{center}
\begin{tabular}{ccl}
\toprule
$k$ & $C_k(X_\square)$ & $\partial^kX_\square$\\
\midrule
0 & $X_\square$ & $X_\square$\\
1 & $4$ copies of $[0,\infty)^2$ & $4$ copies of $[0,\infty)^2$\\
2 & $4$ copies of $[0,\infty)$ & $8$ copies of $[0,\infty)$\\
3 & one point & $8$ points\\
\bottomrule
\end{tabular}
\end{center}
See \cite[Ex.~3.31 and Prop.~3.32]{JoyceGCorner}.  In particular, over the
vertex there is one intrinsic codimension-three corner component but eight
points in the third iterated boundary.  Thus no free $S_3$-quotient of
$\partial^3X_\square$ can recover $C_3(X_\square)$; Joyce proves more generally
that the natural adjacent transpositions on $\partial^kX$ need not satisfy the
relations of $S_k$ for a $g$-corner.  This is exactly why the intrinsic descent
in this paper is indexed by $\mathsf{Face}(X)$ and its flags, while the
$S_k$-ordered-boundary model is used only after passing to an ordinary-corner
resolution.

The table also prevents a second possible misreading.  Four boundary
hypersurfaces meet at the conifold vertex, but they do not form the Boolean
face lattice of an ordinary four-hypersurface corner.  The monoidal relation
\eqref{eq:conifold-monoid-relation} compresses the logarithmic normal lattice
to rank three, while Joyce's corner spaces encode the actual intrinsic
incidence strata.

\subsection{The intrinsic source}

\begin{proposition}[Intrinsic compression]\label{prop:conifold-forms}
Let
\[
 \vartheta_i=\db\log x_i.
\]
Then
\begin{equation}\label{eq:conifold-relation}
 \vartheta_1+\vartheta_2=\vartheta_3+\vartheta_4.
\end{equation}
The $b$-cotangent bundle of $X_\square$ is a trivial rank-three bundle
generated by any three independent classes among the $\vartheta_i$.
\end{proposition}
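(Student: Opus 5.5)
The plan is to reduce both claims to \cref{prop:dbsquare} and \cref{prop:btangent} applied to the monoid $P$ of \eqref{eq:conifold-monoid-presentation}, in the lattice presentation \eqref{eq:conifold-facets}.

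First, under $X_P\cong X_\square$ the coordinate $x_i$ is the monomial $\lambda_{q_i}$. On the interior, $\vartheta_i=\db\log x_i$ is therefore the constant $b$-one-form $\vartheta_{q_i}$ of \cref{prop:dbsquare}, since $\db\lambda_{q_i}=\lambda_{q_i}\vartheta_{q_i}$. Because $\vartheta_{q_i}$ is a smooth section of $\bT^*X_P$, this fixes the meaning of $\vartheta_i$ globally. This is the one point needing care: $\log x_i$ itself does not extend, so the definition is made by $\vartheta_i:=\vartheta_{q_i}$, consistent with the formal identity below \cref{prop:dbsquare}.

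Next, the relation follows from additivity. By \cref{lem:monoidal-relations}, $q\mapsto\vartheta_q$ is additive on $P^{\gp}$. Applying it to the monoid relation \eqref{eq:conifold-monoid-relation}, $q_1+q_2=q_3+q_4$, gives \eqref{eq:conifold-relation}. Equivalently, $x_1x_2=x_3x_4$ can be differentiated logarithmically on the dense interior, and the resulting identity of smooth $b$-forms extends by continuity.

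Finally, triviality and rank. By \cref{prop:btangent}, $\bT^*X_P\cong X_P\times V_P$ with $V_P=P^{\gp}\otimes\R$, and $P^{\gp}=\Lambda^*\cong\Z^3$. Under this trivialization $\vartheta_{q}$ is the constant section $q\otimes1$. A set of three $\vartheta_i$ is pointwise a frame exactly when the corresponding $q_i$ are linearly independent in $\R^3$. Any three of the four vectors in \eqref{eq:conifold-facets} are independent; for example, $\det(q_1,q_2,q_3)=-1\neq0$, and the remaining cases follow by the same one-line determinant or by the relation. So any three of the $\vartheta_i$ frame the trivial rank-three bundle.
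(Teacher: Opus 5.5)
Your proposal is correct and is essentially the paper's argument. The paper applies $\db$ to $\ell_{q_1}+\ell_{q_2}=\ell_{q_3}+\ell_{q_4}$ (the image of $q_1+q_2=q_3+q_4$ in $P^{\gp}$) and then invokes $\operatorname{rank}P^{\gp}=3$; your determinant check adds only an explicit verification of the ``any three'' clause, and its one slip is harmless: $\det(q_1,q_2,q_3)=+1$, not $-1$ (all four triples have determinant $\pm1$).
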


\begin{proof}
The relation \eqref{eq:conifold-monoid-relation} in $P^{\gp}$ gives
\[
 \ell_{q_1}+\ell_{q_2}=\ell_{q_3}+\ell_{q_4}.
\]
Apply $\db$.  Since $\operatorname{rank}P^{\gp}=3$, there is exactly one
independent linear relation.
\end{proof}

\subsection{Scales at the vertex}

\begin{proposition}[Vertex-scale relation]\label{prop:scale-relation}
A monoidal scale at the vertex of $X_\square$ is equivalent to a quadruple
\[
 (c_1,c_2,c_3,c_4)\in\R^4
\]
satisfying
\begin{equation}\label{eq:scale-relation}
 c_1+c_2=c_3+c_4.
\end{equation}
The intrinsic vertex values are $\Reg_v(\log x_i)=c_i$.
\end{proposition}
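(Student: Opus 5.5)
The plan is to unwind \cref{def:vertex-scale} for the specific monoid $P$ of the conifold. Since $X_\square$ is modelled on a sharp monoid, a monoidal scale at the vertex is by definition an additive homomorphism $\scale:P^{\gp}\to\R$. I will use the presentation \eqref{eq:conifold-facets}--\eqref{eq:conifold-monoid-relation}, under which $x_i\leftrightarrow q_i$. Given $\scale$, set $c_i=\scale(q_i)$. Additivity applied to \eqref{eq:conifold-monoid-relation} gives \eqref{eq:scale-relation} at once. Also, \eqref{eq:vertex-scale-unit} with $u=1$ gives $\Reg_v(\ell_{\lambda_{q_i}})=\scale(q_i)=c_i$. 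Since $\lambda_{q_i}=x_i$, this is the claimed value of $\Reg_v(\log x_i)$.

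For the converse, I will show that $P^{\gp}$ is presented as an abelian group by generators $q_1,\dots,q_4$ and the single relation $q_1+q_2=q_3+q_4$. Once this is established, any quadruple satisfying \eqref{eq:scale-relation} defines a unique homomorphism $\scale$, and the two constructions are mutually inverse.

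The presentation argument has two steps.

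First, the $q_i$ generate $P^{\gp}=\Lambda^*\cong\Z^3$. In the explicit coordinates, $q_3=(1,0,0)$ and $q_1=(0,1,0)$, and $q_1+q_2=(0,0,1)$. So the lattice spanned by the $q_i$ contains a basis of $\Z^3$.

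Second, the kernel of $\Z^4\to\Z^3$, $e_i\mapsto q_i$, has rank one by rank count. It contains the primitive vector $e_1+e_2-e_3-e_4$. A saturated rank-one sublattice is generated by any primitive vector in it, so the kernel is exactly $\Z(e_1+e_2-e_3-e_4)$. The kernel is saturated because the cokernel $\Z^3$ is torsion-free, which follows from the first step.

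I expect no real obstacle. The one point needing care is to work with the generators $q_i$ of $P$, so that $\scale$ is determined on all of $P^{\gp}$. Hilbert-basis completeness of $P$ is not needed, because the statement only concerns homomorphisms on the group completion. I will also check that the identification $x_i\leftrightarrow q_i$ from \cref{subsec:conifold-two-squares} matches the relation $x_1x_2=x_3x_4$, which it does by \eqref{eq:conifold-monoid-relation}.
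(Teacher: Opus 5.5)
Your proposal is correct and follows the paper's proof. You set $c_i=\scale(q_i)$, apply additivity to $q_1+q_2=q_3+q_4$, and for the converse use that $P^{\gp}$ is presented by the $q_i$ subject to this single relation; the only difference is that you explicitly verify that presentation (generation of $\Z^3$ and the rank-one kernel spanned by $e_1+e_2-e_3-e_4$) and the value $\Reg_v(\log x_i)=c_i$ via the unit formula with $u=1$, whereas the paper takes both as given from its discussion of the dual-generator square.
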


\begin{proof}
A scale is a homomorphism $\scale:P^{\gp}\to\R$.  Set $c_i=\scale(q_i)$.
Applying $\scale$ to \eqref{eq:conifold-monoid-relation} gives
\eqref{eq:scale-relation}.  Conversely, any quadruple satisfying this
relation defines a unique homomorphism from the presented abelian group
$P^{\gp}$.
\end{proof}

Thus the four logarithmic generators do not carry four independent vertex
values.  This relation controls the intrinsic differential source and its
regularized evaluation at the vertex, but it does not parametrize a complete
DPP regularization on any smooth resolution; the compatible positive normal
sections remain additional data by
\cref{prop:vertex-character-insufficient}.

For compactly supported fields on a truncated neighbourhood of the vertex,
the intrinsic BF action is
\[
 S_\square^{\mathrm{BF}}
 =\Tr_{X_\square}^{\cT}\langle B,\db A\rangle.
\]
Its four codimension-one face terms are indexed by the boundary
hypersurfaces $H_{v_i}$, while all logarithmic normal components lie in the rank-three space
$P^{\gp}\otimes\R$.  At the vertex,
regularized evaluation obeys \eqref{eq:scale-relation}.  The
codimension-two incidence cancellations are consequently relations in the
face poset of $P$, rather than the Boolean cancellations of an orthant.

\subsection{The two diagonal resolutions}

The two diagonals of the ray square give the smooth refinements $\cR_+$
and $\cR_-$ of \eqref{eq:triang-plus}--\eqref{eq:triang-minus}.  Let
\[
  v_0=v_1+v_4=v_2+v_3=(1,1,2).
\]
The ray through $v_0$ gives a common star refinement $\cR_\star$ with four
maximal cones.  Here $v_0$ is already primitive, and the determinants of the
four ordered ray triples
\[
 (v_1,v_2,v_0),\quad (v_2,v_4,v_0),\quad
 (v_4,v_3,v_0),\quad (v_3,v_1,v_0)
\]
have absolute value one.  Hence $\cR_\star$ is a smooth common refinement,
obtained from either diagonal refinement by a single stellar subdivision
\textup{(}\cref{fig:square-refinements}\textup{)}.  Writing $\beta_\star=\beta_+\gamma_+=\beta_-\gamma_-$ for the composite
blow-down of the star refinement, the blow-ups fit into
\[
\begin{tikzcd}[column sep=large]
 & X_{\cR_\star}
   \arrow[dl,"\gamma_+"']
   \arrow[dr,"\gamma_-"] & \\
 X_{\cR_+}
   \arrow[dr,"\beta_+"'] &&
 X_{\cR_-}
   \arrow[dl,"\beta_-"]\\
 & X_\square.&
\end{tikzcd}
\]

\begin{figure}[ht]
\centering
\begin{tikzpicture}[scale=1.0,every node/.style={font=\small}]
  \begin{scope}[xshift=-5.0cm]
    \coordinate (a) at (0,0); \coordinate (b) at (2.2,0);
    \coordinate (c) at (0,2.2); \coordinate (d) at (2.2,2.2);
    \draw[thick] (a)--(b)--(d)--(c)--cycle;
    \draw[thick] (a)--(d);
    \fill (a) circle (1.5pt) node[below left] {$v_1$};
    \fill (b) circle (1.5pt) node[below right] {$v_2$};
    \fill (c) circle (1.5pt) node[above left] {$v_3$};
    \fill (d) circle (1.5pt) node[above right] {$v_4$};
    \node at (1.1,-0.7) {$\cR_+$};
  \end{scope}
  \begin{scope}[xshift=-1.3cm]
    \coordinate (a) at (0,0); \coordinate (b) at (2.2,0);
    \coordinate (c) at (0,2.2); \coordinate (d) at (2.2,2.2);
    \draw[thick] (a)--(b)--(d)--(c)--cycle;
    \draw[thick] (b)--(c);
    \fill (a) circle (1.5pt) node[below left] {$v_1$};
    \fill (b) circle (1.5pt) node[below right] {$v_2$};
    \fill (c) circle (1.5pt) node[above left] {$v_3$};
    \fill (d) circle (1.5pt) node[above right] {$v_4$};
    \node at (1.1,-0.7) {$\cR_-$};
  \end{scope}
  \begin{scope}[xshift=2.4cm]
    \coordinate (a) at (0,0); \coordinate (b) at (2.2,0);
    \coordinate (c) at (0,2.2); \coordinate (d) at (2.2,2.2);
    \coordinate (o) at (1.1,1.1);
    \draw[thick] (a)--(b)--(d)--(c)--cycle;
    \draw[thick] (a)--(o)--(b) (c)--(o)--(d);
    \fill (a) circle (1.5pt) node[below left] {$v_1$};
    \fill (b) circle (1.5pt) node[below right] {$v_2$};
    \fill (c) circle (1.5pt) node[above left] {$v_3$};
    \fill (d) circle (1.5pt) node[above right] {$v_4$};
    \fill (o) circle (1.5pt) node[above right] {$v_0$};
    \node at (1.1,-0.7) {$\cR_{\star}$};
  \end{scope}
\end{tikzpicture}
\caption{The two unimodular triangulations of the ray square and their common
star refinement.  The pictures are transverse slices of the
three-dimensional cones.}
\label{fig:square-refinements}
\end{figure}

\subsection{Cellular comparison}

Each diagonal triangulation has two two-cells, five one-cells, and four
vertices in the transverse slice.  The common star refinement has four
two-cells, eight one-cells, and five vertices.  The subdivision chain maps
\[
  C_\bullet(\cR_+)\longrightarrow C_\bullet(\cR_\star),
  \qquad
  C_\bullet(\cR_-)\longrightarrow C_\bullet(\cR_\star)
\]
are chain-homotopy equivalences by \cref{thm:subdivision-equivalence}.

The extra central vertex and radial edges do not define new cellular homology.
Their reduced cellular contribution is contractible.  This is the finite
combinatorial prototype of the \emph{initial} reduced refinement sector; by
itself it is not a contraction of the full-differential logarithmic or BV
field complex.

\begin{definition}[Product--Whitney conifold class]\label{def:conifold-product-whitney}
Let $v$ be the vertex of a compact toric truncation $U\subset X_\square$ and
let $E=\beta_\star^{-1}(v)$ be the exceptional square.  Choose the star
resolution's dual square cellulation $K_v=E$, with four vertices, four edges
and one two-cell, and choose compatible finite face cellulations away from
$v$.  The ray $v_0$, the four radial two-cones and the four maximal cones are
dual to that two-cell, its edges and its vertices.  On every intrinsic face
$F$ choose a bounded finite-dimensional linear subcomplex
$W_F^\bullet\subset\logOmega^\bullet(F;\cV|_F)$ which is preserved by the
chosen regularized face restrictions.  Require these restrictions to compose
strictly and choose the DPP normal sections to be product-type on the toric
vertex collar.

For the two angular interval factors choose finite invariant subcomplexes
\(\mathfrak L_1,\mathfrak L_2\) as in
\eqref{eq:finite-separated-square}.  For every product cell
$C=C_1\times C_2\leq E$, let $L_C^\bullet$ be the restriction to $C$ of
\(\mu(\mathfrak L_1\otimes\mathfrak L_2)\).  Require the following:
\begin{enumerate}[label=\textup{(\roman*)},leftmargin=2.5em]
\item the $L_C$ contain all angular logarithmic factors obtained by pulling the
selected unresolved monomials to the corresponding resolved face and applying
the chosen regularized normal restriction;
\item the chosen interval factors are closed under the de Rham differential,
endpoint restrictions, and the two-ended primitive of
\cref{lem:two-ended-primitive}; consequently the resulting finite product
\emph{total} complex is preserved by $D_E$ and $H_E^{\log}$, while $I_E$ and
$P_E$ restrict to the finite source and target of
\cref{prop:product-square-log-contraction}.  No componentwise naturality of
$H_E^{\log}$ with respect to individual normal-cell inclusions is assumed;
\item for an intrinsic face $F\supset v$, the support-changing restriction
from its strict transform to a cell $C$ over $v$ factors through a map
\[
 R_{C/F}:W_F^\bullet\longrightarrow W_v^\bullet\otimes L_C^\bullet
\]
whose two components are the chosen coefficient restriction
$r^W_{v/F}$ and the regularized angular restriction.  On a vertex cell
$u\leq E$, where $L_u^0=\mathbb R$, this factorization is normalized by
\[
 (\id\otimes\epsilon_u)R_{u/F}=r^W_{v/F},
 \qquad \epsilon_u(1)=1,
\]
and all the maps $R_{C/F}$ are compatible with flags.
\end{enumerate}
Existence of these finite invariant subcomplexes is an explicit hypothesis on
the coefficient class.  No completion and no closure under arbitrary products
is imposed.  In particular the class contains constants, but is not restricted
to coefficients constant in the angular variables.

The vertex-support part of the resolved coefficient complex is now defined
from the actual logarithmic face diagram,
\begin{equation}\label{eq:conifold-pw-vertex-sector}
 \mathscr L_{v,\mathrm{pw}}^m
 =\bigoplus_{C\leq E}
 \bigl(W_v^\bullet\otimes L_C^\bullet\bigr)
 ^{m-(3-\dim C)},
\end{equation}
with the DPP differential and oriented regularized face restrictions.  Away
from $v$ there is one unchanged support summand $W_F^\bullet$ for each
intrinsic face $F\ne v$.  A \emph{product--Whitney conifold coefficient
complex} is the finite resolved subcomplex obtained by adjoining
\eqref{eq:conifold-pw-vertex-sector} to these unchanged summands and the
support-changing maps in \textup{(iii)}.  Thus no cellular splitting of the
logarithmic sector is assumed in the definition; it will be produced by
\cref{prop:product-square-log-contraction}.

The intrinsic graded target is
\[
 \mathscr B_{U,W}^m
 =\bigoplus_{F\in\mathsf{Face}(U)}W_F^{m-|F|}.
\]
\end{definition}

\begin{example}[An explicit nonconstant product--log complex]
\label{ex:explicit-product-log}
Take scalar coefficients \(W_F=\mathbb R\) with identity restrictions and put
$x_a=\lambda_{q_a}$.  The toric chart calculation is encoded by the pairing
table
\[
\begin{array}{c|ccccc}
 &v_1&v_2&v_4&v_3&v_0\\
\hline
q_1&0&0&1&1&1\\
q_2&1&1&0&0&1\\
q_3&0&1&1&0&1\\
q_4&1&0&0&1&1
\end{array}
\]
obtained directly from \eqref{eq:conifold-facets}.  On a maximal star chart
$\tau=\Cone(v_i,v_j,v_0)$ with canonical monoidal boundary coordinates
$(\rho_i,\rho_j,\rho_0)$, it gives
\begin{equation}\label{eq:conifold-chart-monomial-pullback}
 \beta_\star^*x_a
 =\rho_i^{\langle q_a,v_i\rangle}
  \rho_j^{\langle q_a,v_j\rangle}\rho_0,
 \qquad
 \Reg_{\rho_0}\log\beta_\star^*x_a
 =c_0+\langle q_a,v_i\rangle\log\rho_i
      +\langle q_a,v_j\rangle\log\rho_j,
\end{equation}
where $c_0=\Reg_{\rho_0}(\log\rho_0)$.  On the four standard product
parametrizations of the exceptional square, the angular boundary coordinates
are projective toric monomials in the two interval variables.  Equivalently,
each $\log\rho_i$ and $\log\rho_j$ is an integral linear combination of
\[
 \log t_1,\quad \log(1-t_1),\quad
 \log t_2,\quad \log(1-t_2);
\]
for example, transition coordinates include the familiar ratios
$t_k/(1-t_k)$ and $(1-t_k)/t_k$.  Thus every angular logarithm in
\eqref{eq:conifold-chart-monomial-pullback} belongs to the separated span of
$\{1,\log t,\log(1-t)\}$ in each interval factor; the relation
$q_1+q_2=q_3+q_4$ is preserved because the exponents are lattice pairings.

On \(\overline I=[0,1]\), use the ordered bases
\[
\begin{split}
 f_0&=1,\qquad f_1=t,\qquad
 f_2=\log t-a,\qquad f_3=\log(1-t),\\
 \eta_1&=\dd t,\qquad
 \eta_2=\frac{\dd t}{t},\qquad
 \eta_3=-\frac{\dd t}{1-t},
\end{split}
\]
where \(a=\Reg_0(\log t)\) and
\(b=\Reg_1(\log(1-t))\).  The logarithms are smooth at the opposite endpoint,
so \(\Reg_1(\log t)=\Reg_0(\log(1-t))=0\).  Put
\[
 \mathfrak L^0=\langle f_0,f_1,f_2,f_3\rangle,\qquad
 \mathfrak L^1=
 \langle\eta_1,\eta_2,\eta_3\rangle\oplus
 \mathbb Rv_0\oplus\mathbb Rv_1.
\]
Relative to these bases, the differential, homotopy and projection are
\[
 [D_I]=
 \begin{pmatrix}
 0&1&0&0\\
 0&0&1&0\\
 0&0&0&1\\
 -1&0&0&0\\
 1&1&-a&b
 \end{pmatrix},
 \qquad
 [H_I]=
 \begin{pmatrix}
 0&0&0&0&0\\
 1&0&0&0&0\\
 0&1&0&0&0\\
 0&0&1&0&0
 \end{pmatrix},
\]
\[
 [P_I^0]=\begin{pmatrix}1&0&0&0\end{pmatrix},
 \qquad
 [P_I^1]=
 \begin{pmatrix}
 0&0&0&1&0\\
 -1&a&-b&0&1
 \end{pmatrix}.
\]
Here the degree-one basis is
\((\eta_1,\eta_2,\eta_3,v_0,v_1)\); \(I_I\) sends \(e\) to \(f_0\) and
sends the two cellular vertices to \(v_0,v_1\).  These matrices verify
directly that \(\mathfrak L\) is preserved by \(D_I,I_I,P_I,H_I\).
The endpoint restriction rows on degree zero are
\[
 R_0=\begin{pmatrix}1&0&0&0\end{pmatrix},
 \qquad
 R_1=\begin{pmatrix}1&1&-a&b\end{pmatrix}.
\]

For the exceptional square take
\(\mathfrak R_{E,\mathrm{sep}}=\mu(\mathfrak L\otimes\mathfrak L)\).
The square differential and the maps \(I_E,P_E,H_E^{\log}\) are the displayed
Kronecker products and the tensor homotopy
\eqref{eq:square-tensor-homotopy}.  All edge and vertex restrictions are
\(R_i\otimes\id\), \(\id\otimes R_j\), or \(R_i\otimes R_j\), so the flag
identities of the coefficient system are literal matrix identities.  This
concerns composition of the restrictions, not commutation of
$H_E^{\log}$ with them; see
\cref{rem:two-ended-not-face-natural}.  This gives a nonconstant member of
\cref{def:conifold-product-whitney}, containing \(f_2\otimes1\),
\(1\otimes f_3\), and \(f_2\otimes f_3\), rather than only constants.
\end{example}

\begin{theorem}[Finite product--Whitney conifold strictification]\label{thm:conifold-g4}
For every coefficient complex in
\cref{def:conifold-product-whitney}, the common star resolution carries a
global centre-compatible, incidence-local deformation-retract datum satisfying
all clauses of \textup{(G4)} after normal-face totalization.  No componentwise
naturality of its contracting homotopy with respect to individual cells of the
exceptional square is asserted.  Its perturbation series are finite, its actual
reduced full-differential sector is $\ker p_\infty$, and the transferred
differential is the signed intrinsic cellular incidence differential, with
coefficient arrows $r^W_{G/F}$, on $\mathscr B_{U,W}^\bullet$.
\end{theorem}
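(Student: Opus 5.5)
The plan is to build the (G4) datum explicitly on the common star resolution $X_{\cR_\star}$ and then apply \cref{prop:filtered-exceptional-contraction}.

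\textbf{Step 1: the initial retract.} The datum is the identity on every unchanged support summand $W_F^\bullet$ with $F\ne v$. On the vertex sector \eqref{eq:conifold-pw-vertex-sector} it is the identity on $W_v^\bullet$ tensored with the finite product-square contraction of \cref{prop:product-square-log-contraction}, that is $\id\otimes(I_E,P_E,H_E^{\log})$, landing in $W_v^\bullet\otimes C^\bullet_{\mathrm{nf}}(E)$. This is then composed with the cellular aggregation of $C^\bullet_{\mathrm{nf}}(E)$ onto its single coarse generator, the two-cell regraded to face degree $3$, using a special contraction of the reduced normal-face cellular complex of the square. This realizes the logarithmic alternative of clause~(c), and by \cref{cor:one-stage-contractible} the combined homotopy has the form $H_E^{\log}+I_E H_{\mathrm{cell}} P_E$.

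For the cellular contraction of the square I would take the tensor product of two copies of the diagonal interval contraction of \cref{prop:diagonal-interval}, transported to the chain/normal-face grading. Its kernel is then the tensor of the reduced interval factors, and the side conditions $h_0i_0=p_0h_0=h_0^2=0$ follow from the tensor trick. Clauses~(a),~(b) and the gluing condition are then formal: $D_0$ is the DPP differential on each summand together with the normal-face differential inside $E$, and the only centre is the single point $v$, so centre restriction and gluing are vacuous apart from compatibility with the finite cellulations.

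\textbf{Step 2: nilpotence and locality.} Set $\delta=D-D_0$. This consists exactly of the support-changing restrictions $R_{C/F}$ from the strict transforms of intrinsic faces $F\supset v$ into cells over $v$, together with the unchanged intrinsic restrictions away from $v$. Since $\delta$ raises face degree by one and $h_0$ lowers it, and $h_0$ is supported only on the vertex sector, the operator $h_0\delta$ can act at most once after a single entry into the vertex sector. This gives nilpotence, clause~(d), with series of length at most two.

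For incidence locality, clause~(e): the only new terms are $p_0\delta h_0\delta i_0$ from an intrinsic edge-type face $F$ through $E$ to $v$. I would check that these land only in $\pi_v$ and only when $v\prec F$, by computing supports on the ray square.

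\textbf{Step 3: the transferred differential.} This is the main obstacle. We must show that
\[
p_0\delta(1+h_0\delta)^{-1}i_0\jmath_F=[F:v]\,r^W_{v/F}
\]
for the intrinsic codimension-two faces $F\supset v$, while on faces away from $v$ the unchanged restrictions give the intrinsic differential directly.

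I would compute the zeroth-order term $p_0\delta i_0$ using the vertex normalization $(\id\otimes\epsilon_u)R_{u/F}=r^W_{v/F}$. The strict transform of each intrinsic face $F$ meets $E$ along specific edges and vertices, and $P_E$ followed by the diagonal cellular aggregation extracts the regularized endpoint values. The tangential logarithms in \eqref{eq:conifold-chart-monomial-pullback} are killed by $P_I$ up to the $a,b$ constants, and these constants cancel because of the endpoint-normalized primitive.

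The first-order correction $-p_0\delta h_0\delta i_0$ must then be shown either to vanish or to cancel the spurious edge contributions, and signs must match the incidence signs $[F:G]$ of the intrinsic face lattice of $\sigma^\vee$. I expect this bookkeeping, with the explicit matrices of \cref{ex:explicit-product-log} used to check one chart, to be the hard part.

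Once the components are matched, $D_{\mathrm{str}}^2=0$ is automatic, and $\ker p_\infty$ is the reduced sector by \cref{prop:filtered-exceptional-contraction}.
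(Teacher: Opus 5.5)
Your architecture matches the paper's: the identity on the unchanged support summands, $\id_{W_v}\otimes(I_E,P_E,H_E^{\log})$ on the vertex sector composed with a cellular contraction of $K_v$, the composite homotopy of \cref{cor:one-stage-contractible}, and then \cref{prop:filtered-exceptional-contraction}. However, Step~1 misidentifies the coarse mode. With $C^r_{\mathrm{face}}(K_v)=C_{3-r}(K_v)$, the two-cell sits in face degree $1$ (it is the exceptional hypersurface $H_{v_0}$), and it is not even a $d_{\mathrm{nf}}$-cycle. The surviving class is a vertex class in face degree $3=|v|$, and the aggregation must be the augmentation $\epsilon$ on $0$-cells. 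Your transposed tensor square of the diagonal interval contraction does in fact retain a vertex class. It is therefore an acceptable symmetric substitute for the paper's anchored $H_E$ with $\iota_0(1)=u_0$, provided you normalize the projection to be exactly $\epsilon$. The literal transposes give $\tfrac12\epsilon$ per factor, which would multiply every arrow $L_{ij}\to v$ by $\tfrac14$.

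The genuine gap is in Steps~2--3, where the difficulty is misplaced.

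\emph{The first-order correction vanishes.} Your nilpotence observation already contains the key fact, but you do not use it. $h_0$ takes values in the sector over the minimal intrinsic face $v$, and every incidence among cells over $v$ preserves support, so it lies in $D_0$. Hence $\delta h_0=0$, and therefore $(h_0\delta)^2=0$, $p_\infty=p_0$, $h_\infty=h_0$ and $D_{\mathrm{str}}=d_W+p_0\delta i_0$ exactly. So the correction $p_0\delta h_0\delta i_0$ that you single out as the main obstacle is identically zero. The endpoint-normalized primitive enters only through $h_0$, so it cannot cancel anything in $D_{\mathrm{str}}$, contrary to what Step~3 suggests.

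\emph{The missing content is the zeroth-order term.} The real work is the support analysis of $p_0\delta i_0$, which is also where non-local entries could arise and which your locality check omits. The resolved incidences bulk$\,\to H_{v_0}$ and $H_{v_i}\to H_{\Cone(v_i,v_0)}$ jump from $U$, respectively $H_i$, straight to support $v$. The paper's proof is exactly the star-fan table:
\begin{enumerate}[label=\textup{(\alph*)},leftmargin=2.5em]
\item bulk$\,\to H_{v_i}$ and the perimeter incidences $H_{v_i}\to H_{\Cone(v_i,v_j)}$ aggregate by the identity, with the intrinsic signs;
\item $H_{\Cone(v_i,v_j)}\to u_{ij}$ has multiplicity one, since each perimeter two-cone lies in a single maximal star cone; its sign is $[L_{ij}:v]$ from the fan orientation, and its coefficient is $r^W_{v/L_{ij}}$ from the normalization in clause~\textup{(iii)} of \cref{def:conifold-product-whitney};
\item the codimension-jumping incidences land on the two-cell or on edges of $K_v$, where $\epsilon$ vanishes.
\end{enumerate}
Be careful with the last point. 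Since $P_I$ moves $-\Tr\eta$ onto $v_1$, $P_{v,\log}$ is not cell-preserving. The vanishing in (c) is automatic only when those components have $\mathfrak R_E$-degree below two, as for the degree-zero coefficients of \cref{ex:explicit-product-log}. That is the bookkeeping to carry out; it is not a cancellation by the primitive. Without this computation, neither clause~\textup{(e)} of \textup{(G4)} nor the identification of $D_{\mathrm{str}}$ with the signed incidence differential is proved.

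\emph{Gluing is not quite vacuous.} The paper takes the outer overlap of the vertex collar disjoint from $\beta_\star^{-1}(v)$. Then the exceptional summands restrict to zero there, and the maps glue without inserting a cutoff into the homotopy.
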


\begin{proof}
First apply \cref{prop:product-square-log-contraction}, tensored with
$W_v^\bullet$, to the vertex-support complex
\eqref{eq:conifold-pw-vertex-sector}.  After the external face shift by one it
gives a special deformation retract
\begin{equation}\label{eq:conifold-angular-sdr}
 (\mathscr L_{v,\mathrm{pw}}^\bullet,D_{v,\log})
 \underset{I_{v,\log}}{\overset{P_{v,\log}}{\rightleftarrows}}
 \bigl(W_v^\bullet\otimes
 C_{\mathrm{face}}^\bullet(K_v),d_W+\partial\bigr),
 \qquad H_{v,\log},
\end{equation}
where
$C_{\mathrm{face}}^r(K_v)=C_{3-r}(K_v)$ for $1\leq r\leq3$.
This is the step which retains the tangential logarithms on $E$; it is not an
identification of those logarithms with constants.

Orient the vertices of $K_v$ cyclically as $u_0,u_1,u_2,u_3$, write
$e_j=[u_j,u_{j+1}]$ with indices modulo four, and orient its two-cell $s$ by
$\partial s=e_0+e_1+e_2+e_3$.  Let
$\epsilon:C_\bullet(K_v)\to\mathbb R$ be augmentation and
$\iota_0(1)=u_0$.  Define
\begin{align*}
 H_Eu_0&=0,&H_Eu_1&=e_0,&H_Eu_2&=e_0+e_1,&H_Eu_3&=-e_3,\\
 H_Ee_0&=0,&H_Ee_1&=0,&H_Ee_2&=s,&H_Ee_3&=0,&H_Es&=0.
\end{align*}
A direct boundary calculation gives the special contraction
\[
 \partial H_E+H_E\partial=\id-\iota_0\epsilon,
 \qquad H_E\iota_0=\epsilon H_E=H_E^2=0.
\]
Tensor this cellular contraction with $W_v^\bullet$, with the Koszul sign, and
compose it with \eqref{eq:conifold-angular-sdr}.  On the vertex-support sector
the resulting initial maps are
\begin{equation}\label{eq:conifold-initial-sdr}
 i_0=I_{v,\log}(\id\otimes\iota_0),\qquad
 p_0=(\id\otimes\epsilon)P_{v,\log},
\end{equation}
and
\begin{equation}\label{eq:conifold-composite-homotopy}
 h_0=H_{v,\log}
 +I_{v,\log}(\id\otimes H_E)P_{v,\log},
\end{equation}
where the tensor Koszul signs are understood.
Use the identity, identity and zero, respectively, on every other support
summand.  Let $D_0$ be the actual internal differential on the unchanged
summands and the actual same-support logarithmic face differential
$D_{v,\log}$ on \eqref{eq:conifold-pw-vertex-sector}.  The composition rule for
special deformation retracts gives
\[
 D_0h_0+h_0D_0=\id-i_0p_0,
 \qquad h_0i_0=p_0h_0=h_0^2=0.
\]
Thus the initial kernel contains both the angular logarithmic kernel of
$P_{v,\log}$ and the inverse image of the reduced cellular square; the first
is contracted by $H_{v,\log}$ and the second by the lifted $H_E$.  This is the
two-stage reduced sector required by the strengthened clause of
\textup{(G4)}.

Let $\delta=D-D_0$.  By construction it is precisely the sum of resolved
codimension-one incidences which change intrinsic support.  The image of
$h_0$ is supported over the minimal intrinsic face $v$, from which there is no
further support-decreasing incidence.  Consequently
\[
 \delta h_0=0,
 \qquad
 (h_0\delta)^2=0.
\]
The perturbation formulas are therefore finite and the higher correction
$p_0\delta h_0\delta i_0$ vanishes.

For completeness, the surviving matrix entries of $p_0\delta i_0$ can be read
directly from the star fan.  Its rays, two-cones and maximal cones give the
following four types:
\begin{center}
\begin{tabular}{lll}
\toprule
resolved incidence & intrinsic support change & aggregation\\
\midrule
bulk $\to H_{v_i}$ & $U\to H_i$ & identity\\
$H_{v_i}\to H_{\Cone(v_i,v_j)}$ on the perimeter
 & $H_i\to L_{ij}$ & identity\\
$H_{\Cone(v_i,v_j)}\to u_{ij}$ & $L_{ij}\to v$
 & $\epsilon(u_{ij})=1$\\
bulk $\to H_{v_0}$ or $H_{v_i}\to H_{\Cone(v_i,v_0)}$
 & jump to $v$ by codimension $>1$ & zero\\
\bottomrule
\end{tabular}
\end{center}
Here the last row lands in the two-cell or an edge of $K_v$, on which cellular
augmentation is zero.  Each perimeter two-cone belongs to one maximal cone,
so the third row has multiplicity one.  Orienting the dual cells by the fan
orientation makes its sign exactly $[L_{ij}:v]$; the first two rows inherit
$[U:H_i]$ and $[H_i:L_{ij}]$ from the intrinsic coorientations of the
corresponding strict transforms.  The chosen coefficient restriction system
supplies the first two coefficient factors, while clause \textup{(iii)} of
\cref{def:conifold-product-whitney} supplies the third.  They are, respectively,
\[
 r^W_{H_i/U},\qquad r^W_{L_{ij}/H_i},\qquad r^W_{v/L_{ij}}.
\]
The normalization
$(\id\otimes\epsilon_{u_{ij}})R_{u_{ij}/L_{ij}}
=r^W_{v/L_{ij}}$ is exactly what removes any chart-dependent scalar in the
third row.  Thus for every intrinsic cover relation $G\prec F$,
\[
 \pi_Gp_0\delta i_0\jmath_F=[F:G]r^W_{G/F}.
\]
Hence
\[
 D_{\mathrm{str}}=d_W+p_0\delta i_0
\]
is precisely the signed intrinsic incidence differential, with no hidden
multiple or nonlocal component.

It remains to verify precisely the compatibility required in \textup{(G4)}.
Here the centre $v$ is a point.  Fix the global toric product collar, the
cyclic ordering $u_0,u_1,u_2,u_3$, and the product-type DPP normal sections
used above.  Consequently no transition-function equivariance of the anchored
cellular homotopy $H_E$ is required.  The interval and square maps are
$W_v^\bullet$-linear, while compatibility with restrictions inside the centre
is vacuous.  The normal-cell incidences themselves are already part of
$D_{v,\log}$ in \eqref{eq:conifold-angular-sdr}; by
\cref{rem:two-ended-not-face-natural}, no componentwise commutation of
$H_{v,\log}$ with the individual edge and vertex restrictions is claimed or
needed.

Choose the outer overlap of the toric vertex collar disjoint from
\(\beta_\star^{-1}(v)\).  Every exceptional face summand in
\eqref{eq:conifold-pw-vertex-sector} restricts to zero on that overlap, while
the persistent strict-transform summands carry the identity, identity and zero
as \(i_0,p_0,h_0\).  Thus the collar maps and the exterior maps have identical
restrictions on the overlap and glue by the sheaf axiom; no cutoff is inserted
into the homotopy.  This verifies the
initial contraction, the two-stage associated-graded reduced sector,
nilpotence, incidence locality, centre-restriction compatibility and gluing required
by \textup{(G4)}.  Finally
\cref{prop:filtered-exceptional-contraction} identifies the contracted
full-differential summand as $\ker p_\infty$.
\end{proof}

\begin{theorem}[Finite cyclic logarithmic--cellular conifold BF shadow]
\label{thm:conifold-finite-bf}
Let \(\mathscr C_L^A\) be a finite product--Whitney logarithmic coefficient
complex of \cref{def:conifold-product-whitney}, and let
\(\mathscr C_X^A=\mathscr B_{U,W}\) be its finite intrinsic incidence target.
Give their shifted algebraic BF duals
\[
 \mathscr C_L^B=(\mathscr C_L^A)^{\vee_{\mathrm{BF}}},
 \qquad
 \mathscr C_X^B=(\mathscr C_X^A)^{\vee_{\mathrm{BF}}}
\]
the dual face grading.  Then the star model carries a nondegenerate cyclic
finite BF pairing, and the retract of \cref{thm:conifold-g4} dualizes to a
strict bivariant incidence model: \(A\) restricts contravariantly and \(B\)
corestricts covariantly.

This is a logarithmic--cellular BF shadow.  It is not the field datum
\textup{(G5)} of \cref{def:bf-cyclic-enhancement}: in particular,
\(\mathscr C_X^A\) is not identified with
\(\Omega_b^\bullet(U;V)[1]\), and no assertion about the full intrinsic
\(b\)-de Rham field complex follows without an additional type-correct
comparison.
\end{theorem}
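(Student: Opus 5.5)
The plan is to obtain the $B$-side by pure finite-dimensional algebraic duality from the $A$-side retract of \cref{thm:conifold-g4}, and then to check the cyclic identities by transposition rather than by any geometric input.

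\emph{Dual complexes and pairing.} Since every summand of $\mathscr C_L^A$ and $\mathscr C_X^A=\mathscr B_{U,W}$ is finite-dimensional by \cref{def:conifold-product-whitney}, the shifted algebraic dual is well defined summand by summand. I give $\mathscr C^B$ the transpose differentials $D^B=-(D^A)^{\vee}$, with the Koszul sign fixed so that the evaluation pairing $\langle a,b\rangle$, shifted by the BF degree $k-1$ on a codimension-$k$ summand, satisfies the graded adjointness of clause (c) of \cref{def:bf-cyclic-enhancement}. Nondegeneracy is then automatic from finite dimensionality. The transpose of a face-degree $+1$ map decreases geometric codimension, but it has cohomological degree $+1$ in the dual grading, as already noted in \cref{def:bf-cyclic-enhancement}.

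\emph{Dualizing the retract.} Take $(i_0,p_0,h_0)$ and $(i_\infty,p_\infty,h_\infty)$ from \cref{thm:conifold-g4}. On the $B$-side put
\[
 i^B:=p_\infty^{\vee},\qquad p^B:=i_\infty^{\vee},\qquad h^B:=\pm h_\infty^{\vee},
\]
with the same Koszul signs. Transposing the identities $p_\infty i_\infty=\id$, $Dh_\infty+h_\infty D=\id-i_\infty p_\infty$, and the side conditions gives a special deformation retract for the dual complexes. By construction the adjointness relations (d)--(e) of \cref{def:bf-cyclic-enhancement} hold. This can be done equally at the initial level followed by the perturbation lemma, and transposition commutes with the finite resolvent series. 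The two routes agree, as in the proof of \cref{thm:cyclic-hpl-bf}.

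\emph{Strict bivariant model.} The transferred $B$-differential is $(D_{\mathrm{str}}^A)^{\vee}$ up to sign. By \cref{thm:conifold-g4}, $D^A_{\mathrm{str}}$ is $d_W$ plus the signed incidence arrows $[F:G]r^W_{G/F}$, so its transpose consists of the signed corestrictions $\partial^B_{F/G}$ defined by \eqref{eq:g5-signed-adjoint-arrow}. This gives arrow matching exactly, and incidence locality is inherited. The flag identities $r_{H/G}r_{G/F}=r_{H/F}$ are literal for the chosen coefficient restrictions, and they transpose to covariant composition of the corestrictions. Cyclicity of $\langle B,D^A_{\mathrm{str}}A\rangle$ then follows from graded adjointness.

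\emph{Main obstacle and the negative statement.} The main obstacle I expect is sign bookkeeping. The interchange sign \eqref{eq:square-exterior-product-sign}, the external face shift, and the BF degree shift must be made consistent so that the transpose of $h_\infty$ satisfies \eqref{eq:g5-skew-adjoint} exactly rather than up to an overall sign. I would fix this once on homogeneous bidegrees, check it on the interval matrices of \cref{ex:explicit-product-log}, and then extend by the tensor trick.

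The disclaimer in the statement requires no proof beyond observing a type mismatch. $\mathscr C_X^A$ is a finite logarithmic trace complex, whereas $\Omega_b^\bullet(U;V)[1]$ is infinite-dimensional. No comparison map between them has been constructed, so nothing about (G5) is claimed.
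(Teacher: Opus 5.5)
Your proposal is correct and follows the paper's argument. Both take the perturbed $A$-retract $(i_\infty^A,p_\infty^A,h_\infty^A)$ of \cref{thm:conifold-g4} and dualize it by finite-dimensional transposition, with $i_\infty^B=(p_\infty^A)^\vee$, $p_\infty^B=(i_\infty^A)^\vee$ and $D^B$ the graded adjoint of $-D^A$. Nondegeneracy and cyclicity then come from the evaluation pairing, and the covariant corestrictions and their flag identities are the transposes of the signed incidence arrows $[F:G]\,r^W_{G/F}$. The paper also records that in this model $\delta_A h_0^A=0$, so $p_\infty^A=p_0^A$ and $h_\infty^A=h_0^A$ but $i_\infty^A=i_0^A-h_0^A\delta_A i_0^A$; that is why it insists on the perturbed rather than the initial retract, and your choice of $(i_\infty,p_\infty,h_\infty)$ already does this.
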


\begin{proof}
Use the \emph{perturbed} finite \(A\)-retract supplied by
\cref{thm:conifold-g4,prop:filtered-exceptional-contraction}:
\[
 (\mathscr C_L^A,D^A)
 \underset{i_\infty^A}{\overset{p_\infty^A}{\rightleftarrows}}
 (\mathscr C_X^A,D_{\mathrm{str}}^A),\qquad h_\infty^A.
\]
The distinction from the initial retract is essential.  In the conifold model
\(\delta_Ah_0^A=0\) and \((h_0^A\delta_A)^2=0\) by
\eqref{eq:conifold-initial-sdr}--\eqref{eq:conifold-composite-homotopy} and the
support calculation in the proof of \cref{thm:conifold-g4}.  Consequently
\[
 p_\infty^A=p_0^A,\qquad h_\infty^A=h_0^A,\qquad
 i_\infty^A=(1+h_0^A\delta_A)^{-1}i_0^A
 =i_0^A-h_0^A\delta_Ai_0^A,
\]
and no vanishing of the last correction is assumed.

Finite-dimensional algebraic duality defines
\[
 i_\infty^B=(p_\infty^A)^\vee,\qquad
 p_\infty^B=(i_\infty^A)^\vee,
\]
with \(D_L^B\) the graded adjoint of \(-D^A\),
\(D_X^B\) the graded adjoint of \(-D_{\mathrm{str}}^A\), and
\(h_\infty^B\) the graded transpose of \(h_\infty^A\).  The evaluation
pairings are nondegenerate, and
transposing the \(A\)-contraction identities gives the \(B\)-contraction
identities and side conditions.  Thus
\[
 S_X^{\mathrm{fin}}(A,B)=\langle B,D_{\mathrm{str}}^A A\rangle
\]
is a finite cyclic quadratic BF action.

For every inclusion \(G\prec F\), define
\[
 (r^A_{G/F})_!:=(r^A_{G/F})^{\vee_{\mathrm{BF}}}:
 \mathscr C_G^B\longrightarrow\mathscr C_F^B.
\]
The \(A\)-maps compose by the product--Whitney definition and, for a flag
\(H\prec G\prec F\),
\[
 (r^A_{G/F})_!(r^A_{H/G})_!
 =\bigl(r^A_{H/G}r^A_{G/F}\bigr)^{\vee_{\mathrm{BF}}}
 =(r^A_{H/F})_!.
\]
Hence the dual arrows compose in the covariant order.  The surviving
\(A\)-incidence matrix in \cref{thm:conifold-g4} is the signed cellular
coboundary; the \(B\)-matrix is its shifted transpose.  Every step takes place
inside the finite logarithmic--cellular complex on the common star resolution;
no \(b\)-de Rham field complex is inserted.
\end{proof}

\begin{remark}[No finite cyclic claim on the diagonal arms]
The cellular subdivision maps
\(C_\bullet(\cR_\pm)\to C_\bullet(\cR_\star)\) are chain-homotopy
equivalences by \cref{thm:subdivision-equivalence}.  This fact alone does not
show that a selected logarithmic product--Whitney subcomplex, its DPP normal
sections and its cyclic pairing are preserved by subdivision.  Since no such
diagonal logarithmic subcomplexes and compatible cyclic retracts have been
fixed here, \cref{thm:conifold-finite-bf} makes no finite cyclic assertion on
the two diagonal arms.  Their comparison in this paper is instead the
unfiltered derived comparison of complete relative complexes through the
common interior, stated below in \cref{thm:conifold-trace}.
\end{remark}

\subsection{Abelian BF theory, twice}

The resolution-level and intrinsic analyses now give complementary
statements.  The ordinary resolution-level result compares the descent
systems of the two resolutions; it is not by itself an equivalence with the
intrinsic $b$-source.

\begin{theorem}[Abelian BF on the positive real conifold]\label{thm:conifold-bf}
Let $U\subset X_\square$ be a compact truncated neighborhood of the vertex,
and let $U_{\cR_+}$ and $U_{\cR_-}$ be the two smooth monoidal
resolutions induced by the diagonal triangulations.  For abelian BF theory
with product-type behavior near the exceptional set, there is a canonical
zigzag in the derived category
\[
  \Tot(U_{\cR_+};\cA_{\mathrm{BF}})
  \xrightarrow{\simeq}
  \Tot(U_{\cR_\star};\cA_{\mathrm{BF}})
  \xleftarrow{\simeq}
  \Tot(U_{\cR_-};\cA_{\mathrm{BF}}).
\]
In particular, the two resolution-level classical linear descent theories
have isomorphic cohomology.
\end{theorem}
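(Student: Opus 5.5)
The plan is to reduce the statement to the facewise abelian BF comparison already proved, \cref{thm:bf-facewise} together with \cref{cor:bf-compositions}, applied separately to the two arms of the star refinement. First I record the geometric input from the preceding subsection. The star refinement $\cR_\star$ is obtained from each diagonal refinement $\cR_\pm$ by a single stellar subdivision at the primitive ray $v_0$. Because $\cR_\pm$ is unimodular and $v_0$ lies in the relative interior of the diagonal two-cone ($v_0=v_1+v_4$, respectively $v_0=v_2+v_3$), the blow-down $\gamma_\pm:U_{\cR_\star}\to U_{\cR_\pm}$ is an elementary radial blow-up of an ordinary codimension-two corner face. This face is the exceptional edge dual to the diagonal cone, and the determinant calculation of \cref{prop:g4-existence} (there with $k=2$) shows that every resulting cone is again unimodular. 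Compactness of the truncation $U$ and the product-type hypothesis near the exceptional set supply the product collar assumed in \cref{thm:bf-facewise}.

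Second, for each sign I apply \cref{thm:bf-facewise} to $\gamma_\pm$. This gives a canonical roof
\[
 \Tot(U_{\cR_\pm};\cA_{\mathrm{BF}})\xleftarrow{\simeq}\mathbb F_c(U_{\cR_\pm}^\circ)
 \xrightarrow[\cong]{(\gamma_\pm^\circ)^*}\mathbb F_c(U_{\cR_\star}^\circ)
 \xrightarrow{\simeq}\Tot(U_{\cR_\star};\cA_{\mathrm{BF}}).
\]
Each roof represents an isomorphism in the derived category, which I write as the displayed arrow $\Tot(U_{\cR_\pm})\to\Tot(U_{\cR_\star})$. Composing the first with the inverse of the second yields the zigzag. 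Taking cohomology then gives the final sentence of the statement.

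Third, I check canonicity. The two interior identifications satisfy $(\gamma_+^\circ)^*(\beta_+^\circ)^*=(\beta_\star^\circ)^*=(\gamma_-^\circ)^*(\beta_-^\circ)^*$, because $\beta_\star=\beta_+\gamma_+=\beta_-\gamma_-$ and all of these maps are diffeomorphisms on the common interior $U^\circ$. The composite comparison therefore factors through $\mathbb F_c(U^\circ)$ and does not depend on the auxiliary choices. This is the relative de Rham statement of \cref{lem:interior-bridge} tensored with $V$ and $V^*$ and shifted.

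The main obstacle is the first step. Near the vertex, the diagonal face of $U_{\cR_\pm}$ must be a genuinely product-type ordinary corner with a collar compatible with the truncation boundary of $U$, so that \cref{thm:bf-facewise} applies as stated. I would settle this in the toric chart $\Cone(v_1,v_2,v_4)$ (and its analogues), where the diagonal face is a coordinate corner and the blow-up is literally $[0,\varepsilon)^2\times\Gamma$ replaced by polar coordinates. If the truncation boundary creates difficulties, I would instead work with compact supports in a fixed smaller truncation, as \cref{thm:bf-facewise} explicitly allows.
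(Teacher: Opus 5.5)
Your proposal is correct and is essentially the paper's own proof. Each arm $\gamma_\pm$ is a single stellar subdivision at $v_0$, realized as an elementary radial blow-up of the codimension-two face dual to the diagonal cone, with interval exceptional fibre. Applying \cref{thm:bf-facewise} to the two arms gives the two roofs through the common interior, which are then composed with $\cR_\star$ as the intermediate representative.

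Your explicit canonicity check via $\beta_\star=\beta_+\gamma_+=\beta_-\gamma_-$ on $U^\circ$, and your remark on the truncation collar, only make explicit what the paper leaves implicit. On unimodularity, the conifold section already checks the determinants of the star cones directly. The column-replacement argument of \cref{prop:g4-existence} reproduces that check once it is applied to the maximal cones containing the diagonal face, rather than literally in its $k=2$ form.
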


\begin{proof}
The common star refinement is obtained from either diagonal triangulation by
inserting the ray through the interior point of its diagonal edge.
Equivalently, it is the star subdivision of the two-dimensional cone
corresponding to that diagonal.  On the resolved manifold this is realized by
an elementary toric blow-up of the associated codimension-two boundary face,
locally modeled by a radial blow-up whose exceptional transverse fiber is an
interval and hence contractible.  The BF coefficient system is homotopy
constant along that fiber under the product-type hypothesis.
\Cref{thm:bf-facewise} therefore gives the two derived roofs through the
common interior, and hence the displayed zigzag after choosing the star
refinement as the intermediate representative.
\end{proof}

\begin{remark}
The word canonical refers to the roof determined by the chosen common
refinement in the derived category.  It does not assert a canonical
diffeomorphism $U_{\cR_+}\cong U_{\cR_-}$.  Indeed, the resolutions
correspond to different diagonal choices.
\end{remark}

The intrinsic $b$/logarithmic analysis instead compares corresponding total
relative trace classes carried by the two resolutions.

\begin{theorem}[Derived conifold trace comparison]\label{thm:conifold-trace}
Choose nondegenerate DPP regularizations $s_+$, $s_\star$, and $s_-$ on the
complete ordered boundary diagrams of $\cR_+$, $\cR_\star$, and $\cR_-$,
respectively.  Their regularized total complexes are identified by
trace-compatible roofs through the common interior.  Thus,
if total cocycles $a_+,a_\star,a_-$ represent the same class under these roofs,
\[
 \mathcal I_{\cR_+,s_+}(a_+)
 =\mathcal I_{\cR_\star,s_\star}(a_\star)
 =\mathcal I_{\cR_-,s_-}(a_-).
\]
For $\varphi\in\Omega_c^3(X_\square^\circ)$ this applies to the three interior
representatives and their common value is
$\int_{X_\square^\circ}\varphi$.
An intrinsic vertex scale $(c_1,c_2,c_3,c_4)$ must satisfy
$c_1+c_2=c_3+c_4$ by \cref{prop:scale-relation}, but this vertex datum neither
determines nor is needed to compare the three chosen DPP regularizations.

For the product--Whitney coefficient class of
\cref{def:conifold-product-whitney}, the star resolution supplies the finite
global datum of \textup{(G4)} by \cref{thm:conifold-g4}; hence the actual
full-differential reduced sector contracts and the trace/descent complex
strictifies.  On the finite algebraic-dual BF sub-class,
\cref{thm:conifold-finite-bf} supplies path-independent
$A$-restrictions/$B$-corestrictions and a strict bivariant finite
logarithmic--cellular BF shadow.  It does not supply \textup{(G5)}, and no such
conclusion is asserted here for the full continuum field classes.
\end{theorem}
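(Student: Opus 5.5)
The plan is to assemble \cref{thm:conifold-trace} from results already proved, treating its two paragraphs separately.

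For the derived trace comparison, apply \cref{thm:refinement-invariance} twice, once to each arm $\gamma_\pm:X_{\cR_\star}\to X_{\cR_\pm}$ of the star diagram. Each $\gamma_\pm$ is a single stellar subdivision of the two-dimensional cone dual to the chosen diagonal, hence a finite stellar refinement. The DPP regularizations $s_+,s_\star,s_-$ are arbitrary, so \cref{lem:interior-bridge} makes each $\epsilon_{\cR,s}$ a quasi-isomorphism with $\mathcal I_{\cR,s}\circ\epsilon_{\cR,s}=\int$ on the interior. Composing the two roofs through $\Omega_c^\bullet(X_{\cR_\star}^\circ)$, which is identified with $\Omega_c^\bullet(X_\square^\circ)$ by the interior diffeomorphisms $\beta_\pm^\circ$ and $\beta_\star^\circ$, gives trace-compatible derived isomorphisms. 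The equality of the three traces on corresponding total cocycles is then \cref{prop:tracecompat} applied to the pairs $(+,\star)$ and $(\star,-)$. For $\varphi\in\Omega_c^3(X_\square^\circ)$, the interior representatives $\epsilon((\beta^\circ)^*\varphi)$ correspond under the roofs by construction. Change of variables under the orientation-preserving interior diffeomorphisms gives the common value $\int_{X_\square^\circ}\varphi$.

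The vertex-scale sentence is \cref{prop:scale-relation}. Its independence from the DPP data follows from \cref{prop:vertex-character-insufficient}, and the roof comparison never used scales because the bridge map $\epsilon$ is independent of $s$.

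The second paragraph needs no new argument. The (G4) datum and its consequences are exactly \cref{thm:conifold-g4}, followed by \cref{prop:filtered-exceptional-contraction} for the contraction of $\ker p_\infty$ and \cref{prop:strict-stokes-global} for the strictified trace/descent complex. The finite BF shadow is \cref{thm:conifold-finite-bf}, and its negative clause that (G5) is not supplied is copied verbatim from there.

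The only real point of care is matching the regularization choices. \Cref{thm:conifold-g4} requires product-type DPP normal sections on the toric vertex collar, whereas the first paragraph allows an arbitrary $s_\star$. I would handle this by stating that the strictification clause uses a product-type $s_\star$. The first paragraph remains valid for that choice, since it holds for every regularization, so the two clauses are consistent.
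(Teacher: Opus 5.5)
Your proposal is correct and follows essentially the same route as the paper. The roofs and trace equalities come from \cref{thm:refinement-invariance,prop:tracecompat}, the value on $\varphi$ comes from the interior trace identity, and the strictification and finite BF shadow come from \cref{thm:conifold-g4}, \cref{prop:filtered-exceptional-contraction} and \cref{thm:conifold-finite-bf}; your remark that the second paragraph tacitly uses a product-type $s_\star$ on the vertex collar, as required in \cref{def:conifold-product-whitney}, is a sound clarification that the paper's proof leaves implicit.
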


\begin{proof}
The trace-compatible roofs and the displayed equality are
\cref{thm:refinement-invariance,prop:tracecompat}; they compare total cocycles,
not arbitrary face-degree-zero logarithmic forms.  The assertion about
$\varphi$ follows from the interior trace identity
\eqref{eq:interior-trace-bridge}.

For either diagonal-to-star arm, the new exceptional cellular incidence data
form an interval, and the $c=2$ case of \eqref{eq:barycentric-cone} is its
local cellular contraction.  This observation does not assert preservation of
a finite logarithmic cyclic subcomplex on either arm.  The global vertex fibre of the star resolution is instead the
exceptional square; its explicit cellular contraction, support-filtered perturbation and
gluing maps are constructed in \cref{thm:conifold-g4}.  Applying
\cref{prop:filtered-exceptional-contraction} gives the asserted
strictification, while \cref{thm:conifold-finite-bf} gives the finite cyclic
logarithmic--cellular BF shadow.
\end{proof}

The results expose complementary parts of the same mechanism.  The common
interior gives the unconditional derived comparison and equality of trace
values on corresponding compactly supported total-cocycle classes.  The
exceptional intervals control the cellular geometry of the two refinement
arms, while the exceptional square supplies the global contraction for the
product--Whitney class.  Its
finite algebraic dual supplies the cyclic BF shadow.  These constructions do
not identify arbitrary continuum models, and no equality of raw absolute
regularized bulk integrals is inferred from the common refinement.

\section{A second example: the cone over a pentagon}\label{sec:pentagon}

The conifold is the smallest non-ordinary $g$-corner, and its symmetry is
special: the dihedral group of the square forces its unique monoidal
relation to have unit coefficients, and both of its natural triangulations
happen to already be smooth, so \cref{thm:kottke} is never seen doing more
than choosing a diagonal. Neither feature survives at the next rank-three
case, the cone over a pentagon, and this section records what changes.

\subsection{The monoid}

Let $\sigma\subset\R^3$ be the cone over the pentagon with vertices, in
cyclic order,
\begin{equation}\label{eq:pentagon-rays}
 v_1=(0,0,1),\ v_2=(1,0,1),\ v_3=(2,1,1),\ v_4=(1,2,1),\ v_5=(0,1,1),
\end{equation}
a genuine convex lattice pentagon: the five edge vectors turn consistently
to the left, so $\sigma$ is a strictly convex rational polyhedral cone with
five rays. Let $P=\sigma^\vee\cap\Lambda^*$. Its facet normals, one per
edge of the pentagon, are the primitive vectors
\begin{equation}\label{eq:pentagon-facets}
\begin{aligned}
 q_1&=(0,1,0)&&\text{for the facet }\Cone(v_1,v_2),\\
 q_2&=(-1,1,1)&&\text{for the facet }\Cone(v_2,v_3),\\
 q_3&=(-1,-1,3)&&\text{for the facet }\Cone(v_3,v_4),\\
 q_4&=(1,-1,1)&&\text{for the facet }\Cone(v_4,v_5),\\
 q_5&=(1,0,0)&&\text{for the facet }\Cone(v_5,v_1),
\end{aligned}
\end{equation}
each vanishing on the stated pair of rays and strictly positive on the
other three, exactly as for the conifold's $q_i$ in
\cref{eq:conifold-facets}. These five vectors generate all of $\Z^3$: directly,
$q_5=(1,0,0)$, $q_1=(0,1,0)$, and $q_2-q_1+q_5=(0,0,1)$. Hence $P^{\gp}=\Z^3$: this
$P$ has the same rank as the conifold's, which is what makes the two
examples directly comparable.

\begin{proposition}[Pentagon group relations]\label{prop:pentagon-relations}
The five primitive dual-ray vectors in \eqref{eq:pentagon-facets}, which
generate $P^{\gp}$ as a group, satisfy exactly two independent integral
relations,
\begin{equation}\label{eq:pentagon-monoid-relations}
 2q_1+q_3=2q_2+q_4,
 \qquad
 2q_1+q_4=q_2+2q_5.
\end{equation}
\end{proposition}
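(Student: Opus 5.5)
The claim is elementary linear algebra over $\Z$: the relation module of the five vectors $q_1,\dots,q_5\in\Z^3$ is the kernel of the map $\Z^5\to\Z^3$, $e_i\mapsto q_i$. We must show that this kernel has rank two and that the two displayed relations are independent elements of it. The plan has three steps.

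First, determine the rank of the kernel. The text already shows that $q_5$, $q_1$ and $q_2-q_1+q_5$ are the standard basis of $\Z^3$. Hence the map $\Z^5\to\Z^3$ is surjective, and its kernel is a free abelian group of rank $5-3=2$. So there can be at most two independent relations.

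Second, check that each displayed identity is a relation, by direct coordinate computation from \eqref{eq:pentagon-facets}:
\[
 2q_1+q_3=(0,2,0)+(-1,-1,3)=(-1,1,3)=(-2,2,2)+(1,-1,1)=2q_2+q_4,
\]
\[
 2q_1+q_4=(0,2,0)+(1,-1,1)=(1,1,1)=(-1,1,1)+(2,0,0)=q_2+2q_5.
\]

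Third, show independence. The two relations correspond to the kernel vectors $(2,-2,1,-1,0)$ and $(2,-1,0,1,-2)$. Restricting to the $q_3$ and $q_5$ coordinates gives the minor $\begin{pmatrix}1&0\\0&-2\end{pmatrix}$, whose determinant is $-2\neq0$. So the two vectors are linearly independent, and since the kernel has rank two they span it rationally.

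The one delicate point is the word ``exactly''. If it meant that these two relations generate the full integral kernel, then that index-two minor should make us suspicious: the sublattice they span might have index greater than one. I would test this with a general kernel vector. Solving $a\,q_1+b\,q_2+c\,q_3+d\,q_4+e\,q_5=0$ using the basis found in the first step gives a two-parameter family of integer solutions. The family contains $(0,1,-1,-1,2)$, which equals $\tfrac12$ of (first relation $+$ second relation $\cdot(-1)\cdot\ldots$), so the check needs care. I expect the two relations span an index-two sublattice and not the full kernel.

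Accordingly, I would read the proposition, as stated, as ``rank two, with these two relations independent'', which the three steps above prove. I would then add a remark computing a $\Z$-basis of the kernel (for instance $(0,1,-1,-1,2)$ together with the second relation), so the reader does not conclude that the displayed pair generates all integral relations.
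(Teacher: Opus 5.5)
Your first three steps are correct, but they prove only the weak reading: the relation module $\ker(\mathbb Z^5\to\mathbb Z^3)$ has rank two, and $r^{(1)}=(2,-2,1,-1,0)$ and $r^{(2)}=(2,-1,0,1,-2)$ are independent elements of it. The paper intends, and proves, the stronger statement that these two relations form a $\mathbb Z$-basis of \emph{all} integral relations. Its proof says explicitly that saturation, not merely linear independence, must be checked. This is also what the phrase ``presented group $P^{\gp}$'' in the proof of \cref{prop:pentagon-scale} relies on, although for $\mathbb R$-valued scales the rational statement would already suffice.

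Your last paragraph gets this point wrong, for two reasons:
\begin{itemize}
\item The vector $(0,1,-1,-1,2)$ is not a relation at all, since $q_2-q_3-q_4+2q_5=(1,3,-3)$.
\item The index of $\mathbb Z r^{(1)}\oplus\mathbb Z r^{(2)}$ in its saturation cannot be read off a single minor. It is the gcd of \emph{all} $2\times 2$ minors of the matrix with rows $r^{(1)},r^{(2)}$. Your minor in columns $3,5$ equals $-2$, which only shows that the index divides $2$. The minor in columns $2,3$ is $\det\begin{pmatrix}-2&1\\-1&0\end{pmatrix}=1$, so the index is $1$.
\end{itemize}
The remark you plan to add, claiming an index-two sublattice and a different $\mathbb Z$-basis, would therefore be false.

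The missing step is the saturation check, which the paper carries out directly. Take an arbitrary integral relation $a=(a_1,\dots,a_5)$ and put $\lambda=a_3$, $\mu=-a_2-2a_3$. Then $\lambda r^{(1)}+\mu r^{(2)}$ has second coordinate $-2\lambda-\mu=a_2$ and third coordinate $\lambda=a_3$. Hence $a-\lambda r^{(1)}-\mu r^{(2)}$ is an integral relation supported on $q_1,q_4,q_5$. These three vectors are linearly independent; indeed $\det(q_1,q_4,q_5)=1$. So the difference vanishes, and $a=\lambda r^{(1)}+\mu r^{(2)}$ with $\lambda,\mu\in\mathbb Z$. If you replace your ``delicate point'' paragraph with this argument, the proof is complete in the intended sense.
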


\begin{proof}
Both identities are checked directly from \eqref{eq:pentagon-facets}: e.g.
$2q_1+q_3=(0,2,0)+(-1,-1,3)=(-1,1,3)$ and
$2q_2+q_4=(-2,2,2)+(1,-1,1)=(-1,1,3)$.  Let
\[
 r^{(1)}=(2,-2,1,-1,0),\qquad
 r^{(2)}=(2,-1,0,1,-2)
\]
be the corresponding relation vectors.  Since
$\operatorname{rank}P^{\gp}=3$, the integral kernel of the $3\times5$ matrix
with columns $q_1,\ldots,q_5$ has rank two.  It remains to check saturation,
not merely linear independence.  If
$a=(a_1,\ldots,a_5)$ is any integral relation, put
$\lambda=a_3$ and $\mu=-a_2-2a_3$.  Then
$\lambda r^{(1)}+\mu r^{(2)}$ has the same second and third coordinates as
$a$.  Their difference is a relation supported on $q_1,q_4,q_5$; these three
vectors form a unimodular basis of $\mathbb Z^3$, so that difference is zero.
Thus every integral relation is a unique integral combination of
$r^{(1)},r^{(2)}$, proving the claim.
\end{proof}

Unlike the conifold's single relation, the two relations here do not have
unit coefficients.  No lattice symmetry of the displayed realization is used
to force a unit-coefficient presentation analogous to the square's
diagonal-preserving reflections.  This statement concerns the chosen lattice
embedding, not the dihedral automorphism group of the abstract pentagonal face
poset.

\begin{proposition}[Pentagon scale relation]\label{prop:pentagon-scale}
A monoidal scale at the vertex of $X_P$, \cref{def:vertex-scale}, is
equivalent to a quintuple $(c_1,\ldots,c_5)\in\R^5$ satisfying
\[
 2c_1+c_3=2c_2+c_4,
 \qquad
 2c_1+c_4=c_2+2c_5,
\]
with $\Reg(\log x_i)=c_i$ for the monomial functions $x_i=\lambda_{q_i}$.
\end{proposition}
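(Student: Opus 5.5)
The plan follows the proof of \cref{prop:scale-relation} for the conifold, with \cref{prop:pentagon-relations} in place of the single relation \eqref{eq:conifold-monoid-relation}. By \cref{def:vertex-scale}, a monoidal scale at the vertex of the sharp monoid $P$ is an additive homomorphism $\scale:P^{\gp}\to\R$. We have $P^{\gp}=\Z^3$, and the five vectors $q_1,\dots,q_5$ generate it as a group. Hence $\scale$ is determined by the quintuple $c_i=\scale(q_i)$, and the map $\scale\mapsto(c_1,\dots,c_5)$ is injective.

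For the necessity of the two linear conditions, apply $\scale$ to the two identities \eqref{eq:pentagon-monoid-relations} in $P^{\gp}$. Additivity gives exactly $2c_1+c_3=2c_2+c_4$ and $2c_1+c_4=c_2+2c_5$.

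For sufficiency, use the presentation of $P^{\gp}$ as $\Z^5/K$, where $K$ is the integral kernel of the surjection $e_i\mapsto q_i$. \Cref{prop:pentagon-relations} shows that $K$ is spanned over $\Z$ by $r^{(1)}$ and $r^{(2)}$; the saturation argument there is the one non-formal ingredient. Given a quintuple satisfying both relations, the homomorphism $\Z^5\to\R$, $e_i\mapsto c_i$, vanishes on $r^{(1)}$ and $r^{(2)}$, hence on all of $K$. It therefore descends to a unique $\scale:P^{\gp}\to\R$ with $\scale(q_i)=c_i$.

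Saturation is not strictly needed over $\R$. Any integral relation is a rational combination of $r^{(1)}$ and $r^{(2)}$, because the rank of $K$ is two, and a real-valued homomorphism killing $r^{(1)}$ and $r^{(2)}$ also kills their rational combinations. Citing the proposition is still the cleanest route.

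Finally, with $x_i=\lambda_{q_i}$ we have $\ell_{x_i}=\ell_{q_i}$ and unit $u=1$. Formula \eqref{eq:vertex-scale-unit} then gives $\Reg_\scale(\log x_i)=\log 1+\scale(q_i)=c_i$. Since $P$ is sharp, no unit-splitting choice enters.

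No step should be a real obstacle. The only point needing care is that the relations form a full presentation of $P^{\gp}$, and \cref{prop:pentagon-relations} already supplies this. The same remark as after \cref{prop:scale-relation} applies: the vertex datum does not determine a DPP regularization on any resolution, by \cref{prop:vertex-character-insufficient}.
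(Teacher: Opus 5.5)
Your proof is correct and follows the paper's argument. As there, you set $c_i=\scale(q_i)$ and apply $\scale$ to the two relations of \cref{prop:pentagon-relations} for necessity; for existence and uniqueness you use that the $q_i$ generate $P^{\gp}$ and that those relations present it. Your side remark that saturation is dispensable is also correct, since $\R$ is torsion-free and every integral relation is a rational combination of $r^{(1)},r^{(2)}$; it slightly sharpens the paper's appeal to the full integral presentation.
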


\begin{proof}
As in \cref{prop:scale-relation}: set $c_i=\scale(q_i)$ for a scale
$\scale:P^{\gp}\to\R$ and apply $\scale$ to
\eqref{eq:pentagon-monoid-relations}; conversely any quintuple satisfying
these two linear relations defines a unique homomorphism on the presented
group $P^{\gp}$.
\end{proof}

Thus the five chosen monomial values determine a three-parameter, not a
five-parameter, family of intrinsic vertex scales.  They do \emph{not}
parameterize DPP regularizations on a smooth resolution: such a regularization
is compatible normal-section data on the complete ordered boundary diagram,
as emphasized in \cref{prop:vertex-character-insufficient}.  The two
independent constraints are exactly the two relation directions measured by
$5-\operatorname{rank}P^{\gp}=2$.

\subsection{A resolution that is not just a choice of diagonal}

A convex pentagon triangulates using two non-crossing diagonals from a
single vertex; from $v_1$ these are $v_1v_3$ and $v_1v_4$, giving the three
cones $\Cone(v_1,v_2,v_3)$, $\Cone(v_1,v_3,v_4)$, $\Cone(v_1,v_4,v_5)$.
Direct computation of the three determinants against the standard basis
gives $1$, $3$, and $1$: the outer two cones are already unimodular, but
the middle one is not. This is the point of departure from the conifold,
where both diagonal triangulations of the square are unimodular for free;
here the naive triangulation is not smooth, and producing a smooth
refinement genuinely requires the extra ray constructed in
\cref{prop:pentagon-resolve}, followed by the geometric realization of
\cref{thm:kottke}, not merely a choice of diagonal.

\begin{proposition}[Resolving the singular cone]\label{prop:pentagon-resolve}
The semi-open fundamental parallelepiped of the cone
$\tau=\Cone(v_1,v_3,v_4)$ contains exactly the three lattice points
$0$, $w$, and $2w$, where
\[
 w=(1,1,1)=\tfrac13(v_1+v_3+v_4)
\]
is primitive. Star-subdividing $\tau$ at the ray through $w$ produces the
three cones
\[
 \Cone(v_3,v_4,w),\qquad\Cone(v_1,v_4,w),\qquad\Cone(v_1,v_3,w),
\]
with determinants $1$, $-1$, $1$: all three are unimodular.
\end{proposition}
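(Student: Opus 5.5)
This is a finite lattice computation in $\Lambda=\mathbb Z^3$, and I would carry it out in four steps.

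\emph{Step 1.} Confirm that $\det(v_1,v_3,v_4)=3$, in agreement with the determinant count quoted just before the statement. This fixes the index of the sublattice $\mathbb Z v_1+\mathbb Z v_3+\mathbb Z v_4$ at $3$. The semi-open parallelepiped
\[
\Pi=\{a v_1+b v_3+c v_4:\ 0\le a,b,c<1\}
\]
is a fundamental domain for that sublattice, so it contains exactly three lattice points.

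\emph{Step 2.} Exhibit these three points. Since $v_1+v_3+v_4=(3,3,3)$, we get $w=(1,1,1)$, which equals $\tfrac13(v_1+v_3+v_4)$ and lies in $\Pi$ with all coordinates equal to $\tfrac13$. Likewise $2w$ has all coordinates $\tfrac23$, and $0$ is trivially in $\Pi$. By the count in Step~1, these are all the lattice points of $\Pi$. Primitivity of $w$ is immediate because its entries have $\gcd=1$.

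\emph{Step 3.} Check the star subdivision. The point $w$ lies in the relative interior of $\tau$, since all three barycentric coefficients are positive. Hence the star subdivision of $\tau$ at the ray $\mathbb R_{\ge0}w$ consists of the three cones obtained by replacing one generator by $w$, and these cones have the same support as $\tau$.

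\emph{Step 4.} Compute the three $3\times3$ determinants directly:
\[
\det(v_3,v_4,w),\qquad \det(v_1,v_4,w),\qquad \det(v_1,v_3,w).
\]
A slicker route is multilinearity. Substituting $w=\tfrac13(v_1+v_3+v_4)$ into $\det(v_1,v_3,w)$, only the $\tfrac13 v_4$ term survives, giving $\tfrac13\det(v_1,v_3,v_4)=1$. The same argument handles the other two, with signs determined by the order of the listed generators. This yields $1,-1,1$ in the stated orderings, so each cone is unimodular.

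The only real subtlety is the sign convention. The signs depend on the order of the columns, so I would fix the ordering exactly as listed and note that unimodularity depends only on the absolute value. Beyond that, everything is routine arithmetic.
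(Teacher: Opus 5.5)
Your proposal is correct and follows essentially the same route as the paper. Both arguments use $\det(v_1,v_3,v_4)=3$ to get exactly three lattice points in the semi-open parallelepiped, identify them as $0,w,2w$ with coordinates $(\tfrac13,\tfrac13,\tfrac13)$ for $w$ and $(\tfrac23,\tfrac23,\tfrac23)$ for $2w$, and then check the three determinants; your multilinearity shortcut, which gives each sub-cone determinant $\pm\tfrac13\det(v_1,v_3,v_4)$ with the sign fixed by column order, is a tidy substitute for the paper's direct computation and correctly yields $1,-1,1$.
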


\begin{proof}
Writing $\tau=\Cone(v_1,v_3,v_4)$ in the basis $(v_1,v_3,v_4)$, a point of
$\Z^3$ lies in the fundamental parallelepiped iff its $(v_1,v_3,v_4)$-coordinates
lie in $[0,1)^3$; since $\det\tau=3$, exactly three lattice points do so,
and direct computation identifies them as $0$, $(1,1,1)$ at coordinates
$(\tfrac13,\tfrac13,\tfrac13)$, and $(2,2,2)$ at coordinates
$(\tfrac23,\tfrac23,\tfrac23)$ -- the latter lying on the same ray as the
former, so $w=(1,1,1)$ is the unique new ray the singularity requires. The
three determinants against the standard basis are computed directly from
$v_1,v_3,v_4,w$.
\end{proof}

Combining \cref{prop:pentagon-resolve} with the two already-smooth cones
$\Cone(v_1,v_2,v_3)$ and $\Cone(v_1,v_4,v_5)$ gives a smooth refinement
\[
 \cR_P=\bigl\{\Cone(v_1,v_2,v_3),\ \Cone(v_1,v_3,w),\ \Cone(v_3,v_4,w),\
 \Cone(v_1,v_4,w),\ \Cone(v_1,v_4,v_5)\bigr\}
\]
of $\sigma$, with five maximal cones against the pentagon's three naive
triangles. By \cref{thm:kottke} this is realized by a blow-up
$\beta_{\cR_P}:X_{\cR_P}\to X_P$ with ordinary corners, to which
\cref{sec:resolution-aksz-data,sec:derived-traces} apply exactly as they do
for the conifold's $\cR_+$.  After choosing an arbitrary DPP regularization
$s_{\cR_P}$ on the complete ordered boundary diagram,
\cref{thm:refinement-invariance} gives the resolution-independent affine
derived trace.  A quintuple $(c_1,\ldots,c_5)$ satisfying
\cref{prop:pentagon-scale} is separate intrinsic vertex-scale input; it neither
determines the DPP datum nor is required for the common-interior comparison.
The same theorem applies to the two arms of any smooth common refinement with
a second smooth resolution built from a different starting triangulation,
without the resolutions needing to agree diagonal by diagonal.  A filtered
facewise comparison would additionally require the transfer data of
\cref{def:refinement-descent-datum}.  We do not carry out
a second resolution here in the same explicit detail as $\cR_P$: the point
of this section is that \cref{prop:pentagon-resolve,thm:kottke} are doing
genuine resolution-of-singularities work already at rank three once the
square's accidental symmetry is removed, and that the abstract
refinement-invariance machinery of \cref{sec:derived-traces} was built to
absorb exactly this, not only the conifold's single interval.

\subsection{Combinatorics}

The pentagon's flags -- a ray together with one of the two facets through
it -- number $5\times2=10$, and the combinatorial dihedral group of the
abstract pentagon, of order ten, acts simply transitively on them.  This is a
face-poset action, not an assertion that the full dihedral group preserves the
chosen lattice embedding in \eqref{eq:pentagon-rays}.  Since $10$ is not a
multiple of $3!=6$, no free
$S_3$-action on $\partial^3X_P$ exists here either, exactly as for the
conifold in \cref{subsec:conifold-joyce-corners}. The diagnosis of \cref{subsec:conifold-joyce-corners} was never special to the square: it is the statement that a
$g$-corner's link is the boundary of a polytope with its own combinatorial
automorphism group, while a chosen lattice model may realize only a subgroup.

\part*{VII.\ Scope and conclusion}\addcontentsline{toc}{part}{VII.\ Scope and conclusion}

\section{Open problems and precise scope}\label{sec:open-problems}

The results above separate three levels.  First, under
\textup{(G1)}--\textup{(G3)} the resolution-independent total trace and its
sheaf descent are relative de Rham statements: the unfiltered object is
$j_!\Omega_{X^\circ}^\bullet$ and does not retain individual descendants.
Second, \textup{(G4)} and \textup{(G5)} are conditional transfer criteria.
When supplied, \textup{(G4)} produces the strict linear logarithmic incidence
model and \textup{(G5)} produces the cyclic classical abelian BF model; their
general existence is not deduced from monoidal resolution theory.  Third, the
paper constructs these structures only in more restricted forms: the
product--Whitney conifold class realizes the full \textup{(G4)} criterion and
a finite algebraic-dual BF shadow, while finite-dimensional cellular
nonabelian BF is invariant under the admitted elementary resolution moves.
None of these statements constructs a general nonlinear continuum BV
pushforward on the unresolved $g$-corner.  Nor do the derived roofs identify
arbitrary absolute face-degree-zero regularized integrals: their invariant
output is the total relative trace morphism.

\begin{conjecture}[Intrinsic multiplicative Stokes trace]\label{conj:trace-existence}
Under suitable analytic hypotheses, the resolution-independent derived trace admits a strict
multiplicative representative on $\mathsf{Face}(X)$.  Its expected uniqueness should be
formulated as coherent equivalence in a homotopy theory of multiplicative trace systems;
constructing that homotopy theory is part of the conjecture.
\end{conjecture}

\begin{conjecture}[Nonlinear strictification]\label{conj:nonlinear-strictification}
For a perturbatively regular unimodular AKSZ target, the derived theory admits a nonlinear
strictification whenever the transferred reduced refinement $L_\infty$ structure is
gauge-equivalent to one with no higher face operations.
\end{conjecture}

\begin{conjecture}[Resolved--intrinsic $b$-equivalence]\label{conj:comparison}
For an AKSZ target for which the exceptional BV pushforward is defined and compatible with
all face restrictions, pushforward of the resolved $b$/logarithmic theory along
a smooth monoidal resolution produces the intrinsic $b$-AKSZ theory up to
canonical transformation and a field-independent determinant factor.  No
equivalence with the ordinary AKSZ source is part of this conjecture.
\end{conjecture}

At loop level one would additionally need logarithmic compactifications of
configuration spaces, extension and integrability of propagator products,
regularized Stokes identities on all boundary strata, and control of
short-loop counterterms.  These analytic questions are deliberately outside
the present paper.  More basically, three existence problems remain distinct:
constructing a strict multiplicative intrinsic trace system, deriving
\textup{(G4)} from usable geometric hypotheses beyond the classes treated
here, and constructing the type-correct continuum field datum \textup{(G5)}.
None follows formally from Kottke's resolution theorem.  The
product--Whitney conifold class of \cref{def:conifold-product-whitney} is the
finite non-ordinary example for which \textup{(G4)} and the separate
logarithmic--cellular BF shadow are constructed directly; it is not an example
of the full field datum \textup{(G5)}.

\section{Conclusion}

The paper studies three related classical source models for AKSZ--BV--BFV
theory on Joyce generalized corners.  Ordinary AKSZ on a Kottke monoidal
resolution supplies a resolution-level baseline.  The unconditional
comparison proved for resolved $b$/logarithmic models is the unfiltered
relative comparison through the common interior.  The intrinsic $b$-model
based on $\bT[1]X$ and Joyce's face category is a separate, conditional layer:
its facewise transgression assumes a multiplicative regularized Stokes trace
system, and its strict linear and cyclic realizations require the additional
transfer criteria \textup{(G4)} and \textup{(G5)}.  The anchor from the
ordinary source to the resolved $b$-source is not asserted to be an
equivalence.

The results therefore have an unconditional core and a conditional
strictification layer.  On bulk $b$-fields, monoidal blow-down is
$b$-\'etale.  On the full face complex, smooth resolutions represent a common
unfiltered relative de Rham object through their common interior; equality of
integrated representatives requires corresponding compactly supported
total-cocycle classes and does not follow merely from equality of bulk fields
on the interior.  A monoidal character fixes intrinsic vertex values but does
not canonically determine DPP normal-section data on a resolution.  If the
finite subdivision/aggregation criterion \textup{(G4)} is supplied, the
reduced logarithmic trace/descent refinement sector contracts and yields a
strict linear intrinsic Stokes complex.  If the separate cyclic field-level
criterion \textup{(G5)} is supplied, including the matching of transferred
terms with the prescribed signed face arrows, one obtains the strict
incidence-level classical abelian BF/BV--BFV--Stokes model without identifying
the two retracts.  Its maximally extended face system is
bivariant: $A$ restricts and the BF-dual $B$ corestricts.  The positive real conifold demonstrates why the
intrinsic face category, rather than $\partial^kX/S_k$, is essential and gives an explicit
common-interior comparison of the two resolutions.  On the specified
product--Whitney class, its exceptional square gives the full global
\textup{(G4)} datum after normal-face totalization; the finite algebraic-dual
sub-class gives instead a cyclic
logarithmic--cellular BF shadow with path-independent
restrictions/corestrictions.  It does not give the full intrinsic
\(b\)-de Rham datum \textup{(G5)}, nor any such datum for arbitrary continuum
coefficients.

The local exceptional interval provides the basic geometric check: its reduced cellular
sector is acyclic, and the one-ended radial/ordinary angular propagator used in
the codimension-two continuum collar is compatible with its endpoint
restrictions.  By contrast, the two-ended relative normal-face retract used
for the finite conifold square is a retract after totalization and is not
componentwise natural on the un-totalized normal-face diagram, as made
explicit in \cref{rem:two-ended-not-face-natural}.  Finite-dimensional
nonabelian cellular BF for unimodular Lie algebras
supplies a nonlinear shadow of the same resolution invariance.  General nonlinear continuum strictification and quantum
logarithmic graph integrals remain separate problems rather than hidden assumptions.

\appendix

\section{Sign convention and transgression--Stokes calculation}\label{app:signs}

We use exactly the local-form convention of the ordinary-corner companion
paper \cite[Apps.~A and C]{AnghelPartI}, with the ordinary Stokes functional
replaced by the regularized trace.  The replacement changes no Koszul sign:
regularized restriction acts on source coefficients, commutes with the
field-space differential, and satisfies \eqref{eq:stokes-trace}.

Let $F$ have dimension $d$.  Write $\delta_{\Y}$ for the target de Rham
differential and $\delta$ for the field-space differential.  In homogeneous
coordinates $z^a$ on $\Y$, a field is a superfield
$Z^a\in\bOmega^\bullet(F)$ of total degree $|z^a|$.  We place source
coefficients before field-space differentials and impose
\begin{equation}\label{eq:sign-anticommute}
 \delta\db=-\db\delta .
\end{equation}
The total degree is the sum of source-form degree, target/internal degree and
field-space form degree; every Koszul exponent below is the reduction of this
integer modulo two.
The regularized fiber trace has degree $-d$; consequently an operator of
parity $\pi$ acquires the sign $(-1)^{\pi d}$ when moved through it.  In
particular,
\begin{equation}\label{eq:sign-trace-delta}
 \delta\,\Tr_F^{\cT}(\xi)
 =(-1)^d\Tr_F^{\cT}(\delta\xi).
\end{equation}
The source differential in the Stokes formula acts only on the source
coefficient, so no additional sign depending on the field-space form degree
is inserted in \eqref{eq:stokes-trace}.

For a homogeneous target form $\chi$, define
\begin{equation}\label{eq:regularized-transgression-k}
 \mathsf T_F^0\chi:=\Tr_F^{\cT}\ev_F^*\chi,
 \qquad
 \mathsf T_F^k\chi
 :=\iota_{\widehat{\db}_F}^{\,k}\mathsf T_F^0\chi .
\end{equation}
This is the local-form convention of \cite[App.~D]{CMRClassical}, applied face
by face.  Here the superscript $k$ records the number of source differentials
inserted into the target form; it is neither a power of the fiber trace nor
the codimension of $F$, which is denoted by $k$ elsewhere in this paper.
Because $\widehat{\db}_F$ has degree one,
$\iota_{\widehat{\db}_F}$ is even in the total parity convention.  On
generators it sends $\delta Z^a$ to $\db Z^a$, annihilates $Z^a$ and
$\db Z^a$, and hence obeys the ungraded Leibniz rule.  The superfield
generating-function calculation, using
\cref{eq:sign-anticommute,eq:sign-trace-delta} and regularized Stokes, gives
\begin{equation}\label{eq:regularized-transgression-stokes}
 \delta\bigl(\mathsf T_F^k\chi\bigr)
 =(-1)^d\left(
   \mathsf T_F^k(\delta_{\Y}\chi)
   -k\sum_{G\prec F}[F:G]\,
      (\rho_{G/F}^{\cT})^*\mathsf T_G^{k-1}\chi
 \right),
\end{equation}
with the boundary sum absent for $k=0$.  Indeed, expanding
$\delta Z^a+\db Z^a$ in every target one-form slot groups the terms with
$k$ source differentials into
$\mathsf T_F^k/k!$; applying
$\delta+\db$ and then \eqref{eq:stokes-trace} produces
\eqref{eq:regularized-transgression-stokes}.  This argument is componentwise
in homogeneous superfields and therefore does not use a nonexistent single
degree for an inhomogeneous field.

Taking $k=0$ and $\chi=\alpha_\Y$ shows, with the definitions in
\cref{def:transgression}, that
\begin{equation}\label{eq:transgressed-primitive-sign}
 \omega_F^b=(-1)^d\delta\alpha_F^b.
\end{equation}
Taking $k=1$ and using
$S_{F}^{\mathrm{kin}}=\mathsf T_F^1\alpha_\Y$ gives
\begin{equation}\label{eq:source-hamiltonian-sign}
 \iota_{\widehat{\db}_F}\omega_F^b
 =(-1)^d\delta S_F^{\mathrm{kin}}
 +\sum_{G\prec F}[F:G]\,
   (\rho_{G/F}^{\cT})^*\alpha_G^b .
\end{equation}

For a homogeneous target vector field $V$, moving its contraction through a
degree-$d$ fiber trace gives
\begin{equation}\label{eq:target-contraction-sign}
 \iota_{\widehat V}\mathsf T_F^k\chi
 =(-1)^{(|V|+1)d}\mathsf T_F^k(\iota_V\chi).
\end{equation}
Since $|Q_\Y|=1$, the exponent in
\eqref{eq:target-contraction-sign} is even.  The target Hamiltonian identity
and the $k=0$ case of
\eqref{eq:regularized-transgression-stokes} therefore imply
\begin{equation}\label{eq:target-hamiltonian-sign}
 \iota_{\widehat Q_\Y}\omega_F^b
 =\mathsf T_F^0(\delta_\Y\Theta)
 =(-1)^d\delta\,\mathsf T_F^0\Theta .
\end{equation}
Adding
\cref{eq:source-hamiltonian-sign,eq:target-hamiltonian-sign} proves
\cref{eq:intrinsic-bvbfv-factorized}.  Finally,
\cref{eq:evaluation-regularization} identifies every boundary summand with
the pullback of the primitive on the boundary field space.  Thus both the
global factor $(-1)^d$ and the oriented boundary sign are fixed by explicit
operator conventions.

\bigskip
\noindent\textit{Author's address:}\\
Simion Stoilow Institute of Mathematics of the Romanian Academy,\\
21 Calea Grivi\c{t}ei Street, 010702 Bucharest, Romania.\\
e-mail: cristian.anghel@imar.ro

\end{document}